\newtheorem{theorem}{Theorem}
\newtheorem{definition}{Definition}
\newtheorem{lemma}{Lemma}
\newtheorem{proposition}[theorem]{Proposition}
\renewcommand*\env@matrix[1][*\c@MaxMatrixCols c]{%
  \hskip -\arraycolsep
  \let\@ifnextchar\new@ifnextchar
  \array{#1}}
\let\e=\varepsilon
\let\p=\partial
\let\O=\Omega
\let\o=\omega
\let\g=\gamma
\let\b=\beta
\numberwithin{equation}{section}
\let\hide\iffalse
\let\unhide\fi
\newcommand{\R}{\mathbb{R}}
\renewcommand{\S}{\mathbb{S}}
\newcommand{\be}{\begin{equation}}
\newcommand{\bm}{\begin{multline}}
\newcommand{\ee}{\end{equation}}
\newcommand{\dd}{\mathrm{d}}
\newcommand{\xb}{x_{\mathbf{b}}}
\newcommand{\tb}{t_{\mathbf{b}}}
\newcommand{\vb}{v_{\mathbf{b}}}
\newcommand{\xf}{x_{\mathbf{f}}}
\newcommand{\tf}{t_{\mathbf{f}}}
\newcommand{\tbpm}{t_{\mathbf{b},\iota}}
\newcommand{\vbpm}{v_{\mathbf{b},\iota}}
\newcommand{\xbpm}{x_{\mathbf{b},\iota}}
\newcommand{\Bes}{\begin{eqnarray*}}
\newcommand{\Ees}{\end{eqnarray*}}
\newcommand{\Be}{\begin{equation} }
\newcommand{\Ee}{\end{equation}}
\def\p{\partial}
\def\O{\Omega}
\def\R{\mathbb{R}}
\def\B{\begin{equation}}
\def\E{\end{equation}}
\def\BN{\begin{eqnarray*}}
\def\EN{\end{eqnarray*}}
\begin{document}

\title{A note on two species collisional plasma in bounded domains}
\author{Yunbai Cao}

 \address{Department of Mathematics, University of Wisconsin, Madison, WI 53706 USA}
\email{ycao35@wisc.edu}
%\author{Yunbai Cao}
%
% \address{Department of Mathematics, University of Wisconsin, Madison, WI 53706 USA}
%\email{ycao35@wisc.edu}
%\begin{abstract}
%We consider the Boltzmann equation with external fields in strictly convex domains with diffuse reflection boundary condition. As long as the normal derivative of external fields satisfy some sign condition on the boundary (\ref{signEonbdry}) we construct classical $C^1$ solutions away from the grazing set. As a consequence we construct solutions of Vlasov-Poisson-Boltzmann system having bounded derivatives away from the grazing set (weighted $W^{1,\infty}$ estimate). In particular this improves the recent regularity estimate of such system in weighted $W^{1,p}$ space for $p<6$ in \cite{VPBKim}. 
%\end{abstract}

 \begin{abstract}
We construct a unique global-in-time solution to the two species Vlasov-Poisson-Boltzmann system in convex domains with the diffuse boundary condition, which can be viewed as one of the ideal scattering boundary model. The construction follows a new $L^{2}$-$L^{\infty}$ framework in \cite{VPBKim}. In our knowledge this result is the first construction of strong solutions for \textit{two species} plasma models with \textit{self-consistent field} in general bounded domains.
  \end{abstract}

  \maketitle
  
  \tableofcontents

%In this paper we prove some results for the existence of the function to the pde $\{ \partial_t + v \cdot \nabla_x + E \cdot \nabla_v + \nu \} f = H$ with in flow boundary condition, where $E(t,x) \in C^1(\mathbb R \times \mathbb R^2 )$ is a fixed field satisfying $\| E \|_ {C^1} < \infty $. 

\section{Introduction}
One of the fundamental models for dynamics of dilute charged particles (e.g., electrons and ions) is the Vlasov-Maxwell-Boltzmann (VMB) system, in which particles interact with themselves through collisions and with their self-consistent electromagnetic field:
\Be \label{VMBsystem} \begin{split}
 & \p_t F_+ + v\cdot \nabla_x F_+ + \frac{e_+}{m_+} (E + \frac{v}{c} \times B) \cdot \nabla_v F_+ = Q(F_+,F_+) + Q(F_+,F_-),
\\ &\p_t F_- + v\cdot \nabla_x F_- - \frac{e_-}{m_-} (E + \frac{v}{c} \times B) \cdot \nabla_v F_- = Q(F_-,F_+) + Q(F_-,F_-).
\end{split} \Ee
Here $F_\pm(t,x,v) \ge 0 $ are the density functions for the ions $(+)$ and electrons $(-)$ respectively, and $e_\pm$, $m_\pm$ the magnitude of their charges and masses, $c$ the speed of light.
The self-consistent electromagnetic field $E(t,x)$, $B(t,x)$ in \eqref{VMBsystem} is coupled with $F(t,x,v)$ through the Maxwell system (see \cite{Guo_M}).
%:
%\Be \begin{split}
%&\p_t E - c \nabla \times B = - 4\pi J = -\pi \int_{\mathbb R^3 } v (e_+F_+ - e_-F_- ) dv,
%\\ & \p_t B + c \nabla \times E = 0, \quad \nabla \cdot B = 0 ,
%\\& \nabla \cdot E = 4\pi \rho = 4\pi \int_{\mathbb R^3 } (e_+F_+ - e_-F_- ) dv.
%\end{split} \Ee
Previous studies for the VMB system, for example the existence of global in time classical solution, uniqueness, and asymptotic behavior without boundaries, can be found in \cite{Guo_M}, \cite{DS}.

Now formally as the speed of light $c \to \infty$, one can derive the so-called two species Vlasov-Poisson-Boltzmann (VPB) system, where $B(t,x) =0$. And the field $E$, that we are interested in, is associated with an electrostatic potential $\phi$ as
\Be\label{Field}
E(t,x): =  - \nabla_{x} \phi(t,x),%+ \Phi(x).
\Ee
%Throughout this paper we assume 
%\Be\label{sign_Phi}
%n(x) \cdot \Phi(x)> \e_{0}>0 \ \ \text{for} \  x\in\p\O.
%\Ee
where the potential is determined by the Poisson equation:
\Be \label{phibigF}
-\Delta_x \phi(t,x) = \int_{\mathbb R^3 } (F_+ - F_-)  dv : = \rho.
\Ee
In this paper we consider the zero Neumann boundary condition for $\phi$:
\Be \label{0signcondition}
\, \frac{\p \phi}{\p n } = 0 \text{ for } x \in \p \O.
\Ee
%Let
%\Be
%\mu (v) := 
%\Ee

It turns out that the presence of all the physical constants does not create essential mathematical difficulties. Therefore, for simplicity we normalize all constants in \eqref{VMBsystem} to be one, and the VPB system takes the form:
\Be \label{2FVPB}
\begin{split}
\p_t F_+ + v \cdot \nabla_x F_+ +E \cdot \nabla_v F_+ = Q(F_+,F_+) + Q(F_+,F_- ),
\\ \p_t F_- + v \cdot \nabla_x F_- -E \cdot \nabla_v F_+ = Q(F_-,F_+) + Q(F_-,F_- ).
\end{split} \Ee
The collision operator between particles measures ``the change rate'' in binary hard sphere collisions and takes the form of
\Be\begin{split}\label{Q}
Q(F_{1},F_{2}) (v)&: = Q_\mathrm{gain}(F_1,F_2)-Q_\mathrm{loss}(F_1,F_2)\\
&: =\int_{\R^3} \int_{\S^2} 
|(v-u) \cdot \omega| [F_1 (v') F_2 (u') - F_1 (v) F_2 (u)]
 \dd \omega \dd u,
\end{split}\Ee  
where $u^\prime = u - [(u-v) \cdot \omega] \omega$ and $v^\prime = v + [(u-v) \cdot \omega] \omega$. The collision operator enjoys a collision invariance: for any measurable $G_1, G_2$,  
\Be\label{collison_invariance}
\int_{\R^{3}} \begin{bmatrix}1 & v & \frac{|v|^{2}-3}{2}\end{bmatrix} Q(G_1,G_1) \dd v = \begin{bmatrix}0 & 0 & 0 \end{bmatrix}, \quad \int_{\R^3} Q(G_1,G_2 ) = 0.
\Ee 
It is well-known that a global Maxwellian $\mu$ %$\mathcal{M}_{[\rho, u, \Theta]}$ 
satisfies $Q(\cdot,\cdot)=0$ where
\Be\label{Maxwellian}
\mu(v):= \frac{1}{(2\pi)^{3/2}} \exp\bigg(
 - \frac{|v |^{2}}{2 }
 \bigg).
%\mathcal{M}_{[\rho, u, \Theta]}: = \frac{\rho}{(2 \pi \Theta)^{3/2}}
%\exp \bigg(
%- \frac{|v-u|^{2}}{2\Theta}
%\bigg) \ \ \ \rho, \Theta \in \R_+, \ u \in \R^3.
\Ee
Throughout this paper, let's use the notation
\Be \label{iota} \begin{split}
 \iota = +  \text{ or } -, \text{ and denote }
 -\iota = \begin{cases}
- &, \text{ if } \iota = + \\ +&, \text{ if } \iota = -.
\end{cases}
\end{split} \Ee

%Due to its importance of the Boltzmann equation in theory and application,

Being an important equation in both theoretic and application aspects, the Boltzmann equation has drawn attentions and
there have been a lot of research activities in analytic study of the equation.
Notably the nonlinear energy method has led to solutions of many open problems \cite{Guo_P, Guo_M} including global strong solution of both the VMB system and the VPB system, when the initial data are close to the
Maxwellian $\mu$.
% For large-amplitude solutions, an asymptotic stability of the global Maxwellian is established in \cite{DV, GMM}, provided certain a priori strong Sobolev estimates can be verified. Such high regularity insures an $L^\infty$-control of solutions which is crucial to handle the quadratic nonlinearity.
% Also, in \cite{Guo_V} global classical solutions are constructed for the VPB system when the initial data is near vacuum.
%It should be noted that all of 
One thing to note is that these results deal with idealized periodic domains or
whole space, in which the solutions can remain bounded in $H^k$ for large $k$. 

In many important physical applications, e.g. semiconductor and tokamak, the charged dilute
gas is confined within a container, and its interaction with the boundary often plays a crucial role both in physics and mathematics. So it's natural to consider the equation \eqref{2FVPB} in a bounded domain $\O$, and the interaction of the gas with the boundary is described by suitable boundary conditions \cite{CIP, Maxwell}. In this paper we consider one of the physical conditions, a so-called diffuse boundary condition:

\Be \label{diffuseF}
F_{\iota} (t,x,v) = c_\mu \mu (v) \int_{n(x) \cdot u > 0 } F_{\iota} (t,x,u) (n(x) \cdot u ) du \text{ for } (x,v) \in \gamma_-.
\Ee
Here, $\gamma_-: = \{(x,v) \in \p\O \times \R^3: n(x) \cdot v<0\}$ and $n(x)$ is the outward unit normal at a boundary point $x$. A number $c_\mu$ is chosen to be $\sqrt{2\pi}$ so that $c_\mu  \int_{n(x) \cdot u > 0 } \mu(u) (n(x) \cdot u ) du = 1$. Due to this normalization the distrubution of \eqref{diffuseF} enjoys a null flux condition at the boundary:
\Be \label{null_flux}
\int_{\mathbb R^3 }F_{\iota} (t,x,v)  (n(x) \cdot v ) du = 0 \text{ for } x \in \p \Omega.
\Ee
%
%boundary condition imposed for gas not charged particles. whenever particles hit boundary, redisturbute to the equilbrimum profile with respect to the wall temperature. 
One can view this boundary condition as one of the ideal scattering model.

However, in general, higher regularity may not be expected for solutions of the Boltzmann equation in physical bounded domains. Such a drastic difference of solutions with boundaries had been
demonstrated as the formation and propagation of discontinuity in non-convex domains \cite{Kim11, EGKM}, and a non-existence of some second order derivatives at the boundary in convex domains \cite{GKTT1}. Evidently the nonlinear energy method is not generally available to the boundary problems.
In order to overcome such critical difficulty, Guo developed a $L^2$-$L^\infty$ framework in \cite{Guo10} to study global solutions of the Boltzmann equation with various boundary conditions. The core of the method lays in a direct approach (without taking derivatives) to achieve a pointwise bound using trajectory of the transport operator, which leads substantial development in various directions including \cite{EGKM2, EGKM, GKTT1, GKTT2}. There are also studies on different type of collisional plasma models such as a Fokker-Planck equation with some boundary conditions (for example, see \cite{HwangFP} and reference therein).

%There are also studies on other type of kinetic equations under different physical boundary conditions, for example the well-posedness of the Fokker-Planck equation with absorbing boundary conditions can be found in \cite{HwangFP}.

%In \cite{GKTT1}, with the acid of some distance function towards the grazing set, they construct weighted classical $C^1$ solutions of Boltzmann equation ($E\equiv0$ in (\ref{Boltzmann_E})) with various boundary conditions away from the grazing set. They also construct $W^{1,p}$ solution for $1<p<2$ and weighted $W^{1,p}$ solutions for $2 \le p < \infty$ as well.
%
The main goal of the paper is to study the 2 species VPB system coupled of (\ref{2FVPB}) with (\ref{Field}) and (\ref{phibigF}), which describes the dynamics of electrons in the absence of a magnetic field.
From (\ref{collison_invariance}) and (\ref{null_flux}), a smooth solution of VPB with the diffuse BC (\ref{diffuseF}) preserves total mass:
\Be\label{conservation_mass}
\iint_{\O\times\R^{3}} F_\iota(t,x,v) \dd v \dd x \equiv  \iint_{\O\times\R^{3}} F_\iota(0,x,v) \dd v \dd x
%:= m_{0} \times \text{meas } \O >0 
\ \ \text{for all } \  t\geq 0.
\Ee
We assume that initially $F_0(x,v)$ satisfies
\Be\label{neutral_condition}
\iint_{\O \times \mathbb{R}^{3}} (F_+(0,x,v ) -F_-(0,x,v ))  \dd v \dd x = 0
. \ \ \   \text{(a neutral condition)}
\Ee
Then $\int_\O \left\{\int_{\mathbb{R}^{3}}( F_+(t,x,v)  - F_-(t,x,v))\dd v \right\} \dd x =0$ for all $t>0$ from (\ref{conservation_mass}). This zero-mean condition guarantees a solvability of the Poisson equation (\ref{phibigF}) with the Neumann boundary condition (\ref{0signcondition}).

There are some previous studies for the one-species VPB system (which is obtained by letting $F_- = 0 $) with physical boundary conditions. For example the time asymptotics of a solution to the VPB system is studied \cite{DD} under some a priori assumption on the solutions. In \cite{Michler} renormalized solutions (no uniqueness) were constructed for the VPB system with diffuse boundary condition. Recently in \cite{VPBKim} the authors constructed a unique global strong solution to the VPB system with diffuse boundary condition. They also had a weighted $W^{1,p}$, $3 < p < 6$ estimate for the solution of such system. This regularity result was later improved in \cite{CK} where the author obtained a weighted $W^{1,\infty}$ estimate for the solution under the appearance of an external field with a favorable sign condition $E \cdot n > 0$ on the boundary which will be explained later.

We consider a perturbation around $\mu$:
\Be
F_{\iota} =\mu + \sqrt \mu f_{\iota}.
\Ee
Then the corresponding problem is given by
%\eqref{2FVPB} 
\begin{eqnarray}
 \label{2fVPB}
\p_t f_+  + v \cdot \nabla_v f_+ - \nabla \phi \cdot \nabla_v f_+  + \frac{v}{2} \cdot \nabla \phi f_+  
  - \frac{2}{\sqrt \mu } Q(\sqrt \mu f_+ , \,u ) - \frac{1}{\sqrt \mu } Q(\mu,\sqrt \mu f_+) - \frac{1}{\sqrt \mu } Q(\mu , \sqrt \mu f_- )  
  \\ \notag = \Gamma(f_+,f_+ + f_-) - v \cdot \nabla \phi \sqrt \mu,
\\ \notag \p_t f_-   + v \cdot \nabla_v f_- + \nabla \phi \cdot \nabla_v f_-  - \frac{v}{2} \cdot \nabla \phi f_- 
  - \frac{2}{\sqrt \mu } Q(\sqrt \mu f_- , \,u ) - \frac{1}{\sqrt \mu } Q(\mu,\sqrt \mu f_-) - \frac{1}{\sqrt \mu } Q(\mu , \sqrt \mu f_+)  
  \\= \notag \Gamma(f_-,f_+ + f_-)  + v \cdot \nabla \phi \sqrt \mu,
  \\ \notag f(0,x,v) = f_0(x,v),
\\ \label{smallfphi}
-\Delta_x \phi(t,x) = \int_{\mathbb R^3 } \sqrt \mu (f_+ - f_-)  dv, \, \frac{\p \phi}{\p n } = 0 \text{ for } x \in \p \O,
\\
 \label{diffusef}
f_{\iota} (t,x,v) = c_\mu \sqrt \mu (v) \int_{n(x) \cdot u > 0 } \sqrt \mu(u) f_{\iota} (t,x,u) (n(x) \cdot u ) du \text{ for } (x,v) \in \gamma_-.
\end{eqnarray}

For $g = \begin{bmatrix}  g_1 \\ g_2 \end{bmatrix}, h=\begin{bmatrix}  h_1 \\ h_2 \end{bmatrix}$, let
\Be \label{L_decomposition}
Lg :=  -\frac{1}{\sqrt \mu } \begin{bmatrix}  2 Q(\sqrt \mu g_1, \mu ) + Q (\mu, \sqrt \mu ( g_1 + g_2 ) )
\\   2 Q(\sqrt \mu g_2, \mu ) + Q (\mu, \sqrt \mu ( g_1 + g_2 ) )
 \end{bmatrix} : = \nu (v) g - K g.
 \Ee
 Here the collision frequency is defined as
 \Be  \label{collision_frequency}
 \nu (v) := \frac{2}{\sqrt \mu } Q_{\mathrm{loss}} (\sqrt \mu , \mu ) : = 2 \int_{\mathbb{S}^{2}}\int_{%
\mathbb{R}^{3}}|(v-u) \cdot \omega| \mu(u)\mathrm{d}%
u\mathrm{d}\omega \sim \langle v\rangle, \Ee
It is well-known that for hard-sphere case,
\Be \begin{split} \notag
& \frac{1}{\sqrt \mu (v) } Q_{\mathrm{gain}} (\sqrt \mu g_1, \mu ) = \frac{1}{\sqrt \mu (v) } Q_{ \mathrm{gain}} ( \mu, \sqrt \mu g_1 ) = \int_{\mathbb R^3 } \mathbf k_2 (v,u ) g_1(u) du,
\\ & \frac{1}{\sqrt \mu (v) } Q_{ \mathrm{loss}} ( \mu, \sqrt \mu g_1 ) = \int_{\mathbb R^3 } \mathbf k_1 (v,u ) g_1(u) du,
\end{split} \Ee
with
\begin{equation}\label{k_estimate}
\begin{split}
 \mathbf{k}_{1}(v,u)=  & \pi  |v-u|   e^{-\frac{|v|^{2} +|u|^{2}}{4}} ,
%\label{eq:kernel_K}
\\
\mathbf{k}_{2}(v,u) =& \pi
|v-u|^{-1} e^{- \frac{|v-u|^2}{8}} 
e^{-   \frac{  | |v|^2- |u|^2   |^2}{8|v-u|^2}}.
%&\times \int_{w\cdot (u-v)=0}q_{0}\Big( \frac{u-v}{\sqrt{ |u-v|^{2 } + |w|^{2}}}  \cdot \frac{u-v}{|u-v|}  \Big) e^{-|w+\varsigma |^{2}}(|w|^{2}+|u-v|^{2})^{\kappa /2}\mathrm{d}
%w,
\end{split}
\end{equation}%
Thus
\Be \begin{split} 
Kg & : = \begin{bmatrix} \frac{2}{\sqrt \mu } Q_{\mathrm{gain} } (\sqrt \mu g_1, \mu ) + Q(\mu, \sqrt \mu ( g_1 + g_2) )  \\   \frac{2}{\sqrt \mu } Q_{\mathrm{gain} } (\sqrt \mu g_2, \mu ) + Q(\mu, \sqrt \mu ( g_1 + g_2) )\end{bmatrix} 
\\ & := \begin{bmatrix} \int_{\mathbb R^3 } \mathbf{k}_2 (v,u) (3g_1(u) + g_2(u) ) du - \int_{\mathbb R^3} \mathbf{k}_1 (v,u) (g_1(u) + g_2(u) ) du \\   \int_{\mathbb R^3 } \mathbf{k}_2 (v,u) (3g_2(u) + g_1(u) ) du - \int_{\mathbb R^3} \mathbf{k}_1 (v,u) (g_1(u) + g_2(u) ) du \end{bmatrix}.
\end{split} \Ee
The nonlinear operator is defined as
\Be \label{Gamma_def}
\Gamma(g,h): =  : = \Gamma_\text{gain} (g,h ) - \Gamma_\text{loss} (g,h ):= \frac{1}{\sqrt \mu }  \begin{bmatrix}   Q_\text{gain}(\sqrt \mu g_1, \sqrt{\mu} ( h_1+h_2) - Q_\text{loss}(\sqrt \mu g_1, \sqrt{\mu} ( h_1+h_2) ) 
\\    Q_\text{gain}(\sqrt \mu g_2, \sqrt{\mu} ( h_1+h_2) - Q_\text{loss}(\sqrt \mu g_2, \sqrt{\mu} ( h_1+h_2) ) 
 \end{bmatrix}.
% := \begin{bmatrix}  \Gamma(g_1, h_1 +h_2 )  \\  \Gamma(g_2, h_1+h_2 )  \end{bmatrix} 
% := \begin{bmatrix} \Gamma_\text{gain} (g_1, h_1 +h_2 )  - \Gamma_\text{loss} (g_1, h_1 +h_2 )\\  \Gamma_\text{gain} (g_2, h_1 +h_2 )  - \Gamma_\text{loss} (g_2, h_1 +h_2 ) \end{bmatrix}.
\Ee
Then for $f = \begin{bmatrix} f_+ \\ f_- \end{bmatrix} $, \eqref{2fVPB} becomes
\Be \label{systemf}
\p_t f + v \cdot \nabla_x f - q \nabla \phi \cdot \nabla_v f + q \frac{v}{2} \cdot \nabla \phi f + Lf = \Gamma(f,f) - q_1v \cdot \nabla \phi \sqrt \mu,
\Ee
where $q = \begin{bmatrix} 1 & 0 \\ 0 & -1  \end{bmatrix} $, and $q_1 = \begin{bmatrix} 1  \\ -1  \end{bmatrix} $.

 Let's clarify some notations.
 We denote  
\Be
\label{weight}
w_\vartheta(v) =  e^{\vartheta|v|^2}.
\Ee
The boundary of the phase space $
\gamma := \{ (x,v) \in \partial \Omega \times \mathbb R^3 \}$ can be decomposed as 
\begin{equation} \begin{split}
\gamma_- = \{ (x,v) \in \partial \Omega \times \mathbb R^3 : n(x) \cdot v < 0 \}, &\quad (\text{the incoming set}),
\\ \gamma_+ = \{ (x,v) \in \partial \Omega \times \mathbb R^3 : n(x) \cdot v > 0 \}, &\quad (\text{the outcoming set}),
\\ \gamma_0 = \{ (x,v) \in \partial \Omega \times \mathbb R^3 : n(x) \cdot v = 0 \}, &\quad (\text{the grazing set}).
\end{split} \end{equation}
For any function $z(x,v) : \bar \Omega \times \mathbb R^3 \to \mathbb R$, denote
\Be \notag
|z|_{2,+}^2 = \int_{\g_+} z^2 d\g, \quad |z|_{2,-}^2 = \int_{\g_-} z^2 d\g, \quad |z|_{\gamma, 2}^2 = \iint_{\p \Omega \times \mathbb R^3}  z^2 |n(x) \cdot v |dv dx 
\Ee
Now for any vector-valued function $f,g :  \Omega \times \mathbb R^3 \to \mathbb R^2 $, with $f = \begin{bmatrix} f_+ \\ f_- \end{bmatrix} $, and $g = \begin{bmatrix} g_+ \\ g_- \end{bmatrix} $, let's clarify the following notations:
\begin{eqnarray}
\\ \notag
|f| = |f_+| + |f_- |, \, f \cdot g = f_+ g_+ + f_- g_-, \text{ and } \langle f, g \rangle = \iint_{ \Omega \times \mathbb R^3} f \cdot g \, dvdx =\iint_{ \Omega \times \mathbb R^3} ( f_+ g_+ + f_-g_-  )\, dv dx,
%and
\\ \notag f^p := \begin{bmatrix} f_+^p \\ f_-^p \end{bmatrix}, \, \int f = \begin{bmatrix} \int f_+ \\ \int f_- \end{bmatrix}, \, \partial f = \begin{bmatrix} \p f_+ \\ \p f_- \end{bmatrix},
%  \| f\|_2 = \langle f , f \rangle ^{1/2}, \quad  
  | f |_{p,+}^p : =\int_{\gamma_+ } | f|^p d \gamma \sim \int_{\gamma_+ } ( |f_+|^p + |f_-|^p) d\gamma, 
\\ \notag | f |_{p,-}^p : = \int_{\gamma_- } |f|^p d\gamma \sim \int_{\gamma_- } (| f_+|^p + |f_-|^p) d\gamma,  \text{ and } |f|_{\gamma,p}^p = \iint_{\p \Omega \times \mathbb R^3 } |f|^p |n(x) \cdot v |dv dx,
\\ \notag
\| f(t)\|_p^p :=\iint_{\Omega \times \mathbb R^3 } |f|^p dv dx \sim  \iint_{\Omega \times \mathbb R^3 } ( |f_+(t)|^p + |f_-(t) |^ p ) dv dx , \quad \| f (t)\|_\infty = \sup_{(x,v) \in \Omega \times \mathbb R^3 }  |f_+(t) | + | f_-(t) |.
%, \quad  | f |_{2,+}^2 : = \int_{\gamma_+ } ( f_+^2 + f_-^2) d\gamma, \quad | f |_{2,-}^2 : = \int_{\gamma_- } ( f_+^2 + f_-^2) d\gamma, \quad |f|_{\gamma,2}^2 = \iint_{\p \Omega \times \mathbb R^3 } ( f_+^2 + f_- ^2) |n(x) \cdot v |dv dx
\end{eqnarray}

\subsection{A New Distance Function}

Throughout this paper we extend $ \phi_f$ for a \textit{negative time}. Let 
\Be\label{negative_t_extension}
\phi_f(s,x,v) :=\phi_{f_0}(x,v)    \ \ \text{for} \ - \infty<s<0,
\Ee
where $ \phi_{f_0}(x,v)$ satisfies $- \Delta \phi_{f_0}(x,v) = \int_\R^3 (f_{0,+} - f_{0,-} )\sqrt \mu dv$.

The \textit{characteristics (trajectory)} is determined by the Hamilton ODEs for $f_+$ and $f_-$ separately
\Be\label{hamilton_ODE}
\frac{d}{ds} \left[ \begin{matrix}X_\iota^f(s;t,x,v)\\ V_\iota^f(s;t,x,v)\end{matrix} \right] = \left[ \begin{matrix}V_\iota^f(s;t,x,v)\\ 
{-\iota} \nabla_x \phi_f
(s, X_\iota^f(s;t,x,v))\end{matrix} \right]  \ \ \text{for}   - \infty< s ,  t < \infty  ,
\Ee
with $(X_\iota^f(t;t,x,v), V_\iota^f(t;t,x,v)) =  (x,v)$.

For $(t,x,v) \in \R  \times  \O \times \R^3$, we define \textit{the backward exit time} $\tbpm^f(t,x,v)$ as   
\Be\label{tb}
\tbpm^f (t,x,v) := \sup \{s \geq 0 : X_\iota^f(\tau;t,x,v) \in \O \ \ \text{for all } \tau \in (t-s,t) \}.
\Ee
Furthermore, we define $\xbpm^f (t,x,v) := X_\iota^f(t-\tbpm(t,x,v);t,x,v)$ and $\vbpm^f (t,x,v) := V_\iota^f(t-\tbpm(t,x,v);t,x,v)$.

\begin{definition}[Distance Function] For $\e>0$, for $\iota = +$ or $-$ as in \eqref{iota}, define
\Be\label{alphaweight}\begin{split}
\alpha_{f,\e,\iota}(t,x,v) : =& \  %\mathbf{1}_{ t+1\geq \tb(t,x,v)  }
\chi \Big(\frac{t-\tbpm^{f}(t,x,v)+\e}{\e}\Big)
|n(\xbpm^{f}(t,x,v)) \cdot \vbpm^{f}(t,x,v)| \\
&+ \Big[1- \chi \Big(\frac{t-\tbpm^{f}(t,x,v) +\e}{\e}\Big)\Big].
\end{split}\Ee
Here we use a smooth function $\chi: \R \rightarrow [0,1]$ satisfying
\Be\label{chi}
\begin{split}
\chi(\tau)  =0,  \     \tau\leq 0, \ \text{and} \  \ 
\chi(\tau)  = 1    ,  \  \tau\geq 1, \\ 
 \frac{d}{d\tau}\chi(\tau)  \in [0,4] \ \   \text{for all }   \tau \in \R.
\end{split}
\Ee
\end{definition}

\hide
\Be\label{weight}
\alpha(t,x,v) = 
\begin{cases} \ 
\mathbf{1}_{\tb(t,x,v) \leq t+1}
|n(\xb(t,x,v)) \cdot \vb(t,x,v)| 
 \ \ \ \text{for}  \ \tb(t,x,v)< \infty
 ,\\
 \ \ \  \ \ \ 
  \ \ \ \ \  \ \ \   \ |\vb(t,x,v)|  \  \ \  \ \ \  \ \ \ \ \ \  \ \ \  \ \ \  \ \ \  \text{for}  \ \tb(t,x,v)= \infty
.
\end{cases}
\Ee

\unhide

Note that $\alpha_{f,\e,\iota}(0,x,v)\equiv \alpha_{{f_0},\e,\iota}(0,x,v)$ is determined by $f_0$ and its extension (\ref{negative_t_extension}). For the sake of simplicity, we could drop the superscription $^f$ in $X_\iota^f, V_\iota^f, \tbpm^f, \xbpm^f, \vbpm^f$ unless they could cause any confusion.

Also, denote
\Be \label{matrixalpha}
\alpha_{f,\e}(t,x,v) := \begin{bmatrix} \alpha_{f, \e, +}(t,x,v) & 0 \\ 0 & \alpha_{f, \e, -}(t,x,v) \end{bmatrix},
\Ee
and let $|\alpha_{f,\e}(t,x,v) | :=  | \alpha_{f, \e, +}(t,x,v) | + | \alpha_{f, \e, -}(t,x,v)| $.

One of the crucial properties of the new distance function in (\ref{alphaweight}) is an invariance under the Vlasov operator: 
\Be\label{alpha_invariant}
\big[\p_t + v\cdot \nabla_x -\iota \nabla_x \phi_f \cdot \nabla_v \big] \alpha_{f,\e,\iota}(t,x,v) =0.
\Ee
This is due to the fact that the characteristics solves a deterministic system (\ref{hamilton_ODE}) (See the proof in the appendix). This crucial invariant property under the Vlasov operator is one of the key points in our approach.

It is important to note that a different version of the distance function which has been used in the author's previous paper \cite{CK} to establish the regularity of the one specie VPB system is not applicable here. In \cite{CK}, the weight $\tilde \alpha$ took the form
\Be \label{alphatilde}
\tilde \alpha(t,x,v) = \bigg[ |v \cdot \nabla \xi (x)| ^2 + \xi (x)^2 - 2 (v \cdot \nabla^2 \xi(x) \cdot v ) \xi(x) - 2(E(t,\overline x ) \cdot \nabla \xi (\overline x ) )\xi(x) \bigg]^{1/2}
\Ee
for $x \in \Omega$ close to boundary, where $\overline x := \{ \bar x \in \p \Omega :  d(x,\bar x ) = d(x, \partial \Omega) \}$ is uniquely defined. And $\xi$ was assumed to be a $C^3$ function $\xi : \mathbb R^3 \to \mathbb R$ such that $\Omega = \{ x \in \mathbb R^3: \xi(x) < 0 \}$, $\partial \Omega = \{ x\in \mathbb R^3 : \xi(x) = 0 \}$, and $\nabla \xi(x) \neq 0  \text{ when } |\xi(x) | \ll 1$. And the domain was assumed to be strictly convex:
\[
\sum_{i,j} \partial_{ij} \xi(x) \zeta_i \zeta_j \ge C_\xi |\zeta|^2 \, \text{ for all } \, \zeta \in \mathbb R^3 \text{ and for all } x\in \bar \Omega = \Omega \cup \partial \Omega.
\]
One of the crucial property this $\tilde \alpha$ enjoys is the velocity lemma:
\Be \label{velalphatilde}
 |\{  \p_t + v\cdot \nabla_x + E \cdot \nabla_v \} {\tilde \alpha}(t,x,v) | \lesssim |v| \tilde \alpha,
\Ee
when under the sign condition 
\Be \label{signcondition}
E\cdot n > \delta >  0, \text{ on } \p \O, 
\Ee where $n$ is the outward normal vector.
This can be seen by direct computation:
\Be \label{transderivbeta1}
 |\{  \p_t + v\cdot \nabla_x + E \cdot \nabla_v \} {\tilde \alpha}^2(t,x,v) | \sim |v| {\tilde \alpha}^2 +C_\xi(E,\nabla_x E ,\p_t E )  |v| \xi (x),
\Ee
for some bounded function $C_\xi$. Now under \eqref{signcondition}, we get an extra stronger control for $\xi(x)$ from $\tilde \alpha^2$, and therefore the second term on the right-hand side of \eqref{transderivbeta1} can be bounded by:
\Be \label{boundalphaxi}
C_\xi |v| \xi (x) \le \frac{C_\xi}{ \inf_{y \in \p \O} E(t,y) \cdot \nabla \xi (x) } |v| (E(t,\overline x ) \cdot \nabla \xi (\overline x ) ) \xi (x)  \le \frac{C_\xi}{ \delta} \tilde \alpha^2(t,x,v).
\Ee
Thus combing \eqref{transderivbeta1} and \eqref{boundalphaxi} we obtain \eqref{velalphatilde}. This means $\tilde \alpha(t,x,v)$ retains its full power under the transport operator, which is crucially used for establishing the theories in \cite{CK}.

Thus it's clear that without the last term in \eqref{alphatilde}, i.e. in the case $E\cdot \nabla \xi = 0$ on $\p \O$, in order to have the $\xi(x)$ control from the second term on the right hand side of \eqref{transderivbeta1}, we can only obtain 
\Be
 |\{  \p_t + v\cdot \nabla_x + E \cdot \nabla_v \} {\tilde \alpha}^2(t,x,v) | \lesssim |v| \tilde \alpha(t,x,v).
 \Ee
 Therefore $\tilde \alpha (t,x,v)$ suffers a loss of power under the transport operator, and would result it's been inapplicable for the situation here.

%the a crucial favorable sign condition for the field $E\cdot n>0$ (n is the outward normal) at the boundary.

Therefore the previous distance function $\tilde \alpha$ would work only under a crucial favorable sign condition \eqref{signcondition}. But for the two species VPB system, it's clear from the equation \eqref{2FVPB} that if one requires the sign condition for the field for $F_+$, i.e. $-\nabla \phi \cdot n > 0$, then inevitably one would have $+\nabla \phi \cdot n < 0$, so the field for $F_-$ would fail to satisfy the sign condition. We note that the similar $\tilde \alpha$ has also been used by \cite{Guo_V}, \cite{Hwang} in the study of one-species problem of Vlasov equation.

Thus one of the major benefit for this new distance function $\alpha$ is that it only requires the zero-Neuuman boundary condition $E \cdot n = 0$ (see Lemma \ref{cannot_graze}, Proposition \ref{prop_int_alpha}), and therefore with $ \pm \nabla \phi \cdot n = 0 $ from \eqref{0signcondition}, we can apply this distance function to the two species VPB system \eqref{2FVPB}. 

 \subsection{Main Theorem}
 
% Construction of a unique global solution and proving its asymptotic stability of VPB in general domains has been a challenging open problem for any boundary condition. 
%Assume that a solution of VPB converges to some steady state $(F_{s}(x,v), \phi_{s}(x))$ as time goes to infinity. Then a steady state solves the steady VPB
%\Be\begin{split}\label{VPB_steady}
%v\cdot \nabla_{x}F_{s} +\{ \nabla_{x} \phi_{s} + \Phi\} \cdot \nabla_{v} F_{s} = Q(F_{s},F_{s}),\\
%\Delta_{x} \phi_{s} (x) = \int_{\R^{3}} F_{s}(x,v) \dd v - \bar{\rho}(x),
%\end{split}\Ee
%with (\ref{diffuse_BC}) and $\p_{n} \phi_{s} =0$ on $\p\O$. %Set a number $M>0$
%\Be\label{M}
%\frac{1}{\text{meas } \O}\iint_{\O \times \mathbb{R}^{3}} F_{s}(x,v )  \dd v \dd x 
%=
%\frac{1}{\text{meas } \O}\int_{\O}\bar{\rho}(x) \dd x=m_{0}.
%\Ee
% It is well-known that a solution of this steady VPB cannot be any Maxwellian but a generic non-equilibrium steady state of a gas contacting with thermal reservoirs. Our first result provides a construction of such solutions.
 The main goal of this paper is the construction of a unique global \textit{strong} solution of the two species VPB system with the diffuse boundary condition when the domain is $C^3$ and \textit{convex.} Moreover an asymptotic stability of the global Maxwellian $\mu$ is studied. 

%Throughout this paper we assume that the domain is $C^3$ and convex. 
Here a $C^{3}$ domain means that for any ${p} \in \partial{\Omega}$, there exists sufficiently small $\delta_{1}>0, \delta_{2}>0$, and an one-to-one and onto $C^{3}$-map
	\begin{equation}\label{eta}
	\begin{split}
	\eta_{{p}}:  \{ x_{{ \parallel}} \in \mathbb{R}^{2}: |x_{ \parallel}| < \delta_1  \}  \ &\rightarrow  \ \p\Omega \cap B({p}, \delta_{2}),\\
	x_{{ \parallel}}=(x_{ \parallel,1},x_{\parallel,2} )	 \ &\mapsto \    \eta_{{p}}  (x_{ \parallel,1},x_{\parallel,2} ).
	\end{split}
	\end{equation} 
%	and $\eta_{ { {p}}}(\X_{ {{p}},1},\X_{ {{p}},2},\X_{ {{p}},3}) \in \p \Omega$ if and only if $\X_{ {{p}},3}=0$. 
	A \textit{convex} domain means that there exists $C_\O>0$ such that for all $p \in \p\O$ and  $\eta_p$ and for all $x_\parallel$ in (\ref{eta}) 
\begin{equation}\label{convexity_eta}
\begin{split}
\sum_{i,j=1}^{2} \zeta_{i} \zeta_{j}\p_{i} \p_{j} \eta _{{p}}   ( x_{\parallel }  )\cdot  %\frac{
n ( x_{\parallel } )
%}{\sqrt{g_{33}  (\X_{{p},1}, \X_{{p},2},0)
%}}
  \leq    - C_{\Omega} |\zeta|^{2}  \ 
  \text{ for all}   \ \zeta \in \mathbb{R}^{2}.
\end{split}
\end{equation}

\begin{theorem} 
\label{main_existence}
Assume a bounded open $C^3$ domain $\O \subset\R^3$ is convex (\ref{convexity_eta}). Let $0< \tilde{\vartheta}< \vartheta\ll1$. Assume the neutral condition \eqref{neutral_condition} and the compatibility condition 
\Be\label{compatibility_condition}
f_{0,\iota} (x,v) = c_\mu \sqrt{\mu(v)} \int_{n(x) \cdot u>0} f_{0,\iota} (x,u)\sqrt{\mu(u)} \{n(x) \cdot u\} \dd u   \ \ \text{on} \ \gamma_-.
\Ee
Then there exists a small constant $0< \e_0 \ll 1$ such that for all $0< \e \leq \e_0$ if an initial datum $F_0 = \mu+ \sqrt{\mu}f_0\geq 0$ satisfies
\Be\label{small_initial_stronger}
 \|w_\vartheta f_0 \|_{L^\infty(\bar{\O} \times \R^3)}< \e,\Ee
and, recall the matrix definition of $\alpha$ in \eqref{matrixalpha},
\Be\label{W1p_initial}
\begin{split}
 \| w_{\tilde{\vartheta}} \alpha_{f_0, \e }^\beta \nabla_{x,v } f_0 \|_{ {L}^{p } ( {\O} \times \R^3)}
 <\e
\ \
\text{for}  \  \ 3< p < 6, \ \ 
1-\frac{2}{p }
%\frac{p-2}{p}
 < \beta<
\frac{2}{3}
  %\frac{p-1}{p}
,\end{split}
\Ee
and 
\Be\label{nabla_v_f_0_bounded}
 \|   w_{\tilde{\vartheta}}   \nabla_{v } f_0 \|_{ {L}^{3 } ( {\O} \times \R^3)}< \infty,
\Ee
\hide
\Be\label{W1p_initial}
 \| w_{\tilde{\vartheta}} \alpha_{f, \e }^\beta \nabla_{x,v} f_0 \|_{ {L}^{p} ( {\O} \times \R^3)}<\e
\ \ \ \text{for} \ \ 3<p<6, \ 
1-\frac{2}{p}
%\frac{p-2}{p}
 < \beta<
\frac{3}{2}
  %\frac{p-1}{p}
  ,
\Ee\unhide
%then there exists a unique solution $F(t,x,v)$ to (\ref{Boltzmann}) for all $t\geq$. 
%
%\Be
%\sup_{t\geq 0} e^{\lambda t}\| w f(t) \|_\infty \lesssim_\O \| w f_0 \|_\infty.
%\Ee
%
then there exists a unique global-in-time solution $(f, \phi_f)$ to (\ref{2fVPB}), (\ref{smallfphi}), (\ref{diffusef}) such that $F(t)= \mu+ \sqrt{\mu} f(t) \geq 0$. Moreover there exists $\lambda_{\infty} > 0$ such that 
\Be\begin{split}\label{main_Linfty}
 \sup_{ t \geq0}e^{\lambda_{\infty} t} \| w_\vartheta f(t)\|_{L^\infty(\bar{\O} \times \R^3)}+ 
 \sup_{ t \geq0}e^{\lambda_{\infty} t} \| \phi_f(t)  \|_{C^{2}(\O)}  \lesssim 1,
 %\bigg\|w\frac{F_{0} -  \mu}{\sqrt{\mu }}\bigg\|_{L^\infty(\bar{\O} \times \R^3)}.% +  \| \bar{\rho} - M_{0}\|_{L^{\infty}(\O)} + \| \Theta-\Theta_{0}\|_{L^{\infty}(\O)},
\end{split}\Ee
and, for some $C>0$,
\Be\label{W1p_main}
 \| w_{\tilde{\vartheta}} \alpha_{f, \e }^{\beta } \nabla_{x,v} f(t)  \|_{L^{ p} ( {\O} \times \R^3)} 
% + \big\| \phi_f(t) \big\|_{W^{3, p}( 
%\O )} 
 \lesssim e^{Ct} \ \ \text{for all } t \geq 0
,
\Ee
%where $1-\frac{2}{p}
%\frac{p-2}{p}
 %< \beta <
%\frac{3}{2}$. 
%Here the distance function $\alpha_{f,\e}(t,x,v)$ is defined in (\ref{weight}).
and, for $0< \delta= \delta(p,\beta) \ll1$,
\Be\label{nabla_v f_31}
\| \nabla_v f (t) \|_{L^3_x (\O) L^{1+\delta }_v (\R^3)} \lesssim_t 1  \ \ \text{for all } \  t\geq 0.
\Ee

Furthermore, if $(f, \phi_f)$ and $(g, \phi_g)$ are both solutions to (\ref{2fVPB}), (\ref{smallfphi}), (\ref{diffusef})
%in (\ref{perturbation}) to (\ref{Boltzmann_E}) with (\ref{Field}), (\ref{Poisson}), (\ref{phi_BC}), and (\ref{diffuse_BC}) 
then 
\Be\label{stability_1+}
\| f(t) - g(t) \|_{L^{1+\delta} (\O \times \R^3)} \lesssim_t \| f(0) - g(0) \|_{L^{1+\delta} (\O \times \R^3)} \ \ \text{for all } \  t\geq 0.
\Ee 

\end{theorem}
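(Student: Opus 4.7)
The plan is to follow the $L^2$–$L^\infty$ bootstrap strategy of \cite{VPBKim}, adapted to the two-species system \eqref{systemf}, and to propagate regularity in the $W^{1,p}$-scale weighted by the new transport-invariant distance $\alpha_{f,\e}$. First I would set up a sequence of linear problems: given $(f^{n-1},\phi^{n-1})$, define $\phi^n$ from Poisson using $f^{n-1}$, and solve the linear Vlasov–Boltzmann equation
\begin{equation}\notag
\p_t f^n+v\cdot\nabla_x f^n - q\nabla\phi^{n-1}\cdot\nabla_v f^n + q\tfrac{v}{2}\cdot\nabla\phi^{n-1}f^n + L f^n = \Gamma(f^{n-1},f^{n-1}) - q_1 v\cdot\nabla\phi^{n-1}\sqrt\mu
\end{equation}
with the diffuse BC \eqref{diffusef}. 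The key is to obtain uniform-in-$n$ estimates in (i) $L^\infty$ with exponential decay and (ii) the weighted $W^{1,p}$ norm, and then to extract a limit.

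For step (i) I would first prove an $L^2$ decay for the linearized two-species operator. The kernel of $L$ is now five-dimensional (mass of each species, shared momentum, shared energy), and hypocoercivity must use the Poisson coupling: the electric energy $\tfrac12\|\nabla\phi\|_2^2$ combined with the neutral condition \eqref{neutral_condition} controls the difference of macroscopic densities and, together with the collisional dissipation of $\mathbf{I}-\mathbf{P}$, yields $\|f(t)\|_2\lesssim e^{-\lambda t}\|f_0\|_2$. Feeding this into the Duhamel representation along the characteristics \eqref{hamilton_ODE}, iterating the compact $K$-kernel twice to gain a small factor from the non-grazing set, and using the diffuse boundary stochastic cycles to absorb boundary traces gives \eqref{main_Linfty}. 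The two-species structure does not obstruct this since the stochastic cycles for $f_+$ and $f_-$ decouple at the boundary (each specie is independently diffusely reflected).

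For step (ii), differentiation in $(x,v)$ produces the troublesome terms $\nabla\phi\cdot\nabla_v(\nabla f)$ and boundary contributions through the kinetic distance. Here I would exploit the defining property \eqref{alpha_invariant}: multiplying the equation for $\nabla_{x,v}f$ by $\alpha_{f,\e}^\beta$ and integrating along characteristics eliminates all transport derivatives on $\alpha$, so one only needs to bound commutator-type terms. The restriction $3<p<6$ ensures, via Calderón–Zygmund and Morrey, that $\phi\in C^{1,1-3/p}$ with an estimate on $\|\nabla^2\phi\|_p$ controlled by $\|f\|_\infty$; the restriction $1-\frac{2}{p}<\beta<\frac{2}{3}$ is dictated by the Vidav-type estimate for the trace on $\gamma_+$ (needs $\beta>1-2/p$ to integrate the $|n\cdot v|^{-\beta}$ singularity on the outflow) and by the change-of-variable factor when computing the diffuse reflection integral (needs $\beta<2/3$). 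This yields \eqref{W1p_main}, and a further interpolation between the $\alpha^\beta$-weighted $L^p$ bound and the $L^3$ bound \eqref{nabla_v_f_0_bounded} gives the unweighted $L^3_x L^{1+\delta}_v$ estimate \eqref{nabla_v f_31}. Positivity $F\geq 0$ and convergence of the iteration follow from an $L^\infty$ contraction estimate of $f^n-f^{n-1}$ using the uniform bounds.

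Finally, for the stability \eqref{stability_1+}, set $h=f-g$ and subtract the two equations; the worst term is $(\nabla\phi_f-\nabla\phi_g)\cdot\nabla_v f$, which I would estimate by Hölder using $\|\nabla_v f\|_{L^3_xL^{1+\delta}_v}$ from \eqref{nabla_v f_31} and the Poisson estimate $\|\nabla(\phi_f-\phi_g)\|_\infty\lesssim\|h\|_{L^{1+\delta}}$ (possible because $1+\delta$ is close enough to $3/2$ via elliptic regularity in three dimensions), then close by Grönwall. The principal obstacle I anticipate is the $W^{1,p}$ step: unlike \cite{CK} where the sign condition $E\cdot n>0$ allowed the weight \eqref{alphatilde} to control grazing contributions directly, here $\nabla\phi\cdot n=0$, so one must rely entirely on the $\chi$-cutoff mechanism built into \eqref{alphaweight} and the fact that characteristics of \emph{both} species cannot become tangent to $\p\O$ in finite time (Lemma \ref{cannot_graze}); synchronising this non-grazing property simultaneously for the two flows $(X^f_+,V^f_+)$ and $(X^f_-,V^f_-)$, while propagating the regularity of $\phi_f$ that in turn determines these flows, is the delicate self-referential core of the argument.
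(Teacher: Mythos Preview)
Your overall architecture is recognisably the same as the paper's, but several of the load-bearing steps are mis-stated or missing, and at least two would not close as written.

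\textbf{(a) The stability estimate \eqref{stability_1+}.} You propose to bound the worst term $(\nabla\phi_f-\nabla\phi_g)\cdot\nabla_v f$ via $\|\nabla(\phi_f-\phi_g)\|_\infty\lesssim\|h\|_{L^{1+\delta}}$. In three dimensions this elliptic estimate is false: from $-\Delta(\phi_f-\phi_g)=\int\sqrt\mu\,(h_+-h_-)\,dv$ with right-hand side in $L^{1+\delta}_x$ you only get $\nabla(\phi_f-\phi_g)\in W^{1,1+\delta}\hookrightarrow L^{3(1+\delta)/(2-\delta)}_x$, not $L^\infty$. The paper instead uses the H\"older triple $\tfrac{1}{1}=\tfrac{2-\delta}{3(1+\delta)}+\tfrac{1}{3}+\tfrac{\delta}{1+\delta}$ so that
\[
\iint |\nabla\phi_{f-g}\cdot\nabla_v f|\,|h|^\delta \lesssim \|\nabla\phi_{f-g}\|_{L^{3(1+\delta)/(2-\delta)}_x}\|\nabla_v f\|_{L^3_xL^{1+\delta}_v}\|h\|_{L^{1+\delta}}^\delta,
\]
which is exactly why the $L^3_xL^{1+\delta}_v$ bound \eqref{nabla_v f_31} is needed.

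\textbf{(b) Global $C^2$ control of $\phi_f$.} Your outline jumps from the $W^{1,p}$ bound \eqref{W1p_main}, which \emph{grows} like $e^{Ct}$, to the decay statement for $\|\phi_f\|_{C^2}$ in \eqref{main_Linfty}, without explaining how these are reconciled in the continuation argument. The paper's device here is a time-dependent interpolation (Lemma~\ref{lemma_interpolation}): $\|\nabla^2\phi\|_\infty\lesssim e^{D_1\Lambda_0 t}\|\phi\|_{C^{1,1-D_1}}+e^{-D_2\Lambda_0 t}\|\phi\|_{C^{2,D_2}}$. The first factor decays (via \eqref{Morrey} and the $L^\infty$ decay of $f$), the second grows (via Schauder and \eqref{W1p_main}); tuning $\Lambda_0,D_1$ gives net exponential decay of $\|\nabla^2\phi\|_\infty$, which is what keeps the characteristics under control for all $t$. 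Without this step the bootstrap does not close.

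\textbf{(c) Minor but indicative errors.} The null space of $L$ is six-dimensional ($a_+,a_-$, three shared momenta, shared energy), not five; the paper exploits the identity $\phi_f=\varphi_{a_+}-\varphi_{a_-}$ so that testing with $\psi_a$ simultaneously produces $\|a_\pm\|_2^2$ and $\|\nabla\phi_f\|_2^2$. The upper bound $\beta<2/3$ does not come from the diffuse reflection integral but from a Hardy--Littlewood--Sobolev step in the bulk $K$-term estimate (see \eqref{153_1p}), which needs $\tfrac{3\beta}{2}<1$ to make $\alpha^{-3\beta/2}\in L^1_{loc}$ via Proposition~\ref{prop_int_alpha}. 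Finally, convergence of the iteration is not by $L^\infty$ contraction but by $L^{1+\delta}$ Cauchy estimates plus weak-$*$ compactness and velocity averaging to handle the nonlinear terms; positivity is secured by splitting $\Gamma=\Gamma_{\mathrm{gain}}(f^\ell,f^\ell)-\Gamma_{\mathrm{loss}}(f^{\ell+1},f^\ell)$ in the iteration rather than linearising fully.
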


The proof of Theorem \ref{main_existence} devotes a nontrivial extension of the argument of \cite{VPBKim} now for the two species VPB system. One of the major difference here is the $L^2$ coercivity estimate.

We now illustrate the main ideas in the proof of Theorem \ref{main_existence} which largely follows the framework in \cite{VPBKim}. In the energy-type estimate of $\nabla_{x,v}f$ in $\alpha_{f,\e}^\beta$-weighted $L^p$-norm, the operator $v\cdot \nabla_x$ causes a boundary term to be controlled:
$
%\begin{split}
%&\iint_{\O \times \R^3} v\cdot \nabla_x [\alpha_f^\beta \nabla_{x,v} f]  [\alpha_f^\beta \nabla_{x,v} f]^{p-1}
%\\
%= & \  \int_{\p\O} \int_{n \cdot v\geq0} 
%|\alpha_f^\beta \nabla_{x,v} f| ^p
%|n \cdot v|
%\dd u \dd S_x
%-
 \int^t_0 \int_{\p\O} \int_{n \cdot v\leq0} 
|\alpha_{f,\e}^\beta \nabla_{x,v} f| ^p
|n \cdot v|
\dd v \dd S_x\dd s.
%\end{split}
$
%Considering the singularity of (\ref{deriv_singular}) and the fact $\alpha_{f,\e}(t,x,v) = |n(x) \cdot v|$ on $\gamma_-$, this integrand is integrable if
It turns out this integrand is integrable if
\Be\label{beta_lower_intro}
\beta> \frac{p-2}{p}    \ \ \text{so that} \ \   |n \cdot v|^{p \beta - p + 1} \in L^1_{loc}(\R^3).
\Ee
%
%On the other hand in the bulk we have two terms to be controlled:
%\Be
%\int^t_0 \iint_{\O \times \R^3}  \nabla_x^2 \phi_f \nabla_v f \alpha_{f,\e}^{p \beta} |\nabla_{x,v} f|^{p-1},\label{We_need_phi_C2} %\\ \int^t_0\iint_{\O \times \R^3}K  \nabla_{x,v}f  \alpha_f^{p \beta} |\nabla_{x,v} f|^{p-1}.\label{We_need_K}
%\Ee
%and
% \Be\label{We_need_phi_K}
%% \int^t_0 \iint_{\O \times \R^3}  \nabla_x^2 \phi_f \nabla_v f \alpha_f^{p \beta} |\nabla_{x,v} f|^{p-1}+
%\int^t_0\iint_{\O \times \R^3}K  \nabla_{x,v}f  \alpha_{f,\e}^{p \beta} |\nabla_{x,v} f|^{p-1}.
% \Ee 
%To handle (\ref{We_need_phi_C2}) 
On the other hand to control the terms in the bulk we need \textit{a bound of} $\phi_f (t)$ in $C^2_x.$ 
%Unfortunately such estimate is a boarder line case of the well-known Schauder elliptic regularity theory in (\ref{phi_f}) when $\int_{\R^3} f \sqrt{\mu} \dd v$ is merely continuous or bounded. 
A key observation is that
%
%if we have an $\alpha_{f,\e}^\beta$-weighted $L^p$-estimate of $\nabla_{x,v}f$  a priori then 
\Be\label{bound_wp}
\left\|\int_{\R^3}\nabla_x f \sqrt{\mu} \dd v \right\|_{L^p_x(\O)}\lesssim  \sup_{x} \sum_{\iota = \pm } \left\|  \frac{ \sqrt{\mu}}{\alpha_{f,\e,\iota}^{ \beta}} \right\|_{L^{p^*} (\R^3)}    \left\|  \alpha_{f,\e}^\beta  \nabla_x f \right\|_{L^p(\O \times \R^3)}, \ \ \text{for} \ \frac{1}{p}+ \frac{1}{p^*}=1,
\Ee
which leads $C^{2,0+}$-bound of $\phi_f$ by the Morrey inequality for $p>3$ as long as 
\Be\label{alpha_integrable}
\alpha_{f,\e,\iota}^{- \beta p^*} \in L^1_{loc}(\R^3) \ \   \text{for some } \beta p^*> \frac{p-2}{p-1}.
\Ee
%On the other hand, to bound (\ref{We_need_K}), we need $\alpha_f^{- \beta p^*} \in L^1_{loc}$.
The proof of (\ref{alpha_integrable}) can be found in \cite{VPBKim}, where the authors employ a change of variables 
$
v  \mapsto (\xb^f(t,x,v) , \tb^f (t,x,v)),
$
and carefully compute and bound the determinant of the Jacobian matrix to get
\Be\label{alpha_bounded_intro}
\int_{|v| \lesssim 1} \alpha_{f,\e}^{- \beta p^*} \dd v \lesssim \int_{\text{boundary}} \frac{|(x- \xb^f) \cdot n(\xb^f)|^{1- \beta p^*}}{|x-\xb^f|^{3- \beta p^*}} \dd \xb^f
+ \text{good terms}< \infty, 
\Ee
which turns to be bounded as long as $\beta p^*<1$. 

In order to run the $L^2$-$L^\infty$ bootstrap argument we need to prove the $L^2$ coercivity property of the solution $f$ (Proposition \ref{l2coercivity}).  
%And one of the major difference here between the two species VPB system with the one species system is that 
This is one of the major difference from \cite{VPBKim}, as here for the two species VPB system,
the null space of the linear operator $L$ in \eqref{L_decomposition} is a six-dimensional subspace of $L^2_v(\mathbb R^3; \mathbb R^2 )$ spanned by orthonormal vectors
\Be 
\left\{ \begin{bmatrix} \sqrt \mu \\ 0  \end{bmatrix}, \begin{bmatrix} 0  \\ \sqrt \mu \end{bmatrix}, \begin{bmatrix} \frac{v_i}{\sqrt 2 } \sqrt \mu \\ \frac{v_i}{\sqrt 2 } \sqrt \mu   \end{bmatrix}, \begin{bmatrix} \frac{|v|^2 - 3}{2\sqrt 2} \sqrt \mu \\ \frac{|v|^2 - 3}{2\sqrt 2} \sqrt \mu   \end{bmatrix}
 \right\}, \, i = 1,2,3,
\Ee
(see Lemma 1 from \cite{Guo_M} for the proof). And the projection of $f$ onto the null space $N(L)$ can be denoted by
\Be
\mathbf Pf(t,x,v) := \left\{ a_+(t,x) \begin{bmatrix} \sqrt \mu \\ 0  \end{bmatrix} + a_-(t,x) \begin{bmatrix} 0  \\ \sqrt \mu \end{bmatrix} + b(t,x)  \cdot \frac{v}{\sqrt 2 } \begin{bmatrix} \sqrt \mu \\ \sqrt \mu  \end{bmatrix} + c(t,x)  \frac{|v|^2 - 3}{2\sqrt 2}\begin{bmatrix} \sqrt \mu \\ \sqrt \mu  \end{bmatrix}
\right\}.
\Ee
Using the standard $L^2$ energy estimate of the equation, it is well-known (See \cite{Guo_M}) that $L$ is degenerate: $\langle Lf, f \rangle  \gtrsim \| \nu^{1/2} (I - \mathbf P) f \|_{L^2_{(\O \times \mathbb R^3) }}$. Thus it's clear that in order to control the $L^2$ norm of $f(t)$, we need a way to bound the missing $  \| \mathbf P(t) \|_{L^2} $ term.

From there we adopt the ideas from \cite{EGKM} and apply it to our setting (two species system). By using weak formulation of the equation \eqref{systemf}, we properly choose a set of test functions:
		\Be \label{tests}
		\begin{split}
%			\psi_{a_+}  &\equiv  \begin{bmatrix}  (|v|^{2}-\beta_{a} )\sqrt{\mu }v\cdot\nabla_x\varphi _{a_+} \\ 0 \end{bmatrix} ,  
%			\psi_{a_-}  \equiv   \begin{bmatrix} 0, \\  (|v|^{2}-\beta_{a} )\sqrt{\mu }v\cdot\nabla_x\varphi _{a_-} \end{bmatrix},  \\
			\psi_{a}  &\equiv  \begin{bmatrix} - (|v|^{2}-\beta_{a} )\sqrt{\mu }v\cdot\nabla_x\varphi _{a_+} \\  - (|v|^{2}-\beta_{a} )\sqrt{\mu }v\cdot\nabla_x\varphi _{a_-} \end{bmatrix} ,  \\
			\psi^{i,j}_{b,1} &\equiv  \begin{bmatrix} (v_{i}^{2}-\beta_ b)\sqrt{\mu }\partial _{j}\varphi _{b}^{j} \\  (v_{i}^{2}-\beta_ b)\sqrt{\mu }\partial _{j}\varphi _{b}^{j} \end{bmatrix}, \quad i,j=1,2,3,   \\
			\psi^{i,j}_{b,2} &\equiv \begin{bmatrix} |v|^{2}v_{i}v_{j}\sqrt{\mu }\partial _{j}\varphi _{b}^{i}(x) \\ |v|^{2}v_{i}v_{j}\sqrt{\mu }\partial _{j}\varphi _{b}^{i}(x) \end{bmatrix},\quad i\neq j,  \\
			\psi_c &\equiv \begin{bmatrix} (|v|^{2}-\beta_c )\sqrt{\mu }v \cdot \nabla_x \varphi_{c}\\ (|v|^{2}-\beta_c )\sqrt{\mu }v \cdot \nabla_x \varphi_{c} \end{bmatrix},  \\
		\end{split}
		\Ee
		where $\varphi_{a_{\pm}}(t,x)$, $\varphi_{b}(t,x)$, and $\varphi_{c}(t,x)$ solve
		\Be\begin{split}\label{phi_abc}
			- \Delta \varphi_{a_\pm} &= a_\pm (t,x),  \quad \p_{n}\varphi_{a_\pm} \vert_{\p\O} = 0,  \\
			- \Delta \varphi_b^j &= b_j(t,x),   \ \ \varphi^j_b|_{\p\O} =0,\  \text{and}  \ - \Delta \varphi_c  = c(t,x),   \ \ \varphi_c|_{\p\O} =0,
		\end{split}
		\Ee
		and carefully choose $\beta_a=10$, $\beta_b=1$, and $\beta_c= 5$ to satisfy \eqref{defbeta}. Integrating against those test functions $\int_0^t \langle \phi , \eqref{systemf} \rangle $, we can nicely extract the $L^2$ norms of the $N(L)$ projections of $f$: $\| a_\pm(t) \|_{L^2}^2, \| b(t) \|_{L^2}^2, \| c(t) \|_{L^2}^2 $ through the term $ \langle v \cdot \nabla_x f ,\phi \rangle$. And therefore we recover the bound
for the missing $ \| \mathbf Pf(t) \|_{L^2}^2$ term from the $L^2$ energy estimate of $f$.

Finally we use $L^2$-$L^\infty$ bootstrap argument to derive an exponential decay of $f$ in $L^\infty$. The main idea here is to control $f_+$ and $f_-$ separately along their trajectories $(X_+(s), V_+(s)  ) $ and $(X_-(s), V_-(s)  ) $ by using the double Duhamel expansion, and then use change of variables to get the $L^2$ bound. But here as we are working with the two species system, it's important to note that in the process of the double Duhamel expansion, a mix of trajectories would occur \eqref{double_teration_double}. That is if we start with either $\iota = +$ or $-$,  both the $f_+$ and $f_-$ terms would appear in the first Duhamel expansion of $f_\iota$. From there we perform the second Duhamel expansion by expanding $f_+$ along $(X_+(s), V_+(s)  )$, and expanding $f_-$ along $(X_-(s), V_-(s)  ) $. And then we treat them using two different change of variables
\Be
u \mapsto X_+(s^\prime;s,X_\iota(s;t,x,v), u), \quad
		 u \mapsto X_-(s^\prime;s,X_\iota(s;t,x,v), u)
		\Ee
accordingly to get the bound with $\| f_+\|_{L^2} + \|f_- \|_{L^2 }  $ in the bulk. But thanks to the $L^2$ coercivity (Proposition \ref{l2coercivity}) which gives control to the whole $\|f\|_{L^2} $, we can take the sum $\sum_{\iota = \pm } |f_\iota | $ and close the estimates.

%
% derive a positive lower bound of 
%%\Be
%%\det \left(\frac{\p X^f(s;t,x,v) }{\p v}\right)\gtrsim 1 \ \ \ \text{ } s.
%%\Ee
%\Be\label{det>1}
%\det\left(- (t-s) \text{Id}_{3\times 3}  - \int^s_t \int^\tau_t \frac{\p X(\tau^\prime;t,x,v)}{\p v} \nabla_x^2 \phi(\tau^\prime,X(\tau^\prime;t,x,v)) \dd \tau^\prime \dd \tau\right),
%\Ee
%except for a small set of $s$.
%Again as (\ref{Jacobian_intro}) it is crucial to verify (\ref{unif_phi}) and (\ref{linear_growth_intro}) for obtaining a uniform-in-time positive lower bound of (\ref{det>1}). Finally we can close the estimates via proving an exponential growth bound of $\| \alpha_{f,\e}^\beta \nabla_{x,v} f \|_{L^p(\O \times \R^3)}$ from the Gronwall inequality and an exponential decay of $f$ in $L^\infty$ and therefore achieve (\ref{unif_phi}) by Lemma \ref{lemma_interpolation}.
%

\section{preliminary}

In this section, we give some basic estimates of initial-boundary problems of the transport equation in the presence of a time-dependent field $E(t,x)$, and $f$ here is assumed to be a scalar valued function $f(t,x,v): [0,\infty) \times \Omega \times \mathbb R^3 \to \mathbb R $ satisfies
\Be\label{transport_E}
\p_t f + v\cdot \nabla_x f + E \cdot \nabla_v f + \psi f = H,
\Ee
where $H=H(t,x,v)$ and $\psi= \psi(t,x,v)\geq 0$. We assume that $E$ is defined for all $t \in \R$. Throughout this section $(X(s;t,x,v),V(s;t,x,v))$ denotes the characteristic which is determined by (\ref{hamilton_ODE}) with replacing $- \iota \nabla_x \phi_f$ by $E$. 

\begin{lemma}\label{cannot_graze}Assume that $\O$ is convex (\ref{convexity_eta}). 
%Consider the Hamilton system
%\Be\label{}
%\frac{d}{ds} \left[ \begin{matrix}X(s;t,x,v)\\ V(s;t,x,v)\end{matrix} \right] = \left[ \begin{matrix}V(s;t,x,v)\\ 
%E
%(s, X(s;t,x,v))\end{matrix} \right]  \ \ \text{for}   - \infty< s ,  t < \infty.
%\Ee
Suppose that $\sup_t\| E(t) \|_{C^1_x} < \infty$ and 
	\Be\label{nE=0}
	n(x) \cdot E(t,x) =0 \ \ \text{for } x \in \p\O \ \text{and for all  } t.
	\Ee
	Assume $(t,x,v) \in \R_+ \times \bar{\O} \times \R^3$ and $t+1 \geq \tb(t,x,v)$. If $x \in \p\O$ then we further assume that $n(x) \cdot v > 0$. Then we have 
	\Be
	n(\xb(t,x,v)) \cdot \vb(t,x,v) <0.\label{no_graze}
	\Ee
\end{lemma}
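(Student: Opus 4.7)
I plan to argue by contradiction. Fix a $C^3$ defining function $\xi$ of $\O$ (with $\O = \{\xi < 0\}$, $\p \O = \{\xi = 0\}$, and $\nabla \xi$ parallel to the outward unit normal on $\p \O$), chosen so that the parametric convexity \eqref{convexity_eta} is equivalent to $u^T D^2 \xi(x) u \geq c_0 |u|^2$ for every $x \in \p \O$ and every tangent vector $u \in T_x \p \O$; this equivalence follows by differentiating $\xi(\eta_p(\cdot)) \equiv 0$ twice in the parameter. Set $s^* := t - \tb(t,x,v)$ and $A(s) := \xi(X(s;t,x,v))$. By the definition of $\tb$ together with the endpoint hypothesis ($x \in \p \O \Rightarrow n(x)\cdot v > 0$), the trajectory satisfies $X(s) \in \O$ on an interval $(s^*, s^*+\delta)$, so $A(s) < 0 = A(s^*)$ there. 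Consequently $A'(s^*) = |\nabla \xi(\xb)| \, n(\xb) \cdot \vb \leq 0$, and the lemma reduces to ruling out the grazing possibility $n(\xb)\cdot \vb = 0$.

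Under that assumption, differentiating along the characteristic gives
\[
A''(s^*) \ = \ \vb^T D^2\xi(\xb) \, \vb + \nabla \xi(\xb)\cdot E(s^*, \xb).
\]
The second summand vanishes because $\nabla \xi(\xb) \parallel n(\xb)$ together with \eqref{nE=0}. Grazing makes $\vb$ tangent to $\p \O$, so convexity yields $\vb^T D^2\xi(\xb) \, \vb \geq c_0 |\vb|^2$. If $\vb \neq 0$, this forces $A''(s^*) > 0$, so $A$ has a strict local minimum at $s^*$, contradicting $A(s) < 0 = A(s^*)$ for $s \in (s^*, s^* + \delta)$.

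The remaining case $\vb = 0$ requires extra care. If additionally $E(s^*, \xb) = 0$, ODE uniqueness (valid since $E \in C^1_x$) forces $X \equiv \xb$ and $V \equiv 0$, contradicting either $x \in \O$ or $n(x)\cdot v > 0$ at the endpoint. Otherwise $E(s^*, \xb) \neq 0$ is tangent to $\p \O$ by \eqref{nE=0}, and I Taylor-expand $X$ and $\xi$ through fourth order. Time-differentiating the identity $n(y) \cdot E(\tau, y) = 0$ on $\p \O$ gives $n(\xb) \cdot \p_\tau^k E(s^*, \xb) = 0$ for every $k \geq 0$; tangentially differentiating the same identity at $\xb$ in direction $E$ gives $n(\xb) \cdot (\nabla_x E \cdot E)(s^*, \xb) = - E^T D^2\xi(\xb) E / |\nabla \xi(\xb)|$. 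Assembling the expansion, the odd-order contributions to $A(s)$ cancel and what remains is
\[
A(s) \ = \ \tfrac{(s - s^*)^4}{12} \, E(s^*, \xb)^T D^2\xi(\xb) \, E(s^*, \xb) + O((s - s^*)^5) \ > \ 0
\]
by convexity and $E \neq 0$, once again contradicting $A(s) < 0$ on $(s^*, s^* + \delta)$. The main obstacle I expect is exactly this degenerate subcase: it requires using $n \cdot E \equiv 0$ along all of $\p \O$ (not merely at the point $\xb$) to annihilate the normal components of $\p_\tau^k E$ through fourth order, and the bookkeeping must show that the negative tangent-plane offset contribution ($-E^T D^2\xi E /24$ from $\nabla\xi(\xb) \cdot (X(s)-\xb)$) is dominated by the positive quadratic-form contribution ($+E^T D^2\xi E/8$ from $(X(s)-\xb)^T D^2\xi(X(s)-\xb)/2$), leaving the net strictly positive.
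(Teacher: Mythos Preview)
Your approach for the generic case $\vb\neq 0$ is correct and genuinely different from the paper's: you work with the global defining function $A(s)=\xi(X(s))$ and get the contradiction directly from $A''(s^*)>0$, whereas the paper passes to boundary-fitted coordinates $(X_n,X_\parallel,V_n,V_\parallel)$, derives the differential inequality $\tfrac{d}{ds}(X_n+V_n)\lesssim X_n+V_n$ (using convexity together with $|E\cdot n|\le \|E\|_{C^1_x}X_n$, which is \eqref{nE=0} expanded off the boundary), and applies Gronwall to propagate $X_n+V_n\equiv 0$ from $s^*$ all the way to $t$, forcing $x\in\p\O$ with $n(x)\cdot v=0$.

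Your treatment of the degenerate case $\vb=0$, however, has two real gaps. First, when also $E(s^*,\xb)=0$, the uniqueness claim is false: the field $E$ is time-dependent, so the constant $(\xb,0)$ is in general \emph{not} a solution of the characteristic system for $s\neq s^*$, and uniqueness cannot pin $X\equiv\xb$. Second, your fourth-order expansion in the case $E(s^*,\xb)\neq 0$ time-differentiates the boundary identity $n\cdot E=0$ to annihilate the $(s-s^*)^3$ contribution and part of the $(s-s^*)^4$ contribution; this requires $\p_t E$ and $\p_t^2 E$, but the hypothesis $\sup_t\|E(t)\|_{C^1_x}<\infty$ grants no time regularity at all. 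The paper's Gronwall route avoids both problems because it never Taylor-expands in $s$: the field enters only through the pointwise-in-$s$ bound $|E(s,X(s))\cdot n(X_\parallel(s))|\le\|E(s)\|_{C^1_x}X_n(s)$, and the resulting comparison on $X_n+V_n$ handles tangent and zero-velocity exit uniformly. If you want to keep the defining-function viewpoint, replace the finite Taylor expansion by a Gronwall-type argument on a combination such as $|A|+|A'|$.
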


\begin{proof}
The proof is the same as that of Lemma 1 in \cite{VPBKim}. But since we are going to use some of the argument for later purpose, let's present the proof here.
\hide  \textit{Step 1.}  We claim that for all $(t,x) \in [ 0, \infty) \times \bar{\O}$ as $N \rightarrow \infty$
	\Be\label{conv_NN}
	\mathbf{1}_{\tb(t,x,u)< N}
	\mathbf{1}_{n(\xb(t,x,u)) \cdot \vb(t,x,u) < -\frac{1}{N}}
	\nearrow \mathbf{1}_{\tb(t,x,u)< \infty} \ \ \text{almost every } u \in \R^3.
	\Ee
	First we prove that, for fixed $N \in \mathbb{N}$, as $M \rightarrow \infty$
	\Be\label{conv_NM}
	\mathbf{1}_{\tb(t,x,u)< N}
	\mathbf{1}_{n(\xb(t,x,u)) \cdot \vb(t,x,u) < -\frac{1}{M}}
	\nearrow \mathbf{1}_{\tb(t,x,u)< N} \ \ \text{almost every } u \in \R^3.
	\Ee
	Since $\mathbf{1}_{\tb(t,x,u)< N}$ converges to $\mathbf{1}_{\tb(t,x,u)< \infty}$ as $N \rightarrow \infty$ we can apply Cantor's diagonal argument to conclude (\ref{conv_NN}) from (\ref{conv_NM}).\unhide
	%Assume that $\tb(t,x,u)<N$ and $u \neq 0$. If $x \in \p\O$ then we further assume that $n(x) \cdot u > 0$. 
	%If $x \in \p\O$ and $n(x) \cdot v<0$ then $(\xb(t,x,u), \vb(t,x,u)) = (x,u)$ and hence $n(\xb(t,x,v)) \cdot \vb(t,x,v)= n(x) \cdot v <0$, which implies (\ref{no_graze}). 
	
	%We show that if $\tb(t,x,u)<N, u \neq 0$, and $n(x) \cdot u > 0$ if $x \in \p\O$, then we have
	%\Be\label{no_graze}
	%n(\xb(t,x,u)) \cdot \vb(t,x,u)<0,
	%\Ee
	%so that (\ref{conv_NM}) can be proven. 

	\textit{Step 1.} Note that locally we can parametrize the trajectory (see Lemma 15 in \cite{GKTT1} or \cite{KL2} for details). We consider local parametrization (\ref{eta}). We drop the subscript $p$ for the sake of simplicity. If $X(s;t,x,v)$ is near the boundary then we can define $(X_n, X_\parallel)$ to satisfy 
	\Be\label{X_local}
	X(s;t,x,v)  =   \eta (X_\parallel (s;t,x,v)) + X_n(s;t,x,v) [- n(X_\parallel(s;t,x,v))].
	\Ee
	
	\hide $\eta$ near $\p\O$ such that 
	\Be
	\eta : (x,y,z)\in B(0,r_{1})\cap \R^{3}_{+} \mapsto B(p,r_{2})\cap\O,
	\Ee
	where $\eta(0)=p\in\p\O$ and $\eta(x,y,0) \in \p\O$ for some $r_{1}, r_{2} > 0$. Then for $X(s;t,x,v)$, there exist a unique $x_{*}\in B(p,r_{2})\cap \p\O$ such that
	\Be \label{X def}
	|X(s;t,x,v)-x_{*}| \leq \sup_{x\in B(p,r_{2})\cap \p\O} |X(s;t,x,v)-x|,
	\Ee 
	since $\eta$ is bijective. We define 
	\[
	(X_{\parallel},0) = \eta^{-1}(x_{*}) \quad \text{and} \quad  X_{n} := |X(s;t,x,v)-x_{*}|.
	\]\unhide
	For the normal velocity %, we denote $(V_{n}, V_{\parallel})$. For $V_{n}$
	we define
	\Be\label{def_V_n}
	V_{n}(s;t,x,v) := V(s;t,x,v)\cdot [-n(X_{\parallel}(s;t,x,v))].
	\Ee
	We define $V_{\parallel}$ tangential to the level set $  \big( \eta(X_{\parallel}) + X_{n}(-n(X_{\parallel})) \big)$ for fixed $X_{n}$. Note that 
	\[
	\frac{\p   \big( \eta(x_{\parallel} ) + x_{n}(-n(x_{\parallel})) \big) }{\p {x_{\parallel, i}}}  \perp n(x_{\parallel}) \ \  \text{for} \ i=1,2.
	\]
	% and above two independent vectors span tangential plane at $X(s;t,x,v)$. 
	We define $(V_{\parallel,1}, V_{\parallel,2})$ as 
	\Be\label{def_V_parallel}
	V_{\parallel, i} :=\Big( V - V_{n}[-n(X_{\parallel})]\Big) \cdot   \Big( 
	\frac{\p   \eta(X_{\parallel} )}{\p {x_{\parallel, i}} } + X_{n}\Big[- \frac{\p n(X_{\parallel})}{\p x_{\parallel, i}}\Big] \Big)  .
	\Ee
	\hide \Be\notag
	\begin{split} 
		&\sum_{i=1,2} V_{\parallel,i} \p_{i} \big( \eta(X_{\parallel},0) + X_{n}(-n(X_{\parallel})) \big)  \\
		&= \nabla_{\parallel} \big( \eta(X_{\parallel},0) + X_{n}(-n(X_{\parallel})) \big) V_{\parallel} = V - V_{n}(-n(X_{\parallel}))  .
	\end{split}
	\Ee\unhide
	%\Be
	%	V_{\parallel} := V(s;t,x,v)\cdot \frac{ \nabla_{\parallel} \big( \eta(X_{\parallel}) + X_{n}(-n(X_{\parallel})) \big) }{ \big| \nabla_{\parallel} \big( \eta(X_{\parallel}) + X_{n}(-n(X_{\parallel})) \big) \big|^{2} } .
	%\Ee
	Therefore we obtain
	\Be \label{V_local}
	%X(s;t,x,u) &=& \eta (X_\parallel) + X_n [- n(X_\parallel)],\\
	V(s;t,x,u)   = V_n [- n(X_\parallel)] + V_\parallel \cdot \nabla_{x_\parallel} \eta (X_\parallel ) 
	- X_n V_\parallel \cdot \nabla_{x_\parallel} n (X_\parallel).
	\Ee  
	
	Directly we have 
	\Be \begin{split}\notag
		\dot{X}(s;t,x,u) &=\dot{X}_{\parallel} \cdot \nabla_{x_\parallel}\eta (X_\parallel) + \dot{X}_n [- n(X_\parallel)] - X_{n}\dot{X}_{\parallel}  \cdot \nabla_{x_\parallel} n(X_{\parallel}) .%\\
		%&= V_n [- n(X_\parallel)] + \nabla_{x_\parallel} \eta (X_\parallel)V_\parallel 
		%- X_n \nabla_{x_\parallel} n (X_\parallel)  V_\parallel .
	\end{split}\Ee
	Comparing coefficients of normal and tangential components, we obtain that 
	\Be\label{dot_Xn_Vn}
	\dot{X}_{n}(s;t,x,v) = V_{n}(s;t,x,v) , \ \  \dot{X}_{\parallel}(s;t,x,v) = V_{\parallel}(s;t,x,v).
	\Ee
	
	On the other hand, from (\ref{V_local}),
	\Be \begin{split} \label{Vdotn}
		\dot{V} (s) &=  \dot{V}_{n} [-n(X_{\parallel})] - V_{n} \nabla_{x_\parallel} n(X_{\parallel})\dot{X}_{\parallel} + V_{\parallel}\cdot\nabla^{2}_{x_\parallel}\eta(X_{\parallel}) \dot{X}_{\parallel} + \dot{V}_{\parallel} \cdot \nabla_{x_\parallel}\eta(X_{\parallel})  \\
		&\quad - \dot{X}_{n}\nabla_{x_\parallel} n(X_{\parallel})V_{\parallel} - X_{n}\nabla_{x_\parallel} n(X_{\parallel})\dot{V}_{\parallel} - X_{n} V_{\parallel}\cdot\nabla_{x_\parallel}^{2}n(X_{\parallel})\dot{X}_{\parallel}. 
	\end{split}\Ee
	From $(\ref{Vdotn})\cdot [-n(X_{\parallel})]$, (\ref{dot_Xn_Vn}), and $\dot{V}=E$, we obtain that 
	\Be \begin{split}\label{hamilton_ODE_perp}
		%\dot{X}_n (s) &= V_{n},  \\
		\dot{V}_n (s) 
		&=  [V_\parallel (s)\cdot \nabla^2 \eta (X_\parallel(s)) \cdot V_\parallel(s) ] \cdot n(X_\parallel(s))  
		+  E (s , X (s ) ) \cdot [-n(X_\parallel(s)) ] \\
		&\quad - X_n (s) [V_\parallel(s) \cdot \nabla^2 n (X_\parallel(s)) \cdot V_\parallel(s)]  \cdot n(X_\parallel(s)) .
	\end{split}\Ee
	
	\vspace{4pt}
	
	\textit{Step 2.} We prove (\ref{no_graze}) by the contradiction argument. Assume we choose $(t,x,v)$ satisfying the assumptions of Lemma \ref{cannot_graze}. Let us assume
	\Be\label{initial_00}
	X_n (t-\tb;t,x,v)  +V_n (t-\tb;t,x,v) =0.
	\Ee
	
	First we choose $0<\e \ll 1$ such that $X_n(s;t,x,v) \ll 1$ and 
	\Be\label{Vn_positive}
	V_n (s;t,x,v) \geq0 \ \ \text{for} \  t- \tb(t,x,v)<s<t-\tb(t,x,v) + \e.
	\Ee
	The sole case that we cannot choose such $\e>0$ is when there exists $0< \delta\ll1$ such that $V_n(s;t,x,v)<0$ for all $s \in ( t-\tb(t,x,v), t-\tb(t,x,v) + \delta)$. But from (\ref{dot_Xn_Vn}) for $s \in ( t-\tb(t,x,v), t-\tb(t,x,v) + \delta)$,
	$$
	0 \leq X_n(s;t,x,v)   =  X_n(t-\tb(t,x,v);t,x,v)  +  \int^s_{t-\tb(t,x,v)} V_n (\tau; t,x,v) \dd \tau <  0.$$
	
	Now with $\e>0$ in (\ref{Vn_positive}), temporarily we define that $t_* := t-\tb(t,x,v) + \e$, $x_* = X(t-\tb(t,x,v) + \e; t,x,v),$ and $v_* = V(t-\tb(t,x,v) + \e; t,x,v)$. Then $(X_n(s;t,x,v), X_\parallel (s;t,x,v)) = (X_n(s; t_*, x_*, v_*), X_\parallel (s; t_*, x_*, v_*))$ and \\$(V_n(s;t,x,v), V_\parallel (s;t,x,v)) = (V_n(s; t_*, x_*, v_*), V_\parallel (s; t_*, x_*, v_*))$.
	\hide
	From (\ref{hamilton_ODE}), for $(X_n(s), X_\parallel (s)) = (X_n(s; t_*, x_*, u_*), X_\parallel (s; t_*, x_*, u_*))$ and $(V_n(s), V_\parallel (s)) = (V_n(s; t_*, x_*, u_*), V_\parallel (s; t_*, x_*, u_*))$, 
	\Be \begin{split}\label{hamilton_ODE_perp}
		\dot{X}_n (s)  =& V_n (s),\\
		\dot{V}_n (s)  =& 
		[V_\parallel (s)\cdot \nabla^2 \eta (X_\parallel(s)) \cdot V_\parallel(s) ] \cdot n(X_\parallel(s))  
		+   \nabla\phi (s , X (s ) ) \cdot n(X_\parallel(s))  \\
		&+   X_n (s) [V_\parallel(s) \cdot \nabla^2 n (X_\parallel(s)) \cdot V_\parallel(s)]  \cdot n(X_\parallel(s)) .
	\end{split}\Ee\unhide
	
	Now we consider the RHS of (\ref{hamilton_ODE_perp}). From (\ref{convexity_eta}), the first term $[V_\parallel(s) \cdot \nabla^2 \eta (X_\parallel(s)) \cdot V_\parallel(s) ] \cdot n(X_\parallel(s))\leq 0$. By an expansion and (\ref{nE=0}) we can bound the second term 
	\Be\begin{split}\label{expansion_E}
		&E (s , X(s )) \cdot n(X_\parallel(s ) )\\
		=&   \ E (s , X_n(s ), X_\parallel(s ) ) \cdot n(X_\parallel (s )) \\
		=& \  E (s , 0, X_\parallel(s ) ) \cdot n(X_\parallel (s )) 
		+ \| E (s) \|_{C_x^1}  O( |X_n(s )| )\\
		=&  \ \| E (s) \|_{C_x^1}  O( |X_n(s )| ).
	\end{split}\Ee
	From (\ref{hamilton_ODE}) and assumptions of Lemma \ref{cannot_graze},
	$$|V_\parallel (s;t,x,v )|\leq |v| + \tb(t,x,v) \| E \|_\infty  \leq |v| +  (1+t) \| E \|_\infty.$$ 
	Combining the above results with (\ref{hamilton_ODE_perp}), we conclude that 
	\Be\notag
	\dot{V}_n(s;t_*,x_*,v_*) \lesssim  ( |v| + (1+ t) \| E\|_\infty  )^2X_n(s;t_*,x_*,v_*) ,
	\Ee
	and hence from (\ref{dot_Xn_Vn}) for $t-\tb(t,x,v)\leq s \leq t_*$,
	\Be\label{ODE_X+V}
	\begin{split}
		&\frac{d}{ds} [X_n (s;t_*,x_*,v_* )  +V_n (s ;t_*,x_*,v_*) ]\\
		\lesssim & \ ( |v| + (1+ t) \| E\|_\infty  )^2  [X_n (s;t_*,x_*,v_* )  +V_n (s;t_*,x_*,v_* ) ].\end{split}
	\Ee
	By the Gronwall inequality and (\ref{initial_00}), for $t-\tb(t,x,v)\leq s \leq t_*$,
	\Be \begin{split}\notag
		& [X_n (s;t_*,x_*,v_*)  +V_n (s;t_*,x_*,v_*) ]  \\
		\lesssim & \   [X_n (t-\tb(t,x,u))  +V_n (t-\tb(t,x,u)) ] e^{C \e ( |v| + (1+ t) \| E\|_\infty  )^2) }\\
		=&  \ 0.
	\end{split}\Ee 
	
	From (\ref{Vn_positive}) we conclude that $X_n (s;t,x,v) \equiv 0$ and $V_n (s;t,x,v) \equiv 0$ for all $s \in [t-\tb(t,x,u), t-\tb(t,x,u) + \e]$. We can continue this argument successively to deduce that $X_n (s;t,x,v) \equiv 0$ and $V_n (s;t,x,v) \equiv 0$ for all $s \in [t-\tb(t,x,v), t]$. Therefore $x_n =0 = v_n$ which implies $x \in \p\O$ and $n(x) \cdot v =0$. This is a contradiction since we chose $n(x) \cdot v>0$ if $x \in \p\O$.\end{proof}

\begin{lemma} \label{le:ukai} 
	Assume %we have $\phi(t,x) \in C^1_x$ such 
	that, for $\Lambda_1>0$, $\delta_1>0$, 
	\begin{equation}
	\label{decay_E}
	\sup_{t \geq 0} e^{\Lambda_1 t} \| E(t) \|_{\infty} \leq \delta_1 \ll1 .
	\end{equation}
	We also assume $\frac{1}{C}\langle v\rangle \leq \psi(t,x,v)\leq C \langle v\rangle$ for some $C>0$. For $\varepsilon$ satisfying
	\Be\label{lower_bound_e}
	\e> \frac{2\delta_1}{  \Lambda_1}>0,
	\Ee
	there exists a
	constant $C_{\delta_1, \Lambda_1 ,\Omega }>0$ %, which does not depend on $t$ such that %if
	%	\Be
	%	\begin{split}
	%		 h \in L^{1}_{loc} ([0,\infty);L^{1}(\Omega \times \mathbb{R}^{3})),  \\
	%		 \partial _{t}h+v\cdot
	%		\nabla _{x}h +E \cdot \nabla_v h + \psi h \in L^{1}_{loc}( [0, \infty);L^{1}(\Omega \times \mathbb{R}^{3})),
	%\end{split}
	%\Ee
	%then we have, 
	such that, for all $t\geq 0$,
	\begin{equation} \label{case:decay}
	\begin{split}
	&\int_{0}^{t}\int_{\gamma _{+}\setminus \gamma _{+}^{\varepsilon }}|h|\mathrm{%
		d}\gamma \mathrm{d}s\\
	&\leq C_{\delta_1, \Lambda_1 ,\Omega }\left\{  
	||h_{0}||_{1}+\int_{0}^{t}  \| h(s)\|_{1}+\big{\Vert} %\lbrack
	[
	\partial
	_{t}+v\cdot \nabla _{x}+E \cdot \nabla_v + \psi ]h(s)\big{\Vert} _{1} \mathrm{d}s  \right\}.\end{split}
	\end{equation}
	If $E \in L^\infty$ does not decay but
	\begin{equation} \label{nondecay}
	\| E (t) \|_{\infty} \leq \delta,
	\end{equation}
	then for $\varepsilon > 0$,
	\begin{equation} \label{case:nondecay} 
	\begin{split}
	&\int_{0}^{t}\int_{\gamma _{+}\setminus \gamma _{+}^{\varepsilon }}|h|\mathrm{%
		d}\gamma \mathrm{d}s\\
	&\leq C_{\delta, t, \varepsilon, \Omega }\left\{ \
	\|h_{0}\|_{1}+\int_{0}^{t}  \| h(s)\|_{1}+\big{\Vert} %\lbrack
	[
	\partial
	_{t}+v\cdot \nabla _{x}+E\cdot \nabla_v + \psi ]h(s)\big{\Vert} _{1} \mathrm{d}s\ \right\},
	\end{split}
	\end{equation}
	where we have time-dependent constant $C_{\delta,t, \varepsilon, \O}>0$. \hide Furthermore, for any $(t,x,v)$ in $[0, \infty)\times \Omega \times \mathbb{R}^{3}$
	the function $
	h
	(t+s^{\prime },
	X(t+s^\prime;t,x,v)
	,
	V(t+s^\prime;t,x,v))
	$ is absolutely continuous in
	$s^{\prime } \in (-\min \{t_{\mathbf{b}}(t,x,v),t\}, t_{%
		\mathbf{f}}(t,x, v) )$.\unhide
	% blue
\end{lemma}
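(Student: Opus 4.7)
The statement is a Ukai-type trace bound for the transport equation with field, in the spirit of Lemma~2 of \cite{VPBKim}; the strategy is to exchange the boundary flux on the LHS of \eqref{case:decay}--\eqref{case:nondecay} for a bulk integral by following characteristics backwards and then applying the area formula.

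I would first integrate \eqref{transport_E} along the characteristic $\sigma\mapsto(X,V)(\sigma;s,x,v)$: for any $r\ge 0$ such that this trajectory remains in $\bar\O$ throughout $[s-r,s]$,
\[
|h(s,x,v)|\le |h(s-r,X(s-r),V(s-r))|+\int_0^r |H(s-\tau,X,V)|\,d\tau,
\]
where $H:=[\p_t+v\cdot\nabla_x+E\cdot\nabla_v+\psi]h$ and the factor $e^{-\int_{s-r}^s\psi}\le 1$ is dropped. To use this I need a uniform backward survival time $r_0>0$ on the non-grazing set $\gamma_+\setminus\gamma_+^\varepsilon$ (where $|n(x)\cdot v|\ge\varepsilon$ and $|v|\le 1/\varepsilon$). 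Integrating the ODE for $V$ and invoking \eqref{decay_E},
\[
|V(\sigma;s,x,v)-v|\le \int_\sigma^s\|E(\tau)\|_\infty\,d\tau\le \delta_1/\Lambda_1,
\]
so the lower bound \eqref{lower_bound_e} combined with $|n(x)\cdot v|\ge \varepsilon$ forces $n\cdot V(\sigma)\ge\varepsilon/2$ throughout the backward trajectory. Together with the convexity \eqref{convexity_eta} of $\O$ (used as in \eqref{hamilton_ODE_perp}--\eqref{expansion_E} of Lemma~\ref{cannot_graze}), this yields $X_n(\sigma;s,x,v)\gtrsim \varepsilon(s-\sigma)$ and hence a uniform $r_0=r_0(\varepsilon,\delta_1,\Lambda_1,\O)>0$. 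Under the nondecay hypothesis \eqref{nondecay} the same argument produces only a $t$-dependent $r_0(\delta,\varepsilon,t,\O)$, which is precisely what accounts for the time-dependent constant in \eqref{case:nondecay}.

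Next I would average the Duhamel bound over $r\in[0,r_0]$, multiply by $|n(x)\cdot v|$, and integrate over $[0,t]\times(\gamma_+\setminus\gamma_+^\varepsilon)$. The crucial step is the change of variables
\[
\Phi:(s,r,x,v)\longmapsto (s-r,X(s-r;s,x,v),V(s-r;s,x,v)),
\]
whose Jacobian at $r=0$ collapses via a $6\times 6$ block expansion to $\det(-v,\p_1\eta,\p_2\eta)\cdot\det I_3$, yielding $|J|=|n(x)\cdot v|$ times the surface element $|\p_1\eta\times\p_2\eta|$. For $r\le r_0$ and $\|E\|_\infty$ small (or $t$ bounded), $|J|$ stays between $\tfrac12|n\cdot v|$ and $2|n\cdot v|$, and the no-regrazing property makes $\Phi$ of bounded multiplicity. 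The area formula then bounds the RHS by $Cr_0\{\|h_0\|_1+\int_0^t(\|h(s')\|_1+\|H(s')\|_1)\,ds'\}$ (the $\|h_0\|_1$ contribution coming from the portion $s-r<0$), and dividing through by $r_0$ produces \eqref{case:decay} and \eqref{case:nondecay}.

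The main obstacle is the uniform Jacobian/multiplicity analysis of $\Phi$ on $\gamma_+\setminus\gamma_+^\varepsilon$: verifying $|J|\ge \tfrac12|n\cdot v|$ uniformly in $r\in[0,r_0]$ and $\Phi$ essentially injective there. In the decay case, the quantitative input $\varepsilon>2\delta_1/\Lambda_1$ from \eqref{lower_bound_e} is exactly what prevents the cumulative bending of trajectories over $[0,\infty)$ from ruining uniform injectivity; in the nondecay case that control fails as $t\to\infty$, which is why the constant $C$ in \eqref{case:nondecay} is allowed to depend on $t$.
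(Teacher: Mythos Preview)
The paper does not actually prove this lemma; it simply refers the reader to Lemma~6 of \cite{VPBKim}. Your sketch is a faithful reconstruction of the standard Ukai-type trace argument that lemma uses: integrate (\ref{transport_E}) backward along characteristics, secure a uniform backward survival time $r_0$ on $\gamma_+\setminus\gamma_+^\varepsilon$, average in $r\in[0,r_0]$, and apply the change of variables $(s,r,x,v)\mapsto(s-r,X(s-r),V(s-r))$ whose Jacobian on $\gamma_+$ is $\sim|n(x)\cdot v|$. This is the same approach as the reference.

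One small inaccuracy: your explanation for the $t$-dependent constant in the nondecay case (\ref{case:nondecay}) is not quite right. Under (\ref{nondecay}) one has $|V(\sigma;s,x,v)-v|\le \delta\,|s-\sigma|$, so for $r\le r_0$ with $r_0\sim\min(\varepsilon/\delta,\varepsilon\cdot\mathrm{diam}\,\O)$ the velocity still satisfies $n\cdot V\ge\varepsilon/2$; thus $r_0$ itself does \emph{not} depend on $t$. The difference with the decay case is rather that (\ref{lower_bound_e}) guarantees $\int_0^\infty\|E(\tau)\|_\infty\,d\tau\le\delta_1/\Lambda_1<\varepsilon/2$, which gives the velocity bound uniformly over the whole half-line, whereas in the nondecay case one must also worry about the Jacobian and injectivity of $\Phi$ degrading over long times (the derivatives $\nabla_{x,v}(X,V)$ grow like $e^{C\delta|t-s|}$ when $\nabla E$ is merely bounded). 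In practice the $t$-dependence in (\ref{case:nondecay}) is harmless for the local-in-time arguments where the lemma is invoked.
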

\begin{proof}
See the proof of Lemma 6 in \cite{VPBKim}.
\end{proof}

\begin{lemma} [Green's identity]  \label{lem_Green}
	For $p \in [1, \infty)$, we assume $f \in L^{p}_{loc} (\R_+ \times \Omega \times \mathbb R^3 )$ satisfies
	\Be\notag
	%\begin{split}
	\partial_t f + v \cdot \nabla_x f + E \cdot \nabla_v f  \in L^p_{loc} (\R_+; L^p (\Omega \times \mathbb R^3 ) ),  \ \
	f   \in L^p_{loc} (\R_+; L^p (\gamma_+ ) ).	
	%\end{split}
	\Ee
	Then $f \in C^0_{loc}( \R_+ ; L^p (\Omega \times \mathbb R^3 ) )$ and $f  \in L^p_{loc} (\R_+; L^p (\gamma_-) )$. 
	
	Moreover
	\Be \label{Greedid}
	\begin{split}
		\| f(T)\|_p ^p + \int_0^{T} |f|_{ p,+ } ^p &= \| f(0) \|_p^p + \int_0^{T} |f|_{ p,- }^p  \\
		&  +p \int_0^{T} \iint _{\Omega \times \mathbb R^3 }   \{ \partial_t + v \cdot \nabla_x f + E \cdot \nabla_v f \} |f|^{p-2} f.
	\end{split} 
	\Ee
\end{lemma}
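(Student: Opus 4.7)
The plan is to establish Green's identity by the standard renormalization-by-mollification scheme for transport equations with a divergence-free phase-space vector field, combined with a limiting argument justified by the a priori integrability assumptions. The key structural fact is that the transport vector field $(v, E(t,x))$ on $\Omega \times \mathbb{R}^3$ is divergence-free, since $\nabla_x \cdot v = 0$ and $\nabla_v \cdot E(t,x) = 0$. Consequently, for any smooth $g$, one has the pointwise identity
\Be \notag
\partial_t |g|^p + v \cdot \nabla_x |g|^p + E \cdot \nabla_v |g|^p = p |g|^{p-2} g \big\{ \partial_t g + v \cdot \nabla_x g + E \cdot \nabla_v g \big\},
\Ee
and integration over $[0,T] \times \Omega \times \mathbb{R}^3$ produces boundary terms only at $t=0$, $t=T$, and on $\partial\Omega \times \mathbb{R}^3$, with the latter contributing $\int (n(x) \cdot v) |g|^p \, dS_x \, dv$, which splits into the $\gamma_+$ and $\gamma_-$ pieces by the sign of $n(x) \cdot v$.

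The first step is to introduce a double mollification $f^{\e} := \rho^{\e}_x *_x (\rho^{\e}_v *_v f)$ in the interior and extend $f$ near $\partial \Omega$ using the local chart \eqref{eta} in order to regularize across the boundary. The mollified $f^{\e}$ then satisfies a transport equation with a perturbed right-hand side
\Be \notag
\partial_t f^{\e} + v \cdot \nabla_x f^{\e} + E \cdot \nabla_v f^{\e} = [\partial_t f + v \cdot \nabla_x f + E \cdot \nabla_v f]^{\e} + r^{\e},
\Ee
where $r^{\e}$ is the DiPerna--Lions commutator $[v \cdot \nabla_x + E \cdot \nabla_v, \rho^{\e} *] f$. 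Since $E(t,\cdot)$ is Lipschitz in $x$ (uniform on compact time intervals, coming from the Poisson equation with bounded right-hand side), the commutator lemma gives $r^{\e} \to 0$ in $L^p_{loc}$. Applying the chain rule to $|f^{\e}|^p$ and integrating over $[0,T] \times \Omega \times \mathbb{R}^3$ yields the identity \eqref{Greedid} with $f$ replaced by $f^{\e}$.

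The second step is to pass to the limit $\e \to 0$. The bulk integral and the time-slice terms converge by $L^p_{loc}$ convergence of $f^{\e}$ and of $[\partial_t + v\cdot \nabla_x + E\cdot \nabla_v] f^{\e}$. For the outgoing boundary piece, the hypothesis $f \in L^p_{loc}(\mathbb{R}_+; L^p(\gamma_+))$ and a standard trace argument (using tangential mollification only near $\partial \Omega$, then normal mollification) give convergence of $|f^{\e}|_{p,+}$ to $|f|_{p,+}$. The incoming piece is then extracted from the identity itself: the remaining quantity $\int_0^T |f|_{p,-}^p$ is defined as whatever makes Green's identity balance, and the resulting bound gives $f \in L^p_{loc}(\mathbb{R}_+; L^p(\gamma_-))$. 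Finally, continuity $f \in C^0_{loc}(\mathbb{R}_+; L^p(\Omega \times \mathbb{R}^3))$ follows from applying the identity on $[t_1,t_2]$ and observing the right-hand side is continuous in $t_1,t_2$.

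The main obstacle is the regularization near $\partial \Omega$: a naive mollification does not see the boundary and would corrupt the trace. The standard remedy, already used in \cite{VPBKim} for the one-species analogue, is to work in the local coordinates \eqref{eta}, perform mollification only in tangential and velocity variables first to get a well-defined trace, and then mollify normally after extending $f$ by zero (or by reflection) across $\partial \Omega$. A secondary technical point is ensuring the commutator estimate goes through despite the unboundedness of $v$; this is handled by a cutoff in $v$ and using the linear growth of the drift $v$ together with the fact that $p$-th moments are controlled locally. All of this parallels the proof of the corresponding identity in \cite{VPBKim}, so the argument proceeds by direct adaptation.
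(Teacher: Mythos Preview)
Your proposal is correct and aligns with the paper's own treatment: the paper does not give an independent proof but simply cites Lemma~5 of \cite{VPBKim}, and your sketch is precisely the DiPerna--Lions mollification-plus-commutator argument that underlies that result, as you yourself note in the final sentence. There is nothing to add.
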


\begin{proof}
See the proof of Lemma 5 in \cite{VPBKim}.
\end{proof}

\begin{proposition} \label{inflowprop}
	Assume the compatibility condition
	\Be\label{compatibility_inflow}
	f_0(x,v) = g(0,x,v)\quad  \text{for} \quad (x,v) \in \gamma_- .
	\Ee
	Let $p \in [1, \infty )$ and $0 < \vartheta < 1/4$. \hide
	
	Define $D^\epsilon : = \{ (x,v) \in \Omega \times \mathbb R^3 : (\xb, \vb) \in \gamma_- \setminus \gamma_-^\epsilon \} .$ \unhide
	Assume
	\Be \begin{split}\label{assumption_inflow}
		\nabla_x f_0 , \nabla_v f_0    \in L^p (\Omega \times \mathbb R^3 ),
		\\ \nabla_{x,v} \tb  \partial_t g, \nabla_{x,v} \vb \nabla_v g, \nabla_{x,v} \xb \partial_{\xb } g , \nabla_{x,v} \tb \psi g \in L^p ( [0, T] \times \gamma_- ) ,
		\\ \hide\frac{ n }{ n \cdot \vb } \Big \{ \partial_t g + \sum_{i =1}^2 ( \vb \cdot \tau_i ) \partial_{\tau_i } g  + \nu g - H +  E \cdot \nabla_v g \Big \} 
		\\ +  \frac{ n \cdot \iint \partial_x E }{ n \cdot \vb } \Big \{ \partial_t g + \sum_{i =1}^2 ( \vb \cdot \tau_i) \partial_{ \tau_i } g   - \nu g + H \Big \} \in L^p ([0, T ] \times \gamma_- ),
		\\ \unhide  \nabla_x H ,\nabla_v H \in L^p  ([0, T ] \times \Omega \times \mathbb R^3 ),
		\\ % e^{- \vartheta |v |^2 } \partial_t  \psi, 
		e^{- \vartheta |v|^2 } \nabla_x \psi, e^{-\vartheta |v|^2 } \nabla_v \psi \in L^p ([ 0, T ] \times \Omega \times \mathbb R^3 ),
		\\ e^{\vartheta |v|^2 } f_0 \in L^\infty ( \Omega \times \mathbb R^3 ) , e^{\vartheta |v|^2 } g \in L^\infty ( [0, T] \times \gamma_- ),
		\\ e^{\vartheta |v|^2 } H \in L^\infty ([0, T] \times \Omega \times \mathbb R^3 ).
	\end{split} \Ee
	Then for any $T > 0$, there exists a unique solution $f$ to (\ref{transport_E}) such that $\nabla_{x,v} f\in  C^0 ([0,T] ; L^p (\O \times \R^3)) \cap L^1((0,T); L^p (\gamma))$.
	
	\begin{proof}
See the proof of Proposition 2 in \cite{VPBKim}.
\end{proof}
	
	\hide$f$ to 
	\[ \{ \partial_t + v \cdot \nabla_x + E \cdot \nabla_v + \nu \} f = H \]
	such that $f, \partial_t, \nabla_x f ,\nabla_v f \in C^0( [ 0, T ] ; L^p (\O \times \R^3) ) $ and their traces satisfy
	\[ \begin{split}
	\partial_t f|_{\gamma_-%\setminus \gamma_- ^\epsilon 
	}  = \partial_t g, \nabla_v f |_{\gamma%_- \setminus \gamma_-^\epsilon
	} = \nabla_v g, \nabla_x f|_{\gamma_- %\setminus \gamma_-^\epsilon
	} = \nabla_x g, \quad \text{on} \quad \gamma_- %\setminus \gamma_-^\epsilon
	,
	\\ \nabla_x f(0,x,v) = \nabla_x f_0, \nabla_v f(0,x,v) = \nabla_v f_0, \quad \text{in} \quad\O \times \R^3,
	\\ \partial_t f(0,x,v) = \partial_t f_0, \quad \text{in} \quad \O \times \R^3.
	\end{split} \]
	
	Moreover
	\Be\label{inflow_energy}
	\begin{split}
		&\| \nabla_{x,v} f(t) \|_p^p + \int^t_0 | \nabla_{x,v} f |_{+,p}^p \\
		= & \  \| \nabla_{x,v} f _0\|_p^p + \int^t_0 | \nabla_{x,v} g |_{-,p}^p\\
		&+ p \int^t_0 \iint_{\O \times \R^3} \{ \nabla_{ x,v} H - [\nabla_{x,v} v] \nabla_x f - [\nabla_{x,v} \psi] f \}
		|\nabla_{ x,v} f|^{p-2} \nabla_{ x,v}f.
	\end{split}
	\Ee

	\unhide\end{proposition}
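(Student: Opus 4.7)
The plan is to construct $f$ by the method of characteristics and then establish the $L^p$-regularity of $\nabla_{x,v} f$ via Green's identity (Lemma \ref{lem_Green}) applied to the differentiated equation, with the inflow trace of the gradient understood by direct Duhamel differentiation. Since $E \in C^1_x$, the characteristic flow $(X,V)$ (solving the analogue of (\ref{hamilton_ODE}) with $-\iota \nabla_x \phi_f$ replaced by $E$) is $C^1$ in $(t,x,v)$ wherever the trajectory stays in $\Omega$, and convexity together with Lemma \ref{cannot_graze} controls the behavior of the backward exit triple $(\tb,\xb,\vb)$ away from the grazing set $\gamma_0$.

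First I would define $f$ by the Duhamel formula along characteristics: on $\{\tb(t,x,v) < t\}$,
\[
f(t,x,v) = g(t-\tb, \xb, \vb)\, e^{-\int_{t-\tb}^t \psi(\tau,X,V)\,d\tau} + \int_{t-\tb}^t H(s,X(s),V(s))\, e^{-\int_s^t \psi\,d\tau}\, ds,
\]
with the analogous formula using $f_0(X(0),V(0))$ on $\{\tb \geq t\}$. The compatibility condition (\ref{compatibility_inflow}) glues the two cases across $\{\tb = t\}$. Uniqueness follows from Lemma \ref{lem_Green} with $p=2$ applied to the difference of two putative solutions, together with Gronwall's inequality.

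Next, formally differentiating (\ref{transport_E}) and using $\partial_{v_i} E\equiv 0$, the gradient satisfies transport equations
\[
(\partial_t + v\cdot\nabla_x + E\cdot\nabla_v + \psi)(\partial_{x_i} f) = \partial_{x_i} H - (\partial_{x_i} E)\cdot\nabla_v f - (\partial_{x_i}\psi)\,f,
\]
and $(\partial_t + v\cdot\nabla_x + E\cdot\nabla_v + \psi)(\partial_{v_i} f) = \partial_{v_i} H - \partial_{x_i} f - (\partial_{v_i}\psi)\,f$. Using the characteristic representation together with the weighted $L^\infty$ assumptions on $f_0,g,H$, one has $e^{\vartheta|v|^2}f \in L^\infty_{t,x,v}$, so the hypothesis $e^{-\vartheta|v|^2}\nabla_{x,v}\psi \in L^p$ makes $(\nabla \psi)f\in L^p$, and the remaining sources are in $L^1_t L^p_{x,v}$ by the assumed control on $\nabla_{x,v}H$ and $\nabla_v f$. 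Applying Lemma \ref{lem_Green} to $\nabla_{x,v}f$ then yields
\[
\|\nabla_{x,v}f(t)\|_p^p + \int_0^t |\nabla_{x,v}f|_{p,+}^p \ls \|\nabla_{x,v}f_0\|_p^p + \int_0^t |\nabla_{x,v}f|_{p,-}^p + \int_0^t \big(\|\nabla_{x,v}H\|_p^p + \|\nabla_{x,v}f\|_p^p\big)\,ds,
\]
and Gronwall closes the estimate once the inflow trace is controlled.

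The main obstacle is bounding $\int_0^t|\nabla_{x,v}f|_{p,-}^p$: tangential derivatives of $f$ on $\gamma_-$ agree with those of $g$, but the transverse derivative requires the Jacobian of the exit map. Differentiating the Duhamel formula directly produces
\[
\nabla_{x,v}f\big|_{\gamma_-} = \nabla_{x,v}\tb\,\big[\partial_t g - \psi g + H\big]_{(t-\tb,\xb,\vb)} + \nabla_{x,v}\xb\cdot\partial_{\xb}g + \nabla_{x,v}\vb\cdot\nabla_v g + (\text{smooth bulk terms}),
\]
where $\nabla_{x,v}\tb,\nabla_{x,v}\xb,\nabla_{x,v}\vb$ can be singular near $\gamma_0$. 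Assumption (\ref{assumption_inflow}) is exactly the packaging needed: each product appearing above lies in $L^p([0,T]\times\gamma_-)$. To legitimize this differentiation rigorously I would mollify the data and the field, truncate away from the grazing set with a $\gamma_+^\varepsilon$-cutoff in the spirit of Lemma \ref{le:ukai}, obtain uniform-in-approximation estimates, and pass to the limit; strong time-continuity in $L^p$ then follows from the absolute continuity of both sides of Green's identity combined with weak $L^p$-continuity.
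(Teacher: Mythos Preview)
The paper does not give its own proof of this proposition; it simply refers the reader to Proposition~2 of \cite{VPBKim}. Your outline---construct $f$ by the Duhamel formula along characteristics, differentiate the transport equation, apply Green's identity (Lemma~\ref{lem_Green}) to $\nabla_{x,v}f$, control the inflow trace by differentiating the Duhamel representation and invoking the packaged hypotheses on $\nabla_{x,v}\tb,\nabla_{x,v}\xb,\nabla_{x,v}\vb$, and justify everything by approximation away from $\gamma_0$---is precisely the standard route and is what the argument in \cite{VPBKim} does.

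Two minor remarks. First, in your displayed expression for $\nabla_{x,v}f|_{\gamma_-}$ the signs inside the bracket are slightly off: a careful differentiation of the Duhamel formula gives the coefficient of $\nabla_{x,v}\tb$ as $[-\partial_t g-\psi g+H]$ (or its negative, depending on how you factor), not $[\partial_t g-\psi g+H]$; this is cosmetic and does not affect the argument. Second, your claim that ``each product appearing above lies in $L^p([0,T]\times\gamma_-)$'' by (\ref{assumption_inflow}) is correct in spirit, but note that the $\nabla_{x,v}\tb\cdot H$ contribution is not listed explicitly among the four products in (\ref{assumption_inflow}); it is implicitly absorbed (compare the hidden full hypothesis involving $\frac{n}{n\cdot\vb}\{\partial_t g+\cdots+\nu g-H+E\cdot\nabla_v g\}$ from \cite{VPBKim}), so when writing this out you should either state that additional hypothesis or observe how it follows from the listed ones together with the $L^\infty$ control on $e^{\vartheta|v|^2}H$.
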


\begin{lemma}\label{est_X_v}
	Assume $E(t,x) \in C^1_x$ is given and (\ref{decay_E})
	%\begin{equation}
	%\label{decay_phi}
	%\sup_{t \geq 0} e^{\lambda_1 t} \| E (t) \|_{\infty} \leq \delta_1 \ll1 ,
	%\end{equation}
	and
	\Be\label{decay_phi_2}
	\sup_{t\geq 0}e^{\Lambda_2 t} \| \nabla_x E(t)\|_{\infty} \leq \delta_2 \ll1,
	\Ee% }. 	
	with $\Lambda_2+ \delta_2+ \e \leq 1$.%that for $\lambda_{C^2}>0$ and $\delta_{C^2} \geq 0$
	%\Be\notag
	%\sup_{t \geq 0} e^{\lambda_{C^2} t} \| \nabla^2_x \phi (t) \|_\infty \leq \delta_{C^2}
	%\Ee
	%and 
	\hide
	\Be\begin{split}\notag
		\frac{d}{ds} 
		\begin{bmatrix}
			X(s;t,x,v)\\
			V(s;t,x,v)
		\end{bmatrix} = 
		\begin{bmatrix}
			V(s;t,x,v)\\
			- \nabla_x \phi (s,X(s;t,x,v))
		\end{bmatrix}.
	\end{split}\Ee\unhide
	Then there exists $C>0$ such that 
	%for $\iota = + $ or $-$,
	\Be \label{result_X_v}
	%\begin{bmatrix}
		|\nabla_vX(s;t,x,v)|%\\
	%	|\nabla_vV(s;t,x,v)|
	%\end{bmatrix}
	 \leq Ce^{C \delta_2 (\Lambda_{2})^{-2}   } %\begin{bmatrix}
		|t-s|% \\ 1
	%\end{bmatrix}
	, \ \ \text{for all}   \ \max(t - \tb(t,x,v), - \e )\leq s \leq t.
	\Ee  
\end{lemma}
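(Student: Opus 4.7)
The plan is to differentiate the characteristic ODE (\ref{hamilton_ODE}) in $v$, rewrite the resulting matrix equation as an integral identity, and close the argument via a backwards Gronwall inequality whose exponent is controlled by the decay built into (\ref{decay_phi_2}).

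First I would introduce $A(s) := \nabla_v X(s;t,x,v)$ and $B(s) := \nabla_v V(s;t,x,v)$. Differentiating the Hamilton system (with $E$ in place of $-\iota \nabla_x \phi_f$) in $v$ gives the linear matrix system
\[
\dot A(s) = B(s), \qquad \dot B(s) = \nabla_x E(s,X(s))\,A(s),
\]
with terminal data $A(t)=0$ and $B(t)=I$. Solving $\dot B = \nabla_x E \cdot A$ backwards from $t$ and substituting into $\dot A = B$, a Fubini step gives the integral identity
\[
A(s) = -(t-s)I + \int_s^t (\sigma-s)\,\nabla_x E(\sigma, X(\sigma))\,A(\sigma)\,d\sigma,
\]
so $|A(s)| \le (t-s) + \int_s^t (\sigma-s)\,\|\nabla_x E(\sigma)\|_\infty\,|A(\sigma)|\,d\sigma$.

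Next I would divide through by $(t-s)$ and set $\psi(s) := |A(s)|/(t-s)$. The key simplification is
\[
\frac{(\sigma-s)(t-\sigma)}{t-s} \le \sigma - s \le \sigma + \e,
\]
where the first inequality uses $(t-\sigma)/(t-s) \le 1$ for $\sigma \in [s,t]$, and the second uses $s \ge -\e$. This turns the bound into
\[
\psi(s) \le 1 + \int_s^t (\sigma+\e)\,\|\nabla_x E(\sigma)\|_\infty\,\psi(\sigma)\,d\sigma,
\]
whose kernel no longer depends on $s$. Standard backwards Gronwall then yields
\[
\psi(s) \le \exp\Big(\int_{-\e}^t (\sigma+\e)\,\|\nabla_x E(\sigma)\|_\infty\,d\sigma\Big).
\]
Plugging in (\ref{decay_phi_2}) for $\sigma \ge 0$ and the extension (\ref{negative_t_extension}) for $\sigma \in [-\e,0]$, and using $\Lambda_2 + \e \le 1$, the exponent is bounded by $C\delta_2/\Lambda_2^2$, uniformly in $t$, giving $|A(s)| \le e^{C\delta_2/\Lambda_2^2}(t-s)$ as claimed.

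The hard part is arranging the Gronwall so the resulting constant is \emph{uniform in $t$}. A naive bound $(\sigma - s) \le (t-s)$ forces the exponent to absorb $\int_0^t (t-\sigma)e^{-\Lambda_2 \sigma}\,d\sigma \sim t/\Lambda_2$, producing a constant that grows linearly in $t$ and which would be useless for later applications. The sharper factorization $(\sigma-s)(t-\sigma)/(t-s) \le \sigma-s$ is what reduces the exponent to the $t$-uniform integral $\int_0^\infty \sigma\,e^{-\Lambda_2 \sigma}\,d\sigma = 1/\Lambda_2^2$, which is exactly the source of the $\delta_2/\Lambda_2^2$ in the final bound.
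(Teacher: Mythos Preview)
Your proof is correct and follows essentially the same route as the argument in \cite{VPBKim} that the paper cites: derive the integral identity $A(s) = -(t-s)I + \int_s^t(\sigma-s)\,\nabla_x E(\sigma,X(\sigma))\,A(\sigma)\,d\sigma$ by differentiating the Hamilton system in $v$, then close via a Gronwall inequality whose exponent is rendered $t$-uniform by the exponential decay (\ref{decay_phi_2}). Your observation that the factorization $(\sigma-s)(t-\sigma)/(t-s)\le \sigma-s\le \sigma+\e$ is what produces the uniform $\delta_2/\Lambda_2^2$ exponent (rather than one growing in $t$) is exactly the point; the paper uses this same structure when it later reproduces the calculation around (\ref{X_v})--(\ref{double_int_phi}).
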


\begin{proof}
See the proof of Lemma 9 in \cite{VPBKim}.
\end{proof}

\section{$L^\infty$ estimate}

Let $\iota = + $ or $-$ as in \eqref{iota}. We set $F^{0}_\iota(t,x,v) \equiv \mu$ and $\phi^0\equiv 0$. We then apply proposition \ref{inflowprop} for $\ell = 0,1,2...$ to get a sequence $F^\ell$ such that 
		\Be
		\begin{split}\label{Fell}
			\p_t F_\iota^{\ell+1} + v\cdot \nabla_x F_\iota^{\ell+1} -\iota \nabla {\phi^\ell} \cdot \nabla_v F_\iota^{\ell+1}= Q_{\text{gain}} (F_\iota^\ell, F_\iota^\ell + F^\ell_{-\iota}) - Q_{\text{loss}} (F_\iota^{\ell+1} ,F_\iota^\ell + F^\ell_{-\iota} ),\\
			- \Delta \phi^\ell= \int_{\R^3} F_+^\ell - F_-^\ell \dd v, \ \ \int_\O \phi^\ell \dd x =0, \ \ \frac{\p \phi^\ell}{\p n}\Big|_{\p\O }=0,
		\end{split}\Ee
		and, on $(x,v) \in \gamma_-$,
		\Be\label{F_ell_BC}
		F_\iota^{\ell+1}(t,x,v) = c_\mu \mu \int_{n(x) \cdot v>0} F_\iota^\ell(t,x,u) \{n(x) \cdot u\} \dd u,
		\Ee
		and $F_\iota^{\ell+1} (0,x,v) = {F_0}_\iota (x,v)$.

				Then $f^{\ell+1}$ solves 
		\Be\label{fell_local}
		\begin{split}
			&[\p_t + v\cdot\nabla_x -  q \nabla_x \phi^{\ell} \cdot \nabla_v + \nu +  q \frac{v}{2} \cdot\nabla \phi^\ell  
			] f^{\ell+1}\\
			=&  \  K f^{\ell}-  q_1 v\cdot \nabla \phi^\ell  \sqrt{\mu} + \Gamma_{\text{gain}} (f^\ell, f^\ell) -  \Gamma_{\text{loss}} (f^{\ell+1},f^\ell),\\
			& - \Delta \phi^\ell = \int_{\R^3}( f^\ell_+ - f^\ell_-) \sqrt{\mu} \dd v , \ \ \int_\O \phi^\ell \dd x =0, \ \ \frac{\p \phi^\ell}{\p n}\Big|_{\p\O }=0,
		\end{split}
		\Ee 
		
		Denote the characteristics $(X_\iota^\ell, V_\iota^\ell)$ which solves
		\Be\label{XV_ell}
		\begin{split}
			\frac{d}{ds}X_\iota^\ell(s;t,x,v) &= V_\iota^\ell(s;t,x,v),\\
			\frac{d}{ds} V_\iota^\ell(s;t,x,v) &= {-\iota} \nabla \phi^\ell (s, X_\iota^\ell(s;t,x,v)).\end{split}
		\Ee
		
		\Be
		\begin{split}\label{cycle}
			t^{\ell}_{1,\iota} (t,x,v)&:= 
			\sup\{ s<t:
			X_\iota^\ell(s;t,x,v) \in \p\O
			\}
			,\\
			x^\ell_{1,\iota} (t,x,v ) &:= X_\iota^\ell (t^{\ell}_{1,\iota} (t,x,v);t,x,v)
			,\\
			t^{\ell-1}_{2,\iota} (t,x,v, v_1) &:= \sup\{ s<t^\ell_{1,\iota}:
			X^{\ell-1}_\iota (s;t^{\ell}_{1,\iota} (t,x,v),x^{\ell}_{1,\iota} (t,x,v),v_1) \in \p\O
			\}
			,\\
			x^{\ell-1}_{2,\iota} (t,x,v, v_1 ) &:= X^{\ell-1}_\iota (t^{\ell-1}_{2,\iota} (t,x,v,v_1);t^\ell_{1,\iota}(t,x,v),x^\ell_{1,\iota}(t,x,v),v_1)
			,\\
		\end{split}
		\Ee
		and inductively 
		\Be
		\begin{split}\label{cycle_ell}
			%&\vdots\\
			& t^{\ell-(k-1)}_{k,\iota} (t,x,v, v_1, \cdots, v_{k-1})  \\
			&:=     \sup\big\{ s<t^{\ell-(k-2)}_{k-1,\iota} 
			:
			X_\iota^{\ell-(k-1)}(s;t_{k-1,\iota}^{\ell - (k-2)}  , x_{k-1,\iota}^{\ell - (k-2)} ,v_{k-1}) \in \p\O
			\big\},\\
			& x_{k,\iota}^{\ell - (k-1)} (t,x,v, v_1, \cdots, v_{k-1})\\
			&:= X_\iota^{\ell- (k-1)} (t_{k,\iota}^{\ell- (k-1)}; t_{k-1,\iota}^{\ell- (k-2)},x_{k-1,\iota}^{\ell- (k-2)} , v_{k-1})
			.
		\end{split}
		\Ee
		Here,
		\Be\begin{split}\notag
			t^{\ell-(i-1)}_{i,\iota } &:= t^{\ell-(i-1)}_{i,\iota }
			(t,x,v,v_1, \cdots, v_{i-1}),\\
			x^{\ell-(i-1)}_{i,\iota} &:= x^{\ell-(i-1)}_{i ,\iota}
			(t,x,v,v_1, \cdots, v_{i-1}).\end{split}\Ee

%	\textit{Step 2-1. } We prove, from \textit{Step 2-1} to \textit{Step 2-3}, that by choosing $M \ll1$ and $T^*=T^*(M)\ll 1$ and 
\begin{proposition}
 Assume that for sufficiently small $M>0$, such that 
 \Be \label{wf0small}
 \|w_\vartheta f_0 \|_\infty < \frac{M}{2},
 \Ee
  then there exits $T^*(M) > 0 $ such that 
		\Be\label{uniform_h_ell}
		\sup_{0 \leq t \leq T^*} \max_{\ell} \| w_{\vartheta}  f^\ell (t) \|_\infty \leq M.
		\Ee
		\end{proposition}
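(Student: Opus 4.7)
The plan is to argue by induction on $\ell$. The base case $\ell=0$ is immediate since $f^0 \equiv 0$, so $\|w_\vartheta f^0\|_\infty = 0 \leq M$. Assume inductively that $\sup_{0 \leq s \leq T^*} \|w_\vartheta f^\ell(s)\|_\infty \leq M$ holds, and write the equation (\ref{fell_local}) component-wise for $f^{\ell+1}_\iota$. The key observation is that $\phi^\ell$ is determined entirely by $f^\ell$ through the Poisson problem with zero Neumann data, so by standard elliptic $L^p$-theory and the Morrey embedding we get $\|\nabla \phi^\ell(s)\|_\infty \lesssim \|\rho^\ell(s)\|_{L^p_x}\lesssim \|w_\vartheta f^\ell(s)\|_\infty \lesssim M$. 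Consequently the characteristic system (\ref{XV_ell}) defines a well-behaved flow on $[0,T^*]$ with bounds on $V^\ell_\iota(s;t,x,v)$ that are uniform in $\ell$.

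Next, I would rewrite (\ref{fell_local}) as a linear transport equation for $h^{\ell+1}_\iota := w_\vartheta f^{\ell+1}_\iota$ along the characteristics $(X^\ell_\iota, V^\ell_\iota)$. The coefficient of the zeroth-order term on the LHS becomes $\nu(v) + \iota \tfrac{v}{2}\cdot \nabla\phi^\ell$, and I would absorb the $\Gamma_{\text{loss}}(f^{\ell+1},f^\ell)$ contribution on the RHS into it as well; since both $|\tfrac{v}{2}\cdot\nabla\phi^\ell|$ and the loss coefficient are bounded by $CM\langle v\rangle$ while $\nu(v) \sim \langle v\rangle$, for $M$ small the effective damping $\tilde\nu^\ell$ satisfies $\tilde\nu^\ell \geq \tfrac{1}{2}\nu$. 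Duhamel along $(X^\ell_\iota,V^\ell_\iota)$ then gives, for $s_* := \max\{0, t-\tb^\ell_\iota(t,x,v)\}$,
\begin{equation*}
h^{\ell+1}_\iota(t,x,v) = e^{-\int_{s_*}^t \tilde\nu^\ell d\tau}\, h^{\ell+1}_\iota(s_*, X^\ell_\iota(s_*), V^\ell_\iota(s_*)) + \int_{s_*}^t e^{-\int_s^t \tilde\nu^\ell d\tau}\, w_\vartheta\, \mathcal{S}^\ell(s) \, ds,
\end{equation*}
where $\mathcal{S}^\ell$ collects $Kf^\ell$, $-\iota q_1 v\cdot\nabla\phi^\ell\sqrt\mu$, and $\Gamma_{\text{gain}}(f^\ell,f^\ell)_\iota$.

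For each source term I would invoke standard estimates: $|w_\vartheta K f^\ell| \lesssim \|w_\vartheta f^\ell\|_\infty$ using (\ref{k_estimate}) together with the weight comparison $w_\vartheta(v)/w_\vartheta(u)\cdot e^{-c|v-u|^2}$ being integrable for $\vartheta<1/4$; next $|w_\vartheta v\cdot\nabla\phi^\ell\sqrt\mu|\lesssim \|\nabla\phi^\ell\|_\infty \lesssim M$; and $|w_\vartheta \Gamma_{\text{gain}}(f^\ell,f^\ell)_\iota|\lesssim \nu(v)\|w_\vartheta f^\ell\|_\infty^2 \lesssim \nu(v) M^2$. The last bound combines with $e^{-\int_s^t \tilde\nu^\ell}$ so that $\int_{s_*}^t e^{-\int_s^t \tilde\nu^\ell d\tau}\nu(v)ds \lesssim 1$, yielding an $O(M^2 + M)$ contribution from the bulk source. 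For the first term, if $s_* = 0$ then $|h^{\ell+1}_\iota(0,\cdot,\cdot)|\leq \|w_\vartheta f_0\|_\infty < M/2$ by (\ref{wf0small}). If $s_*>0$, the trace hits $\gamma_-$ and the diffuse boundary condition from (\ref{F_ell_BC}) reduces to
\begin{equation*}
f^{\ell+1}_\iota|_{\gamma_-}(t,x,v) = c_\mu \sqrt{\mu(v)} \int_{n(x)\cdot u>0} \sqrt{\mu(u)}\, f^\ell_\iota(t,x,u)\, (n(x)\cdot u)\, du,
\end{equation*}
so $|w_\vartheta f^{\ell+1}_\iota|_{\gamma_-}| \leq C\|w_\vartheta f^\ell\|_\infty \cdot w_\vartheta(v)\sqrt{\mu(v)}\int w_\vartheta(u)^{-1}\sqrt{\mu(u)}|n\cdot u|\,du \leq \tilde C M$ with $\tilde C$ independent of $\ell$ when $\vartheta < 1/4$ (the integral converges and the outer factor is bounded).

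Collecting everything, I obtain an inequality of the schematic form
\begin{equation*}
\|w_\vartheta f^{\ell+1}(t)\|_\infty \leq \|w_\vartheta f_0\|_\infty + \tilde C M \cdot \mathbf{1}_{t>\tb^\ell} \cdot (1- e^{-ct})  + C(M + M^2) t, \qquad 0\leq t \leq T^*.
\end{equation*}
The delicate point is that the boundary contribution $\tilde C M$ is \emph{not} small for arbitrary $t$; however, the exponential factor $e^{-\int_{s_*}^t \tilde\nu^\ell d\tau}\lesssim e^{-c\langle v\rangle t}$ in front of the boundary trace, combined with splitting into $|v|\leq N$ (where the prefactor is $O(e^{-cNt})$-times the ratio of weights and one chooses $N$ large with $t$ small enough that $e^{-c N t}\tilde C<1/4$) and $|v|>N$ (where $w_\vartheta(v)\sqrt{\mu(v)}e^{-cN^2\vartheta} $ is small), this is the standard small-time argument to beat the boundary factor below $1$. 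This is the main obstacle, but it is a routine adaptation of the Boltzmann $L^\infty$ construction. After doing this, I take $T^*=T^*(M)$ so small that all terms besides the initial datum total at most $M/2$, which with (\ref{wf0small}) yields $\|w_\vartheta f^{\ell+1}(t)\|_\infty \leq M$ and closes the induction.
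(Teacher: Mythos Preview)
Your proposal has a genuine gap at the boundary step. The exponential factor in front of the boundary trace is $e^{-\int_{s_*}^{t}\tilde\nu^\ell\,d\tau}$ with $s_*=t-\tb^\ell$, so the exponent is of size $\nu\,\tb^\ell$, \emph{not} $\nu\, t$. For points $(x,v)$ starting near the boundary with $v$ pointing outward, $\tb^\ell(t,x,v)$ can be arbitrarily small regardless of how small (or large) $t$ is, so $e^{-\int_{s_*}^{t}\tilde\nu^\ell}\approx 1$ there. Your proposed splitting $|v|\le N$ versus $|v|>N$ cannot fix this: for $|v|\le N$ you still only get $e^{-cN\tb^\ell}$, which is close to $1$ when $\tb^\ell$ is small. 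Consequently the boundary contribution is genuinely $\tilde C\,M$ with a constant $\tilde C=c_\mu\,w_\vartheta(v_b)\sqrt{\mu(v_b)}\int_{n\cdot u>0}w_\vartheta(u)^{-1}\sqrt{\mu(u)}(n\cdot u)\,du$ that is \emph{not} below $1$ for all $v_b$, and the induction does not close after a single Duhamel step.

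The paper handles this by iterating the Duhamel formula through $k$ successive boundary reflections (the stochastic cycles (\ref{cycle})--(\ref{cycle_ell})): each reflection sends $h^{\ell-j}$ to $h^{\ell-j-1}$ via (\ref{bdry_local}), and after $k$ bounces one lands on $h^{\ell-k+1}$. The crucial estimate is (\ref{small_k}), which shows that the $\dd\Sigma_{k-1}^{k-1}$-measure of trajectories that have \emph{not} reached time $0$ after $k$ bounces is $\lesssim(1/2)^{k/5}$. Those that have reached time $0$ contribute $\|w_\vartheta f_0\|_\infty<M/2$, while the remaining set contributes only $(1/2)^{k/5}\sup_j\|h^{\ell-j}\|_\infty\le (1/2)^{k/5}M$. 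Together with the source terms (which are $O(t\,M)+O(M^2)$), choosing $k$ large and then $T^*$ small closes the bound at $M$. Your single-bounce argument is missing exactly this mechanism; the ``standard small-time argument'' you allude to does not exist in this form for the diffuse boundary because nothing forces $\tb^\ell$ to be bounded below.
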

\begin{proof}
		 We define 
		\Be
		\label{h}
		h^{\ell} (t,x,v) : = w_{\vartheta}(v) f^\ell (t,x,v).
		\Ee
		By an induction hypothesis we assume 
		\Be \label{hypAssump}
			\sup_{0 \leq t \leq T^*}\| h^{\ell}(t) \|_\infty\leq M.
		\Ee
		Then $h^{\ell+1}$ solves 
		\Be\label{hell_local}
		\begin{split}
			&[\p_t + v\cdot\nabla_x - q \nabla_x \phi^{\ell} \cdot \nabla_v + \nu + q \frac{v}{2} \cdot\nabla \phi^\ell  
			- q \frac{\nabla_x \phi^\ell \cdot \nabla_v w_{\vartheta}}{w_{\vartheta}}
			] h^{\ell+1}\\
			=&  \  K_{w_{\vartheta}} h^{\ell}- q_1 v\cdot \nabla \phi^\ell w_{\vartheta} \sqrt{\mu} + w_{\vartheta}\Gamma_{\text{gain}} (\frac{h^\ell}{w_{\vartheta}}, \frac{h^\ell}{w_{\vartheta}}) -   w_{\vartheta}\Gamma_{\text{loss}} (\frac{h^{\ell+1}}{w_{\vartheta}}, \frac{h^\ell}{w_{\vartheta}}),\\
			& - \Delta \phi^\ell = \int_{\R^3} (f_+^\ell- f_-^\ell) \sqrt{\mu} \dd v , \ \ \int_\O \phi^\ell \dd x =0, \ \ \frac{\p \phi^\ell}{\p n}\Big|_{\p\O }=0,
		\end{split}
		\Ee 
		where $K_{w_{\vartheta} }( \ \cdot \ )=w_{\vartheta}  K( \frac {1}{w_{\vartheta} } \ \cdot)$. The boundary condition is
		\Be\label{bdry_local}
		h_\iota^{\ell+1} |_{\gamma_-} = c_\mu w_{\vartheta} \sqrt{\mu} \int_{n \cdot u>0}h_\iota^\ell w_{\vartheta}^{-1}\sqrt{\mu}   \{n \cdot u\} \dd u. 
		\Ee

		\hide
		The perturbation of $F^{\ell+1} = \mu + \sqrt{\mu} f^{\ell+1}$ solves
		\Be\label{fell_local}
		\begin{split}
			&[\p_t + v\cdot\nabla_x - \nabla_x \phi^{\ell} \cdot \nabla_v + \nu + \frac{v}{2} \cdot\nabla \phi^\ell  ] f^{\ell+1}\\
			=&  \ Kf^{\ell}- v\cdot \nabla \phi^\ell \sqrt{\mu} + \Gamma_{\text{gain}} (f^\ell, f^\ell) -   \Gamma_{\text{loss}} (f^\ell, f^{\ell+1}),\\
			& - \Delta \phi^\ell = \int_{\R^3} f^\ell \sqrt{\mu} \dd v , \ \ \int_\O \phi^\ell \dd x =0, \ \ \frac{\p \phi^\ell}{\p n}\Big|_{\p\O }=0,
		\end{split}
		\Ee 
		and, on $(x,v) \in \gamma_-$,
		\Be\label{bdry_local}
		f^{\ell+1}(t,x,v) = c_\mu \sqrt{\mu} \int_{n(x) \cdot v>0} f^\ell (t,x,u)\sqrt{\mu(u)} \{n(x) \cdot u\} \dd u,
		\Ee
		and $f^{\ell+1} (0,x,v) = f_0 (x,v)$.

		\unhide
		
		We define
		\Be\label{nu_ell} \begin{split}
		\nu^\ell (t,x,v) &: = \begin{bmatrix} \nu^\ell_+ (t,x,v) & 0 \\ 0 & \nu^\ell_- (t,x,v) \end{bmatrix}  
		: = \begin{bmatrix} \nu(v) + \frac{v}{2} \cdot \nabla \phi^\ell - \frac{\nabla_x \phi^\ell \cdot \nabla_v w_{\vartheta}}{w_{\vartheta}}  & 0 
		\\ 0 & \nu(v) - \frac{v}{2} \cdot \nabla \phi^\ell + \frac{\nabla_x \phi^\ell \cdot \nabla_v w_{\vartheta}}{w_{\vartheta}} )
		\end{bmatrix}.
		\end{split}  \Ee
		From (\ref{hypAssump}), for $M\ll 1$, $\| \nabla \phi^\ell \|_\infty \ll1$ and hence
		   \\
		%{\color{blue} Let us assume
		%\[
		%\| f^{\ell} (t) \|_p^p
		%+
		%\| \alpha_{f^{\ell },\e}^\beta \nabla_{x,v} f^{\ell}(t) \|_{p}^p
		%+ 
		%\int^t_0  | \alpha_{f^{\ell},\e}^\beta \nabla_{x,v} f^{\ell}(t) |_{p,+}^p \ll 1.
		%\]
		%Then we have  } %blue
		\Be\label{lower_nu_l}
		\nu_\iota^\ell (t,x,v)\geq \frac{4 \nu_0}{5} \langle v\rangle  .
		\Ee
		Let 
		\Be\label{g_ell}
		g^\ell : = -   q_1v\cdot \nabla \phi^\ell \sqrt{\mu} +  \Gamma_{\mathrm{gain}} (\frac{h^\ell}{w_{\vartheta}}, \frac{h^\ell}{w_{\vartheta}})  :=\begin{bmatrix} g^\ell_+ \\ g^\ell_- \end{bmatrix}.
		\Ee
		Note that 
		\Be\label{bound_g_ell}
		| w_{\vartheta} g^\ell |  \lesssim  \| h^\ell \|_\infty +  \langle v\rangle  \| h^\ell \|_\infty^ 2,
		\Ee
		where we have used
		\Be\label{est_Gamma}
		|w_{\vartheta}\Gamma(\frac{h}{w_{\vartheta}}, \frac{h}{w_{\vartheta}})| \lesssim \langle v\rangle \| h \|_\infty^2.
		\Ee

		Consider the trajectories of $h_+^{\ell+1}$ and $h_-^{\ell+1}$ separately from (\ref{hell_local}),
		\Be\begin{split}\label{duhamel_local}
			&\frac{d}{ds} \Big\{ e^{- \int^t_s \nu_\iota^\ell (\tau, X_\iota^\ell (\tau), V_\iota^\ell (\tau)) \dd \tau}
			h_\iota^{\ell+1} (s, X_\iota^\ell(s;t,x,v), V_\iota^\ell (s;t,x,v)) 
			\Big\}\\
			=& \ e^{- \int^t_s \nu_\iota^\ell (\tau, X_\iota^\ell (\tau), V_\iota^\ell (\tau)) \dd \tau}
			\Big\{K_{w_{\vartheta},\iota} h^\ell (s, X_\iota^\ell (s), V_\iota^\ell(s))+ w_{\vartheta} g_\iota^\ell (s, X_\iota^\ell (s), V_\iota^\ell(s))\Big\}.
		\end{split}\Ee
%		By (\ref{bdry_local}) and (\ref{bound_g_ell}) we represent $h^{\ell+1}$ along the stochastic cycles:
%		\Be
%		\begin{split}\label{cycle}
%			t^{\ell}_1 (t,x,v)&:= 
%			\sup\{ s<t:
%			X^\ell(s;t,x,v) \in \p\O
%			\}
%			,\\
%			x^\ell_1 (t,x,v ) &:= X^\ell (t^{\ell}_1 (t,x,v);t,x,v)
%			,\\
%			t^{\ell-1}_2 (t,x,v, v_1) &:= \sup\{ s<t^\ell_1:
%			X^{\ell-1}(s;t^{\ell}_1 (t,x,v),x^{\ell}_1 (t,x,v),v_1) \in \p\O
%			\}
%			,\\
%			x^{\ell-1}_2 (t,x,v, v_1 ) &:= X^{\ell-1} (t^{\ell-1}_2 (t,x,v,v_1);t^\ell_1(t,x,v),x^\ell_1(t,x,v),v_1)
%			,\\
%		\end{split}
%		\Ee
%		and inductively 
%		\Be
%		\begin{split}\label{cycle_ell}
%			%&\vdots\\
%			& t^{\ell-(k-1)}_k (t,x,v, v_1, \cdots, v_{k-1})  \\
%			&:=     \sup\big\{ s<t^{\ell-(k-2)}_{k-1} 
%			:
%			X^{\ell-1}(s;t_{k-1}^{\ell - (k-2)}  , x_{k-1}^{\ell - (k-2)} ,v_{k-1}) \in \p\O
%			\big\},\\
%			& x_k^{\ell - (k-1)} (t,x,v, v_1, \cdots, v_{k-1})\\
%			&:= X^{\ell- (k-2)} (t_k^{\ell- (k-1)}; t_{k-1}^{\ell- (k-2)},x_{k-1}^{\ell- (k-2)} , v_{k-1})
%			.
%		\end{split}
%		\Ee
%		Here,
%		\Be\begin{split}\notag
%			t^{\ell-(i-1)}_{i } &:= t^{\ell-(i-1)}_{i }
%			(t,x,v,v_1, \cdots, v_{i-1}),\\
%			x^{\ell-(i-1)}_{i } &:= x^{\ell-(i-1)}_{i }
%			(t,x,v,v_1, \cdots, v_{i-1}).\end{split}\Ee
		
		From (\ref{duhamel_local}) and (\ref{bdry_local}), we have 
		\Be
		\begin{split}\label{h_ell_local}
			& h_\iota^{\ell+1} (t,x,v)\\
			= & \ \mathbf{1}_{t_{1,\iota}^\ell   \leq 0}e^{-      \int^t_0     \nu_\iota^\ell 
			} h_\iota^{\ell+1} (0,X_\iota^\ell (0),V_\iota^\ell (0) )   \\
			&+ \int_{\max\{t_{1,\iota}^\ell ,0\}}^{t}e^{-  \int^t_s     \nu_\iota^\ell } [K_{w_{\vartheta},\iota }h^\ell  +w_{\vartheta} g_\iota^\ell](s,X_\iota^\ell(s;t,x,v), V_\iota^\ell(s;t,x,v))\dd s \\
			&+ \mathbf{1}_{t_{1,\iota}^\ell  \geq  0}e^{-      \int^t_{t_{1,\iota}^\ell}     \nu_\iota^\ell 
			} h^{\ell+1}(t_{1,\iota}^\ell,X_\iota^\ell (t_{1,\iota}^\ell  ;t,x,v  ), V_\iota^\ell (t_{1,\iota}^\ell  ;t,x,v  )).
		\end{split}
		\Ee
		We define
		\begin{equation}
		\tilde{w}_{\vartheta} (v)\equiv \frac{1}{ w_{\vartheta}  (v)\sqrt{\mu (v)}}%=\sqrt{2\pi} \frac{e^{(\frac{1}{4}- \zeta)
		%|v|^{2}}}{(1+\varrho ^{2}|v|^{2})^{\frac{\beta}{2} }}
		.\label{tweight}
		\end{equation}
		From (\ref{bdry_local}),
		\Be\begin{split}\notag
			\textit{the last line of} \ (\ref{h_ell_local}) = \mathbf{1}_{t_{1,\iota}^\ell  \geq  0}e^{-      \int^t_{t_{1,\iota}^\ell}     \nu_\iota^\ell 
			}  \frac{1}{\tilde{w_{\vartheta}} (V_\iota^\ell (t^\ell_{1,\iota}))} \int_{n(x_{1,\iota}^\ell)\cdot v_1>0} h_\iota^{\ell}    (t^\ell_{1,\iota}, x^\ell_{1,\iota}, v_1)
			\tilde{w}_{\vartheta} (v_1) 
			c_\mu \mu  \{n(x_{1,\iota}^\ell) \cdot v_1\} \dd v_1.
		\end{split}\Ee
		
		We define $\mathcal{V}(x)=\{v \in\mathbb{R}^3 : n(x)\cdot v >0\}$
		with a probability measure $\dd\sigma=\dd\sigma(x)$ on $\mathcal{V}(x)$ which is given by
		\begin{equation}
		\dd\sigma \equiv
		c_\mu
		\mu
		(v)\{n(x)\cdot v\}\dd v.\label{smeasure}
		\end{equation}
		Let
		\Be\label{mathcal_V}
		\mathcal{V}_{j,\iota}: = \{v_j \in \R^3: n(x^{\ell - (j-1)}_{j,\iota}) \cdot v_j >0\}.
		\Ee
		
		Then inductively we obtain from (\ref{h_ell_local}), (\ref{duhamel_local}) and (\ref{bdry_local}), 
		\Be\begin{split}\label{h_iteration}
			&|h_\iota^{\ell+1} (t,x,v)| \\
			\leq & \ \mathbf{1}_{t_{1,\iota}^\ell   \leq 0}e^{-      \int^t_0     \nu_\iota^\ell 
			}|h^{\ell+1} (0,X_\iota^\ell (0),V_\iota^\ell (0))|    \\
			&  +%\mathbf{1}_{t_{1}\leq 0}
			\int_{\max\{t_{1,\iota}^\ell ,0\}}^{t}e^{-  \int^t_s     \nu_\iota^\ell }|[K_{w_{\vartheta} }h^\ell  +w_{\vartheta} g^\ell](s,X_\iota^\ell(s;t,x,v), V_\iota^\ell(s;t,x,v))|\dd s   \\
			%&   +\mathbf{1}_{t_{1}>0}\int_{t_{1}}^{t}e^{-  \int^t_s     \nu_\phi }  |[K_{w_{\vartheta} }h +w_{\vartheta} g](s,X(s;t,x,v), V(s;t,x,v))|\dd s \ \ \ \ \  \ \ \ \ \  \label{iteration} \\
			&    +\mathbf{1}_{t_{1,\iota}^\ell >0}   %   e^{-  \int^t_{t_1}     \nu_\phi }   %e^{-\nu (v)(t-t_{1})}
			%|w_{\vartheta} r(t_1,x_{1},v)|
			\frac{
				e^{-  \int^t_{t_{1,\iota}^\ell}     \nu_\iota^\ell }
				%e^{-\nu
				%(v)(t-t_{1})}
			}{\tilde{w_{\vartheta}}_{\varrho}(V_\iota^\ell (t_{1,\iota}^\ell))}\int_{\prod_{j=1}^{k-1}\mathcal{V}_{j,\iota}}|H| , 
		\end{split}\Ee
		where $|H|$ is bounded by
		\begin{eqnarray}
		&&\sum_{l=1}^{k-1}\mathbf{1}_{\{t^{\ell-l}_{l+1,\iota}\leq
			0<t_{l,\iota}^{\ell - (l-1)}\}}   |h^{\ell-l} (0,  
		X_\iota^{\ell-l}(0
		; v_l
		)
		,V_\iota^{\ell-l}(0; v_l))|\dd\Sigma _{l,\iota}(0) \label{h1} \\
		&+& \sum_{l=1}^{k-1}\int_{\max\{ t_{l+1,\iota}^{\ell-l}, 0 \}}^{t_{l,\iota}^{\ell - (l-1)}}\mathbf{1}_{\{t_{l+1,\iota}^{\ell-l}\leq
			0<t_{l,\iota}^{\ell - (l-1)}\}} \nonumber
		\\
		&& \ \ \   \times
		|[K_{w_{\vartheta} }h^{\ell-l} +w_{\vartheta}   g^{\ell-l}](s,
		X_\iota^{\ell-l}(s; v_l), V_\iota^{\ell-l}(s; v_l)
		%x_{l}-(t_{l}-s){v}_{l},v_{l})
		|\dd \Sigma
		_{l,\iota}(s)\dd s  \label{h2} \\
		%&&+\sum_{l=1}^{k-1}\int_{t_{l+1}}^{t_{l}}\mathbf{1}_{\{0<t_{l}%
		%\}}|[K_{w_{\vartheta}_{\varrho}}h^{\ell-l}+w_{\vartheta}_{\varrho} \ g](s,x_{l}-(t_{l}-s){v}_{l},v_{l})|d\Sigma _{l}(s)ds
		%\label{h3} \\
		%&&+\sum_{l=1}^{k-1} \mathbf{1}_{\{0<t_{l}\}} d\Sigma _{l}^r
		%\label{h4} \\
		&+& \mathbf{1}_{\{0<t_{k,\iota}^{\ell - (k-1)}\}}|h^{\ell-(k-1)} (t_{k,\iota}^{\ell - (k-1)},x_{k,\iota}^{\ell - (k-1)},v_{k-1})|\dd\Sigma
		_{k-1,\iota}(t_{k,\iota}^{\ell - (k-1)}),  \label{h5}
		\end{eqnarray}%
		where% $\dd\Sigma _{l}^{k-1}(s)$
		\Be\begin{split}
			%d\Sigma _{l} &=&\{\Pi _{j=l+1}^{k-1}d\sigma _{j}\}\times\{\tilde{w}(v_{l})d\sigma
			%_{l}\}\times \Pi _{j=1}^{l-1} d\sigma _{j}
			\label{measure} %\\
			d\Sigma _{l,\iota}^{k-1}(s) &= \{\Pi _{j=l+1}^{k-1}\dd\sigma _{j,\iota}\}\times\{
			e^{
				-\int^{t_{l,\iota}}_s
				\nu^\ell_{\iota} 
			}
			\tilde{w_{\vartheta}}(v_{l})\dd\sigma _{l,\iota}\}\times \Pi
			_{j=1}^{l-1}\{{{
					e^{
						-\int^{t_{j,\iota}}_{t_{j+1,\iota}}
						\nu^j_\iota 
					} \frac{ w_{\vartheta} (v_{j,\mathbf b})  \sqrt {\mu (v_{j,\mathbf b})}}{w_{\vartheta} (v_{j}) \sqrt  {\mu (v_j)} }
					%e^{\nu (v_{j})(t_{j+1}-t_{j})} 
					\dd\sigma _{j,\iota}}}\},
			%\notag %\\
			%d\Sigma _{l}^r &=&\{\Pi _{j=l+1}^{k-1}d\sigma _{j}\}\times\{e^{\nu
			%(v_{l})(t_{l+1}-t_{l})}\tilde{w}(v_{l})w(v_{l})r( t_{l+1},x_{l+1},v_{l} ) d\sigma_l\}\notag\\
			%&& \  \times \Pi
			%_{j=1}^{l-1}\{{{e^{\nu (v_{j})(t_{j+1}-t_{j})} d\sigma _{j}}}\}.
		\end{split} \Ee 
		and 
		\Be\begin{split}\label{X_ell_l}
			X_\iota^{\ell-l} (s; v_l)&:= X_\iota^{\ell-l}(s;t_{l,\iota}^{\ell- (l-1)},x_{l,\iota}^{\ell- (l-1)},v_l),\\
			V_\iota^{\ell-l} (s;v_l)&:= V_\iota^{\ell-l}(s;t_{l,\iota}^{\ell- (l-1)},x_{l,\iota}^{\ell- (l-1)},v_l), \\
			v_{j,\mathbf{b}} &: = V_\iota^{\ell - j } ( t_{j,\iota}^{\ell - (j - 1 ) }-\tb;t_{j,\iota}^{\ell - (j - 1 ) }, x_{j,\iota}^{\ell - (j - 1 ) }, v_j ).
		\end{split}\Ee

		\vspace{2pt}
		
		\textit{Step 2-2. } We claim that there exist $T>0$ and $k_{0} >0$ such that for all $k\geq  k_{0}$ and for all $(t,x,v) \in [0,T] \times \bar{\O} \times \R^{3}$, we have 
		\Be\label{small_k}
		\int_{\prod_{j=1}^{k-1} \mathcal{V}_{j,\iota}} \mathbf{1}_{\{ t_{k,\iota}^{\ell - (k-1)} (t,x,v,v^{1}, \cdots , v^{k-1}) >0 \}} \dd \Sigma_{k-1,\iota}^{k-1} \lesssim_{\O} \Big\{\frac{1}{2}\Big\}^{ k/5}.
		\Ee
		The proof of the claim is a modification of a proof of Lemma 14 of \cite{GKTT1}.
		
		For $0<\delta\ll 1$ we define
		\Be\label{V_j^delta}
		\mathcal{V}_{j,\iota}^{\delta} := \{ v_j \in \mathcal{V}_{j,\iota} : |v_j \cdot n(x_{j,\iota}^{\ell -(j-1)})| > \delta, \ |v_j| \leq \delta^{-1} \}.
		\Ee 
		
		Choose   
		\Be\label{large_T}
		T= \frac{2}{\delta^{2/3} (1+ \| \nabla \phi \|_\infty)^{2/3}}.
		\Ee
		We claim that 
		\Be\label{t-t_lowerbound}
		|t_{j,\iota}^{\ell - (j-1) } -t_{j+1,\iota}^{\ell - j } |\gtrsim   \delta^3, \ \ \text{for} \ v_j \in \mathcal{V}^\delta_{j,\iota},  \ 0 \leq t\leq T, \ 0 \leq t_{j,\iota}^{\ell - (j-1)}.
		\Ee
		
		For $j \geq 1$,
		\Bes
		&&\Big| \int^{t_{j+1,\iota}^{\ell - j}}_{t_{j,\iota}^{\ell - (j-1) }} V^{\ell - j }_\iota(s;t_{j,\iota}^{\ell - (j-1) }, x_{j,\iota}^{\ell -(j-1)}, v_j) \dd s   \Big|^{2}\\
		&=& |x_{j+1,\iota}^{\ell -j} -x_{j,\iota}^{\ell -(j-1)}|^{2}\\
		&\gtrsim& |(x_{j+1,\iota}^{\ell -j} -x_{j,\iota}^{\ell -(j-1)}) \cdot n(x_{j,\iota}^{\ell -(j-1)})|\\
		&=&\Big| \int^{t_{j+1,\iota}^{\ell - j}}_{t_{j,\iota}^{\ell - (j-1)}}  V^{\ell-j}_\iota(s;t_{j,\iota}^{\ell - (j-1) },x_{j,\iota}^{\ell -(j-1)},v_j) \cdot  n(x_{j,\iota}^{\ell -(j-1)})  \dd s 
		\Big|\\
		&=&\Big| \int_{t_{j,\iota}^{\ell - (j-1) }}^{t_{j+1,\iota}^{\ell - j}}  
		\Big(
		v_j - \int^{s}_{t_{j,\iota}^{\ell - (j-1) }} \nabla \phi^{\ell - j} (\tau, X^{\ell - j}_\iota(\tau;t_{j,\iota}^{\ell - (j-1)},x_{j,\iota}^{\ell -(j-1)},v_j))    \dd \tau
		\Big)\cdot   n(x_{j,\iota}^{\ell -(j-1)}) 
		\dd s 
		\Big|\\
		&\geq& |v_j \cdot n(x_{j,\iota}^{\ell -(j-1)})| |t_{j,\iota}^{\ell - (j-1) }-t_{j+1,\iota}^{\ell - j}|
		- \Big|
		\int^{t_{j+1,\iota}^{\ell - j}}_{t_{j,\iota}^{\ell -(j-1)}} \int^{s}_{t_{j,\iota}^{\ell -(j-1)}}  \nabla \phi^{\ell -j } (\tau, X^{\ell -j}_\iota(\tau; t_{j,\iota}^{\ell - (j-1) },x_{j,\iota}^{\ell -(j-1)},v_j)) \cdot n(x_{j,\iota}^{\ell -(j-1)})\dd \tau \dd s 
		\Big|.
		\Ees
		Here we have used the fact if $x,y \in \p\O$ and $\p\O$ is $C^2$ and $\O$ is bounded then $|x-y|^2\gtrsim_\O |(x-y) \cdot n(x)|$. Hence
		\Be\label{lower_tb}
		\begin{split}
			|v_j \cdot n(x_{j,\iota}^{\ell -(j-1)})| 
			& \lesssim \frac{1}{|t_{j,\iota}^{\ell - (j-1) } - t_{j+1,\iota}^{\ell - j}| } \Big| \int^{t_{j+1,\iota}^{\ell - j}}_{t_{j,\iota}^{\ell - (j-1) }}
			V^{\ell - j}_\iota(s;t_{j,\iota}^{\ell - (j-1) },x_{j,\iota}^{\ell -(j-1)},v_j) \dd s
			\Big|^{2}\\
			& \ \  + 
			\frac{1}{|t_{j,\iota}^{\ell - (j-1) } - t_{j+1,\iota}^{\ell - j}| } \Big| \int^{t_{j+1,\iota}^{\ell - j}}_{t_{j,\iota}^{\ell - (j-1) }} 
			\int^{s}_{t_{j,\iota}^{\ell - (j-1) }} 
			\nabla \phi^{\ell-j} (\tau, X^{\ell-j}_\iota(\tau;t_{j,\iota}^{\ell - (j-1) },x_{j,\iota}^{\ell -(j-1)},v_j) )\cdot n(x_{j,\iota}^{\ell -(j-1)})\dd \tau  \dd s
			\Big|\\
			& \lesssim 
			|t_{j,\iota}^{\ell - (j-1) } - t_{j+1,\iota}^{\ell - j}|   \big\{ |v_j|^2 + |t_{j,\iota}^{\ell - (j-1)} - t_{j+1,\iota}^{\ell - j}|^3 \|\nabla \phi\|^2_\infty
			% \max_{t_{j+1,\iota}^{\ell - j} \leq s  \leq t_{j,\iota}^{\ell - (j-1) }} |V_\iota(s;t_{j,\iota}^{\ell - (j-1) },x_{j,\iota}^{\ell -(j-1)},v_j)|^{2}
			\\
			& \ \ \ \ \ \ \ \  \ \ \ \ \ \  \ \ \ \     +    \frac{1}{2}\sup_{t_{j+1,\iota}^{\ell - j} \leq \tau \leq t_{j,\iota}^{\ell - (j-1) }} 
			|  \nabla \phi^{\ell-j} (\tau, X^{\ell-j}_\iota(\tau;t_{j,\iota}^{\ell - (j-1) },x_{j,\iota}^{\ell -(j-1)},v_j) )\cdot n(x_{j,\iota}^{\ell -(j-1)})|
			\big\}.
		\end{split}
		\Ee
		For $v_j \in \mathcal{V}^\delta_{j,\iota}$, $0 \leq t\leq T$, and $t_{j,\iota}^{\ell - (j-1)}\geq 0$,
		\Be\notag
		|v_j \cdot n(x_{j,\iota}^{\ell -(j-1)})|\lesssim |t_{j,\iota}^{\ell - (j-1)} -t_{j+1,\iota}^{\ell - j}|
		\{
		\delta^{-2} + T^3 \| \nabla \phi^{\ell-j} \|_\infty^2 + \| \nabla \phi^{\ell-j} \|_\infty
		\}.
		\Ee
		We choose $T$ as (\ref{large_T}) then prove (\ref{t-t_lowerbound}).

		Therefore if $t_{k,\iota}^{\ell-(k-1)}  \geq 0$ then there can be at most $\left\{\left[\frac{C_\Omega}{%
			\delta^3}\right]+1\right\}$ numbers of $v_m \in \mathcal{V}_{m,\iota}^\delta$ for $1\leq m
		\leq k-1$. Equivalently there are at least $k-2- \left[\frac{C_\Omega}{%
			\delta^3}\right]$ numbers of $v_{i} \in \mathcal{V}_{i,\iota} \backslash \mathcal{V}_{i,\iota}^\delta$ for $0 \leq i \leq m$. 
		
		Let us choose $k=N \times \left(\left[\frac{C_\Omega}{\delta^3}\right]%
		+1\right)$ and $N=\left(\left[\frac{C_\Omega}{\delta^3}\right]%
		+1\right)   \gg C>1$. Then we have 
		\begin{eqnarray*}
			&&\int_{\prod_{j=1}^{k-1}\mathcal{V}_{j,\iota}} 1_{\{t_{k,\iota}^{\ell - (k-1)}(t,x,v,v^{1},\cdots, v^{k-1})
				> 0\}} d\Sigma_{k-1}^{k-1} \\
			&\leq& \sum_{m=1}^{\left[\frac{C_\Omega}{\delta^3}\right]+1} \int_{\Big\{\begin{array}{ccc}{\text{there are exactly } m \text{ of } v_{i} \in \mathcal{V}_{i,\iota}^\delta} \\ {\small \text{ and } \ k-1-m \
						of \ v_{i} \in \mathcal{V}_{i,\iota} \backslash \mathcal{V}_{i,\iota}^{\delta}}\end{array}\Big\}}
			\prod_{j=1}^{k-1} C_0 \mu(v_j)^{1/4} \mathrm{d} v_j \\
			&\leq& \sum_{m=1}^{\left[\frac{C_\Omega}{\delta^3}\right]+1} \left(%
			\begin{array}{ccc}
				k-1  \\
				m
			\end{array}%
			\right) \left\{ \int_{\mathcal{V}}C_0 \mu(v)^{1/4}\mathrm{d} v\right\}^{m} \left\{ \int_{%
				\mathcal{V}\backslash \mathcal{V}^\delta}C_0 \mu(v)^{1/4}\mathrm{d} v\right\}^{k-1-m} \\
			&\leq& \left(\left[\frac{C_\Omega}{\delta^3}\right]+1\right) \{k-1\}^{\left[%
				\frac{C_\Omega}{\delta^3}\right]+1} \{ \delta\}^{k-2-\left[\frac{%
					C_\Omega}{\delta^3}\right]} \left\{ \int_{\mathcal{V}} C_0\mu(v)^{1/4}
			dv\right\}^{\left[\frac{C_\Omega}{\delta^3}\right]+1} \\
			%&\lesssim & \frac{k}{N} \{Ck\}^{\frac{k}{N}} \left(\frac{k}{N}\right)^{-\frac{Nk}{10}} \\
			&\leq& \{CN\}^{\frac{k}{N}} \left\{\frac{k}{N}\right\}^{\frac{k}{N}} \left\{\frac{k}{N}\right\}^{-\frac{k}{N} \frac{N^2}{20}}
			\leq  \left\{\frac{k}{N}\right\}^{\frac{k}{N} \left(- \frac{N^2}{20} + 3\right) } %\\
			%&\leq& \left\{\frac{1}{k/N}\right\}^{\frac{-\frac{N^2}{20} + 3}{N}k} 
			\leq
			\left\{\frac{1}{2}\right\}^{ k}
			,
		\end{eqnarray*}
		where we have chosen $k=N \times \left(\left[\frac{C_\Omega}{\delta^3}\right]%
		+1\right)$ and $N=\left(\left[\frac{C_\Omega}{\delta^3}\right]%
		+1\right)   \gg C>1$.
		
		\vspace{2pt}
		
		\textit{Step 2-3. }  
				We define a notation
		\begin{equation}\label{kzeta}
		\mathbf{k}_{  \varrho}(v,u) := \frac{1}{|v-u| } \exp\left\{- {\varrho} |v-u|^{2}  
		-  {\varrho} \frac{ ||v|^2-|u|^2 |^2}{|v-u|^2}
		\right\}.
		\end{equation}
		For $0<\frac{\vartheta}{4}<\varrho$, if $0<\tilde{\varrho}<  \varrho- \frac{\vartheta}{4}$ then
		\Be\label{k_vartheta_comparision}
		\mathbf{k}_{  \varrho}(v,u) \frac{e^{\vartheta |v|^2}}{e^{\vartheta |u|^2}} \lesssim  \mathbf{k}_{\tilde{\varrho}}(v,u) .
		\Ee
		See the proof in the appendix. 
		
		Moreover, for $0<\frac{\vartheta}{4}<\varrho$, (see the proof of Lemma 7 in \cite{Guo10})
		\Be\label{grad_estimate}
		\int_{\R^3}\mathbf{k}_{  \varrho}(v,u) \frac{e^{\vartheta |v|^2}}{e^{\vartheta |u|^2}} \dd u  \lesssim \langle v\rangle^{-1}.
		\Ee
		Also, we have
		\Be \label{measurechangesmall}
		\prod_{ j = 1 }^{l - 1 } \sup_{v_j }  \frac{ w_{\vartheta} (v_{j,\mathbf b})  \sqrt {\mu (v_{j,\mathbf b})}}{w_{\vartheta} (v_{j}) \sqrt  {\mu (v_j)} } \lesssim \prod_{ j = 1 }^{l - 1 } e^{\int_{t_{j+1,\iota}}^{t_{j,\iota}} \| \nabla \phi ^{\ell - j } (s) \|_\infty^2} \lesssim \prod_{ j = 1 }^{l - 1 } e^{M^2 ( t_{j,\iota} - t_{j+1,\iota} ) } \lesssim e^{M^2 t },
		\Ee
and
\Be \label{sumtkle0}
 \sum_{l=1}^{k-1}\mathbf{1}_{\{t^{\ell-l}_{l+1,\iota}\leq
			0<t_{l,\iota}^{\ell - (l-1)}\}}  = \mathbf{1}_{ \{ t_k^{\ell - ( k - 1) } \le 0\}  }.
\Ee	
		Then from (\ref{bound_g_ell}), (\ref{lower_nu_l}), and (\ref{h_iteration})-(\ref{measure}), (\ref{small_k}), \eqref{measurechangesmall}, and \eqref{sumtkle0}, if we choose $\ell \geq k_0$ and $0 \leq t \leq T$ where $k_0$ and $T$ in (\ref{small_k}), and let $M^2 \ll \nu_0$, we have
		\Be
		\begin{split}\label{stochastic_h^ell}
			& | h_\iota^{\ell+1} (t,x,v )|\\
			\leq & \  \| e^{- \frac{3}{4}\nu_0  t} h_0 \|_\infty \\
			& + \ \int^t_{\max\{t^\ell_{1,\iota}, 0 \}}
			e^{ - \frac{3}{4}\nu_0 (t-s)}\int_{\R^3}
			\mathbf{k}_\varrho (V_\iota^\ell (s;t,x,v),u) |h^\ell (s, X_\iota^\ell (s;t,x,v), u)|
			\dd u
			\dd s\\
			& +C_k \sup_l \int^{t_{l,\iota}^{\ell- (l-1)}}_{\max\{ t^{\ell-1}_{l+1,\iota}, 0  \}}   e^{ - \frac{3}{4}\nu_0 (t-s)}\int_{\R^3} \int_{\R^3} 
			\mathbf{k}_\varrho (V_\iota^{\ell-l} (s; v_l), u) 
			\\
			& \ \ \ \ \ \ \ \ \ \ \  \times
			|h^{\ell-l} (s, X_\iota^{\ell-l}(s; v_l)  ,u )|
			\{n(x_l)\cdot v_l\}\frac{\sqrt{\mu (v_l)}}{w_{\vartheta}(v_l)}
			\dd v_l
			\dd u
			\dd s \\
			& +  \int^t_{\max\{ t^\ell_{1,\iota}, 0  \}} 
			\langle V_\iota^\ell (s;t,x,v) \rangle     e^{- \int^t_s \frac{ \nu_\iota^\ell (\tau)}{2} \dd \tau}
			\| 
			e^{- \frac{3}{4}\nu_0 (t-s)}  h^\ell (s) \|_\infty^2
			\dd s \\
			&+ C_k \sup_l \int^{t_{l,\iota}^{\ell - (l-1)}}_{\max\{ t^{\ell-l}_{l+1,\iota} ,0 \}} \langle V_\iota^{\ell-l} (s;v_l) \rangle  \\
			& \ \ \ \ \ \ \ \ \ \ \  \times
			e^{
				- \int^{t_{l,\iota}^{\ell - (l-1)}}_s \frac{ \nu_\iota^{\ell-l } (\tau)}{2} \dd \tau 
			}
			\| e^{-\frac{3}{4}\nu_0 (t-s)}   h^{\ell-l} (s) \|_\infty^2 \dd s\\
			& +  \int^t_{\max\{ t^\ell_{1,\iota}, 0  \}} 
			\| 
			e^{- \frac{3}{4}\nu_0 (t-s)}  \nabla  \phi^\ell (s) \|_\infty 
			\dd s \\
			&+ C_k \sup_l \int^{t_{l,\iota}^{\ell - (l-1)}}_{\max\{ t^{\ell-l}_{l+1,\iota} ,0 \}}  
			\| e^{-\frac{3}{4}\nu_0 (t-s)}  \nabla   \phi ^{\ell-l} (s) \|_\infty  \dd s\\
			& +\Big\{\frac{1}{2}\Big\}^{ k/5} \| e^{-\frac{3}{4}\nu_0  (t - t_{k,\iota}^{\ell - (k-1)}) }  h(t_{k,\iota}^{\ell - (k-1)})  \|_\infty,
		\end{split}
		\Ee
		where we used the abbreviation of (\ref{X_ell_l}).
		
		From $\int^t_{0}\langle V_\iota^{\ell-l} (s;v_l) \rangle 
		e^{
			- \int^{t_{l,\iota}^{\ell - (l-1)}}_s \frac{ \nu_\iota^{\ell-l } (\tau)}{2} \dd \tau 
		} \dd s \lesssim 1$ and (\ref{grad_estimate}), we derive that 
		\Be\begin{split}\label{uniform_h^ell}
			\| h^{\ell+1} (t) \|_\infty & \lesssim \| h_+^{\ell+1} (t) \|_\infty +\| h_-^{\ell+1} (t) \|_\infty  
			\\ &\lesssim_k \  \| h(0) \|_\infty + o(1) \| h (t_{k}^{\ell - (k-1)}) \|_\infty
			+ t  \max_{l \geq 0 } \sup_{0 \leq s \leq t}   \| h^{\ell-l} (s) \|_\infty
			+ \max_{l \geq 0 } \sup_{0 \leq s \leq t}  \| h^{\ell-l} (s) \|_\infty^2 .
		\end{split}\Ee
		By taking supremum in $\ell$ and choosing $M \ll1$ and $0\leq t\leq T^* \leq T$ with $T^*\ll 1$, we conclude (\ref{uniform_h_ell}).

\end{proof}

		\section{Weighted ${W}^{1,p}$ estimates }

				\begin{proposition}\label{prop_W1p}
The main goal of this section is to prove the following weighted $W^{1,p}$ estimate for the sequence $f^\ell$ in \eqref{fell_local}
			Let us choose $0< \tilde{\vartheta}< \vartheta \ll 1$ and 
			\Be\label{beta_condition}
			\frac{p-2}{p}<\beta< \frac{2}{3} %< \frac{p-1}{p}
			, \quad\text{for}\quad 3< p < 6. 
			\Ee
			%Assume $f$ solves (\ref{2fVPB}), (\ref{smallfphi}), (\ref{diffusef}), and $f \in L^\infty((0,T); L^p (\O \times \R^3)) \cap L^1 ((0,T); L^p (\gamma))$ and $w_{\tilde{\vartheta}}\alpha_{f,\e}^\beta \nabla_{x,v} f \in L^\infty((0,T); L^p (\O \times \R^3))  \cap L^1 ((0,T); L^p (\gamma))$ and 
			Assume $f^l$ solves \eqref{fell_local}, and for some $T > 0$ 
			\Be\label{small_bound_f_infty}
			\sup_{\ell \ge 0 } \sup_{0 \leq t \leq T} \| w_\vartheta f^\ell(t) \|_\infty \ll 1,
			\Ee
						\Be\label{integrable_nabla_phi_f}
			\sup_{\ell \ge 0 }\sup_{0 \leq t \leq T} e^{\Lambda_1 t} \|  \nabla_x \phi^\ell (t) \|_\infty  < \delta_1,
			\Ee
			with 
			\Be\label{delta_1/lamdab_1}
			0< \frac{  \delta_1}{\Lambda_1}  \ll_\O 1.
			\Ee
		Then there exists $T^{**} \ll 1 $ and $C > 0$ such that the sequence \eqref{fell_local} satisfies

			\Be \label{unif_Em}
		%\begin{split}
		   \max_{ \ell\geq0}\sup_{0 \leq t \leq T^{**}} \mathcal{E}^\ell(t)
			\leq C \{ \| w_\vartheta f_0 \|_\infty +
			\| w_{\tilde{\vartheta}} f _0 \|_p^p
			+
			\| w_{\tilde{\vartheta}} \alpha_{f_{0 },\e}^\beta \nabla_{x,v} f  _0 \|_{p}^p + |\nabla_{\tau, v } f_0 |_{p,+}^p \}< \infty.%\\
		%	\text{ then } \  \sup_{0 \leq t \leq T^{**}}
			%\mathcal{E}^{\ell+1} (t)\leq C \{ \| w_{\tilde{\vartheta}} f _0 \|_p^p
			%+
			%\| w_{\tilde{\vartheta}} \alpha_{f_{0 },\e}^\beta \nabla_{x,v} f_0 \|_{p}^p \}. 
		%\end{split}
		 \Ee 
		 where we define, for $0 < \e \ll 1$,
		\Be\label{mathcal_E}
		\mathcal{E}^{\ell+1}(t):= \| w_\vartheta f^{\ell+1} (t) \|_\infty +
		\| w_{\tilde{\vartheta}} f^{\ell+1} (t) \|_p^p
		+
		\| w_{\tilde{\vartheta}}\alpha_{f^{\ell},\e}^\beta \nabla_{x,v} f^{\ell+1}(t) \|_{p}^p
		+ 
		\int^t_0  | w_{\tilde{\vartheta}}\alpha_{f^{\ell},\e}^\beta \nabla_{x,v} f^{\ell+1}(t) |_{p,+}^p + \int_0^t | w_{\tilde{\vartheta}} f^{\ell+1}(t) |_{p,+}^p.
		\Ee					
%			Then there exists $C_p>0$ such that 
%			\Be\label{est_W}
%			\begin{split}
%				&\|w_{\tilde{\vartheta}}f(t) \|_p^p
%				+\|w_{\tilde{\vartheta}} \alpha_{f,\e}^\beta 
%				\p f (t)\|_p^p
%				%+ \int^t_0 | w_{\tilde{\vartheta}} \alpha_{f,\e}^\beta \p f (t)|_{p,+}^p
%				\\
%				& 
%				\leq  \  C_p e^{ {C_p (1 %+ \sup_{0 \leq s \leq t} \| w f(s) \|_\infty
%						+ \sup_{0 \leq s \leq t} \| \nabla^2 \phi_f(s) \|_\infty) t} }
%				\{
%				\| |w_{\tilde{\vartheta}}f(0) \|_p^p
%				+\| w_{\tilde{\vartheta}} \alpha_{f,\e}^\beta 
%				\p f (0)\|_p^p \}
%				.\end{split} \Ee
		\end{proposition}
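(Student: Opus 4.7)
\medskip

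\noindent\textbf{Proof plan.} The plan is to combine three separate estimates for the iterate $f^{\ell+1}$: the uniform $L^\infty$ bound, a plain $L^p$ bound, and the weighted $W^{1,p}$ bound. The $L^\infty$ piece $\|w_\vartheta f^{\ell+1}(t)\|_\infty$ is already controlled by the preceding proposition, which gives a uniform bound in $\ell$ for $0\le t\le T^*$ once $\|w_\vartheta f_0\|_\infty\ll 1$. For the plain $L^p$ bound of $w_{\tilde\vartheta}f^{\ell+1}$ I would apply Green's identity (Lemma \ref{lem_Green}) to the equation obtained by multiplying \eqref{fell_local} by $p|f^{\ell+1}|^{p-2} f^{\ell+1} w_{\tilde\vartheta}^p$; the transport-generated boundary term on $\gamma_+$ is split via the diffuse boundary condition \eqref{bdry_local}, the outgoing part is absorbed using H\"older and the normalization $c_\mu\int \sqrt\mu|n\cdot u|\,du = 1$ in the spirit of \cite{EGKM,VPBKim}, and the remaining bulk terms produced by $q\frac{v}{2}\cdot\nabla\phi^\ell$, $K f^\ell$, $q_1 v\cdot\nabla\phi^\ell\sqrt\mu$, and $\Gamma(f^\ell, f^{\ell\pm 1})$ are controlled using \eqref{small_bound_f_infty}-\eqref{integrable_nabla_phi_f} and the kernel estimates \eqref{k_estimate}.

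\medskip

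\noindent The heart of the argument is the weighted $W^{1,p}$ bound for $\alpha_{f^\ell,\e}^\beta\nabla_{x,v}f^{\ell+1}$. I apply $\nabla_{x,v}$ to each component of \eqref{fell_local} and exploit the crucial observation \eqref{alpha_invariant}, namely that $\alpha_{f^\ell,\e,\iota}$ is invariant under the exact transport operator $\partial_t+v\cdot\nabla_x-\iota\nabla_x\phi^\ell\cdot\nabla_v$ that governs $f^{\ell+1}_\iota$ (this is why the subscript in the weight is $f^\ell$, not $f^{\ell+1}$). Multiplying by $\alpha_{f^\ell,\e}^{p\beta}$ and again invoking Green's identity, the transport part is transferred exactly onto $\nabla_{x,v}f^{\ell+1}$ with no commutator from the weight. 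The genuine commutators come from $\nabla_{x,v}$ hitting $\nu$, $q\frac{v}{2}\cdot\nabla\phi^\ell$, $Kf^\ell$, the gain/loss $\Gamma$ operators, and the source $q_1 v\cdot\nabla\phi^\ell\sqrt\mu$: every such term is either lower order (no derivative on $f^{\ell+1}$), or contains one derivative on the previous iterate $f^\ell$ bounded by $\mathcal{E}^\ell$, or is of the form $\nabla\phi^\ell\cdot\nabla_v f^{\ell+1}$; the last is absorbed by the left hand side for $t$ small because $\|\nabla\phi^\ell\|_\infty\le\delta_1$ by \eqref{integrable_nabla_phi_f} and \eqref{delta_1/lamdab_1}.

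\medskip

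\noindent The main obstacle is the boundary term $\int_0^t|\alpha_{f^\ell,\e}^\beta\nabla_{x,v}f^{\ell+1}|_{p,-}^p$ produced by Green's identity on $\gamma_-$. Here I follow the diffuse-boundary strategy of \cite{VPBKim}: differentiating \eqref{bdry_local} tangentially on $\partial\Omega$ expresses $\nabla_{x,v}f^{\ell+1}|_{\gamma_-}$ as a tangential derivative of an integral average against $\sqrt\mu|n\cdot u|$ of $f^{\ell+1}$ on $\gamma_+$, with an extra factor of the boundary weight; on $\gamma_-$ we have $\alpha_{f^\ell,\e,\iota}(t,x,v)=|n(x)\cdot v|$ (since $t-\tb=0$ and $\chi(1)=1$), so the incoming factor $|n\cdot v|^{p\beta}\,|n\cdot v|\,dv dS$ is integrable precisely under \eqref{beta_condition} because $\beta>(p-2)/p$ makes $|n\cdot v|^{p\beta-p+1}\in L^1_{\mathrm{loc}}$, cf.\ \eqref{beta_lower_intro}. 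The resulting bound involves $|\nabla_\tau f^{\ell+1}|_{p,+}$ on $\gamma_+$, which is absorbed by the Green identity contribution from the outgoing boundary, and a lower-order term dominated by $\|w_{\tilde\vartheta}f^{\ell+1}\|_p^p$ plus $|\nabla_{\tau,v}f_0|_{p,+}^p$ at $t=0$. For the cross-species collision terms, such as $Q(\mu,\sqrt\mu f^\ell_{-\iota})$, the same gain/loss decomposition \eqref{k_estimate} applies since the kernel is symmetric in the two species.

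\medskip

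\noindent Putting the three estimates together produces an inequality of the form
\begin{equation*}
\mathcal{E}^{\ell+1}(t)\le C_0\{\|w_\vartheta f_0\|_\infty+\|w_{\tilde\vartheta}f_0\|_p^p+\|w_{\tilde\vartheta}\alpha_{f_0,\e}^\beta\nabla_{x,v}f_0\|_p^p+|\nabla_{\tau,v}f_0|_{p,+}^p\}+C_1\, t\,\mathcal{E}^\ell(t)+C_2\, t\,(\mathcal{E}^{\ell+1}(t))^{1+\kappa},
\end{equation*}
for some $\kappa>0$ arising from the quadratic $\Gamma$ term via H\"older. Choosing $T^{**}\ll 1$ so that $C_1 T^{**}\le 1/2$ and the nonlinear term can be absorbed, a straightforward induction in $\ell$ starting from $\ell=0$ yields the uniform bound \eqref{unif_Em} and closes the argument.
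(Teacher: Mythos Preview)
Your overall architecture is right: induction on $\ell$, $L^\infty$ from the preceding proposition, plain $L^p$ via Green's identity, and the weighted $W^{1,p}$ estimate using the invariance \eqref{alpha_invariant} of $\alpha_{f^\ell,\e,\iota}$ under the transport operator of the $(\ell+1)$-st iterate. You also correctly identify that the lower bound $\beta>(p-2)/p$ is what makes the $\gamma_-$ integrand locally integrable. However, three concrete pieces of the paper's argument are missing or misstated, and without them the estimate does not close.

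First, the commutator you call ``$\nabla\phi^\ell\cdot\nabla_v f^{\ell+1}$'' is actually $\nabla_x^2\phi^\ell\cdot\nabla_v f^{\ell+1}$: applying $\nabla_x$ to $-q\nabla_x\phi^\ell\cdot\nabla_v f^{\ell+1}$ produces a second derivative of the potential. This is \emph{not} controlled by the assumption $\|\nabla\phi^\ell\|_\infty\le\delta_1$; you need $\|\nabla^2\phi^\ell\|_\infty$, which the paper obtains from Lemma~\ref{lemma_apply_Schauder} (Schauder estimate) as $\|\phi^\ell\|_{C^{2,1-3/p}}\lesssim \|f^\ell\|_p+\|\alpha^\beta_{f^{\ell-1},\e}\nabla_x f^\ell\|_p\lesssim(\mathcal E^\ell)^{1/p}$. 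This feedback, using Proposition~\ref{prop_int_alpha} in the H\"older step \eqref{nabla_p}, is the mechanism that ties the $C^2$ bound on $\phi^\ell$ back to the inductive quantity and is essential for the closing inequality \eqref{control_bdry_ell}--\eqref{phi_ell<E}.

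Second, you assert that the nonlocal terms ``contain one derivative on the previous iterate $f^\ell$ bounded by $\mathcal E^\ell$,'' but the actual bound of $\int_\O\int_{\R^3}|\alpha^\beta_{f^\ell,\e}\partial f^{\ell+1}(v)|^{p-1}\int\mathbf k_\varrho(v,u)|\partial f^\ell(u)|\,du\,dv$ is nontrivial: after inserting $\alpha_{f^{\ell-1},\e}(u)^{\pm\beta}$ and splitting $|u|\le N$ vs.\ $|u|\ge N$, the small-$u$ piece is handled by the Hardy--Littlewood--Sobolev inequality \eqref{153_1p}, which reduces to $\int_{|v|\le N}\alpha_{f^{\ell-1},\e,\iota}(v)^{-3\beta/2}\,dv<\infty$. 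This is finite by Proposition~\ref{prop_int_alpha} precisely when $3\beta/2<1$, and is the \emph{only} place the upper bound $\beta<2/3$ in \eqref{beta_condition} is used. Your outline never invokes this and so leaves the upper restriction on $\beta$ unexplained.

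Third, on $\gamma_-$ the diffuse condition gives $f^{\ell+1}$ in terms of $f^\ell$ (not $f^{\ell+1}$) on $\gamma_+$, and tangential differentiation of the boundary condition only yields $\partial_{\tau_i}f^{\ell+1}$ and $\nabla_v f^{\ell+1}$; the normal derivative $\partial_n f^{\ell+1}$ must be recovered from the equation itself, cf.\ \eqref{fn}, which brings in $\partial_t f^{\ell+1}$ and hence, via the boundary condition again, $f^{\ell-1}$ as well. The resulting outgoing trace $|w_{\tilde\vartheta}\alpha^\beta_{f^{\ell-1},\e}\nabla_{x,v}f^\ell|_{p,+}$ is not ``absorbed by the Green identity contribution'' for $f^{\ell+1}$; rather it is split into a small grazing piece (controlled by $o(1)\mathcal E^\ell$) and a non-grazing piece bounded via the trace Lemma~\ref{le:ukai} applied to the equation for $|\alpha^\beta\nabla f^\ell|^p\mu^{p/8}$, see \eqref{W1p_bdry_1}--\eqref{8 Last}.
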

To prove this, we need the following results:
		
		\begin{proposition}\label{prop_int_alpha}%{\color{blue}
%	Assume $E(t,x) \in C^1_x$ is given and (\ref{decay_E})
	
	Assume $\phi_f(t,x)$ obtained from \eqref{smallfphi} with $\nabla \phi_f $ satisfies \eqref{decay_E}   
	%\begin{equation}
	%\label{decay_phi}
	%\sup_{t \geq 0} e^{\lambda_1 t} \| E (t) \|_{\infty} \leq \delta_1 \ll1 ,
	%\end{equation}
	and
	\Be\label{Decay_phi_2}
	\sup_{t\geq 0}e^{\Lambda_2 t} \|\nabla^2 \phi_f(t)\|_{\infty} \leq \delta_2 \ll1 .
	\Ee% }. 
	Then for $\iota = +$ or $-$ as in \eqref{iota}, for all $0<\sigma<1$ and $N>1$ and for all $s\geq 0,$ $x \in \bar{\O}$,
	\Be\label{NLL_split2}
	\int_{|u| \leq N } 
	\frac{   \dd u
	}{\alpha_{f,\e,\iota}(s,x,u)^{ \sigma}}
	% \frac{|n(\xb(t,x,v)) \cdot \vb(t,x,v)|^{\frac{\beta p}{p-1}}}{ |n(\xb(t,x,u)) \cdot \vb(t,x,u)|^{\frac{\beta p}{p-1}}} 
	\lesssim_{\sigma, \O, \Lambda_1, \delta_1, \Lambda_2, \delta_2,N}  1, %\   \text{for all } s\geq 0, \ x \in \bar{\O}.
	\Ee
	and, for any $0< \kappa\leq2$,  
	\Be\label{NLL_split3} 
	\int_{  |u|\geq N} \frac{e^{-C|v-u|^2}}{|v-u|^{2-\kappa}} \frac{1}{\alpha_{f,\e,\iota}(s,x,u)^\sigma} \dd u
	\lesssim_{\sigma, \O, \Lambda_1, \delta_1, \Lambda_2, \delta_2,N,\kappa}  1.
	\Ee
\end{proposition}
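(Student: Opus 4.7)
The plan is to split each integral according to the cutoff function $\chi$ in \eqref{alphaweight} and then apply the change of variables $u \mapsto (x_{\mathbf{b}}^{f}(s,x,u), t_{\mathbf{b}}^{f}(s,x,u))$ to reduce the problem to an integral over $\partial\Omega$, where the convexity \eqref{convexity_eta} provides a decisive quadratic vanishing of the normal component.

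First I would dispose of the region where the backward trajectory remains in $\Omega$ for time exceeding $s+\varepsilon$: there $t_{\mathbf{b}}^{f} \geq s+\varepsilon$, so $\chi((s - t_{\mathbf{b}}^{f} + \varepsilon)/\varepsilon) = 0$ and $\alpha_{f,\varepsilon,\iota} \equiv 1$. The contribution to \eqref{NLL_split2} is then bounded by the volume $|\{|u|\leq N\}| \lesssim N^{3}$, and to \eqref{NLL_split3} by a fixed constant via the Gaussian factor. What remains is the hitting set $\{u : t_{\mathbf{b}}^{f}(s,x,u) \leq s+\varepsilon\}$ on which $\alpha_{f,\varepsilon,\iota}(s,x,u) \gtrsim |n(x_{\mathbf{b}}^{f})\cdot v_{\mathbf{b}}^{f}|$. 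Because $\phi_{f}$ satisfies the Neumann condition $n\cdot \nabla\phi_{f}=0$ from \eqref{smallfphi}, Lemma~\ref{cannot_graze} applies and rules out grazing, so $n(x_{\mathbf{b}}^{f})\cdot v_{\mathbf{b}}^{f} < 0$ and the change-of-variables map is non-degenerate on this set.

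Next I would perform the change of variables, parametrising $\partial\Omega$ locally by the charts $\eta_{p}$ from \eqref{eta}. Combining the trajectory identity $x - x_{\mathbf{b}}^{f} = \int_{s - t_{\mathbf{b}}^{f}}^{s} V_{\iota}^{f}(\tau)\,\dd\tau$ with Lemma~\ref{est_X_v} and the smallness hypotheses \eqref{decay_E}, \eqref{Decay_phi_2}, the Jacobian is a small perturbation of the free-streaming value:
\begin{equation*}
\dd u \simeq \frac{|v_{\mathbf{b}}^{f}\cdot n(x_{\mathbf{b}}^{f})|}{(t_{\mathbf{b}}^{f})^{3}}\, \dd S(x_{\mathbf{b}}^{f})\,\dd t_{\mathbf{b}}^{f}, \qquad |v_{\mathbf{b}}^{f}| = \frac{|x - x_{\mathbf{b}}^{f}|}{t_{\mathbf{b}}^{f}} + O(\delta_{1} t_{\mathbf{b}}^{f}).
\end{equation*}
For \eqref{NLL_split2}, after substituting and integrating in $t_{\mathbf{b}}^{f}$ over $[|x-x_{\mathbf{b}}^{f}|/(2N), s+\varepsilon]$ (the lower endpoint coming from $|u| \leq N$), the bulk integral reduces to
\begin{equation*}
\int_{\partial\Omega} \frac{|(x - x_{\mathbf{b}}^{f})\cdot n(x_{\mathbf{b}}^{f})|^{1-\sigma}}{|x - x_{\mathbf{b}}^{f}|^{3-\sigma}}\, \dd S(x_{\mathbf{b}}^{f}).
\end{equation*}
Convexity \eqref{convexity_eta} yields the key bound $|(x-x_{\mathbf{b}}^{f})\cdot n(x_{\mathbf{b}}^{f})| \lesssim_{\Omega} |x-x_{\mathbf{b}}^{f}|^{2}$, so the integrand is $\lesssim |x-x_{\mathbf{b}}^{f}|^{-(1+\sigma)}$, which is integrable on the $2$-dimensional surface $\partial\Omega$ whenever $\sigma < 1$. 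For \eqref{NLL_split3}, the Gaussian factor $e^{-C|v-u|^{2}}$ absorbs both the singular $|v-u|^{-(2-\kappa)}$ factor ($\kappa \leq 2$) and the large-$|u|$ tail. Since $|u|\approx |x-x_{\mathbf{b}}^{f}|/t_{\mathbf{b}}^{f}$, the regime $|u|\geq N$ corresponds to small $t_{\mathbf{b}}^{f}$ and is controlled by the exponential decay; the remaining boundary integral is handled exactly as above.

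The main technical obstacle is verifying that the Jacobian of this nonlinear change of variables is uniformly comparable to its free-streaming counterpart and that the map is globally invertible on the hitting set. Lemma~\ref{est_X_v} supplies the crucial estimate $|\nabla_{u}X_{\iota}^{f}| \lesssim t_{\mathbf{b}}^{f}$, Lemma~\ref{cannot_graze} removes the grazing obstruction, and the short-time smallness of $\delta_{1}/\Lambda_{1}$ and $\delta_{2}/\Lambda_{2}$ keeps the perturbation of the Jacobian uniformly under control. These ingredients have been assembled in detail in \cite{VPBKim} for the one-species case, and they transfer verbatim to the present setting since the only property of $\phi_{f}$ used is the Neumann condition $n\cdot\nabla\phi_{f}=0$, which holds for both species in \eqref{smallfphi}.
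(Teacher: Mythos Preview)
Your proposal is correct and matches the paper's approach: the paper's proof simply observes that the Neumann condition $n\cdot\nabla\phi_f=0$ from \eqref{smallfphi} holds for both fields $\mp\nabla\phi_f$, then invokes Proposition~3 of \cite{VPBKim} for each species separately. What you have written is an accurate sketch of that cited result's proof (the change of variables $u\mapsto(x_{\mathbf b}^f,t_{\mathbf b}^f)$, the Jacobian estimate via Lemma~\ref{est_X_v}, and the boundary integral \eqref{alpha_bounded_intro} controlled by convexity), so you are essentially unpacking the citation rather than giving a different argument.
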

\begin{proof}
It is important to note that from \eqref{decay_E} we have $  n(x) \cdot \nabla \phi_f = 0 $ for all $x \in \O$. Thus  for both trajectories $(X_{\pm}(s;t,x,v), V_\pm(s;t,x,v) )$, their corresponding fields $\mp \nabla_x \phi_f $ satisfy $\mp \nabla_x \phi_f  \cdot n(x) = 0 $. Therefore we can apply Proposition 3 from \cite{VPBKim} to $\alpha_{f,\e,+}$ and $\alpha_{f, \e , - }$ separately to conclude \eqref{NLL_split2} and \eqref{NLL_split3}.
\end{proof}

		\begin{lemma}
			For any $0< \delta <1$, we claim that if $(f, \phi_f)$ solves (\ref{smallfphi}) then 
			\Be\label{Morrey}
			\|   \phi_f(t) \|_{C^{1, 1- \delta}(\bar{\O})}\lesssim_{\delta,  \O}  \| w_\vartheta f(t) \|_\infty  \ \ \text{for all} \ \ t\geq 0.
			\Ee\end{lemma}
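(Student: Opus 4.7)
The plan is to combine a pointwise bound on the charge density with classical Neumann elliptic regularity and the Morrey embedding. For the first step I would bound, pointwise in $x$, the source $\rho(t,x) := \int_{\R^3} \sqrt{\mu(v)}(f_+(t,x,v) - f_-(t,x,v))\,\dd v$ of the Poisson equation in \eqref{smallfphi}. Using $|f_\iota(t,x,v)| \le w_\vartheta(v)^{-1}\|w_\vartheta f(t)\|_\infty$ and recalling $w_\vartheta(v) = e^{\vartheta|v|^2}$ with $\vartheta \ll 1 < 1/4$, the product $\sqrt{\mu(v)}\, e^{-\vartheta|v|^2}$ is integrable, so
\[
|\rho(t,x)| \lesssim \|w_\vartheta f(t)\|_\infty \quad \text{for all } x\in\bar\O,
\]
and in particular $\|\rho(t)\|_{L^p(\O)} \lesssim_\O \|w_\vartheta f(t)\|_\infty$ for every $p \in [1,\infty]$.

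Next I would invoke standard $W^{2,p}$ elliptic regularity for the zero-Neumann problem $-\Delta_x\phi_f = \rho$, $\p_n\phi_f|_{\p\O} = 0$. The neutral condition \eqref{neutral_condition} and conservation of mass \eqref{conservation_mass} give $\int_\O \rho(t,x)\,\dd x = 0$ for every $t\ge 0$, which is exactly the compatibility condition for the Neumann problem; since $\O$ is $C^3$ (in particular $C^{1,1}$), classical elliptic theory then yields (after normalizing by $\int_\O \phi_f\,\dd x = 0$)
\[
\|\phi_f(t)\|_{W^{2,p}(\O)} \lesssim_{p,\O} \|\rho(t)\|_{L^p(\O)}
\]
for every $1 < p < \infty$. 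For the given $0 < \delta < 1$, I would then pick $p := 3/\delta > 3$ and apply the Morrey--Sobolev embedding $W^{2,p}(\O) \hookrightarrow C^{1,1-3/p}(\bar\O)$ to conclude
\[
\|\phi_f(t)\|_{C^{1,1-\delta}(\bar\O)} \lesssim_{\delta,\O} \|\phi_f(t)\|_{W^{2,p}(\O)} \lesssim_{\delta,\O} \|w_\vartheta f(t)\|_\infty,
\]
which is \eqref{Morrey}.

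No essential obstacle appears in the argument; it is a short concatenation of Gaussian integrability in $v$, solvability and $W^{2,p}$ regularity for the Neumann Laplacian on a smooth bounded domain, and the Morrey embedding. The only mild point to be careful about is to verify the compatibility condition $\int_\O \rho\,\dd x = 0$ at every time $t\ge 0$, which is precisely why the neutral condition \eqref{neutral_condition} was assumed in the statement of the main theorem.
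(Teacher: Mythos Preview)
Your proposal is correct and follows essentially the same approach as the paper: bound the source $\rho$ pointwise using the Gaussian weight, apply $W^{2,p}$ elliptic regularity for the Neumann problem, and then use the Morrey embedding with $p=3/\delta$. The only difference is that you spell out the compatibility condition $\int_\O \rho = 0$ and the zero-mean normalization explicitly, whereas the paper simply invokes ``the standard elliptic estimate.''
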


		\begin{proof} 
			We have, for any $p>1$, 
			\Be\notag
			\begin{split}
				\left\|\int_{\R^3} (f_+ - f_- ) \sqrt{\mu(v)} \dd v\right\|_{L^p (\O)} 
				\leq        |\O|^{1/p} \left( \int_{\R^3} w_\vartheta(v)^{-1} \sqrt{\mu (v)} \dd v\right) \| w_\vartheta f (t) \|_\infty.% \\
				%&\lesssim_\O       \e e^{- \lambda t}.
			\end{split}\Ee
			Then we apply the standard elliptic estimate to (\ref{smallfphi}) and deduce that 
			\Be\label{phi_2p}
			\|\phi_f (t) \|_{W^{2,p} (\O)} \lesssim 
			\| w_\vartheta f(t) \|_\infty
			%\e e^{- \lambda t}
			.\notag
			\Ee
			%For all $1< p\leq \infty$, there exists $C>0$, which does not depend on $p$, such that $$
			%\left\| \int_{\R^3}  f (t,x,v) \sqrt{\mu (v)} \dd v\right\|_{L^p (\O)}
			%\leq C \e e^{- \lambda t}.$$ By the standard elliptic regularity estimates, $\|  \phi(t) \|_{W^{2,p} (\O)} \lesssim_\Omega \big\| \int_{\R^3}  f (t,x,v) \sqrt{\mu (v)} \dd v\big\|_{L^p (\O)}$. 
			On the other hand, from the Morrey inequality, we have, for $p>3$ and $\O \subset \R^3$, 
			\[
			\|   \phi_f(t) \|_{C^{1, 1-  {3}/{p}}(\O)}\lesssim_{p, \O} \|   \phi(t) \|_{W^{2,p} (\O)}.
			\]
			Now we choose $p=3/\delta$ for $0< \delta <1$. Then we can obtain (\ref{Morrey}). 
		\end{proof}

		To close the estimate, we use the following lemma crucially.
		\begin{lemma}\label{lemma_apply_Schauder}
			Assume (\ref{beta_condition}). If $\phi_f$ solves (\ref{smallfphi}) then 
			\Be\label{phi_c,gamma}
			\| \phi_f (t) \|_{C^{2, 1- \frac{3}{p}} (\bar{\O})}    \leq (C_1)^{1/p} \big\{ \|  f(t) \|_p  + \| \alpha_{f,\e}^\beta \nabla_x f(t) \|_{p} \big\} \ \ \ \text{for} \ \ p>3.
			\Ee

		\end{lemma}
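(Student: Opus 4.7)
The plan is to combine a Schauder estimate for the Neumann problem (\ref{smallfphi}) with the key observation (\ref{bound_wp}) and the integrability bound from Proposition \ref{prop_int_alpha}. Since $\phi_f$ solves $-\Delta \phi_f = \rho$ with $\partial_n \phi_f = 0$, where $\rho(t,x):= \int_{\R^3}\sqrt{\mu}(f_+-f_-)\dd v$, the classical Schauder estimate (see, e.g., Gilbert--Trudinger) for the Neumann problem on the $C^{3}$ domain $\O$ gives
\be
\| \phi_f(t) \|_{C^{2,\gamma}(\bar\O)} \lesssim_{\O,\gamma} \| \rho(t) \|_{C^{0,\gamma}(\bar\O)} ,\qquad \gamma = 1 - \tfrac{3}{p}\in(0,1),
\ee
(the zero-mean condition guarantees solvability). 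By the Morrey embedding $W^{1,p}(\O) \hookrightarrow C^{0,1-3/p}(\bar\O)$ for $p>3$, it then suffices to prove
\be\label{rhoW1p}
\| \rho(t) \|_{W^{1,p}(\O)} \lesssim \| f(t) \|_{p} + \| \alpha_{f,\e}^\beta \nabla_x f(t) \|_{p}.
\ee

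For the $L^p$ part of (\ref{rhoW1p}) I would just use H\"older in $v$ against the Gaussian: since $\int_{\R^3}\mu(v)^{p^*/2}\dd v<\infty$, one gets pointwise
\be
|\rho(t,x)| \leq \sum_{\iota=\pm} \Big(\int_{\R^3}\mu^{p^*/2}\dd v\Big)^{1/p^*} \Big(\int_{\R^3}|f_\iota(t,x,v)|^p\dd v\Big)^{1/p},
\ee
and integrating in $x$ yields $\|\rho(t)\|_p \lesssim \|f(t)\|_p$. The main step, which I expect to be the delicate one, is the $L^p$ bound for $\nabla_x\rho$. Differentiating under the integral sign (legitimate since $\nabla_x f \in L^p$) and applying H\"older in $v$ with conjugate exponents $(p,p^*)$ against the weight $\alpha_{f,\e,\iota}^\beta$,
\be
|\nabla_x\rho(t,x)| \leq \sum_{\iota=\pm} \Big\| \frac{\sqrt{\mu}}{\alpha_{f,\e,\iota}(t,x,\cdot)^{\beta}}\Big\|_{L^{p^*}_v(\R^3)}\, \Big\|\alpha_{f,\e,\iota}^\beta \nabla_x f_\iota(t,x,\cdot) \Big\|_{L^p_v(\R^3)}.
\ee
Raising to the $p$-th power and integrating in $x$ over $\O$ gives
\be\label{nablarho}
\| \nabla_x \rho(t)\|_{L^p(\O)}^p \leq \sum_{\iota}\bigg( \sup_{x \in \bar\O}\Big\| \frac{\sqrt{\mu}}{\alpha_{f,\e,\iota}(t,x,\cdot)^{\beta}}\Big\|_{L^{p^*}_v}\bigg)^{p} \| \alpha_{f,\e,\iota}^\beta \nabla_x f_\iota(t) \|_{L^p(\O\times\R^3)}^p.
\ee

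The hard point is therefore to show that the supremum in (\ref{nablarho}) is finite, i.e.\ that $\alpha_{f,\e,\iota}^{-\beta p^*}$ is uniformly integrable in $v$ on $\R^3$ against the Gaussian weight $\mu^{p^*/2}$. Splitting the integral at $|v|\le N$ and $|v|>N$ and writing $\mu^{p^*/2}\lesssim e^{-C|v|^2}$ on the tail, the two pieces are exactly of the form (\ref{NLL_split2}) and (\ref{NLL_split3}) of Proposition \ref{prop_int_alpha} applied to each trajectory $\iota = \pm$ (legitimate since $\mp\nabla_x\phi_f\cdot n = 0$ on $\p\O$ by (\ref{smallfphi})), with $\sigma = \beta p^*$. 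The admissibility condition $\sigma<1$ reduces to $\beta < (p-1)/p$, which is implied by $\beta < 2/3 < (p-1)/p$ for $p>3$ from (\ref{beta_condition}). This gives the finite constant $C_1$ and establishes (\ref{rhoW1p}), completing the proof.
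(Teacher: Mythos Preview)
Your proof is correct and follows essentially the same route as the paper: Schauder estimate for the Neumann problem, then Morrey embedding $W^{1,p}\hookrightarrow C^{0,1-3/p}$, then the H\"older-in-$v$ splitting against the weight $\alpha_{f,\e,\iota}^\beta$, and finally Proposition~\ref{prop_int_alpha} with $\sigma=\beta p^*<1$ (using $\beta<2/3<(p-1)/p$ for $p>3$). The paper's argument is identical in structure, differing only in presentation details.
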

		
		\begin{proof}

			Applying the Schauder estimate to (\ref{smallfphi}), we deduce 
			\Be
			\begin{split}\label{apply_Schauder}
				\|\phi_{f} (t)\|_{C^{2,1-\frac{3}{p}}(\bar{\O})} &\lesssim_{p,\O} \Big\|\int_{\R^{3}} (f_+(t) - f_-(t))\sqrt{\mu} \dd v \Big\|_{C^{0,1-\frac{3}{p}}(\bar{\O})}  \ \ \ \text{for} \ \ p>3.
			\end{split}\Ee

			By the Morrey inequality, $W^{1,p}   \subset C^{0,1- \frac{3}{p}} $ with $p>3$ for a domain $\O \subset \R^3$ with a smooth boundary $\p\O$, we derive 
			%
			%said that for $n < p \leq \infty$ and $\O \subset \R^{n}$ with a smooth boundary $\p\O$
			%\Be\label{morrey}
			%\| u \|_{C^{0,1-n/p} (\bar{\O})} \lesssim_{n,p,\O} \| u \|_{W^{1,p} (\O)}.
			%\Ee
			%
			%From (\ref{morrey}),
			\Be\begin{split}\label{apply_Morrey}
				&\Big\| \int_{\R^{3}} (f_+(t) - f_-(t)) \sqrt{\mu }\dd v \Big\|_{C^{0,1-\frac{n}{p}} (\bar{\O}) }  \\
				\lesssim &  \ \Big\| \int_{\R^{3}} (f_+(t) - f_-(t))  \sqrt{\mu }\dd v \Big\|_{W^{1,p} (\O) } 
				\\
				\lesssim & \ \left(\int_{\R^3}   {\mu}^{q /2}\dd v\right)^{1/q}   \|  f (t) \|_{L^p (\O \times \R^3)} 
				+\Big\| \int_{\R^{3}} \nabla_{x} (f_+(t) - f_-(t))  \sqrt{\mu }\dd v \Big\|_{L^p (\O) }.
			\end{split}
			\Ee
			By the H\"older inequality, for $\iota = +$ or $-$ as in \eqref{iota},
			\Be\begin{split}\label{nabla_p}
				&\Big|\int_{\R^{3}} \nabla_{x} f_\iota (t,x,v) \sqrt{\mu(v)}\dd v\Big|\\
				\leq  & \  \Big\|\frac{\sqrt{\mu(\cdot)}}{ \alpha_{f,\e,\iota}(t,x,\cdot) ^{\beta}}  \Big\|_{L^{\frac{p}{p-1} }(\R^{3})}
				\Big\| \alpha_{f,\e,\iota}(t,x,\cdot )^{\beta} \nabla_{x} f_\iota (t,x,\cdot ) \Big\|_{L^{p} (\R^{3})}\\
				= & \ \underbrace{\left( \int_{\R^{3}}  \frac{ \mu(v)^{\frac{p}{2(p-1)}}}{\alpha_{f,\e,\iota} (t,x,v)^{\frac{\beta p}{p-1}}}   \dd v \right)^{\frac{p-1}{p}}}_{(\ref{nabla_p})_1} \|\alpha_{f,\e,\iota}(t,x, \cdot)^{\b}\nabla_{x}f_\iota(t,x, \cdot)\|_{L^{p} (\R^{3})}.
			\end{split}\Ee
			Note that $\frac{p-2}{p-1}<\frac{\beta p}{p-1}< \frac{2}{3} \frac{p}{p-1}<1$ from (\ref{beta_condition}). We apply Proposition \ref{prop_int_alpha} and conclude that $(\ref{nabla_p})_1\lesssim 1$. Taking $L^p(\O)$-norm on (\ref{nabla_p}) and from (\ref{apply_Morrey}), we conclude (\ref{phi_c,gamma}).\end{proof}

		%\subsection{Preliminary}
		We need some basic estimates to prove Proposition \ref{prop_W1p}. Recall the decomposition of $L$ in (\ref{L_decomposition}). From (\ref{collision_frequency})
		\Be\label{nabla_nu}
		|\nabla_v \nu(v)| \leq  \int_{\R^3} \int_{\S^2} |\o| \mu(u) \dd \omega \dd u \lesssim 1. 
		\Ee

		\hide
		\begin{lemma} \label{DKG}
			For $0<\varrho< \frac{1}{8}$,
			\Be\label{vK}\begin{split}
				| \nabla_v Kg(v) | \lesssim    \ \| w g \|_\infty
				+ \int_{\R^3} \mathbf{k}_\varrho (v,u) |\nabla_v g(u)| \dd u,
			\end{split}
			\Ee
			and
			\Be\label{vGamma}
			\begin{split}
				|\nabla_v \Gamma (g,g) (v)| 
				\lesssim  & \ \| w g \|_\infty \int_{\R^3} \mathbf{k}_\varrho (v,u) |\nabla_v g (u)| \dd u \\
				&+ \langle v\rangle  \| w g \|_\infty |\nabla_v g (v)|  + \langle v \rangle w(v)^{-1} \| w g \|_\infty^2.
			\end{split}
			\Ee
			
		\end{lemma}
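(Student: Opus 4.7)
The proof proceeds by induction on $\ell$ with hypothesis $\max_{0\leq j\leq \ell}\sup_{0\leq s\leq T^{**}}\mathcal{E}^j(s) \leq \mathcal{E}_0$ for some constant $\mathcal{E}_0$ depending only on the initial data appearing in (\ref{unif_Em}). The base case $f^0 \equiv 0$ is immediate. The $L^\infty$ component of $\mathcal{E}^{\ell+1}$ is inherited from the preceding $L^\infty$ proposition, so only the $L^p$ bulk, the weighted $W^{1,p}$ bulk, and the $\gamma_+$ traces need to be propagated. Each is obtained through Green's identity (Lemma \ref{lem_Green}) applied to an appropriate power of $f^{\ell+1}$, with the iterative diffuse BC (\ref{bdry_local}) providing the coupling on $\gamma_-$.

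For the $L^p$ estimate I would apply Lemma \ref{lem_Green} to $|w_{\tilde{\vartheta}} f^{\ell+1}|^p$. The field drift $q\nabla\phi^\ell\cdot\nabla_v$ is divergence-free in $v$, so the bulk keeps only the zeroth-order reservoir $\tfrac{p}{2}q v\cdot\nabla\phi^\ell$, which combined with $p\nu(v) \gtrsim \langle v\rangle$ and (\ref{integrable_nabla_phi_f}) yields good dissipation. The gain terms $Kf^\ell$ and $\Gamma_{\text{gain}}(f^\ell,f^\ell)$ are absorbed via $\|w_\vartheta f^\ell\|_\infty \ll 1$ and Young's inequality, while $-\Gamma_{\text{loss}}(f^{\ell+1},f^\ell)$ supplies an additional positive contribution on the LHS; the field source $q_1 v\cdot\nabla\phi^\ell\sqrt{\mu}$ is bounded by (\ref{integrable_nabla_phi_f}). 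On $\gamma_-$, the diffuse BC (\ref{bdry_local}) combined with Jensen's inequality applied to the probability measure $d\sigma$ produces $|w_{\tilde{\vartheta}} f^{\ell+1}|_{p,-}^p \lesssim |w_{\vartheta} f^\ell|_{p,+}^p$ with constant uniform in $\ell$, since $\tilde{\vartheta} < \vartheta$ supplies a Gaussian margin.

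For the weighted $W^{1,p}$ estimate I differentiate (\ref{fell_local}) in $\partial \in \{\partial_{x_i},\partial_{v_i}\}$ and apply Green's identity to $w_{\tilde{\vartheta}}^p \alpha_{f^\ell,\e}^{p\beta}|\partial f^{\ell+1}|^p$. The crucial simplification is the invariance (\ref{alpha_invariant}), which lets the weight $\alpha_{f^\ell,\e}^{p\beta}$ pass through the Vlasov operator without generating bulk contributions. What remains in the bulk are the commutators $[\partial_{v_i}, v\cdot\nabla_x]f^{\ell+1} = \partial_{x_i} f^{\ell+1}$ and $[\partial_{x_i}, \nabla\phi^\ell\cdot\nabla_v]f^{\ell+1} = \partial_i\nabla\phi^\ell\cdot\nabla_v f^{\ell+1}$, together with $\partial K f^\ell$ and $\partial\Gamma(f^\ell,f^\ell)$; the latter are controlled pointwise by $\|w_\vartheta f^\ell\|_\infty\int \mathbf{k}_\varrho |\nabla f^\ell| + \|w_\vartheta f^\ell\|_\infty \langle v\rangle |\nabla f^\ell|$ and hence in $L^p$ by $\mathcal{E}^\ell$. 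The $\nabla^2\phi^\ell$ piece is closed by Lemma \ref{lemma_apply_Schauder}, which uses precisely $\|f^\ell\|_p$ and $\|\alpha_{f^{\ell-1},\e}^\beta\nabla_x f^\ell\|_p$, both already inside $\mathcal{E}^\ell$.

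The main technical obstacle is the $\gamma_-$ boundary contribution of the weighted $W^{1,p}$ estimate. Since $\nabla\phi^\ell\cdot n = 0$ on $\p\O$, Lemma \ref{cannot_graze} gives $\alpha_{f^\ell,\e,\iota}\sim |n(x)\cdot v|$ on $\gamma_-$, so the trace weight $\alpha^{p\beta}|n\cdot v|$ behaves like $|n\cdot v|^{p\beta+1}$. Differentiating (\ref{bdry_local}) in tangential directions transfers derivatives onto the integrand under the $d\sigma$ integral, while the normal-velocity derivative produces factors such as $\partial_{v_n}\sqrt{\mu(v)}$; after absorbing one power of $|n\cdot v|$ from $d\gamma$, the resulting integrability requirement becomes $|n\cdot v|^{p\beta-p+1} \in L^1_{loc}(\R^3)$, which is precisely $\beta > (p-2)/p$ from (\ref{beta_condition}). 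Quantifying this marginal integrability by splitting $\gamma_+^\e$ from $\gamma_+\setminus\gamma_+^\e$ via Lemma \ref{le:ukai}, together with the change-of-variable bound (\ref{alpha_integrable}) to control $\alpha^{-\beta p^*}$ locally in $v$, is the delicate step. Once handled, combining the three estimates yields
\[
\mathcal{E}^{\ell+1}(t) \leq C\bigl(\|w_\vartheta f_0\|_\infty + \|w_{\tilde{\vartheta}} f_0\|_p^p + \|w_{\tilde{\vartheta}}\alpha_{f_0,\e}^\beta \nabla_{x,v}f_0\|_p^p + |\nabla_{\tau,v}f_0|_{p,+}^p\bigr) + C\, T^{**}\sup_{\ell'\leq \ell,\, s\leq T^{**}}\mathcal{E}^{\ell'}(s),
\]
and (\ref{unif_Em}) follows by choosing $T^{**}$ small enough that $CT^{**}\leq 1/2$.
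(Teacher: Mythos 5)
Your proposal does not prove the statement it was asked to prove. Lemma \ref{DKG} is a pointwise kernel estimate: it asserts that $|\nabla_v Kg(v)|$ and $|\nabla_v\Gamma(g,g)(v)|$ are bounded, for each fixed $v$, by convolutions of $\mathbf{k}_\varrho$ with $|\nabla_v g|$ plus lower-order reservoirs in $\|wg\|_\infty$. Its proof is a direct computation on the collision kernels and has nothing to do with the iteration scheme $f^\ell$, the induction on $\ell$, Green's identity, the boundary condition, or the small-time continuation argument. What you have written is an outline of the proof of Proposition \ref{prop_W1p} (the uniform weighted $W^{1,p}$ bound for the iterates), which is a different result that \emph{uses} the bounds of Lemma \ref{DKG} as one of its inputs when controlling $\p K f^\ell$ and $\p\Gamma(f^\ell,f^\ell)$ in the bulk term $(\ref{pf_green})_{\mathcal{G}}$.

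The argument Lemma \ref{DKG} actually requires is the following. Write $Kg(v)=\sum_i\int_{\R^3}\mathbf{k}_i(v,u+v)g(u+v)\,\dd u$ so that the $v$-derivative splits into a kernel piece and a function piece. One then verifies, by the elementary computations displayed as (\ref{nabla_k1}) and (\ref{nabla_k2}), that $|\p_{v_i}\mathbf{k}_1(v,u+v)|\lesssim\mathbf{k}_\varrho(v,u+v)$ and $|\p_{v_i}\mathbf{k}_2(v,u+v)|\lesssim\mathbf{k}_\varrho(v,u+v)$; the kernel piece is then bounded by $\|wg\|_\infty\int\mathbf{k}_\varrho(v,u)w(u)^{-1}\dd u\lesssim\|wg\|_\infty$, while the function piece produces $\int\mathbf{k}_\varrho(v,u)|\nabla_v g(u)|\dd u$, giving (\ref{vK}). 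For (\ref{vGamma}) one uses the Carleman representation (\ref{carleman}) and the decomposition (\ref{nabla_Gamma}): $\nabla_v\Gamma(g,g)=\Gamma_{\mathrm{gain}}(\nabla_v g,g)+\Gamma_{\mathrm{gain}}(g,\nabla_v g)-\Gamma_{\mathrm{loss}}(\nabla_v g,g)-\Gamma_{\mathrm{loss}}(g,\nabla_v g)+\Gamma_v(g,g)$, where $\Gamma_v$ collects the contribution from $\nabla_v\sqrt{\mu(v+u)}$. Following the derivation of the Grad kernel bound (\ref{k_estimate}) with $w^{-1}$ in place of $\sqrt\mu$ gives the $\|wg\|_\infty\int\mathbf{k}_\varrho|\nabla_v g(u)|\dd u$ piece for the gain terms (as in (\ref{bound_Gamma_nabla_vf1})); the local loss term $\Gamma_{\mathrm{loss}}(\nabla_v g,g)$ produces $\langle v\rangle\|wg\|_\infty|\nabla_v g(v)|$ (as in (\ref{bound_Gamma_nabla_vf2})); and $\Gamma_v(g,g)$ is bounded by $\langle v\rangle w(v)^{-1}\|wg\|_\infty^2$ (as in (\ref{Gvloss})--(\ref{Gvgain})). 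None of this appears in your proposal, so the gap is not a missing step but a misidentification of what needs to be proved.
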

		\unhide
		%\begin{proof}
	Recall the definition of $\mathbf{k}_\varrho(v,u)$ from \eqref{kzeta}. From (\ref{k_estimate}) and a direction computation, for $0<\varrho< \frac{1}{8} $,
		%\Be\label{nabla_k1}
		%\begin{split}
		%	\nabla_{v} K_{1}g(v) &= \nabla_{v} \iint_{\R^{3}\times\mathbf{S}^{2}} B(u-v,\omega)(\sqrt{\mu}g)(u) du d\omega \sqrt{\mu}(v) \\
		%	&= (\nabla_{v}\sqrt{\mu})\langle v \rangle \|wg\|_{\infty} + \sqrt{\mu} \iint_{\R^{3}\times\mathbf{S}^{2}} B(u-v,\omega) \big( g\nabla_{v}\sqrt{\mu} + \sqrt{\mu} \nabla_{v}g \big)(u) d\omega du  \\
		%	&\lesssim \|wg\|_{\infty} + \sqrt{\mu}\langle v \rangle \|wg\|_{\infty} + \int_{\R^3} \mathbf{k}_\varrho (v,u) |\nabla_v g(u)| \dd u,
		%\end{split}
		%\Ee
		%\Be \label{nabla_k1}
		%\begin{split}
		%	\nabla_{v} \mathbf{k}_{1}(v,u+v) &\lesssim \nabla_{v}\Big( |u|e^{-\frac{|v|^{2}+|u+v|^{2}}{4}} \Big)  \\
		%	&\lesssim |u|e^{-\frac{|v|^{2}+|u+v|^{2}}{4}} \big( \frac{|v|}{2} + \frac{|u+v|}{2} \big)  \\
		%	&\lesssim e^{-\frac{|v|^{2}+|u+v|^{2}}{4}} \big( |u|^{2} + |v|^{2} + |u+v|^{2} \big)  \\
		%	&\lesssim (1 + |u|^{2}) e^{-\frac{|v|^{2}+|u+v|^{2}}{4+\delta}},\quad 0 < \delta \ll 1  \\
		%	&\lesssim e^{-\frac{|u|^{2}}{8+2\delta}}  
		%	\lesssim \frac{1}{|u|} e^{-\frac{\varrho}{4} |u|^{2}} \lesssim \mathbf{k}_{\varrho}(v,u+v),
		%\end{split} 
		%\Ee
		\Be \label{nabla_k1}
		\begin{split}
			|\p_{v_{i}} \mathbf{k}_{1}(v,u+v)| &= C_{\mathbf{k}_{1}} \p_{v_{i}}\Big( |u|e^{-\frac{|v|^{2}+|u+v|^{2}}{4}} \Big)  
			%&= - C_{\mathbf{k}_{1}} |u|e^{-\frac{|v|^{2}+|u+v|^{2}}{4}} \frac{u_{i} + 2v_{i}}{2}  \\
			%&\lesssim e^{-\frac{|v|^{2}+|u+v|^{2}}{4}} |u|\big( |v| + |u+v| \big)  \\
			%&\lesssim e^{-\frac{|v|^{2}+|u+v|^{2}}{4}} \big( |v|^{2} + |u+v|^{2} \big)  \\
			%&\lesssim e^{-\frac{|v|^{2}+|u+v|^{2}}{4+\delta}},\quad 0 < \delta \ll 1  \\
			\lesssim %e^{-\frac{|u|^{2}}{8+2\delta}}  
			%\lesssim \frac{1}{|u|} e^{-\frac{\varrho}{4} |u|^{2}} \lesssim 
			\mathbf{k}_{\varrho}(v,u+v),
		\end{split} 
		\Ee
		and %similarly,
		%\Be \label{nabla_k2}
		%\begin{split}
		%	\nabla_{v} \mathbf{k}_{2}(v,u+v) &\lesssim \nabla_{v}\Big( \frac{1}{|u|}e^{-\frac{|u|^{2}}{8}} e^{-\frac{||v|^{2}-|u+v|^{2}|^{2}}{8|u|^{2}}}  \Big) \\
		%	&\lesssim \frac{1}{|u|}e^{-\frac{|u|^{2}}{8}} e^{-\frac{||v|^{2}-|u+v|^{2}|^{2}}{8|u|^{2}}} \frac{||v|^{2}-|u+v|^{2}|}{8|u|^{2}} |u| \\
		%&\lesssim \frac{1}{|u|}e^{-\frac{|u|^{2}}{8}} e^{-\frac{||v|^{2}-|u+v|^{2}|^{2}}{8|u|^{2}}} \frac{|u||v+u|}{8|u|^{2}} |u| \\
		%	&\lesssim \frac{|v|+|u+v|}{|u|}e^{-\frac{|u|^{2}}{8}} e^{-\frac{||v|^{2}-|u+v|^{2}|^{2}}{8|u|^{2}}}   \\
		%	&\lesssim \frac{1}{|u|}e^{-\frac{|u|^{2}}{8}} (|v|+|u+v|) e^{-\frac{||v|-|u+v||^{2}||v|+|u+v||^{2}}{8|u|^{2}}}   \\  
		%	&\lesssim \frac{1}{|u|}e^{-\frac{|u|^{2}}{8}} (|v|+|u+v|) e^{-\frac{||v|-|u+v||^{2}}{8}}   \\  
		%	&\lesssim \mathbf{k}_{\varrho}(v,u+v).
		%\end{split} 
		%\Ee
		\Be \label{nabla_k2}
		\begin{split}
			\p_{v_{i}} \mathbf{k}_{2}(v,u+v) &= C_{\mathbf{k}_{2}} \p_{v_{i}}\Big( \frac{1}{|u|}e^{-\frac{|u|^{2}}{8}} e^{-\frac{||v|^{2}-|u+v|^{2}|^{2}}{8|u|^{2}}}  \Big) \\
			&= -\frac{C_{\mathbf{k}_{2}}}{|u|}e^{-\frac{|u|^{2}}{8}} e^{-\frac{||v|^{2}-|u+v|^{2}|^{2}}{8|u|^{2}}} \frac{||v|^{2}-|u+v|^{2}|}{4|u| } \frac{u_{i}}{|u|}  \\
			%&\lesssim \frac{1}{|u|}e^{-\frac{|u|^{2}}{8}} e^{-\frac{||v|^{2}-|u+v|^{2}|^{2}}{8|u|^{2}}} \frac{|u||v+u|}{8|u|^{2}} |u| \\
			&\lesssim \frac{e^{-\frac{|u|^{2}}{8}} 
			}{|u|}
			e^{-\frac{
					||v|^{2}-|u+v|^{2}|^{2}
				}{16|u|^{2}}} %\frac{||v| +|u+v| || u |}{4|u|}
			\\
			&%\lesssim \frac{e^{-\frac{|u|^{2}}{8}} 
			%}{|u|}  
			\lesssim \mathbf{k}_{\varrho}(v,u+v) .  \\
			%&\lesssim \frac{|v|+|u+v|}{|u|}e^{-\frac{|u|^{2}}{8}} e^{-\frac{||v|^{2}-|u+v|^{2}|^{2}}{8|u|^{2}}}   \\
			%&\lesssim \frac{1}{|u|}e^{-\frac{|u|^{2}}{8}} (|v|+|u+v|) e^{-\frac{||v|-|u+v||^{2}||v|+|u+v||^{2}}{8|u|^{2}}}   \\  
			%&\lesssim \frac{1}{|u|}e^{-\frac{|u|^{2}}{16}} (|v|+|u|) e^{-\frac{|u|^{2}}{16}} e^{-\frac{||v|-|u+v||^{2}}{8}}   \\  
			%&\lesssim \mathbf{k}_{\varrho}(v,u+v).
		\end{split} 
		\Ee

For $g_1, g_2: \mathbb R^3 \to \mathbb R$, $g = \begin{bmatrix} g_1 \\ g_2 \end{bmatrix}$, we define
		\Be \label{K_v}
		 K_v g (v)		 : = \begin{bmatrix} \int_{\mathbb R^3 }\nabla_v \mathbf{k}_2 (v,u) (3g_1(u) + g_2(u) ) du - \int_{\mathbb R^3} \nabla_v \mathbf{k}_1 (v,u) (g_1(u) + g_2(u) ) du \\   \int_{\mathbb R^3 }\nabla_v  \mathbf{k}_2 (v,u) (3g_2(u) + g_1(u) ) du - \int_{\mathbb R^3} \nabla_v \mathbf{k}_1 (v,u) (g_1(u) + g_2(u) ) du \end{bmatrix}.
		\Ee

		From (\ref{k_vartheta_comparision}), (\ref{nabla_k1}), and (\ref{nabla_k2}),
		\Be\label{vKsum}\begin{split}
			| w_{\tilde{\vartheta}}   K \nabla_v g(v) |
			&\lesssim \ \sum_{i}\int_{\R^3} |\mathbf{k}_i  (v,u+v) | \frac{w_{\tilde{\vartheta}}(v)}{w_{\tilde{\vartheta}}(u+v)}  (|w_{\tilde{\vartheta}} \nabla_v g_1(u+v)| + |w_{\tilde{\vartheta}} \nabla_v g_2(u+v)|)\}\dd u\\
			&\lesssim \int_{\R^3} \mathbf{k}_{\tilde{\varrho}} (v,u) |w_{\tilde{\vartheta}}\nabla_v g(u)| \dd u, \\
			%&\lesssim  \ \| w g \|_\infty + \int_{\R^3} \mathbf{k}_\varrho (v,u) |\nabla_v g(u)| \dd u.
		| w_{\tilde{\vartheta}}  K_{v} g(v) |
			&\lesssim \ \sum_{i}\int_{\R^3} | \nabla_v\mathbf{k}_i  (v,u+v)| \frac{w_{\tilde{\vartheta}}(v)}{w_{\vartheta}(u+v)} ( |w_{\vartheta} g_1(u+v) | + |w_{\vartheta} g_2(u+v) |) \dd u  \\
			&\lesssim 
			\int_{\R^3} \mathbf{k}_\varrho (v,u) \frac{w_{\tilde{\vartheta}}(v)}{w_{\vartheta}(u)} |w_{\vartheta} g(u)|  \dd u  \\
			%&\lesssim \int_{\R^3} \mathbf{k}_\varrho (v,u) \{ |g(u)| +|\nabla_v g(u)|\} \dd u  \\
			&\lesssim \| w_\vartheta g \|_\infty.  \\
			%&\lesssim  \ \| w g \|_\infty + \int_{\R^3} \mathbf{k}_\varrho (v,u) |\nabla_v g(u)| \dd u.
		\end{split}
		\Ee

For $g = \begin{bmatrix} g_1 \\ g_2 \end{bmatrix}$ and $h = \begin{bmatrix} h_1 \\ h_2 \end{bmatrix} $, the nonlinear Boltzmann operator  $ \Gamma (g, h ) $ in (\ref{Gamma_def}) equals
		\begin{equation}\label{carleman}
		\begin{split}
		% \Gamma (g_{1},g_{2})(v)  
		%:=& 
		% \  \Gamma_{\mathrm{gain}} (g_{1},g_{2})(v)  
		%-  \Gamma_{\mathrm{loss}} (g_{1},g_{2})(v)  
		%\\
		%: =  &
		% \ 
		%\frac{1}{\sqrt{\mu}}Q_{\mathrm{gain}}(\sqrt{\mu} g_1, \sqrt{\mu} g_2)(v)
		%-  \frac{1}{\sqrt{\mu}}Q_{\mathrm{loss}}(\sqrt{\mu} g_1, \sqrt{\mu} g_2)(v)
		%\\
		%:=  
		& %\ 
		 \Gamma (g, h)  = \begin{bmatrix} \int_{\mathbb{R}^{3}}   \int_{\S^2}   | u \cdot \omega|
		(h_1+h_2)(v+ u_\perp) g_1(v + u_\parallel)
		\sqrt{\mu(v+u)} \dd \omega  \mathrm{d}u
		 - \int_{\mathbb{R}^{3}}   \int_{\S^2}   | u \cdot \omega|
		(h_1+h_2)(v+u) g_1(v  )
		\sqrt{\mu(v+u)} \dd \omega  \mathrm{d}u
		\\  \int_{\mathbb{R}^{3}}   \int_{\S^2}   | u \cdot \omega|
		(h_1+h_2)(v+ u_\perp) g_2(v + u_\parallel)
		\sqrt{\mu(v+u)} \dd \omega  \mathrm{d}u
		 - \int_{\mathbb{R}^{3}}   \int_{\S^2}   | u \cdot \omega|
		(h_1+h_2)(v+u) g_2(v  )
		\sqrt{\mu(v+u)} \dd \omega  \mathrm{d}u
		 \end{bmatrix},
		%,\\
		%=&
		%C\int_{\mathbb{R}^{3}} \mathrm{d}u_{ \parallel} \int_{u_{ \parallel}\cdot u_{ \perp} =0} \mathrm{d}u_{ \perp} \ g_{2}(v+ u_{ \perp}) g_{1} (v+ u_{\parallel})
		%q_{0}^{*} \big( \frac{|u_{ \parallel}|}{|u_{ \parallel} + u_{ \perp}|} \big) \frac{|u_{ \parallel} + u_{ \perp}|^{\kappa-1}}{|u_{ \parallel}| } e^{- \frac{|u_{ \parallel} + v + u_{ \perp}|^{2}}{4}}
		%,\\
		%=& C\int_{\mathbb{R}^{3}} \mathrm{d}u_{ \parallel} \int_{ (u_{ \parallel} - v)\cdot u_{ \perp}=0}  \mathrm{d}u_{ \perp} \ g_{1}(v+u_{ \perp}) g_{2}(u_{ \parallel})
		%q_{0}^{*}\big(  \frac{|u_{ \parallel}-v|}{ { |u_{ \parallel}-v+u_{ \perp}|    }}\big)  \frac{\big| u_{\parallel}-v + u_{ \perp} \big|^{{\kappa-1}}}{|u_{ \parallel}-v|}
		%e^{-\frac{|u_{ \parallel}+u_{ \perp} |^{2}}{4}} ,\\
		%=&C \int_{\mathbb{R}^{3}} \mathrm{d}u_{ \parallel} \int_{ (u_{ \parallel} - v)\cdot u_{ \perp}=0}  \mathrm{d}u_{ \perp} \  g_{2}(v+u_{ \perp}) g_{1}( u_{ \parallel})
		%q_{0}^{*}\big(  \frac{|u_{ \parallel}-v|}{ { |u_{ \parallel}-v+u_{\perp}|    }}\big)  \frac{\big| u_{ \parallel}-v + u_{ \perp} \big|^{ {\kappa-1} }}{|u_{ \parallel}-v|}
		%e^{-\frac{|u_{ \parallel}+u_{ \perp} |^{2}}{4}}
		%,
		\end{split}
		\end{equation}
		where $u_\parallel = (u \cdot \omega)\omega$ and $u_\perp = u - u_\parallel$. Following the derivation of (\ref{k_estimate}) in Chapter 3 of \cite{gl}, by exchanging the role of $\sqrt{\mu}$ and $w^{-1}$, we have 
		\Be\begin{split}\label{bound_Gamma_k}
			& |w_{\vartheta} \Gamma  (g, h)|   \lesssim \| w_{\vartheta} g \|_\infty \int_{\R^3} \mathbf{k}_{\tilde{\varrho }} (v,u) |w_{\vartheta}h(u) | \dd u, 
			\\ & |w_{\vartheta} \Gamma  (g, h)|   \lesssim \| w_{\vartheta} h \|_\infty \left( \int_{\R^3} \mathbf{k}_{\tilde{\varrho }} (v,u) |w_{\vartheta}g(u) | \dd u + \langle v \rangle |w_\vartheta g (v)|\right).
		\end{split}\Ee

		By direct computations
		\Be\begin{split}\label{nabla_Gamma}
			&\nabla_v  \Gamma(g,h) (v) \\
			=& \  \nabla_{v}  \Gamma_{\textrm{gain}}(g, h) - \nabla_{v}   \Gamma_{\textrm{loss}}(g, h) \\
			= & \   \Gamma_{\textrm{gain}} (\nabla_v g,h) +   \Gamma_{\textrm{gain}} ( g,\nabla_v h)
			-   \Gamma_{\textrm{loss}} (\nabla_v g,h) -  \Gamma_{\textrm{loss}} ( g,\nabla_vh)+   \Gamma_v (g,h).
		\end{split}\Ee
		Here we have defined
		\Be\begin{split}\label{Gamma_v}
			   & \Gamma_v (g, h)(v) :=   \Gamma_{v,\textrm{gain}} -   \Gamma_{v,\textrm{loss}} 
			  \\
			&  = \begin{bmatrix} \int_{\mathbb{R}^{3}}   \int_{\S^2}   | u \cdot \omega|
		(h_1+h_2)(v+ u_\perp) g_1(v + u_\parallel)
		\nabla_v \sqrt{\mu(v+u)} \dd \omega  \mathrm{d}u
		 - \int_{\mathbb{R}^{3}}   \int_{\S^2}   | u \cdot \omega|
		(h_1+h_2)(v+u) g_1(v  )
		\nabla_v \sqrt{\mu(v+u)} \dd \omega  \mathrm{d}u
		\\  \int_{\mathbb{R}^{3}}   \int_{\S^2}   | u \cdot \omega|
		(h_1+h_2)(v+ u_\perp) g_2(v + u_\parallel)
		\nabla_v \sqrt{\mu(v+u)} \dd \omega  \mathrm{d}u
		 - \int_{\mathbb{R}^{3}}   \int_{\S^2}   | u \cdot \omega|
		(h_1+h_2)(v+u) g_2(v  )
		\nabla_v \sqrt{\mu(v+u)} \dd \omega  \mathrm{d}u
		 \end{bmatrix}	.	
		 \end{split}\Ee
		
%Let's denote
%\Be \notag
%\Gamma_v( g, h ) :=  \begin{bmatrix} \hat \Gamma_v(g_1, h_1 ) +   \hat \Gamma_v(g_1, h_2 ) \\ \hat \Gamma_v(g_2, h_1 )  +  \hat \Gamma_v(g_2, h_2 ) \end{bmatrix}.
%\Ee
		
		Note that 
		\Be\begin{split}\notag 
			&|w_{\tilde{\vartheta}}  \Gamma_{\textrm{gain}} (\nabla_v g,h) |+ |w_{\tilde{\vartheta}}  \Gamma_{\textrm{gain}} ( g,\nabla_vh)|\\
			\lesssim & \ (\| w_{\vartheta} g \|_\infty + \| w_{\vartheta} h \|_\infty ) \big\{|w_{\tilde{\vartheta}}  \Gamma_{\textrm{gain}} (|\nabla_v g|, w_{\vartheta}^{-1}) | + |w_{\tilde{\vartheta}}  \Gamma_{\textrm{gain}} ( w_{\vartheta}^{-1}, |\nabla_vh|)|\big\} \\
			\lesssim & \ \ (\| w_{\vartheta} g \|_\infty + \| w_{\vartheta} h \|_\infty )
			\int_{\R^3} \int_{\S^2} 
			|(v-u) \cdot \omega| \frac{w_{\tilde{\vartheta}}(v)}{w_{\vartheta}(u )}  \Big\{ \frac{|\nabla_v h (u^\prime)|}{w_{\vartheta}(v^\prime) } + \frac{ |\nabla_v g (v^\prime)|}{w_{\vartheta}(u^\prime)}\Big\}
			\dd \omega \dd u.%\\
			%\lesssim & \ \| w g \|_\infty \int_{\R^3} \mathbf{k}_{\varrho} (v,u) |\nabla_v g(u)| \dd u
		\end{split}\Ee

		Then following the derivation of (\ref{k_estimate}) in Chapter 3 of \cite{gl}, by exchanging the role of $\sqrt{\mu}$ and $w_{\vartheta}^{-1}$, we can obtain a bound of 
		\Be\label{bound_Gamma_nabla_vf1}
		\begin{split}
		%&
		|w_{\tilde{\vartheta}}   \Gamma_{\textrm{gain}} (\nabla_v g,h) |+ |w_{\tilde{\vartheta}}  \Gamma_{\textrm{gain}} ( g,\nabla_v h)|%\\
		% \lesssim & \ \| w f \|_\infty \big\{| \hat \Gamma_{\textrm{gain}} (|\nabla_v f|, w^{-1}) | + | \hat \Gamma_{\textrm{gain}} ( w^{-1}, |\nabla_vf|)|\big\} \\
		% \lesssim & \ \| w f \|_\infty  
		% \int_{\R^3} \int_{\S^2} 
		% |(v-u) \cdot \omega| w(u )  \{ |\nabla_v f (u^\prime)|w(v^\prime)^{-1} + w(u^\prime)^{-1} |\nabla_v f (v^\prime)|\}
		% \dd \omega \dd u\\
		&\lesssim %& \
		(\| w_{\vartheta} g \|_\infty + \| w_{\vartheta} h \|_\infty ) \int_{\R^3} \mathbf{k}_{\varrho} (v,u) \frac{w_{\tilde{\vartheta}}(v)}{w_{\tilde{\vartheta}}(u)} (|w_{\tilde{\vartheta}}\nabla_v g(u)|  + |w_{\tilde{\vartheta}}\nabla_v h(u)| )\dd u  \\
		&\lesssim (\| w_{\vartheta} g \|_\infty + \| w_{\vartheta} h \|_\infty ) \int_{\R^3} \mathbf{k}_{\tilde{\varrho}} (v,u) ( |w_{\tilde{\vartheta}}\nabla_v g(u)|  + |w_{\tilde{\vartheta}}\nabla_v h(u)|)\dd u . \\
		\end{split}
		\Ee

		Clearly 
		\Be\label{bound_Gamma_nabla_vf2}
		\begin{split}
			 |w_{\tilde{\vartheta}}  \Gamma_{\textrm{loss}}( g, \nabla_v h)| &\lesssim  \| w_{\vartheta} g \|_\infty   \int_{\R^3} 
			\frac{w_{\tilde{\vartheta}}(v)}{w_{\vartheta}(v)w_{\tilde{\vartheta}}(u)}
			|w_{\tilde{\vartheta}}\nabla_v h (u)| \mu(u)^{\frac{1}{2}} 
			\dd u \\
			&\lesssim \|w_{\vartheta} g\|_{\infty} \int_{\R^3} \mathbf{k}_{\tilde{\varrho}} (v,u) |w_{\tilde{\vartheta}}\nabla_v h(u)| \dd u , \\
			|w_{\tilde{\vartheta}}  \Gamma_{\textrm{loss}}(\nabla_v g, h)| &\lesssim \langle v\rangle  \| w_{\vartheta} h \|_\infty   
			|w_{\tilde{\vartheta}}\nabla_v g (v)|  . 
		\end{split}
		\Ee
		For $  \Gamma_{v,\textrm{loss}}(g,h)$ defined in (\ref{Gamma_v}),
		\Be \label{Gvloss}
		\begin{split}
			&|w_{\tilde{\vartheta}}  \Gamma_{v,\textrm{loss}} (g,h)|\\
			&\lesssim \frac{w_{\tilde{\vartheta}}(v)}{w_{\vartheta} (v)} | w_{\vartheta} g | \iint_{\R^{3}\times {\S}^{2}} |(u-v) \cdot\omega|  \frac{1}{w_{\vartheta}(u)} |w_{ \vartheta}h(u)| \nabla_{v}\sqrt{\mu(u)} \dd u \dd \omega  \\
			&\lesssim  {\langle v \rangle} |w_{\vartheta} g| \|w_{\vartheta} h\|^{}_{\infty}. 
			%\| g \|_{L^p(\R^3)} .  
		\end{split}
		\Ee	
		For $  \Gamma_{v,\textrm{gain}}(g,h)$, following the derivation of (\ref{k_estimate}) in Chapter 3 of \cite{gl}, by exchanging the role of $\sqrt{\mu}$ and $w_{\vartheta}^{-1}$
		\Be \label{Gvgain}
		\begin{split}
			|w_{\tilde{\vartheta}}  \Gamma_{v,\textrm{gain}} (g,h)| &\lesssim \|w_{\vartheta}h\|_{\infty} \iint_{\R^{3}\times {\S}^{2}} |(u-v)\cdot\omega| \frac{w_{\tilde{\vartheta}}(v)}{w_{\vartheta}(v^{\prime})} \frac{w_{\vartheta}g(v^\prime)}{w_{\vartheta}(u^{\prime}) } \nabla_{v}\sqrt{\mu(u)} \dd u \dd\omega  \\
			&\lesssim   \|w_{\vartheta}h\|_{\infty} \int_{\R^3} \mathbf{k}_{\tilde{\varrho}} (v,u) | w_{\vartheta} g(u)| \dd u. \\
		\end{split}
		\Ee
		
		\hide
		we know that $w(v) \lesssim w(v^{\prime})w(u^{\prime})$ from consevation $|u|^{2} + |v|^{2} = |u^{\prime}|^{2} + |v^{\prime}|^{2}$ to derive 
		\Be \label{Gvgain}
		\begin{split}
			| \hat \Gamma_{v,\textrm{gain}} (g,g)| &\lesssim \|w_{\vartheta} g\|_{\infty}  \iint_{\R^{3}\times {\S}^{2}} |(u-v)\cdot\omega| \frac{1}{w_{\vartheta}(u^{\prime})w_{\vartheta}(v^{\prime})} \nabla_{v}\sqrt{\mu}(u) du d\omega  \\
			&\lesssim \|w_{\vartheta}g\|_{\infty} \int_{\R^3} \mathbf{k}_{\varrho} (v,u) |\nabla_v g(u)| \dd u. 
		\end{split}
		\Ee

		We combine (\ref{nabla_Gamma}), (\ref{bound_Gamma_nabla_vf1}), (\ref{bound_Gamma_nabla_vf2}), (\ref{Gvloss}), and (\ref{Gvgain}) to obtain (\ref{vGamma}).%\end{proof}
		
		\unhide

		The next result is about estimates of derivatives on the boundary. Assume (\ref{F_ell_BC}) and (\ref{fell_local}). We claim that for $(x,v) \in\gamma_-$,
		\Be\label{BC_deriv}\begin{split}
			%&
			|\nabla_{x,v} f^{\ell+1}(t,x,v) |%\\
			\lesssim%& \ 
			\langle v\rangle\sqrt{\mu(v)}
			\Big(1+ \frac{1}
			{|n(x) \cdot v|}  \Big)
			\times (\ref{BC_deriv_R}) .
			%\\
			%& \times
			%\int_{n(x) \cdot u>0}  \Big\{
			%(
			%\langle u\rangle  + |\nabla_x \phi_f|
			%)
			%|\nabla_{x,v} f(t,x,u) | \\ &  \ \ \ \ \   \ \ \  \ \ \  \ \ \  \ \ \   + %\frac{\langle v\rangle \sqrt{\mu(v)}  }{|n(x) \cdot v|}
			%\langle u \rangle (1+ |\nabla_x \phi_f |) |f|+ |\nabla_x \phi_f| \mu(u)^{\frac{1}{4}}
			% \\
			%&  \ \ \ \ \   \ \ \  \ \ \  \ \ \  \ \ \  
			%+(1+ \| w f \|_\infty) |\int_{\R^3}\mathbf{k}_{\varrho }(u, u^\prime) |f(u^\prime)| \dd u^\prime|
			%\Big\}\sqrt{\mu(u)}\{n(x) \cdot u\} \dd u.
		\end{split}\Ee   
		with
		\Be\begin{split}\label{BC_deriv_R}
			%&(\ref{BC_deriv_1}) \\
			%= 
			& %\
			\int_{n(x) \cdot u>0}  \Big\{
			(
			\langle u\rangle  + |\nabla_x \phi^\ell | + |\nabla_x \phi^{\ell-1} | 
			)
			%|\nabla_{x,v} f^{\ell+1} (t,x,u) |  + 
			| \nabla_{x,v} f^\ell (t,x,u ) |  \\ 
			&
			\ \ \ \ \    \ \ \  \ \ \  \ \ \   + \langle u \rangle (|f^{\ell +1}|  + |f^\ell | ) 
			+(1+ \| w_{\vartheta} f ^{\ell }\|_\infty + \| w_\vartheta f^{\ell -1 } \|_\infty) \int_{\R^3}\mathbf{k}_{\varrho }(u, u^\prime) ( |f^\ell(u^\prime)| + |f^{\ell - 1 } (u' ) | ) \dd u^\prime
			\\
			&
			\ \ \ \ \    \ \ \  \ \ \  \ \ \   + 
			(\langle u \rangle (|f^{\ell +1}| + | f^\ell | )  + \mu(u)^{\frac{1}{4}} ) ( |\nabla_x \phi^\ell | + |\nabla_x \phi^{\ell-1} | )
			\Big\}\sqrt{\mu(u)}\{n(x) \cdot u\} \dd u.
		\end{split}
		\Ee
		
		From (\ref{fell_local}),
		%
		%The difficulty is always the control of the normal spatial derivative of $%
		%\partial _{n}.$ From the general method of proving regularity in PDE with boundary conditions, it is
		%natural to use the Boltzmann equation to solve the normal derivative $%
		%\partial _{n}f$ inside the region$,$ in terms of $\partial _{t}f,$ $\nabla
		%_{v}f,$ and $\partial _{\tau }f$ $\ $as:
		\Be\label{fn}
		\begin{split}
			&\partial _{n}f^{\ell+1}(t,x,v)\\
			=&\frac{-1}{n(x)\cdot v}\bigg\{ \partial
			_{t}f^{\ell +1}+\sum_{i=1}^{2}(v\cdot \tau _{i})\partial _{\tau _{i}}f^{\ell +1} 
			-q \nabla_x \phi^l \cdot \nabla_v f^{\ell +1}  \\
			& \ \ \ \  \ \ \ \  \   \ \ \ + q \frac{v}{2} \cdot \nabla_x \phi^l f^{\ell +1} + \nu f^{\ell +1} - K f^\ell
			-\Gamma_\text{gain} (f^\ell,f^\ell) + \Gamma_\text{loss} (f^{\ell +1}, f^\ell) + q_1 v\cdot \nabla_x \phi^l \sqrt{\mu}\bigg\}. 
			%\\
			%=& \frac{-1}{n(x)\cdot v}\bigg\{  c_\mu\sqrt{\mu (v)}%
			%\int_{n(x)\cdot u>0}\partial _{t}f(t,x,u)\sqrt{\mu
			%(u)}\{n(x)\cdot u\}\mathrm{d}u \\
			%& \ \ \ \ \ \ \ \ \ \ \  +\sum_{i=1}^{2}(v\cdot \tau _{i})c_\mu\sqrt{\mu
			%(v)}\int_{n(x)\cdot u>0}\partial _{\tau _{i}}f(t,x,u)%
			%\sqrt{\mu (u)}\{n(x)\cdot u\}\mathrm{d}u \\
			%& \ \ \ \ \ \ \ \ \ \ \ +\sum_{i=1}^{2}(v\cdot \tau _{i}) c_\mu\sqrt{\mu
			%(v)}\int_{n(x)\cdot u>0}\nabla _{v}f(t,x,u)\frac{%
			%\partial \mathcal{T}^{T}(x)}{\partial \tau _{i}}\mathcal{T}(x)u
			%\sqrt{\mu (u)}\{n(x)\cdot u\}\mathrm{d}u \\
			%& \ \ \ \ \ \ \ \ \ \ \ \ \ \ \ -\Gamma_{\mathrm{gain}}  (f,f)  +\nu(\sqrt{\mu}f)f\ \ \ \bigg\}
		\end{split}\Ee%
		%encounters a severe difficulty: $\frac{1}{n(x)\cdot v}$ $\notin L_{loc}^{1}$
		%in the velocity space (a $L^{\infty }$ bound is desirable for any $W^{1,p}$
		%estimate).
		%
		%The first new ingredient of our approach is to use (\ref{fn}) \textit{not}
		%inside the domain, but at the boundary $\partial \Omega .$ Using special
		%feature of the diffuse boundary condition and (\ref{boundary_t}), (\ref%
		%{boundary_v}) and (\ref{boundary_tau}), we can express $\partial _{n}f$ at $%
		%(x,v)\in \gamma _{-}$ as
		%\begin{equation}
		%\begin{split}
		%&\partial _{n}f(t,x,v)\\
		%&= -\frac{1}{n(x)\cdot v}\bigg\{\ \sqrt{\mu (v)}%
		%\int_{n(x)\cdot u>0}\partial _{t}f(t,x,u)\sqrt{\mu
		%(u)}\{n(x)\cdot u\}\mathrm{d}u \\
		%& \ \ \ \ \ \ \ \ \ \ \ \ \ \ \ +\sum_{i=1}^{2}(v\cdot \tau _{i})\sqrt{\mu
		%(v)}\int_{n(x)\cdot u>0}\partial _{\tau _{i}}f(t,x,u)%
		%\sqrt{\mu (u)}\{n(x)\cdot u\}\mathrm{d}u \\
		%& \ \ \ \ \ \ \ \ \ \ \ \ \ \ \ +\sum_{i=1}^{2}(v\cdot \tau _{i})\sqrt{\mu
		%(v)}\int_{n(x)\cdot u>0}\nabla _{v}f(t,x,u)\frac{%
		%\partial \mathcal{T}^{t}(x)}{\partial \tau _{i}}\mathcal{T}(x)u
		%\sqrt{\mu (u)}\{n(x)\cdot u\}\mathrm{d}u \\
		%& \ \ \ \ \ \ \ \ \ \ \ \ \ \ \ -\Gamma_{\mathrm{gain}}  (f,f)  +\nu(\sqrt{\mu}f)f\ \ \ \bigg\},
		%\end{split}
		%\label{boundary_n}
		%\end{equation}

		Let $\tau _{1}(x)$ and $\tau _{2}(x)$ be unit tangential vectors to $\partial\Omega$ satisfying $\tau
		_{1}(x)\cdot n(x)=0=\tau _{2}(x)\cdot n(x)$ and $\tau _{1}(x)\times \tau
		_{2}(x)=n(x)$. Define the orthonormal transformation from $\{n,\tau
		_{1},\tau _{2}\}$ to the standard basis $\{\mathbf{e}_{1},\mathbf{e}_{2},%
		\mathbf{e}_{3}\}$, i.e. $\mathcal{T}(x)n(x)=\mathbf{e}_{1},\ \mathcal{T}%
		(x)\tau _{1}(x)=\mathbf{e}_{2},\ \mathcal{T}(x)\tau _{2}(x)=\mathbf{e}_{3},$
		and $\mathcal{T}^{-1}=\mathcal{T}^{T}.$ Upon a change of variable: $%
		u^{  \prime }=\mathcal{T}(x)u,$ we have%
		\begin{equation*}
		n(x)\cdot u=n(x)\cdot \mathcal{T}^{t}(x)u^{\prime }=n(x)^{t}%
		\mathcal{T}^{t}(x)u^{ \prime }=[\mathcal{T}(x)n(x)]^{t}u^{
			\prime }=\mathbf{e}_{1}\cdot u^{  \prime }=u_{1}^{  \prime },
		\end{equation*}%
		then the RHS of the diffuse BC (\ref{F_ell_BC}) equals
		\begin{equation*}
		%c_{\mu }\sqrt{\mu (v)}\int_{n(x)\cdot u>0}f(t,x,u)\sqrt{%
		%\mu (u)}\{n(x)\cdot u\}\mathrm{d}u=
		c_{\mu }%
		\sqrt{\mu (v)}\int_{u_{1}^{  \prime }>0}f^\ell(t,x,\mathcal{T}%
		^{t}(x)u^{  \prime })\sqrt{\mu (u^{  \prime })}\{u_{1}^{
			\prime }\}\mathrm{d}u^{  \prime }.
		\end{equation*}%
		Then we can further take tangential derivatives $\partial _{\tau _{i}}$
		as, for $(x,v)\in \gamma _{-},$
		\begin{equation}\label{boundary_tau}
		\begin{split}
		&\partial _{\tau _{i}}f^{\ell +1}(t,x,v)\\
		%& =c_{\mu }\sqrt{\mu (v)}\int_{u_{1}^{
		%\prime }>0}\Big\{\partial _{\tau _{i}}f(t,x,\mathcal{T}^{t}(x)u^{
		%\prime })+\nabla _{v}f(t,x,\mathcal{T}^{t}(x)u^{  \prime })\frac{%
		%\partial \mathcal{T}^{t}(x)}{\partial \tau _{i}}u^{  \prime }\Big\}%
		%\sqrt{\mu (u^{  \prime })}\{u_{1}^{  \prime }\}\mathrm{d}u^{
		%\prime } \\
		& =c_{\mu }\sqrt{\mu (v)}\int_{n(x)\cdot u >0}\partial _{\tau
			_{i}}f^\ell(t,x,u)\sqrt{\mu (u)}\{n(x)\cdot u\}%
		\mathrm{d}u\\
		& \ \ +c_{\mu }\sqrt{\mu (v)}\int_{n(x)\cdot u>0}\nabla
		_{v}f^\ell(t,x,u)\frac{\partial \mathcal{T}^{t}(x)}{\partial \tau _{i}}%
		\mathcal{T}(x)u\sqrt{\mu (u)}\{n(x)\cdot u\}%
		\mathrm{d}u.
		\end{split}
		\end{equation}%

		%%%%%%%%%%%%%
		%%%%%%%%%%%%%

		%Our second new ingredient is to split the $\gamma _{+}$ integral into near
		%grazing set $\gamma _{+}^{\varepsilon }$ and the rest for $p\neq 2$ for our
		%boundary representation for derivatives (\ref{boundary_t}), (\ref{boundary_v}%
		%), (\ref{boundary_tau}), and (\ref{boundary_n}). For small $\varepsilon >0$ we define $\gamma _{+}^{\varepsilon }$, the set of almost grazing velocities or large
		%velocities
		
		We can take velocity derivatives directly to (\ref{diffusef}) and obtain that for $%
		(x,v)\in \gamma _{-},$
		\begin{eqnarray} \label{vtderivbdry}
		\nabla _{v}f^{\ell +1}(t,x,v) &=&c_{\mu }\nabla _{v}\sqrt{\mu (v)}\int_{n(x)\cdot
			u>0}f^\ell(t,x,u)\sqrt{\mu (u)}\{n({x})\cdot
		u\}\mathrm{d}u,\label{boundary_v}\\
		\partial _{t}f^{\ell +1}(t,x,v) &=&c_{\mu }\sqrt{\mu (v)}\int_{n(x)\cdot u>0}\partial _{t}f^\ell(t,x,u)\sqrt{\mu (u)}\{n({x})\cdot
		u\}\mathrm{d}u. \notag
		\end{eqnarray}%
		For the temporal derivative, we use (\ref{systemf}) again to deduce that 
		\Be\label{boundary_t} 
		\begin{split}
			&\partial _{t}f^{\ell +1}(t,x,v)\\
			=&c_{\mu }\sqrt{\mu (v)}\int_{n(x)\cdot u>0}
			\Big\{
			- u\cdot \nabla_x f^\ell + q \nabla_x  \phi^{\ell-1}  \cdot \nabla_v f^\ell - q \frac{u}{2} \cdot \nabla_x \phi^{l-1} f^\ell + \nu f^\ell - K f^{l-1} \\
			& \ \ \ \ \ \ \ \  \ \ \ \ \ \ \ \  \ \ \ \ \ \ \ \ \ \ 
			+  \Gamma_\text{gain}(f^{\ell-1},f^{\ell-1}) - \Gamma_\text{loss} (f^\ell, f^{\ell-1})  - q_1 u\cdot \nabla_x \phi^{\ell-1} \sqrt{\mu}
			\Big\}
			\sqrt{\mu (u)}\{n({x})\cdot
			u\}\mathrm{d}u.
		\end{split}
		\Ee 
		From (\ref{fn})-(\ref{boundary_t}), (\ref{k_estimate}), and (\ref{bound_Gamma_k}), we conclude (\ref{BC_deriv}).

		%where $q_{0}^{*}(\cos \theta )=  \frac{q_{0}(\cos\theta)}{|\cos\theta|}.$  This is due to two change of variables (37),(38) and page 316 of \cite{Guo03}.

		%For (\ref{W1p}) first we need
		%\Be\label{transp_alpha}
		%[\p_{t} + v\cdot \nabla_{x} - \nabla_{x} \phi \cdot \nabla_{v}] |n(\xb(t,x,v)) \cdot \vb(t,x,v)|=0.
		%\Ee
		%%This might be true since, for $|s-t| \ll 1$,
		%\Bes
		%\xb(s, X(s;t,x,v), V(s;t,x,v)) &\equiv& \xb (t,x,v),\\
		%\vb(s, X(s;t,x,v), V(s;t,x,v)) &\equiv& \vb (t,x,v).
		%\Ees

		%Then we have to check \cite{GKTT1} for our case. 
		%\begin{proposition}\label{prop_W1p}
		%\end{proposition}

		\begin{proof}[\textbf{Proof of Proposition \ref{prop_W1p}}]
			\textit{Step 1.} 
			Note that by our choice of $f^1$, we have $\p_t f^1(t,x,v)|_{\gamma_- }  = 0 $. Therefore combing \eqref{boundary_tau}, \eqref{fn}, and \eqref{vtderivbdry} and the assumption that $ |\nabla_{\tau, v } f_0 |_{p,+}^p < \infty$, we get \eqref{unif_Em} is valid for $\ell \le 1 $.
			
			Thus it suffices to prove the following induction statement: there exist $T^{**} \ll 1$ (and $T^{**} < T^*(M)$) and $C>0$ such that  
		\Be \label{induc_hypo}
		\begin{split}
			\text{ if }   \   \max_{0\leq m\leq\ell}\sup_{0 \leq t \leq T^{**}} \mathcal{E}^m(t)
			\leq C \{ \| w_\vartheta f_0 \|_\infty + \| w_{\tilde{\vartheta}} f _0 \|_p^p
			+
			\| w_{\tilde{\vartheta}} \alpha_{f_{0 },\e}^\beta \nabla_{x,v} f  _0 \|_{p}^p
			+ |\nabla_{\tau, v } f_0 |_{p,+}^p  \} < \infty, \\
			\text{ then } \  \sup_{0 \leq t \leq T^{**}}
			\mathcal{E}^{\ell+1} (t)\leq C \{ \| w_\vartheta f_0 \|_\infty +\| w_{\tilde{\vartheta}} f _0 \|_p^p
			+
			\| w_{\tilde{\vartheta}} \alpha_{f_{0 },\e}^\beta \nabla_{x,v} f_0 \|_{p}^p
			+|\nabla_{\tau, v } f_0 |_{p,+}^p \}. 
		\end{split} \Ee

			Define 
			\Be\label{def_nu_phi}
			\nu_{\phi^\ell}(t,x,v):= \begin{bmatrix} \nu(v) +  \frac{v}{2} \cdot \nabla_x \phi^\ell & 0 \\ 0 & \nu(v) -  \frac{v}{2} \cdot \nabla_x \phi^\ell \end{bmatrix}.
			\Ee
			From the assumption (\ref{small_bound_f_infty}), we have that $\nu(v) +  \frac{v}{2} \cdot \nabla_x \phi^\ell \gtrsim \frac{\nu(v)}{2}$, and $\nu(v) -  \frac{v}{2} \cdot \nabla_x \phi^\ell \gtrsim \frac{\nu(v)}{2}$.
			
			From (\ref{smallfphi}), (\ref{k_estimate}), and (\ref{bound_Gamma_k}), we can easily obtain that, for $0<\varrho \ll 1$ 
			\Be
			\begin{split}\label{energy_f_p}
				&\|w_{\tilde{\vartheta}} f^{\ell+1}(t)\|_p^p  + \int^t_0 \| \nu_{\phi^\ell}^{1/p} w_{\tilde{\vartheta}} f^{\ell+1} \|_p^p +  \int^t_0 | w_{\tilde{\vartheta}}f^{\ell+1}|_{p,+}^p\\
				\lesssim & \ \|w_{\tilde{\vartheta}}f( 0)\|_p^p + (1+ \| w_\vartheta f^\ell \|_\infty) \int^t_0 \iint_{\O \times \R^3} |w_{\tilde{\vartheta}}f^{\ell+1}(v)|^{p-1} \int_{\R^3} \mathbf{k}_\varrho (v,u)\frac{w_{\tilde{\vartheta}}(v)}{w_{\tilde{\vartheta}}(u)} | w_{\tilde{\vartheta}}f^\ell(u)|\dd u\\
				&  +  \|  w_{\tilde{\vartheta}} f^\ell \|_\infty \int^t_0   \int_{\Omega \times \mathbb R^3 }  \langle v \rangle| w_{\tilde{\vartheta}} f^{\ell+1} | ^p +    o(1) \int^t_0 \|  w_{\tilde{\vartheta}}f^{\ell+1} \|_p^p  + \int_0^t \| \nabla \phi^\ell \| _p^p +  \int^t_0 |  w_{\tilde{\vartheta}}f^{\ell + 1 }|_{p,-}^p .
			\end{split}
			\Ee
			Note that by the H\"older inequality, (\ref{grad_estimate}), and (\ref{k_vartheta_comparision}),			\Be\begin{split}\label{k_p_bound}
				&\int_{\R^3}|w_{\tilde{\vartheta}}f^{\ell + 1 }(v)|^{p-1} \int_{\R^3} \mathbf{k}_{\tilde{\varrho}} (v,u)  | w_{\tilde{\vartheta}}f^\ell(u)|\dd u \dd v  \\
				\lesssim & \ \| w_{\tilde{\vartheta}} f^{\ell + 1 }  \|_{L^p_v}^{\frac{1}{p-1}}  \left\| \int_{\R^3} \mathbf{k}_{\tilde{\varrho}} (v,u)^{1/q} \mathbf{k}_{\tilde{\varrho}} (v,u)^{1/p}  | w_{\tilde{\vartheta}} f^\ell(u)|\dd u  \right\|_{L^p_v }\\
				\lesssim & \  \| w_{\tilde{\vartheta}} f^{\ell + 1 }  \|_{L^p_v}^{\frac{1}{p-1}} \left( \int_{\R^3} \mathbf{k}_{\tilde{\varrho}} (v,u) \dd u\right)^{1/q}\left\| 
				\left( \int_{\R^3} \mathbf{k}_{\tilde{\varrho}}  (v,u) |w_{\tilde{\vartheta}}f^\ell(u)|^p \dd u \right)^{1/p} \right\|_{L^p_v }\\
				%\lesssim & \ \| w_{\tilde{\vartheta}}f  \|_{L^p_v}^ p  \left( \int_{\R^3} \mathbf{k}_{\tilde{\varrho}} (v,u) \dd u\right)^{1/q} \left( \int_{\R^3} \mathbf{k}_{\tilde{\varrho}}  (v,u)   \dd v \right)^{1/p} \\
				\lesssim &  \ o(1) \| w_{\tilde{\vartheta}} f ^{\ell + 1 } \|_{L^p_v}^ p + \| w_{\tilde{\vartheta}} f ^{\ell} \|_{L^p_v}^ p.
			\end{split}\Ee
			From a standard elliptic theorem and (\ref{smallfphi}), we have 
			\Be\label{phi_p_bound}
			\int_0^t\| \nabla \phi ^\ell \|_p^p\lesssim \int^t_0 \| w_{\tilde{\vartheta}}f^\ell \|_p^p. 
			\Ee
			
			Now we focus on $\int^t_0 |w_{\tilde{\vartheta}}f^{\ell + 1}|^p_{p,-}$ in (\ref{energy_f_p}). We plug in (\ref{F_ell_BC}) and then decompose $\gamma_+^\e  \cup \gamma_+   \backslash \gamma_+^\e $ where $\e$ is small but satisfies (\ref{lower_bound_e}). This leads
			\Be\notag
			\begin{split}
				&\int^t_0 |w_{\tilde{\vartheta}}f^{\ell + 1}|_{p,-}^p\\
				\lesssim & \ 
				\int^t_0  \int_{\p\O} \left(
				\int_{\gamma_+^\e(x)} w_{\tilde{\vartheta}} | f^\ell | \sqrt{\mu} \{n \cdot u\} \dd u 
				\right)^p +  \int^t_0  \int_{\p\O}  \left(
				\int_{\gamma_+(x)\backslash \gamma_+^\e(x)} w_{\tilde{\vartheta}} | f^\ell|\sqrt{\mu} \{n \cdot u\} \dd u 
				\right)^p\\
				\lesssim & \  \Big( \int_{\gamma_+^\e} \sqrt{\mu} \{n \cdot u\} \dd u  \Big)^{p/q}\int^t_0 |w_{\tilde{\vartheta}}f^\ell |_{p,+}^p
				+ \int^t_0 \int_{\gamma_+ \backslash \gamma_+^\e} |w_{\tilde{\vartheta}}f^\ell \sqrt{\mu}|^p\\
				\lesssim & \ o(1)  \int^t_0 |w_{\tilde{\vartheta}}f^\ell |_{p,+}^p
				+  \int^t_0 \int_{\gamma_+ \backslash \gamma_+^\e} |w_{\tilde{\vartheta}}f^\ell \sqrt{\mu}|^p.
			\end{split}
			\Ee
			From (\ref{fell_local}), Lemma \ref{le:ukai}, (\ref{bound_Gamma_k}), and (\ref{k_p_bound})
			\Be\label{bound_p_est}
			\int^t_0 |w_{\tilde{\vartheta}}f^{\ell+1} |_{p,-}^p\lesssim \| w_{\tilde{\vartheta}}f(0)\|_p^p + o(1)  \int^t_0 |w_{\tilde{\vartheta}}f^\ell |_{p,+}^p  +(1+ \| w_\vartheta f^{\ell - 1} \|_\infty) \int^t_0 \|   ( w_{\tilde{\vartheta}} f^\ell \|_p^p +  w_{\tilde{\vartheta}} f^{\ell-1} \|_p^p) .
			%+  \|  w_{\tilde{\vartheta}} f^{\ell-1} \|_\infty \int^t_0   \int_{\Omega \times \mathbb R^3 }  \langle v \rangle| w_{\tilde{\vartheta}} f^\ell | ^p.
			\Ee
			
			Collecting terms from (\ref{energy_f_p}), (\ref{k_p_bound}), (\ref{phi_p_bound}), and (\ref{bound_p_est}), we conclude that for $\sup_{\ell \ge 0 } \|  w_{\tilde{\vartheta}} f^\ell \|_\infty \ll 1 $,
			\Be \begin{split}\label{Lp_estimate_f}
			\| w_{\tilde{\vartheta}}f^{\ell + 1 } (t) \|_p^p  & + \int^t_0 \| \nu_{\phi^\ell}^{1/p} w_{\tilde{\vartheta}} f^{\ell + 1 } \|_p^p + \int^t_0 |w_{\tilde{\vartheta}}f^{\ell + 1 }|_{p,+}^p 
			\\ & \lesssim \| w_{\tilde{\vartheta}}f(0) \|_p^p + (1+ \| w_\vartheta f^\ell \|_\infty) \int^t_0  (\| w_{\tilde{\vartheta}} f^\ell \|_p^p + \| w_{\tilde{\vartheta}} f^{\ell-1} \|_p^p +  \int^t_0 |w_{\tilde{\vartheta}}f^\ell |_{p,+}^p )
			\\ & \lesssim w_{\tilde{\vartheta}}f(0) \|_p^p + \left( o(1) +t (1 +  \| w_\vartheta f^\ell \|_\infty) \right) \max_{m = \ell, \ell-1 } \sup_{0 \le t \le T^{**}} \mathcal E^m .
			\end{split}  \Ee

			\vspace{4pt}
			
			\textit{Step 2.} By taking derivatives $\p \in\{ \nabla_{x_{i}}, \nabla_{v_{i}} \}$ to (\ref{fell_local}),
			\Be \label{eqtn_nabla_f}   [ \p_{t} + v\cdot\nabla_{x} - q \nabla_{x}\phi^\ell \cdot\nabla_{v} + \nu_{\phi^\ell, w_{\tilde{\vartheta}}} ]( w_{\tilde{\vartheta}}\p f^{\ell +1}) =  w_{\tilde{\vartheta}}\mathcal{G}^{\ell +1}, \Ee
			where
			\Be  
			\begin{split}	\label{mathcal_G}
				\mathcal{G}^{\ell +1}  =& 
				- \p v \cdot\nabla_{x}f^{\ell +1} 
				+ q \p \nabla \phi^{\ell} \cdot \nabla_v f^{\ell +1}\\&
				+ \p \Gamma_\text{gain}(f^\ell,f^\ell) - \p \Gamma_\text{loss} (f^{\ell +1}, f^\ell ) - \p \big[\nu(v) + q \frac{v}{2} \cdot \nabla \phi^\ell(t,x)\big] f^{\ell +1} - \p K f^\ell - q_1 \p(v\cdot\nabla_{x}\phi^\ell \sqrt{\mu}).
			\end{split}\Ee
			Here we have used
			\Be
			\nu_{\phi^\ell, w_{\tilde{\vartheta}}} =  \nu_{\phi^\ell, w_{\tilde{\vartheta}}} (t,x,v) := 
			\begin{bmatrix} \nu(v) +  \frac{v}{2} \cdot \nabla \phi^\ell (t,x) 
			+ \frac{\nabla_x \phi^\ell \cdot \nabla_v w_{\tilde{\vartheta}}}{w_{\tilde{\vartheta}}} & 0
						\\ 0 &  \nu(v) -  \frac{v}{2} \cdot \nabla \phi^\ell(t,x) + \frac{\nabla_x \phi^\ell \cdot \nabla_v w_{\tilde{\vartheta}}}{w_{\tilde{\vartheta}}} \end{bmatrix}
			. \label{nu_phi}
			\Ee
Denote
\Be \notag
\nu_{\phi^\ell, w_{\tilde{\vartheta}} + } = \nu(v) +  \frac{v}{2} \cdot \nabla \phi^\ell(t,x) 
			+ \frac{\nabla_x \phi^\ell \cdot \nabla_v w_{\tilde{\vartheta}}}{w_{\tilde{\vartheta}}} , \, 
			\nu_{\phi^\ell, w_{\tilde{\vartheta}} - } = \nu(v) -  \frac{v}{2} \cdot \nabla \phi^\ell(t,x) 
			+ \frac{\nabla_x \phi^\ell \cdot \nabla_v w_{\tilde{\vartheta}}}{w_{\tilde{\vartheta}}}.
\Ee
			%\Be \label{eqtn_nabla_f} 
			%\begin{split}
			%&\p_t f_{x_{i}} + v\cdot \nabla_x f_{x_{i}} - \nabla_x  \phi^\ell  \cdot \nabla_v f_{x_{i}} + \frac{v}{2} \cdot \nabla_x \phi^\ell f_{x_{i}} 
			%+   \nu f _{x_{i}} -  K f_{x_{i}}  \\
			%&
			%=  
			%\nabla_x    \p_{x_{i}} \phi^\ell  \cdot \nabla_v f 
			%- \frac{v}{2} \cdot \nabla_x \p_{x_{i}} \phi^\ell f
			%+
			% \p_{x_{i}}\Gamma(f,f) - v\cdot \nabla_x \p_{x_{i}} \phi^\ell \sqrt{\mu}  
			% , \\
			% &
			%\p_{t} f_{v_{i}} + v\cdot \nabla_{x} f_{v_{i}}- \nabla_{x} \phi^\ell \cdot \nabla_{v} f_{v_{i}} + \frac{v}{2}  \cdot \nabla \phi _f f_{v_{i}} + \nu f_{v_{i}} - \p_{v_{i}}Kf  \\
			% &= 
			%\p_{v_{i}} \Gamma(f,f)
			% - f_{x_{i}} - \frac{1}{2} \p_{x_{i}} \phi^\ell f - \nu_{v_{i}} f - \p_{x_{i}} \phi^\ell \sqrt{\mu} - v\cdot \nabla_{x}  \phi^\ell (\sqrt{\mu})_{v_{i}},
			%\end{split}\Ee
			%for $i=1,2,3$. 
			\hide
			Note that we used $K_v$ to denote 
			\begin{equation} \label{Kv}
			K_v f(v) =\int_{\mathbb{R}^3} \left\{ \nabla_u \mathbf{k}_1 + \nabla_ v\mathbf{k}_1 + \nabla_u \mathbf{k}_2 + \nabla_v \mathbf{k}_2\right\}(v,u) f(u) \dd u \lesssim \|wf\|_{\infty},
			\end{equation}
			where the last inequality comes from (\ref{nabla_k1}) and (\ref{nabla_k2}).   \\
			
			Let us choose 
			\Be\label{beta_p_1}
			\frac{p-2}{p}< \beta <\frac{p-1}{p} , \ \ p>3.
			\Ee\unhide

			From (\ref{alpha_invariant}) and (\ref{eqtn_nabla_f}), for $\iota = +$ or $-$ we have
			\Be \label{Boltzmann_p} \begin{split}
				%&  \big[\p_t + v\cdot \nabla_x - \nabla_x \phi^\ell \cdot \nabla_v % + \frac{v}{2} \cdot \nabla \phi _f
				%\big] |\alpha^\beta  \p f |^p \\
				&\frac{1}{p} |w_{\tilde{\vartheta}}\alpha_{f^{\ell},\e,\iota}^\beta  \p f^{\ell +1}_\iota |^{p-1}  \big[\p_t + v\cdot \nabla_x {-\iota} \nabla_x \phi^\ell \cdot \nabla_v
				+ \nu_{\phi^\ell, w_{\tilde{\vartheta}}\iota} 
				% + \frac{v}{2} \cdot \nabla \phi _f
				\big] |w_{\tilde{\vartheta}} \alpha_{f^\ell,\e,\iota}^\beta  \p f^{\ell +1}_\iota | \\
				&=   \alpha_{f^\ell,\e,\iota}^{\b p} | w_{\tilde{\vartheta}} \p f^{\ell +1}_\iota|^{p-1} \big[\p_t + v\cdot \nabla_x {-\iota} \nabla_x \phi^\ell \cdot \nabla_v + \nu_{\phi^\ell,w_{\tilde{\vartheta}} \iota} \big]|w_{\tilde{\vartheta}} \p f^{\ell +1}_\iota | \\
				&= w_{\tilde{\vartheta}}^p \alpha_{f^\ell,\e,\iota}^{\b p} |  \p f^{\ell +1}_\iota|^{p-1}    \mathcal{G}^{\ell +1}_\iota .
			\end{split}\Ee
			
			From (\ref{nabla_nu}), (\ref{nu_phi}), (\ref{K_v}), and (\ref{Gamma_v})
			\Be
			\begin{split}\label{G est}
				|\mathcal{G}| &\lesssim |\nabla_{x} f^{\ell +1}| 
				+  | \nabla^2 \phi^\ell | |\nabla_v f^{\ell +1}|
				+ |\Gamma_\text{gain}(\p f^\ell, f^\ell)| + |\Gamma_\text{gain} (f^\ell, \p f^\ell )|  + |\Gamma_\text{loss}(f^{\ell +1}, \p f^\ell)| + | \Gamma_\text{loss} ( \p f^{\ell +1}, f^\ell ) |+   \\
				&\quad  + | K \p f^\ell |  + |f^{\ell+1}|  +  |\Gamma_{v,\text{gain}}(f^\ell,f^\ell)| + |\Gamma_{v, \text{loss} } ( f^{\ell +1}, f^\ell ) | + |K_{v}f^\ell |\\
				& \quad  + 
				w_\vartheta(v)^{-1/2}(  | \nabla \phi^\ell |  + | \nabla^2\phi^\ell | ) (1+\|w_\vartheta f^{\ell +1}\|_\infty )    .\end{split}
			\Ee
			Now we apply Lemma \ref{lem_Green} to (\ref{Boltzmann_p}) to both $f^{\ell +1}_+$ and $f^{\ell + 1}_-$ separately and add them together to obtain
			\Be\begin{split} \label{pf_green}
				& \| w_{\tilde{\vartheta}} \alpha_{f^\ell,\e}^\beta 
				\p f^{\ell +1} (t)\|_p^p
				+ \int^t_0 \| \nu_{\phi^\ell, w_{\tilde{\vartheta}}}^{1/p} w_{\tilde{\vartheta}} \alpha_{f^\ell,\e}^\beta 
				\p f ^{\ell +1}  \|_p^p  
				+ % \underbrace{
				\int^t_0 | w_{\tilde{\vartheta}} \alpha_{f^\ell,\e}^\beta 
				\p f^{\ell +1}   |_{p,+}^p  % } _{(\ref{pf_green})_{1}}
				\\
				\leq&  \  \| w_{\tilde{\vartheta}} \alpha_{f^\ell,\e}^\beta 
				\p f (0)\|_p^p  + 
				\underbrace{\int^t_0 |w_{\tilde{\vartheta}} \alpha_{f^\ell,\e}^\beta 
					\p f ^{\ell +1} |_{p,-}^p  } _{(\ref{pf_green})_{\gamma_-}}
				+ \underbrace{\int_{0}^{t} \iint_{\O\times\R^{3}}p | \alpha_{f^\ell,\e}^{\b p} w_{\tilde{\vartheta}}^p ( \p f^{\ell +1})^{p-1} |\mathcal{G}^{\ell +1}| } _{(\ref{pf_green})_{\mathcal{G}}}.
			\end{split}\Ee

			First we consider $(\ref{pf_green})_{\mathcal{G}}$. Directly, the contribution of $|\nabla_{x} f^{\ell +1}| 
			+  | \nabla^2 \phi^\ell | |\nabla_v f^{\ell +1}|$ of (\ref{G est}) in $(\ref{pf_green})_{\mathcal{G}}$ is bounded by
			\Be
			\begin{split}\label{pf_p_nabla}
				(1+  \sup_{0 \leq s \leq t} \| \nabla^2 \phi^\ell \|_\infty ) \int_0^t \| w_{\tilde{\vartheta}}\alpha_{f^\ell,\e}^\beta \p f^{\ell +1} \|_p^p.
			\end{split}
			\Ee

			From (\ref{vKsum}), (\ref{bound_Gamma_nabla_vf1}), and (\ref{bound_Gamma_nabla_vf2}), the contribution of $|\Gamma_\text{gain}( f^\ell, \p f^\ell)| + |\Gamma_{\mathrm{gain}}(\p f^\ell, f^\ell)|+ |\Gamma_\text{loss} (f^{\ell +1}, \p f^l ) | + |K \p f^\ell|$ of (\ref{G est}) in $(\ref{pf_green})_{\mathcal{G}}$ is bounded by 
			\Be\begin{split}\label{pf_p_K}
				&(1 +\sup_{0 \leq s\leq  t} \|w_\vartheta f^\ell(s)\|_{\infty} + \sup_{0 \le s \le t } \| w_\vartheta f^{\ell +1} (s) \|_\infty) \\
				& \times \int^t_0    { \iint_{\O\times\R^{3}}  |\alpha_{f^\ell,\e}^{\b  } w_{\tilde{\vartheta}}  \p f^{\ell +1}(v)|^{p-1} 
					\int_{\R^3} |\alpha_{f^\ell,\e}(v)^\beta \mathbf{k}_\varrho(v,u)w_{\tilde{\vartheta}} (v) \p f^\ell(u)| \dd u  
					\dd v \dd x } \dd s.
			\end{split}\Ee
			The estimate of (\ref{pf_p_K}) is carried out in \textit{Step 3}.
			
			From (\ref{bound_Gamma_nabla_vf2}), the contribution of $|\Gamma_{\mathrm{loss}}(\p f^{\ell +1}, f^\ell)|$ of (\ref{G est}) in $(\ref{pf_green})_{\mathcal{G}}$ is bounded by 
			\Be\label{pf_p_0}
			\sup_{0 \leq s\leq t}\| w_\vartheta f^\ell(s) \|_\infty
			\int^t_0  \| \nu_{\phi_f, w_{\tilde{\vartheta}}}^{1/p} w_{\tilde{\vartheta}}   \alpha_{f^\ell,\e}^\beta \p f^{\ell +1} \|_p^p
			.
			\Ee

			For the $|f^{\ell+1}|$ contribution of (\ref{G est}) in $(\ref{pf_green})_{\mathcal{G}}$, we bound
			\Be
			\begin{split}\label{pf_p_1}
				&\int^t_0 
				\iint_{\O \times \R^3}p  |w_{\tilde{\vartheta}}^p \alpha_{f^\ell,\e}^{\beta p } (\p f^{\ell +1})^{p-1} |
				% \langle v \rangle 
				|f^{\ell+1}|  \dd x \dd v \dd s\\
				\lesssim & \ \int^t_0 \iint_{\O \times \R^3}
				| \nu_{\phi_f,w_{\tilde{\vartheta}} }^{1/p}  w_{\tilde{\vartheta}} \alpha_{f^\ell,\e}^{\beta} \p f^{\ell +1}| ^{p-1}%\langle v \rangle^{1/p} 
				| w_{\tilde{\vartheta}}f^{\ell +1}|
				\frac{  %\langle v \rangle^{1-1/p}
					 |\alpha_{f^\ell,\e}(s,x,v)|^{\beta} }{\langle v \rangle^ {{(p-1)}/{p}} }   \dd x \dd v \dd s\\
				\lesssim & \ o(1) \int^t_0 \iint_{\O \times \R^3}
				| \nu_{\phi_f, w_{\tilde{\vartheta}}}^{1/p} w_{\tilde{\vartheta}} \alpha_{f^{\ell},\e}^{\beta} \p f^{\ell +1}| ^{p}
				+(1+\delta_1/\Lambda_1 )  \int^t_0 \iint_{\O \times \R^3} |w_{\tilde{\vartheta}} f^{\ell +1} |^p.
			\end{split}
			\Ee
			Here we have used the fact that, from (\ref{weight}) and (\ref{integrable_nabla_phi_f})
			\Be\label{upper_vb}
			\begin{split}
				&| \alpha_{f^\ell,\e}(s,x,v) |\\  \leq & \ 
				2 (\mathbf{1}_{s +1 \geq \tb(s,x,v)}
				|\vb(s,x,v)| 
				+ \mathbf{1}_{s \leq \tb(s,x,v) +1})\\
				\lesssim &  \ 1+   |v| + \int^{0}_{-1} |\nabla \phi^\ell (\tau,X(\tau;s,x,v))| \dd \tau +  \int^s_{0} |\nabla \phi^\ell (\tau,X(\tau;s,x,v))| \dd \tau\\
				\lesssim &  \ ( 1+ \| w_{\vartheta} f_0 \|_\infty+  {\delta_1}/{\Lambda_1} ) \langle v\rangle, % \\
				%\lesssim & \ \left(1+\frac{\delta_{ \|\nabla\phi^\ell \|_\infty}}{\lambda_{ \|\nabla\phi^\ell \|_\infty}}\right)\langle v\rangle
				% ,
			\end{split}
			\Ee
			and from (\ref{beta_condition}), $
			\frac{  %\langle v \rangle^{1-1/p}
				|\alpha_{f^\ell,\e}(s,x,v)|^{\beta} }{\langle v\rangle^{{(p-1)}/{p}} }  \lesssim (1+ {\delta_1}/{\Lambda_1} )\times \frac{\langle v\rangle^\beta }{\langle v\rangle^ {{(p-1)}/{p}} }\lesssim (1 + {\delta_1}/{\Lambda_1}).$

			From (\ref{Gvloss}), the contribution of $|\Gamma_{v, \mathrm{loss}} (f^{\ell+1},f^\ell)|$ of (\ref{G est}) in $(\ref{pf_green})_{\mathcal{G}}$ is bounded by
			\Be
			\begin{split}\label{pf_p_Gamma_v_-}
				& \|w_{\vartheta}f^{\ell+1} \|_{\infty} \int^t_0 \iint_{\O \times \R^3} p   | w_{\tilde \vartheta} \alpha_{f^\ell,\e}^\beta \p f^{\ell+1}|^{p-1} 
				|\alpha_{f^\ell,\e}(v) |^\beta {\langle v \rangle} w_{\tilde \vartheta} (v) w_{\vartheta}(v)^{-1} 
				\| w_{ \vartheta} f^\ell(s,x, \cdot) \|_{L^p(\R^3)}\\
				\lesssim & \ 	
				\|w_{\vartheta}f^{\ell+1}\|_{\infty} 
				\left\{
				\int^t_0 \iint_{\O \times \R^3}  | \alpha_{f^\ell,\e}^\beta \p f^{\ell+1}|^p
				+   \int^t_0 \iint_{\O \times \R^3}  | w_{ \vartheta} f^\ell|^p
				\right\},
			\end{split}
			\Ee
			where we have used, from (\ref{upper_vb}), $|\alpha_{f^\ell,\e}(v)|^\beta {\langle v \rangle}w_{\tilde \vartheta} (v)w_{\vartheta}(v)^{-1}  \lesssim 1$.
			
			From (\ref{vKsum}) and (\ref{Gvgain}), the contribution of $|\Gamma_{v, \mathrm{gain}}|$ and $|K_vf|$ in $(\ref{pf_green})_{\mathcal{G}}$ is bounded by 
			\Be\begin{split}\label{pf_p_Kv}
				&(1 +\sup_{0 \leq s\leq  t} \|w_\vartheta f^\ell \|_{\infty}) \int^t_0  % \underbrace{
				\iint_{\O\times\R^{3}}  | \alpha_{f^\ell,\e}^{\b p} ( w_{\tilde{\vartheta}} \p f^{\ell+1} (v))^{p-1} |
				\int_{\R^3} \mathbf{k}_\varrho(v,u) \frac{w_{\tilde{\vartheta}}(v)}{w_{\tilde{\vartheta}}(u)} |f^\ell(u)| %\dd u  
				%\dd v \dd x %}_{(\ref{pf_green})_4} 
				%\dd s
				\\
				\lesssim & \ 
				o(1) \int^t_0 \iint_{\O \times \R^3}
				| \nu_{\phi^\ell}^{1/p} \alpha_{f^\ell,\e}^{\beta} \p f^{\ell+1}| ^{p}
				+(1 +\sup_{0 \leq s\leq  t} \|w_{\vartheta}f^\ell (s)\|_{\infty}) \int^t_0 \iint_{\O \times \R^3} |w_{\tilde \vartheta} f^\ell |^p,
			\end{split}\Ee
			where we have used, for $1/p+1/p^*=1$ and $0< \tilde{\varrho} \ll \varrho$, from (\ref{k_vartheta_comparision}), (\ref{grad_estimate}),
			\Be\begin{split}\notag
				&\int_{\R^3} | \alpha_{f^\ell,\e}(v) ^{\b p}  (w_{\tilde{\vartheta}} \p f^{\ell+1} (v))^{p-1} |\int_{\R^3} \mathbf{k}_{\tilde{\varrho}}(v,u) w_{\tilde{\vartheta}}(u) |f^\ell(u)| \dd u \dd v \\
				\lesssim & \ \int_{\R^3} | \alpha_{f^\ell,\e}(v)^{\b p}  ( w_{\tilde{\vartheta}} \p f^{\ell+1} (v))^{p-1} |\int_{\R^3}\mathbf{k}_{\tilde{\varrho}}(v,u)^{1/{p^*}} \mathbf{k}_{\tilde{\varrho}}(v,u)^{1/p}  | w_{\tilde{\vartheta}}f^\ell(u)| \dd u \dd v\\
				\lesssim & \ \int_{\R^3} \frac{ |\alpha_{f^\ell,\e}(v) |^\beta}{\langle v\rangle^{\frac{p-1}{p}}} | \langle v\rangle^{1/p} w_{\tilde{\vartheta}} \alpha_{f^\ell,\e}^\beta\p f^{\ell+1} (v)|^{p-1}\\
				& \ \ \  \times \left(\int_{\R^3} \mathbf{k}_{\tilde{\varrho}}(v,u) \dd u \right)^{1/{p^*}}
				\left(
				\int_{\R^3}  \mathbf{k}_{\tilde{\varrho}}(v,u)   | w_{\tilde{\vartheta}} f^\ell (u)|  ^p  \dd u 
				\right)^{1/p} \dd v\\
				\lesssim & \ \left( \int_{\R^3}  |\langle v\rangle^{1/p}w_{\tilde{\vartheta}} \alpha_{f^\ell,\e}^\beta\p f^{\ell+1} (v)|^p  \dd v\right)^{\frac{p-1}{p}}
				\left(
				\int_{\R^3} \int_{\R^3} \mathbf{k}_{\tilde{\varrho}}(v,u)  |w_{\tilde{\vartheta}}f^\ell(u)|^p \dd u \dd v
				\right)^{\frac{1}{p}}\\
				\lesssim & \   \left(\int_{\R^3}
				| \nu_{\phi^\ell}^{1/p} w_{\tilde{\vartheta}}\alpha_{f^\ell,\e}^{\beta} \p f^{\ell+1}| ^{p}\right)^{\frac{p-1}{p}}
				\left( \int_{\R^3} |w_{\tilde{\vartheta}}f^\ell|^p\right)^{\frac{1}{p}}.
			\end{split}\Ee

			Note that from the standard elliptic estimate and (\ref{smallfphi}),
			\Be\label{nabla_2_phi_p}
			\|   \phi^\ell (t) \|_{W^{2,p}(\O  ) } \lesssim \left\|\int_{\R^3} (f^\ell_+ - f_-^\ell)   (t,x,v)\sqrt{\mu(v)} \dd v \right\|_{L^p(\O )}
			\lesssim \|   f^\ell(t) \|_{L^p (\O  \times \R^3)}
			.
			\Ee
			Then from (\ref{nabla_2_phi_p}) we bound the contribution of $w_{\tilde{\vartheta}}(v)^{-1/2}(  | \nabla \phi^\ell |  + | \nabla^2\phi^\ell | ) (1+\|w_{\vartheta}f^{\ell+1}\|_\infty )$ of (\ref{G est}) in (\ref{Boltzmann_p}) by 
			\Be \label{pf_p_2}
			\begin{split}
				&(1+\|w_\vartheta f^{\ell+1}\|_\infty ) \int^t_0 
				\iint_{\O \times \R^3}p | w_{\tilde{\vartheta}} \alpha_{f^\ell,\e}^{\beta   }  \p f^{\ell+1}|^{p-1} 
				\frac{ |\alpha_{f^\ell,\e}(v)|^{\beta}}{w_{\tilde{\vartheta}}(v)^{1/2}} (  | \nabla \phi^\ell |  + | \nabla^2\phi^\ell | )  % \dd x \dd v \dd s
				\\
				\lesssim  & \ (1+\|w_\vartheta f^{\ell+1}\|_\infty ) \int^t_0 
				\iint_{\O \times \R^3}  | w_{\tilde{\vartheta}} \alpha_{f^\ell,\e}^{\beta   }  \p f^{\ell+1}|^{p-1} 
				%\frac{\alpha(v)^{\beta}}{w(v)^{1/2}}
				w_{\tilde{\vartheta}}^{-1/4}  (  | \nabla \phi^\ell |  + | \nabla^2\phi^\ell | )  % \dd x \dd v \dd s
				\\
				\lesssim  & \ (1+\|w_\vartheta f^{\ell+1}\|_\infty ) 
				\left\{ o(1)
				\int^t_0 
				\iint_{\O \times \R^3}  |  w_{\tilde{\vartheta}} \alpha_{f^\ell,\e}^{\beta   }  \p f^{\ell+1}|^{p } 
				%\frac{\alpha_{f^\ell,\e}(v)^{\beta}}{w(v)^{1/2}}
				+\int_0^t \iint_{\O \times \R^3} | w_{\tilde{\vartheta}} \alpha_{f^{\ell-1},\e}^\beta \p f^\ell |^p 
				+\int^t_0 
				\int_{\O  }  \| \phi^\ell \|_{W^{2,p}}^p \int_{\R^3} w_{\tilde{\vartheta}}^{-p/4}  \right\}\\
				\lesssim& \ o(1) (1+\|w_\vartheta f^{\ell+1}\|_\infty ) \int^t_0 \iint_{\O \times \R^3} |w_{\tilde{\vartheta}}\alpha_{f^\ell,\e}^\beta \p f^{\ell+1}|^p +\int_0^t \iint_{\O \times \R^3} | w_{\tilde{\vartheta}} \alpha_{f^{\ell-1},\e}^\beta \p f^\ell |^p + \int^t_0 \iint_{\O \times \R^3} |f^\ell|^p.
			\end{split}
			\Ee
			where we have used, from (\ref{upper_vb}), $\alpha_{f^\ell,\e}(v)^\beta w_\vartheta(v)^{-1/2} \lesssim w_\vartheta(v)^{-1/4}$.

			\hide
			
			Note that $\Gamma_v (f,f)$ is defined in (\ref{Gamma_v}) and bounded by 
			\[
			\Gamma_{v}(f,f) \lesssim \frac{\langle v \rangle}{w(v)}\|wf\|_{\infty}^{2}
			\]
			from (\ref{Gvloss}) and (\ref{Gvgain}). Also we note that 
			\[
			| \Gamma(\p f, f) + \Gamma(f, \p f) | \lesssim \|wf\|_{\infty} \int_{\R^3} \mathbf{k}_{\varrho}(v,u )  |  \p f (u)| \dd u
			\dd v.
			\]

			We further apply Lemma \ref{lem_Green} to (\ref{Boltzmann_p}) and use (\ref{vKsum}), (\ref{bound_Gamma_nabla_vf1}), (\ref{bound_Gamma_nabla_vf2}), (\ref{Gvloss}), (\ref{Gvgain}), and (\ref{nabla_2_phi_p}) to obtain, together with (\ref{pf_p_1}) and (\ref{pf_p_2}),   
			\Be\begin{split}\label{pf_green}
				& \| \alpha^\beta 
				\p f (t)\|_p^p
				+ \int^t_0 \| \nu_{\phi_f}^{1/p} \alpha^\beta 
				\p f   \|_p^p  
				+ \underbrace{\int^t_0 |\alpha^\beta 
					\p f   |_{p,+}^p  }_{(\ref{pf_green})_{1}}
				\\
				% \lesssim&  \  \| \alpha^\beta 
				% \p f (0)\|_p^p  + 
				% 	\int^t_0 |\alpha^\beta 
				% 	\p f  |_{p,-}^p 
				% 	+ \int_{0}^{t} \iint_{\O\times\R^{3}} \alpha^{\b p}|\p f|^{p-1} |\mathcal{G}|
				% \\
				\lesssim&  \  \| \alpha^\beta 
				\p f (0)\|_p^p   + 
				\underbrace{
					\int^t_0 |\alpha^\beta 
					\p f  |_{p,-}^p
				}_{(\ref{pf_green})_2}+  (1+ \sup_{0 \leq s\leq  t} \| \nabla^2 \phi_{f} (t) \|_\infty) \int^t_0 \| \alpha^\beta 
				\p f  \|_p^p + \int^t_0 \iint_{\O \times \R^3} |f|^p
				\\
				&+ (1 +\sup_{0 \leq s\leq  t} \|wf(s)\|_{\infty}) \int^t_0  \underbrace{ \iint_{\O\times\R^{3}}
					\alpha(v)^{p\beta}    |  \p f (v)|^{p-1}  
					\int_{\R^3} \mathbf{k}_{\varrho}(v,u )  |  \p f (u)| \dd u
					\dd v
					\dd x }_{(\ref{pf_green})_3} \dd s \\
				%&+(1 +\sup_{0 \leq s\leq  t}  \| \nabla^2\phi_{f}\|_\infty)  \int^t_0 \underbrace{ \iint_{\O\times\R^{3}} \alpha^{\b p} |\p f|^{p-1} \langle v \rangle |f| \dd v \dd x }_{(\ref{pf_green})_4} \dd s  \\
				&+ (1 +\sup_{0 \leq s\leq  t} \|wf(s)\|_{\infty}) \int^t_0   \underbrace{ \iint_{\O\times\R^{3}} \alpha(v)^{\b p} |\p f(v)|^{p-1} 
					\int_{\R^3} \mathbf{k}_\varrho(v,u)  |f(u)| \dd u  
					\dd v \dd x }_{(\ref{pf_green})_4} \dd s.
				%&+ (1 + \|wf\|^{2}_{\infty}) \int^t_0 (1 + \|\phi_{f}\|_{C^{2}}) \underbrace{ \iint_{\O\times\R^{3}} \alpha^{\b p} |\p f|^{p-1} \frac{\langle v \rangle}{w(v)} \dd v \dd x }_{(\ref{pf_green})_6} \dd s.  \\
				%\\
				%\\
				%&+ \int^t_0 (1 + \|\phi_{f}\|_{C^{2}}) \iint_{\O\times\R^{3}} \alpha^{\b p} \langle v \rangle^{2}\sqrt{\mu} |\p f|^{p-1} \dd v \dd x \dd s  \\
				%&+ \int^t_0 (1 + \|\phi_{f}\|_{C^{2}}) \iint_{\O\times\R^{3}} \alpha^{\b p} \langle v \rangle \sqrt{\mu} |\p f|^{p-1} |f| \dd v \dd x \dd s  \\
				%&\lesssim \int^t_0 (1 + \|\phi_{f}\|_{C^{2}}) \iint_{\O\times\R^{3}} \alpha^{\b} \langle v \rangle^{2}\sqrt{\mu} |\alpha^{\b} \p f|^{p-1} \dd v \dd x \dd s  
				%\lesssim
				%\int^t_0 (1 + \|\phi_{f}\|_{C^{2}}) \|\alpha^{\b}\p f\|_{p}^{p-1} \|\alpha^{\b}\|_{p} ds  \\
				%&+ \int^t_0 (1 + \|\phi_{f}\|_{C^{2}}) \iint_{\O\times\R^{3}} \alpha^{\b p} \langle v \rangle \sqrt{\mu} |\p f|^{p-1} |f| \dd v \dd x \dd s  
				%\lesssim
				% \int^t_0 (1 + \|\phi_{f}\|_{C^{2}}) \|wf\|_{\infty}  \|\alpha^{\b}\p f\|_{p}^{p-1} \|\alpha^{\b}\|_{p} ds
			\end{split}\Ee
			\unhide
			
			\hide
			
			Estimates $(\ref{pf_green})_4$ and $(\ref{pf_green})_5$ are easily obtained by H\"older inequality,
			\Be \label{151_4}
			\begin{split}
				(\ref{pf_green})_4 &= \iint_{\O\times\R^{3}} \alpha^{\b p} |\p f|^{p-1} \langle v \rangle |f| \dd v \dd x = \iint_{\O\times\R^{3}} |\langle v \rangle\alpha^{\b}f| |\alpha^{\b}\p f|^{p-1} \dd v \dd x  \\
				&\lesssim \|\alpha^{\b}\p f\|_{p}^{p-1} \|\langle v \rangle\alpha^{\b}f\|_{p} \lesssim \|wf\|_{\infty} \|\alpha^{\b}\p f\|_{p}^{p-1},
			\end{split}
			\Ee  
			and
			\Be \label{151_5}
			\begin{split}
				(\ref{pf_green})_5  &\lesssim \iint_{\O\times\R^{3}} \alpha^{\b p} |\p f|^{p-1} \Big( \sqrt{\mu}^{1-\delta} + \frac{\langle v \rangle}{w(v)} \Big) \dd v \dd x  \\
				&\lesssim \|\alpha^{\b}\p f\|_{p}^{p-1} \Big\|\alpha^{\b}\Big( \sqrt{\mu}^{1-\delta} + \frac{\langle v \rangle}{w(v)} \Big) \Big\|_{p} \lesssim \|\alpha^{\b}\p f\|_{p}^{p-1}.
			\end{split}
			\Ee\unhide
			
			\vspace{4pt}

			\textit{Step 3.} We focus on (\ref{pf_p_K}). %Recall (\ref{kzeta}). %Set $a>1$. From the H\"older estimate with $\frac{1}{p}+ \frac{1}{ qa} + \frac{1}{\frac{a}{a-1} \frac{p}{p-1}}=1$
			With $N>0$, we split the $u$-integration of (\ref{pf_p_K}) into the integrations over $\{|u| \leq N\}$ and $\{|u| \geq N\}$.
			
			For $\{|u| \geq N\}$ and $0< \tilde{\varrho} \ll \varrho$, by Holder inequality with $\frac{1}{p} + \frac{1}{p^*}=1$
			\Be\begin{split}\label{est_k_p_f}
				&\int_{|u| \geq N} | \alpha_{f^\ell,\e}^\beta(v)  \mathbf{k}_{\tilde{\varrho}} (v,u) \p f^{\ell}(u)|\\
				\leq & \ 
				| \alpha_{f^\ell,\e}^\beta (v) |
				{\left( \sum_{\iota = \pm } \int_{|u| \geq N}  \mathbf{k}_{\tilde{\varrho}}  (v,u) \frac{1}{\alpha_{f^{\ell-1},\e,\iota}(u)^{\beta {p^*}}}   \right)^{1/{p^*}}}
				\left(
				\int_{|u| \geq N} \mathbf{k}_{\tilde{\varrho}}  (v,u)  |  \alpha_{f^{\ell-1},\e}^{\beta  }\p f^\ell (u)|^p  
				\right)^{1/p}\\
				\lesssim & \ \alpha_{f^\ell,\e}^\beta(v) \left(
				\int_{|u| \geq N}  \mathbf{k}_{\tilde{\varrho}}  (v,u)  |\alpha_{f^{\ell-1},\e}^{\beta  }\p f^\ell (u)|^p \dd u 
				\right)^{1/p},
			\end{split}\Ee
			where have used Proposition \ref{prop_int_alpha} with $\beta q< \frac{p-1}{p}\frac{p}{p-1}=1$ from (\ref{beta_condition}).% at the last line. 
			
			Then the contribution of $\{|u| \geq N\}$ in (\ref{pf_p_K}) is bounded by 
			\Be\begin{split}\label{small result}
				&\int^t_0 \int_{\O} \int_{v \in \R^3}  | \nu_{\phi_f}^{1/p} w_{\tilde{\vartheta}}\alpha_{f^\ell,\e}^\beta \p f ^{\ell+1}(v)|^{p-1} \frac{ |\alpha_{f^\ell,\e}(v)|^\beta}{ \langle v\rangle^{\frac{p-1}{p}}}\\
				& \ \ \ \   \ \ \ \ \ \ \ \ \times 
				\int_{|u| \geq N} \mathbf{k}_\varrho (v,u)\frac{w_{\tilde{\vartheta}}(v)}{w_{\tilde{\vartheta}}(u)} | w_{\tilde{\vartheta}} \p f^\ell(u)| \dd u \dd v \dd x \dd s  \\
				\leq& \ \int^t_0 \int_{\O} \bigg(\int_{v}  |\nu_{\phi_f}^{1/p}w_{\tilde{\vartheta}} \alpha_{f^\ell,\e}^\beta \p f^{\ell+1} (v)|^{p }\bigg)^{1/q}
				\bigg(
				\int_{|u| \geq N} |w_{\tilde{\vartheta}}\alpha_{f^{\ell-1},\e}^{\beta  }\p f^\ell (u)|^p  \int_{v}\mathbf{k}_{\tilde{\varrho}}(v,u)  
				\bigg)^{1/p}\\
				\lesssim & \ 
				o(1)
				\int_0^t \|\nu_{\phi_f}^{1/p} w_{\tilde{\vartheta}} \alpha_{f^\ell,\e}^\beta \p f^{\ell+1}  (s)\|_p^p \dd s
				+\int_0^t \| w_{\tilde{\vartheta}} \alpha_{f^{\ell-1},\e}^\beta \p f^\ell  (s)\|_p^p \dd s,
			\end{split}\Ee
			where we have used, from (\ref{upper_vb}), $\frac{ |\alpha_{f^\ell,\e}(v)|^\beta}{ \langle v\rangle^{\frac{p-1}{p}}}\lesssim 1$ for $\beta$ in (\ref{beta_condition}), (\ref{k_vartheta_comparision}), and (\ref{grad_estimate}).

			The contribution of $\{|u| \leq N\}$ in (\ref{pf_p_K}) is bounded by, from the H\"older inequality, 
			\begin{eqnarray}
			&& \int^t_0  \int_\O 
			\int_{\R^3}
			| \nu_{\phi_f}^{1/p} w_{\tilde{\vartheta}}\alpha_{f^\ell,\e}^\beta \p f^{\ell+1} (v)|^{p-1} \notag\\
			&& \ \ \  \times 
			\int_{|u| \leq N} \sum_{\iota = \pm } \mathbf{k}_\varrho(v,u) \frac{ w_{\tilde{\vartheta}}(v)}{ w_{\tilde{\vartheta}}(u)}\frac{ | \alpha_{f^\ell,\e}(v) |^{\beta} |  w_{\tilde{\vartheta}}  \alpha_{f^{\ell-1},\e} ^\beta \p f^\ell (u)| }{\langle v\rangle^{(p-1)/p}  \alpha_{f^{\ell-1},\e,\iota}(u)^{\beta}}     \dd u
			\dd v
			\dd x \dd s \notag\\
			%&\leq&\int^t_0
			%\int_\O
			%\Big[
			%\int_{\R^3}
			%| \nu_\phi^{1/p}\alpha^\beta \p f |^{(p-1)q}
			%\dd v
			%\Big]^{1/q}\\
			%&& \ \ \ \ \ \ \    \times 
			%\Big[\int_{\R^3}
			%\Big(
			%\int_{\R^3} 
			%\mathbf{k}(v,u) \frac{\alpha(v)^{\beta}}{ \nu_\phi(v)^{(p-1)/p}\nu_\phi(u)^{1/p}\alpha(u)^{\beta}} | \nu_\phi^{1/p} %\alpha^\beta \p f (u)|
			%\dd u
			%\Big)^p
			%\dd v
			%\Big]^{1/p}
			%\dd x
			% \dd s\\
			&\leq& 
			\int^t_0    \| \nu_{\phi_f}^{1/p} w_{\tilde{\vartheta}}  \alpha_{f^\ell,\e}^\beta \p f^{\ell+1}  (s)\|_p^{p-1}\notag\\
			&& \ \ \   \times 
			\Big[
			\int_\O
			\int_{\R^3}
			\Big(
			\underline{\underline{
					\int_{|u| \leq N} \sum_{\iota = \pm}
					\mathbf{k}_{\tilde{\varrho}} (v,u) 
					\frac{ |
						% \nu_\phi^{1/p}
						w_{\tilde{\vartheta}} \alpha_{f^{\ell-1},\e}^\beta \p f^\ell ( u)|}{ %\nu_\phi(u)^{1/p}
						\alpha_{f^{\ell-1},\e,\iota}(u)^{\beta}}
					\dd u}}
			\Big)^p
			\dd v
			\dd x  \Big]^{1/p}
			\dd s 
			.\label{double_underline}
			\end{eqnarray}
			where we have used (\ref{k_vartheta_comparision}) and the fact $ |\alpha_{f^\ell,\e} |^\beta/ \langle v \rangle^{\frac{p-1}{p}} \lesssim 1$ from (\ref{upper_vb}) and (\ref{beta_condition}).
			
			By the H\"older inequality, we bound an underlined $u$-integration inside (\ref{double_underline}) as
			\Be
			%&&
			%(\ref{double_underline_split})\\
			%\underbrace{\int_{|u|\leq M} 
			% \mathbf{k}(v,u) 
			%\frac{\alpha(v)^{\beta} |
			% \nu_\phi^{1/p}
			% \alpha^\beta \p f ( u)|}{ \nu_\phi(v)^{(p-1)/p}%\nu_\phi(u)^{1/p}
			% \alpha(u)^{\beta}}
			%\dd u}_{(\ref{double_underline_split})_1} + 
			%\underbrace{\int_{|u|\geq M} 
			%\mathbf{k}(v,u) 
			%\frac{\alpha(v)^{\beta} |
			% \nu_\phi^{1/p}
			% \alpha^\beta \p f ( u)|}{ \nu_\phi(v)^{(p-1)/p}%\nu_\phi(u)^{1/p}
			% \alpha(u)^{\beta}}
			%\dd u}_{(\ref{double_underline_split})_2} \\
			%&%\leq
			%&
			%\frac{\alpha(v)^{\beta}}{\nu_\phi(v)^{(p-1)/p}}
			\|w_{\tilde{\vartheta}} \alpha_{f^{\ell-1},\e}^\beta \p f^\ell(\cdot) \|_{L^p(\R^3)}
			\times 
			%\bigg\{
			%\underbrace{
			\bigg(  \sum_{\iota= \pm}
			\int_{\R^3}
			\frac{e^{-p^*\tilde{\varrho}  |v-u|^2}}{|v-u|^{p^*}} \frac{\mathbf{1}_{|u| \leq N}}{\alpha_{f^{\ell-1},\e,\iota}(u)^{\beta p^*}}
			\dd u
			\bigg)^{1/q}%}%_{(\ref{double_underline_split})
			%_1}
			%+  \underbrace{\bigg(
			%\int_{u}
			%\frac{e^{-qC |v-u|^2}}{|v-u|^q} \frac{\mathbf{1}_{|u| \geq M}}{\alpha(u)^{\beta q}}
			%\dd u
			%\bigg)^{1/q}}_{(\ref{double_underline_split})
			%_2}
			%\bigg\}
			\label{double_underline_split},% \\
			%&&+ \frac{\alpha(v)^{\beta}}{\nu_\phi(v)^{(p-1)/p}}
			%\left(
			%\int_{\R^3} |\alpha^\beta \p f(u)|^p \dd u
			%\right)^{1/p}
			%\left(
			%\int_{\R^3}
			%\frac{e^{-qC |v-u|^2}}{|v-u|^q} \frac{\mathbf{1}_{|u| \leq M}}{\alpha(u)^{\beta q}}
			%\dd u
			%\right)^{1/q}
			%\label{double_underline_split_2}.
			\Ee
			where $1/p + 1/p^* =1$.

			\hide
			By the H\"older inequality, we bound it by 
			\begin{equation} \label{whole}
			\int^t_0  \int_\O 
			\int_{\R_{v}^3}
			| \nu_\phi^{1/p}\alpha^\beta \p f (v)|^{p-1} 
			\int_{\R_{u}^3} \mathbf{k}(v,u) \frac{ |  \alpha^\beta \p f (u)| }{ \alpha(u)^{\beta}}     \dd u
			\dd v
			\dd x \dd s.   
			\end{equation}
			We note that $\frac{p-2}{p} < \b < \frac{p-1}{p}$ implies $-1 < (\b-1) p + 1 < 0$ and therefore 
			$
			\frac{\alpha^{\b p}(v)}{\nu_{\phi}(v)^{p-1}} \lesssim \frac{\alpha^{\b p}(v)}{\langle v \rangle^{p-1}} \lesssim 1.   
			$

			Recall a standard estimate of $\mathbf{k}(v,u)\lesssim  \frac{e^{- C|v-u|^2}}{|v-u|}$ in (\ref{estimate_k}). %Set $a>1$. From the H\"older estimate with $\frac{1}{p}+ \frac{1}{ qa} + \frac{1}{\frac{a}{a-1} \frac{p}{p-1}}=1$
			With $M>0$, we split $\{|u| \leq M\} \cup \{|u| \geq M\}$.
			% and then use the H\"older inequality with $\frac{1}{p} +\frac{1}{q}=1$ to bound an underlined $u$-integration inside (\ref{double_underline}) as
			In (\ref{whole}), for $\{|u| \geq M \}$,
			\Be \label{double_underline_split}
			\begin{split}
				&\int^t_0  \int_\O 
				\int_{\R_{v}^3}
				| \nu_\phi^{1/p}\alpha^\beta \p f (v)|^{p-1} 
				\int_{|u| \geq M} \mathbf{k}(v,u) \frac{ |  \alpha^\beta \p f (u)| }{ \alpha(u)^{\beta}}     \dd u
				\dd v
				\dd x \dd s  \\
				&\leq \int^t_0 \int_{\O} \int_{\R^{3}_{v}}  |\nu_{\phi}^{1/p} \alpha^\beta \p f (v)|^{p-1} \\
				&\quad\times 
				\underbrace{ \Big[ \int_{|u| \geq M} \mathbf{k}_\zeta (v,u) \frac{1}{\alpha^{\b q}} \Big] ^{1/q} }
				\Big[ \int_{|u| \geq M} \mathbf{k}_\zeta (v,u) |\alpha^{\b}\p f(u)|^{p} \Big]^{1/p}  dv dx ds \\
				&\leq \ \int^t_0 \int_{\O} \Big[ \int_{\R^{3}_{v}}  |\nu_{\phi}^{1/p}\alpha^\beta \p f (v)|^{p } \Big]^{1/q}
				\Big[
				\int_{|u| \geq M} |\alpha^{\beta  }\p f (u)|^p \int_{\R^{3}_{v}} \mathbf{k}_\zeta(v,u)  
				\Big]^{1/p}  dx  ds  \\
				&\lesssim \ \int_0^t \| \nu_{\phi}^{1/p}\alpha^\beta \p f  (s)\|_{p}^{p-1} \| \alpha^\beta \p f  (s)\|_{p} \dd s  \\
				&\lesssim \ O(\varepsilon)\int_0^t \| \nu_{\phi}^{1/p}\alpha^\beta \p f  (s)\|_{p}^{p} + \int_0^t \| \alpha^\beta \p f  (s)\|_{p}^{p}  \dd s  \\
			\end{split}
			\Ee
			where we used Proposition \ref{prop_int_alpha} for above underbraced term.  \\
			
			For $\{|u| \leq M\}$ part of (\ref{whole}),
			\Be  \label{small}
			\begin{split}
				&\int^t_0  \int_\O 
				\int_{\R_{v}^3}
				| \nu_\phi^{1/p}\alpha_{f^\ell,\e}^\beta \p f (v)|^{p-1} 
				\int_{|u| \leq N} \mathbf{k}(v,u) \frac{ |  \alpha_{f^\ell,\e}^\beta \p f (u)| }{ \alpha_{f^\ell,\e}(u)^{\beta}}     \dd u
				\dd v
				\dd x \dd s  \\
				&\lesssim \int^t_0    \| \nu_\phi^{1/p} \alpha_{f^\ell,\e}^\beta \p f  (s)\|_p^{ p/q}  
				\Big[
				\int_\O
				\|  \alpha_{f^\ell,\e}^\beta \p f(\cdot) \|^{p}_{L^p(\R^3)}
				\int_{\R_{v}^3}
				|(**)|^{p}
				\dd v
				\dd x  \Big]^{1/p}
				\dd s  \\
			\end{split}
			\Ee
			where  \unhide
			
			It is important to note that for $\iota = + $ or $-$,
			\Be\label{convolution}
			\bigg(
			\int_{\R^3}
			\frac{e^{-p^*\tilde{\varrho}  |v-u|^2}}{|v-u|^{p^*}} \frac{\mathbf{1}_{|u| \leq N}}{\alpha_{f^{\ell-1},\e,\iota}(u)^{\beta p^*}}
			\dd u
			\bigg)^{1/{p^*}}  \leq \bigg|
			\frac{1}{| \cdot |^{p^*}} *  \frac{\mathbf{1}_{|\cdot| \leq N}}{\alpha_{f^{\ell-1},\e,\iota}(\cdot)^{p^*\beta}}
			\bigg| ^{1/{p^*}}.
			\Ee
			%
			%\Bes
			%&&(\ref{double_underline_split})_1\\
			%&\leq&  \frac{\alpha(v)^{\beta}}{\nu_\phi(v)^{(p-1)/p}}
			%\left(
			%\int_{\R^3} |\alpha^\beta \p f(u)|^p \dd u
			%\right)^{1/p}
			%\left(
			%\int_{\R^3}
			%\frac{e^{-qC |v-u|^2}}{|v-u|^q} \frac{\mathbf{1}_{|u| \leq M}}{\alpha(u)^{\beta q}}
			%\dd u
			%\right)^{1/q}
			%\\
			%%&\leq &
			% \frac{\alpha(v)^{\beta}}{\nu_\phi(v)^{(p-1)/p}}
			% \|  \alpha^\beta \p f (\cdot)\|_{L^p(\R^3)}
			%\bigg|
			%\frac{1}{| \cdot |^q} *  \frac{\mathbf{1}_{|\cdot| \leq M}}{\alpha(\cdot)^{q\beta}}
			%\bigg| ^{1/q}\\
			%&\lesssim& \|  \alpha^\beta \p f (\cdot)\|_{L^p(\R^3)}
			%\bigg|
			%\frac{1}{| \cdot |^q} *  \frac{\mathbf{1}_{|\cdot| \leq M}}{\alpha(\cdot)^{q\beta}}
			%\bigg| ^{1/q}
			%.
			%\Ees
			By the Hardy-Littlewood-Sobolev inequality with $$1+ \frac{1}{p/{p^*}} = \frac{1}{3/{p^*}} + \frac{1}{
				\frac{3}{2} \frac{p-1}{p}
			},$$ we have  %(and $\frac{1}{p}+ \frac{1}{q}=1$)
			\Be \label{153_1p}
			\begin{split}
				%&&(\ref{double_underline})\\
				%&\leq&
				%\|(**)\|_{L^p_{ v}}%\Big\|_{L^p(\O)} 
				%\lesssim % \Big\| \|  \alpha^\beta \p f \|_{L^p(\R^3)}\Big\|_{L^p(\O)} \times 
				%\sup_{x \in \O}
				\left\|
				\bigg|
				\frac{1}{| \cdot |^{p^*}} *  \frac{\mathbf{1}_{|\cdot| \leq N}}{\alpha_{f^{\ell-1},\e,\iota}(\cdot)^{{p^*}\beta}}
				\bigg| ^{1/{p^*}}\right\|_{L^p(\R^3)} 
				& = % \|  \alpha^\beta \p f  \|_{L^p_{x,v}}  \times \sup_{x \in \O}
				\bigg\|
				\frac{1}{| \cdot |^{p^*}} *  \frac{\mathbf{1}_{|\cdot| \leq N}}{\alpha_{f^{\ell-1},\e,\iota}(\cdot)^{p^*\beta}}
				\bigg\|_{L^{p/{p^*}} (\R^3)}  ^{1/{p^*}}\\
				&\lesssim %\|  \alpha^\beta \p f  \|_{L^p _{x,v}} \times \sup_{x \in \O} 
				\left\| \frac{\mathbf{1}_{|\cdot| \leq N}}{\alpha_{f^{\ell-1},\e,\iota}(\cdot)^{p^*\beta}}\right\|_{L^{
						\frac{3(p-1)}{2p}
					} (\R^3)}^{1/{p^*}}\\
				&\lesssim% \|  \alpha^\beta \p f  \|_{L^p_{x,v}}\times \sup_{x \in \O} 
				\left( \int_{\R^3} \frac{\mathbf{1}_{|v| \leq N}}{\alpha_{f^{\ell-1},\e,\iota}(v) ^{\frac{p}{p-1} \beta 
						\frac{3(p-1)}{2p}
					}
				} \dd v\right)^{
					\frac{2p}{3(p-1)} \frac{p-1}{p}
				}
				\\
				&=
				\left( \int_{\R^3} \frac{\mathbf{1}_{|v| \leq N}}{\alpha_{f^{\ell-1},\e,\iota}(v) ^{  3\beta/2
					}
				} \dd v\right)^{
					2/3
				}
				.
			\end{split}
			\Ee
			For $3<p<6$, we have $\frac{3}{2}\frac{p-2}{p} < 1$ and $\frac{2}{3} < \frac{p-1}{p}$. 
			%Then  
			%\Be\label{less_than_1}
			%\frac{p-2}{p-1} \frac{1}{1- \frac{1}{3} \frac{p-3}{p-1}} <
			%0<\frac{p}{p-1} \beta \frac{1}{1- \frac{1}{3} \frac{p-3}{p-1}}<  1.
			%\Ee
			Importantly from (\ref{beta_condition}) we have $\frac{3\beta}{2}<1$. Now we apply (\ref{NLL_split2}) in Proposition \ref{prop_int_alpha} to conclude that 
			\Be\notag
			%\sup_{x \in \O} 
			%\|(**)\|_{L^{p}_{v}} 
			%\lesssim 
			\left( \int_{\R^3} \frac{\mathbf{1}_{|v| \leq M}}{\alpha_{f^{\ell-1},\e,\iota}(v) ^{ 3\beta/2}
			} \dd v\right)^{2/3}\lesssim_{p, \beta, M,\O} 1 .
			\Ee 
			Finally from (\ref{double_underline}), (\ref{double_underline_split}), (\ref{convolution}), (\ref{153_1p}), and (\ref{small result}) we bound %the contribution of $\{|u| \leq N\}$ in (\ref{pf_green}) by the RHS of (\ref{small result}).
			\Be\label{151_3}
			(\ref{pf_p_K}) \lesssim
			o(1)\int_{0}^{t}  \| \nu_\phi^{1/p} w_{\tilde{\vartheta}}\alpha_{f^\ell,\e}^\beta \p f^{\ell+1}   \|_{p}^{p} + (1+ \sup_{0 \leq s \leq t } \| w_\vartheta f^\ell(s) \|_\infty + \sup_{0 \leq s \leq t } \| w_\vartheta f^{\ell+1}(s) \|_\infty) \int_{0}^{t} \|w_{\tilde{\vartheta}}\alpha_{f^{\ell-1},\e}^{\b}\p f^\ell \|_{p}^{p}   .
			\Ee
			
			Collecting terms from (\ref{pf_p_nabla}) (\ref{pf_p_K}), (\ref{pf_p_0}), (\ref{pf_p_1}), (\ref{pf_p_Gamma_v_-}), (\ref{pf_p_Kv}), (\ref{pf_p_2}), (\ref{small result}), and (\ref{151_3}) we have
			\Be
			\begin{split}\label{final_est_G}
				(\ref{pf_green})_\mathcal{G}
				\lesssim& o(1) 
				\int^t_0  \| \nu_{\phi^\ell}^{1/p} w_{\tilde{\vartheta}}  \alpha_{f^\ell,\e}^\beta \p f^{\ell+1} \|_p^p\\
				&+  (1  + \sup_{0 \leq s\leq t}\| \nabla^2 \phi^\ell(s) \|_\infty
				)
				\int^t_0   \| w_{\tilde{\vartheta}}  \alpha_{f^\ell,\e}^\beta \p f^{\ell+1} \|_p^p 
				\\
				& 
				+(1 + \sup_{0 \leq s\leq t}\| w_\vartheta f^\ell(s) \|_\infty+ \delta_1/\Lambda_1)
				\int^t_0 \|w_{\tilde{\vartheta}} f^{\ell+1} \|_p^p
				\\
				& 
				+(1 + \sup_{0 \leq s\leq t}\| w_\vartheta f^\ell(s) \|_\infty  +\sup_{0 \leq s\leq t} \| w_\vartheta f^{\ell+1}(s) \|_\infty)
				\int^t_0 (\|w_{\tilde{\vartheta}} f^{\ell} \|_p^p + \|w_{\tilde{\vartheta}}\alpha_{f^{\ell-1},\e}^{\b}\p f^\ell \|_{p}^{p})  .
			\end{split}
			\Ee
			
			\hide
			Therefore
			\Be \label{small result}
			\begin{split}
				(\ref{small}) &\lesssim \int^t_0    \| \nu_\phi^{1/p} \alpha^\beta \p f  (s)\|_p^{ p/q}
				\|  \alpha^\beta \p f(\cdot) \|_{p}	\dd s  \\
				&\lesssim O(\varepsilon)\int_{0}^{t}  \| \nu_\phi^{1/p} \alpha^\beta \p f  (s)\|_{p}^{p} ds + \int_{0}^{t} \|\alpha^{\b}\p f(s)\|_{p}^{p} ds.
			\end{split}
			\Ee

			From (\ref{small result}) and (\ref{double_underline_split}) we conclude
			\Be \label{151_3}
			\begin{split}
				\int_{0}^{t} (\ref{pf_green})_{3} \dd s &\lesssim  o(1)\int_{0}^{t}  \| \nu_\phi^{1/p} \alpha^\beta \p f  (s)\|_{p}^{p} \dd s + \int_{0}^{t} \|\alpha^{\b}\p f(s)\|_{p}^{p} \dd s .
			\end{split}
			\Ee

			\unhide

			\vspace{4pt}

			\textit{Step 4.}
			%{\color{blue}
			%  $\clubsuit$See Page 120-122, 173-175, 182-185 plus Lemma 8 in \cite{GKTT1}.
			%It is important to note that, from (\ref{beta_p_1}), we have $(\beta-1)p +1>-1$ and $|n(x) \cdot v|^{\beta p+1}, |n(x) \cdot v|^{(\beta-1) p+1} \in L^1_{loc} (\R^3)$. \\
			%} %blue
			We focus on $(\ref{pf_green})_{\gamma_-}$. From (\ref{BC_deriv}) and (\ref{BC_deriv_R}),
			\Be \label{BC_deriv_1}
			\begin{split}
				%&| w_{\tilde{\vartheta}}  \alpha_{f,\e}^\beta 
				%\p f  |_{p,-}^p\\
				&
				%=
				\int_{n(x) \cdot v<0}
				|n(x) \cdot v|^{\beta p}
				|w_{\tilde{\vartheta}}  \nabla_{x,v} f^{\ell+1}(t,x,v) |^p
				|n(x) \cdot v| \dd v
				\\
				&\lesssim  \int_{n(x) \cdot v<0}
				\langle v\rangle^p \mu(v)^{\frac{p}{2}}w_{\tilde{\vartheta}} ^p
				\Big(|n(x) \cdot v|^{\beta p+1}+ |n(x) \cdot v|^{(\beta-1) p+1} 
				\Big)  \times |(\ref{BC_deriv_R})|^p  \dd v.
				% \\
				%& \ \    \times \bigg\{ 
				%\int_{n(x) \cdot u>0}  \Big\{
				%(
				%\langle u\rangle  + |\nabla_x \phi^\ell|
				%)
				%|\nabla_{x,v} f(t,x,u) | \\ &  \ \ \ \ \   \ \ \  \ \ \  \ \ \  \ \ \   + %\frac{\langle v\rangle \sqrt{\mu(v)}  }{|n(x) \cdot v|}
				%\langle u \rangle (1+ |\nabla_x \phi^\ell |) |f|+ |\nabla_x \phi^\ell| \mu(u)^{\frac{1}{4}}
				%\\
				% &  \ \ \ \ \   \ \ \  \ \ \  \ \ \  \ \ \  
				%+(1+ \| w f \|_\infty) |\int_{\R^3}\mathbf{k}_{\varrho }(u, u^\prime)|f(u^\prime)| \dd u^\prime|
				%\Big\}\sqrt{\mu(u)}\{n(x) \cdot u\} \dd u
				%\bigg\}^p \dd v \\
				%& \lesssim   
				%(1+ |\nabla_x \phi^\ell|)^p 
				%\underbrace{ \bigg\{
				% \int_{n(x) \cdot u>0}  |\nabla_{x,v} f(t,x,u)|\langle u\rangle \sqrt{\mu(u)} \{n(x) \cdot u\} \dd u\bigg\}^p }
				% \\
				% &
				%  \ \ \ + \big\{ \sup_{0 \leq s \leq t}\| w f(s) \|_\infty (1+  \sup_{0 \leq s \leq t}\| w f(s) \|_\infty) (1+ |\nabla_x \phi^\ell|)
				%+  |\nabla_x \phi^\ell|\big\}^p. 
			\end{split}\Ee
			Note that for $0< \tilde{\vartheta} \ll_p 1$ we have $\mu(v)^{\frac{p}{2}}w_{\tilde{\vartheta}} ^p\lesssim e^{C|v|^{2}}$ for some $C>0$ when $|v|\gg1$. 
			
			On the other hand, from (\ref{beta_condition}), we have   
			\Be\label{L1_loc}
			(\beta-1) p + 1> \frac{p-2}{p} p - p+1 = -1, \ \ \  |n(x) \cdot v|^{(\beta-1) p+1} \in L^1_{loc}(\R^3).
			\Ee
			
			\hide
			\Be\notag%\label{BC_deriv}
			\begin{split}
				&|\nabla_{x,v} f(t,x,v) |\\
				\lesssim& \ 
				\langle v\rangle\sqrt{\mu(v)}
				\Big(1+ \frac{1}
				{|n(x) \cdot v|}  \Big)\\
				& \times
				\int_{n(x) \cdot u>0}  \Big\{
				(
				\langle u\rangle  + |\nabla_x \phi_f|
				)
				|\nabla_{x,v} f(t,x,u) | \\ &  \ \ \ \ \   \ \ \  \ \ \  \ \ \  \ \ \   + %\frac{\langle v\rangle \sqrt{\mu(v)}  }{|n(x) \cdot v|}
				\langle u \rangle (1+ |\nabla_x \phi_f |) |f|+ |\nabla_x \phi_f| \mu(u)^{\frac{1}{4}}
				\\
				&  \ \ \ \ \   \ \ \  \ \ \  \ \ \  \ \ \  
				+(1+ \| w f \|_\infty) |\int_{\R^3}\mathbf{k}_{\varrho }(u, u^\prime)|f(u^\prime)| \dd u^\prime|
				\Big\}\sqrt{\mu(u)}\{n(x) \cdot u\} \dd u.
			\end{split}\Ee   
			\unhide

			Now we bound $|(\ref{BC_deriv_R})|^p$. For the first line of (\ref{BC_deriv_R}), 
			%Now we focus on the underbraced term. Note that $\alpha(s,x,u)\neq |n(x) \cdot u|$ for $(x,u) \in \gamma_+$ in general. 
			%
			%Now 
			we split the $u$-integration into $\gamma_+^\e(x) \cup \gamma_+ (x) \backslash \gamma_+^\e(x)$ where $\e$ is small but satisfies (\ref{lower_bound_e}). By the H\"older inequality 
			\Be
			\begin{split}
				&   \bigg\{  \sum_{\iota = \pm} \int_{n(x) \cdot u>0} |w_{\tilde{\vartheta}}\alpha_{f^{\ell-1},\e}^\beta\nabla_{x,v}  f^{\ell} (s,x,u)| \{w_{\tilde{\vartheta}}\alpha_{f^\ell,\e,\iota}( u)\}^{- \beta}\langle u\rangle \sqrt{\mu(u)} \{n(x) \cdot u\}  \dd u\bigg\}^p
				\label{W1p_bdry}
				\\
				&  \lesssim
				\bigg\{\int_{\gamma_+^\e (x)}  |w_{\tilde{\vartheta}}\alpha_{f^{\ell-1},\e}^\beta\nabla_{x,v} f^\ell(s,x,u)|^p\{n(x) \cdot u\}  \dd u\bigg\}  \\
				& \ \ \ \  \times 
				\underbrace{ \bigg\{  \sum_{\iota = \pm} \int_{\gamma_+^\e (x)}
					\{w_{\tilde{\vartheta}}\alpha_{f^{\ell-1},\e,\iota}( u)\}^{- \beta p^*}
					|n(x) \cdot u|  \mu ^{\frac{q}{4}}\dd u 
					\bigg\}^{p/{p^{*} }} }  \\
				& + \bigg\{\int_{\gamma_+(x) \backslash\gamma_+^\e (x)}   | w_{\tilde{\vartheta}}\alpha_{f^{\ell-1},\e}^\beta\nabla_{x,v} f^\ell(s,x,u)|^p
				\mu ^{\frac{p}{8}}
				\{n(x) \cdot u\}  \dd u\bigg\}  \\
				& \ \ \ \   \times \underbrace{ \bigg\{  \sum_{\iota = \pm} \int_{\gamma_+(x) \backslash\gamma_+^\e (x)}
					\{w_{\tilde{\vartheta}}\alpha_{f^{\ell-1},\e,\iota}(s,x,u)\}^{- \beta p^* }
					|n(x) \cdot u|\mu ^{\frac{p^{*} }{8}}\dd u 
					\bigg\}^{p/{p^{*} }} },  \quad  p^{*} := \frac{p}{p-1}.
			\end{split}
			\Ee
			Note that $\alpha_{f^\ell,\e,\iota}(s,x,u)\neq |n(x) \cdot u|$ for $(x,u) \in \gamma_+$ in general. From (\ref{beta_condition}), $\beta p^*<1$. From (\ref{NLL_split2}) and (\ref{NLL_split3}) with $v=0$, we have $\alpha_{f^{\ell-1},\e,\iota}^{- \beta p^*} |n(x) \cdot u |   \lesssim \alpha_{f^{\ell-1},\e,\iota}^{- \beta p^*}   \in L^1_{loc} (\{ u \in \R^3\})$. Since $\mathbf{1}_{\gamma^\e_+ (x)}(v) \downarrow 0$ almost everywhere in $\R^3$ as $\e \downarrow 0$, by the dominant convergence theorem, for (\ref{delta_1/lamdab_1}), we choose $\e : = \frac{2 \delta_1}{\Lambda_1}\ll_\O 1$  %Since $\mathbf{1}_{\gamma^\e_+ (x)} \downarrow 0$ almost everywhere in $\R^3$ as $\e \downarrow 0$
			\Be\begin{split}\label{W1p_bdry_1}
				(\ref{W1p_bdry}) \lesssim& \  o(1)  \int_{\gamma_+^\e (x)}  |w_{\tilde{\vartheta}}\alpha_{f^{\ell-1},\e}^\beta\nabla_{x,v} f^\ell(s,x,u)|^p\{n(x) \cdot u\}  \dd u\\
				&+\int_{\gamma_+(x) \backslash\gamma_+^\e (x)}   |w_{\tilde{\vartheta}}\alpha_{f^{\ell-1},\e}^\beta\nabla_{x,v} f^\ell(s,x,u)|^p \mu(u)^{{p}/{8}}\{n(x) \cdot u\}  \dd u.
			\end{split} 
			\Ee

 Now applying Lemma \ref{le:ukai} and (\ref{Boltzmann_p}) to $f^\ell_+$ and $f^\ell_-$ separately and adding them together, the last term of (\ref{W1p_bdry_1}) has a bound as 
			\Be\begin{split}\label{nongrazing_nabla_f}
				&\int^t_0 \int_{\p\O} \int_{\gamma_+(x) \backslash\gamma_+^\e (x)}   | w_{\tilde{\vartheta}}\alpha_{f^{\ell-1},\e}^\beta\nabla_{x,v} f^\ell(s,x,u)|^p \mu(u)^{{p}/{8}}\{n(x) \cdot u\}  \dd u \dd S_x \dd s\\
				\lesssim & \ \| w_{\tilde{\vartheta}} \alpha_{f_0,\e}^\beta\nabla_{x,v} f(0)  \mu^{{1}/{8}}\|^p_p + \int^t_0  \| w_{\tilde{\vartheta}} \alpha_{f^{\ell-1},\e}^\beta \nabla_{x,v} f^\ell \|_p^p + (\ref{8 Last}),
			\end{split} 
			\Ee
			where, from (\ref{eqtn_nabla_f}), (\ref{mathcal_G}),
			%
			%
			%. With help of (\ref{Boltzmann_p}) the last term of Lemma \ref{le:ukai} becomes,   
			\begin{eqnarray} 
			&&
			 \int^t_0\iint_{ \O \times \R^3}
			  \big|[\p_t + v\cdot \nabla_x - q \nabla_x \phi^{\ell-1} \cdot \nabla_v + \nu_{\phi^\ell,w_{\tilde{\vartheta}}}  ] (
			w_{\tilde{\vartheta}}\mu^{1/8}
			\alpha_{f^{\ell-1},\e}^\beta\nabla_{x,v} f^\ell 
			)^p
			\big| \label{8 Last} \\
			&\leq&    \int^t_0
			\iint_{ \O \times \R^3} p |\alpha_{f^{\ell-1},\e}^{\beta p}  (\nabla_{x,v} f^\ell)^{p-1} | |w_{\tilde{\vartheta}}  \mu^{1/8}|^p
			\big|
			\mathcal{G}^\ell \big|\label{8 Last_1} \\
			&& +   \int^t_0\iint_{ \O \times \R^3} 
			|\nabla_x \phi^\ell| \mu^{0+} 
			|\alpha_{f^{\ell-1},\e}^\beta\nabla_{x,v} f^\ell |^p   \label{8 Last_2}.
			\end{eqnarray}
			\hide
			&\lesssim 
			\int^t_0\iint_{ \O \times \R^3} p\alpha^{\beta p}  |\nabla_{x,v} f|^{p-1}  \mu^{p/8} |\mathcal{G}|  
			+ ( 1 + \| \nabla \phi_{f}\|_{\infty} ) \|\alpha^{\b}\p f\|_{p}^{p}  \\
			%&\lesssim (1+ \| \nabla^2 \phi_{f} \|_\infty) \| \alpha^\beta \p f  \|_p^p \\
			%&\quad + (1+\|wf\|_{\infty}) \Big( O(\varepsilon) \| \nu_\phi^{1/p} \alpha^\beta \p f  (s)\|_{p}^{p} + \|\alpha^{\b}\p f\|_{p}^{p} \Big) \\
			%&\quad + (1 + \|wf\|_{\infty})(1 + \|\phi_{f}\|_{C^{2}} ) \|\alpha^{\b}\p f\|_{p}^{p-1}  \\
			&\lesssim 
			(o(1) + \sup_{0 \leq s\leq t}\| w_\vartheta f(s) \|_\infty)
			\int^t_0  \| \nu_{\phi^\ell}^{1/p} w_{\tilde{\vartheta}}\alpha^\beta \p f \|_p^p\\
			& \  \ +  (1 + \sup_{0 \leq s\leq t}\| w_{\vartheta} f(s) \|_\infty + \sup_{0 \leq s\leq t}\| \nabla^2 \phi^\ell(s) \|_\infty
			)
			\int^t_0   \| w_{\tilde{\vartheta}} \alpha_{f,\e}^\beta \p f \|_p^p 
			\\
			&  \ \ \  
			+(1 + \sup_{0 \leq s\leq t}\| w f(s) \|_\infty+ \delta_1/\Lambda_1) \int^t_0  \|  w_{\tilde{\vartheta}}  f \|_p^p.
		\end{eqnarray}\unhide
		Clearly $(\ref{8 Last_1})\lesssim  (\ref{final_est_G})|_{\ell \longleftrightarrow \ell-1} $. And, from (\ref{integrable_nabla_phi_f}),  
		\Be\notag
		(\ref{8 Last_2}) \lesssim \delta_1 \int^t_0 \| w_{\tilde{\vartheta}} \alpha_{f^{\ell-1}, \e}^\beta \nabla_{x,v} f^\ell \|_p^p.
		\Ee

		Now we consider the third term of (\ref{BC_deriv_R}). From the trace theorem $W^{1,p} (\O) \rightarrow W^{1- \frac{1}{p}, p} (\p\O)$ and (\ref{nabla_2_phi_p})
		\Be \label{phi_bdry_estimate}
		\| \nabla \phi^m \|_{L^p (\p\O)}\lesssim \| \nabla \phi ^m \|_{W^{1- \frac{1}{p}, p} (\p\O)}\lesssim \| \nabla \phi^m \|_{W^{1,p} (\O)}\lesssim \| w_{\tilde{\vartheta}} f^m \|_{L^p (\O \times \R^3)}.
		\Ee
		Then 
		\Be
		\begin{split}\label{W1p_bdry_2}
			&\int_{\p \O} \Big\{  \int_{n(x) \cdot u > 0 } 
			(\langle u \rangle (|f^{\ell +1}| + | f^\ell | )  + \mu(u)^{\frac{1}{4}} ) ( |\nabla_x \phi^\ell | + |\nabla_x \phi^{\ell-1} | )
			\Big\}\sqrt{\mu(u)}\{n(x) \cdot u\} \dd u \Big\}^p \dd S_{x}
			\\
			\lesssim & \ (1+ \| w_\vartheta f^\ell \|_\infty + \| w_\vartheta f^{\ell+1} \|_\infty) \left( \sum_{m = \ell}^{\ell + 1 } \| \nabla \phi^m \|_{L^p(\p\O)}^p \right)\\
			\lesssim & \ (1+ \| w_\vartheta f^\ell \|_\infty + \| w_\vartheta f^{\ell+1} \|_\infty) (  \| w_{\tilde{\vartheta}} f^\ell \|^{p}_{L^p (\O \times \R^3)} + \| w_{\tilde{\vartheta}} f^{\ell+1}\|^{p}_{L^p (\O \times \R^3)}).
		\end{split}
		\Ee
		
		For the second term of (\ref{BC_deriv_R}), by the H\"older inequality with $\frac{1}{p}+ \frac{1}{q}=1$ for $3<p<6$,
		\Be
		\begin{split}\label{W1p_bdry_3}
			&\Bigg\{
			\int_{n \cdot u>0} 
			\Big( 
			\langle u\rangle ( |f|^\ell + |f|^{\ell + 1 } )  
			\\ &+ (1+ \| w_{\vartheta} f^\ell \|_\infty + \| w_{\vartheta} f^{\ell-1} \|_\infty) \int_{\R^3}\frac{ \mathbf{k}_\varrho (u,u^\prime)^{1/q}}{|n\cdot u^\prime|^{1/p} }
			 \mathbf{k}_\varrho (u,u^\prime)^{1/p}(|f^\ell(u^\prime)| + f^{\ell -1 } (u') ) |n\cdot u^\prime|^{1/p} \dd u^\prime 
% \langle u \rangle (|f^{\ell +1}|  + |f^\ell | ) 
%			+(1+ \| w_{\vartheta} f ^{\ell }\|_\infty + \| w_\vartheta f^{\ell -1 } \|_\infty) \int_{\R^3}\mathbf{k}_{\varrho }(u, u^\prime) ( |f^\ell(u^\prime)| + |f^{\ell - 1 } (u' ) | ) \dd u^\prime
			 \Big)
			\sqrt{\mu }  \{n  \cdot u\} \dd u 
			\Bigg\}^p\\
			\lesssim & \ \left(  \int_{n\cdot u>0} (|f^{\ell+1}|^p + | f^\ell | ^p ) \{n\cdot u\} \sqrt \mu \dd u \right)^p \\
			&  + (1+ \| w_{\vartheta} f^\ell \|_\infty +\| w_{\vartheta} f^{\ell+1} \|_\infty) \Big(\int_{\R^3} \mathbf{k}_{\varrho} (u,u^\prime) |n \cdot u^\prime|^{-q/p} \dd u^\prime\Big)^{p/q} \int_{\R^3}\int_{\R^3} \mathbf{k}_\varrho (u,u^\prime) ( |f^\ell(u^\prime)|^p +|f^{\ell+1}(u^\prime)|^p) |n \cdot u^\prime| \dd u^\prime \dd u\\
			\lesssim & \ (1+  \| w_{\vartheta} f^\ell \|_\infty +\| w_{\vartheta} f^{\ell+1} \|_\infty) \int_{n\cdot u>0}  (|f^{\ell+1}|^p + | f^\ell | ^p ) \{n\cdot u\} \dd u
			\\  \lesssim & \ (1+  \| w_{\vartheta} f^\ell \|_\infty +\| w_{\vartheta} f^{\ell+1} \|_\infty) ( | w_{\vartheta} f^\ell \|_\infty +\| w_{\vartheta} f^{\ell+1} \|_\infty ).
		\end{split}
		\Ee
		
		Collecting terms from (\ref{BC_deriv_1}), (\ref{W1p_bdry_1}), (\ref{8 Last}), (\ref{W1p_bdry_2}), and (\ref{W1p_bdry_3}) we derive that 
		\Be \label{mid} \begin{split} 
			&(\ref{pf_green})_{\gamma_-}\\
			\lesssim & \
			\|  w_{\tilde{\vartheta}}\alpha_{f_0,\e}^\beta\nabla_{x,v} f(0)  \mu(u)^{{1}/{8}}\|^p_p
			\\
			& + o(1) \left(  \int^t_0 | w_{\tilde{\vartheta}}\alpha_{f^{\ell-1},\e}^\beta \p f^\ell |_{p,+}^p  \right) + \int_0^t  (1+  \| w_{\vartheta} f^\ell \|_\infty +\| w_{\vartheta} f^{\ell+1} \|_\infty)  
			%\{1 + \sup_{0 \leq s\leq t}\| w f(s) \|_\infty\}  \int_0^t \{\| f \|_p^p+   |f|_{\gamma,p}^p  \}
			\\ & + \left(o(1) +  \sup_{0 \leq s\leq t}\| w_{\vartheta} f^{\ell-1} (s) \|_\infty +  \sup_{0 \leq s\leq t}\| w_{\vartheta} f^\ell (s) \|_\infty +  \sup_{0 \leq s\leq t}\| w_{\vartheta} f^{\ell+1} (s) \|_\infty \right)
			\int^t_0  \sum_{m = \ell -1 }^\ell \| \nu_{\phi_f}^{1/p} w_{\tilde{\vartheta}}\alpha_{f^{m-1},\e}^\beta \p f^m \|_p^p\\
			& + \sum_{m = \ell}^{\ell+1} (1 + \sup_{0 \leq s\leq t}\| w_{\vartheta} f^m(s) \|_\infty+\sup_{0 \leq s\leq t}\| \nabla^2 \phi^{m-1}(s) \|_\infty   ) \int^t_0  \|   \alpha_{f^{m-1},\e}^\beta w_{\tilde{\vartheta}} \p f^m \|_p^p %+ \| f \|_p^p
			\\
			&    
			+(1 +\sum_{m = \ell}^{\ell +1 }  \sup_{0 \leq s\leq t}\| w_{\vartheta} f^m(s) \|_\infty) \int^t_0  %\|   \alpha^\beta \p f \|_p^p + 
			\sum_{m = \ell}^{\ell +1 }  \big(  \| w_{\tilde{\vartheta}} f^m \|_p^p+   | w_{\tilde{\vartheta}}f^m |_{p,+}^p \big) .
			% \lesssim & \ 
			%(1+ \| \nabla \phi_f \|_\infty^p) \times  \Big\{O(\e) \int^t_0 |\alpha^\beta \p f |_{p,+}^p + \underline{\underline{ \int^t_0
			%\int_{\gamma_+ \backslash \gamma_+^\e} 
			% |
			% \alpha^\beta \p f | ^p \mu^{p/8}}}\Big\}\\
			%&+  \int_{0}^{t} \big\{ \| w f \|_\infty (1+ \| w f \|_\infty) (1+ \|\nabla_x \phi_f\|_\infty)
			%+  \|\nabla_x \phi_f\|_\infty\big\}^p.
		\end{split} \Ee

		\hide

		Using Lemma \ref{le:ukai} and (\ref{8 Last}) to (\ref{mid}),
		\Be \label{151_2}
		\begin{split}
			& (\ref{pf_green})_2 = \int_{0}^{t} |\alpha^\beta 
			\p f(s)  |_{p,-}^p ds   \\
			&\lesssim O(\e) (1 + \|\phi_{f}\|_{C^{2}})^{p} \int^t_0 |\alpha^\beta \p f (s) |_{p,+}^p ds  + O(\varepsilon) (1+\|wf\|_{\infty}) \int_{0}^{t} \| \nu_\phi^{1/p} \alpha^\beta \p f  (s)\|_{p}^{p}  \\
			&\quad + \|\alpha^{\b}\p f (0) \|_{p}^{p} + \int_{0}^{t} (1 + \|wf\|_{\infty})^{2p}(1 + \|\phi_{f}\|_{C^{2}})^{p} ( 1 + \|\alpha^{\b}\p f\|_{p}) \|\alpha^{\b}\p f\|_{p}^{p-1}  ds \\
			%&\quad + (1 + \|\phi_{f}\|_{C^{2}}) (1+\|wf\|_{\infty}) ( 1 + \|\alpha^{\b}\p f\|_{p}) \|\alpha^{\b}\p f\|_{p}^{p-1}  \\
			%&\quad + (1 + \|wf\|_{\infty})^{2p}(1 + \|\phi_{f}\|_{C^{2}})^{p} .
		\end{split}
		\Ee
		\unhide
		
		\vspace{4pt}

		\textit{Step 5.} From (\ref{Lp_estimate_f}), (\ref{pf_green}), (\ref{final_est_G}), (\ref{mid}) we have 		
		\Be
		\begin{split}\label{control_bdry_ell}
&\sup_{0 \leq s\leq t}  \mathcal{E}^{\ell+1} (s)
			\\  \le  &     
			C_0  \left( \|  w_{\tilde{\vartheta}}\alpha^\beta_{f_0, \e} \nabla_{x,v} f_0 \|_p^p 
			+ t \left(1  + \|w_{\vartheta}f^{\ell-1 }\|_{\infty} +   \sum_{ m = \ell - 1 }^{\ell + 1}\| w_{\vartheta} f^ml \|_\infty   + \| \nabla^2 \phi^{\ell-1 } \|_\infty + \| \nabla^2 \phi^{\ell } \|_\infty\right)   \right) \max_{0 \leq m \leq \ell +1} \sup_{0 \leq s \leq t} \mathcal{E}^{m}(s) 
			\\ & +o(1)  \max_{0 \leq m \leq \ell }\sup_{0 \leq s \leq t}\mathcal{E}^{m}(s)
			 .
		\end{split}\Ee		
		On the other hand, from Lemma \ref{lemma_apply_Schauder}, 
		\Be\label{phi_ell<E}
		\| \nabla^2 \phi^\ell (t)\|_\infty + \| \nabla^2 \phi^{\ell-1}(t) \|_\infty 
		\lesssim [\mathcal{E}^{\ell}(t) + \mathcal{E}^{\ell-1}(t)]^{1/p}.
		\Ee
Therefore from (\ref{control_bdry_ell}), (\ref{phi_ell<E}), and the induction hypothesis in (\ref{induc_hypo}), we first choose a small $o(1)$, then large $C \gg C_0$, and finally small $ 0< T^{**} \ll 1 $ to conclude 
		\Be \begin{split} \notag
		\sup_{0 \leq s\leq t}\mathcal{E}^{\ell + 1} (s) \le & \frac{C}{10} \| w_{\tilde{\vartheta}} \alpha_{f_0,\e}^{\b} 
		\nabla_{x,v} f_0\|_p^p + \frac{1}{10}\sup_{0 \leq s\leq t}\sup_{m\leq \ell}\mathcal{E}^{m} (s)
		\\ \le & C \{ \| w_\vartheta f_0 \|_\infty + \| w_{\tilde{\vartheta}} f _0 \|_p^p
			+
			\| w_{\tilde{\vartheta}} \alpha_{f_{0 },\e}^\beta \nabla_{x,v} f  _0 \|_{p}^p \}.
		\end{split} \Ee
		This proves (\ref{induc_hypo}).

		\hide

		\vspace{4pt}

		\textit{Step 5.} By simple modification we can prove 
		\Be\label{tilde_w_W1p}
		\| \tilde{w} \alpha^\beta \nabla_{x,v} f(t) \|_{L^p (\O \times \R^3)}^p + \int^t_0 \| \langle v \rangle^{1/p}\tilde{w}  \alpha^\beta \nabla_{x,v} f(t) \|_{L^p (\O \times \R^3)}^p  \lesssim_t 1.
		\Ee
		The equation for $\tilde{w} \p f$ equals 
		\Be\label{eqtn_nabla_f_tilde_w}
		[\p_t + v\cdot \nabla_x - \nabla_x \phi_f \cdot \nabla_v + \tilde{\nu}_{\phi_f,\tilde{w}}] \p f 
		=   \tilde{w} \mathcal{G},
		\Ee
		where ${\nu}_{\phi_g,\tilde{w}}$ in (\ref{nu_w}) and $\mathcal{G}$ in (\ref{mathcal_G}). Note that, from {\color{red}(??)}, for some $0<\tilde{\varrho}\ll \varrho$
		\Be \label{K_tilde_w}
		\tilde{w} (v)| K \p f(v)| 
		\lesssim  \int_{\R^3} \mathbf{k}_\varrho (v,u) \frac{\tilde{w}(v)}{\tilde{w}(u)} | \tilde{w}\p f (u)| \dd u 
		\lesssim  \int_{\R^3} \mathbf{k}_{\tilde{\varrho}} (v,u)   | \tilde{w}\p f (u)| \dd u.
		\Ee
		Similarly we have 
		\Be\begin{split}\label{Gamma_tilde_w}
			&\tilde{w} (v) |\Gamma( \tilde{w}^{-1} \|w f\|_\infty,\p f)|\lesssim  \tilde{w} (v) |\Gamma(w^{-1}\|w f\|_\infty, \tilde{w}^{-1} \tilde{w}\p f)|\\
			\lesssim & \ 
			\|w f\|_\infty \int_{\R^3} \mathbf{k}_{\tilde{\varrho}} (v,u)   | \tilde{w}\p f (u)| \dd u.
		\end{split}\Ee
		The rest of proof is same as \textit{Step 1}-\textit{Step 4}.\hide
		
		We define
		\Be \label{def W}
		\mathcal{W}(t) := 
		\| f(t) \|_p^p
		+\| \alpha^\beta 
		\p f (t)\|_p^p  .
		\Ee
		From (\ref{final_est_W1p}),
		\Be\label{eqtn_W}
		\mathcal{W} (t) \lesssim \mathcal{W} (0)  + (1 %+ \sup_{0 \leq s \leq t} \| w f(s) \|_\infty
		+ \sup_{0 \leq s \leq t} \| \nabla^2 \phi_f(s) \|_\infty)
		\int^t_0 \mathcal{W} (s) \dd s.
		\Ee

		\vspace{4pt}
		
		\textit{Step 5. }

		and assume $\|\alpha^{\b}\p f(t)\|_{p} \geq 1$ WLOG, since we apply growth estimate. We use (\ref{151_2}), (\ref{151_3}), (\ref{151_4}), and (\ref{151_5}) to (\ref{pf_green}) with sufficiently small $\varepsilon \ll 1$ to obtain
		\Be
		\begin{split}
			\mathcal{W}(t) &\lesssim \| \alpha^\beta \p f (0)\|_{p}^{p} + \int_{0}^{t}\mathcal{P}\big( \|wf\|_{\infty},  \|\phi_{f}\|_{C^{2}} \big) \mathcal{W}(s) ds, 
		\end{split}	
		\Ee
		and Gronwall's inequality yield
		\Be \label{e_growth}
		\mathcal{W}(t) := \| \alpha^\beta 
		\p f (t)\|_p^p
		+ \int^t_0 \| \nu_{\phi_f}^{1/p} \alpha^\beta 
		\p f (s) \|_p^p  ds \lesssim \| \alpha^\beta \p f (0)\|_{p}^{p} \ e^{ t C_{p}(f,\phi_{f}) },
		\Ee
		where 
		\[
		C_{p}(f,\phi_{f}) = \mathcal{P}\big( \|wf\|_{\infty},  \|\phi_{f}\|_{C^{2}} \big) = (1 + \|wf\|_{\infty})^{2p}(1 + \|\phi_{f}\|_{C^{2}})^{p}.
		\]
		
		From Schauder estimate,
		\Be
		\begin{split}\label{apply_schauder}
			\|\phi_{f}\|_{C^{2,1-\frac{3}{p}}} &\lesssim_{p,\O} \Big\|\int_{\R^{3}} f\sqrt{\mu} dv \Big\|_{C^{0,1-\frac{n}{p}}(\O)}  \\
			&\lesssim \|wf\|_{\infty} + \Big\|\int_{\R^{3}} \nabla_{x} f\sqrt{\mu} dv \Big\|_{L^{p}(\O)}  \\
			&\lesssim  \|wf\|_{\infty} + \Big( \int_{\R^{3}} \Big| \frac{\sqrt{\mu}}{\alpha^{\b}(x,v)} \Big|^{\frac{p}{p-1}} dv \Big)^{\frac{p-1}{p}} \|\alpha^{\b}\nabla_{x}f\|_{p}  \\
			&\lesssim e^{t C_{p}(f,\phi_{f})},
		\end{split}
		\Ee
		since we can apply Proposition \ref{prop_int_alpha} with $\b \frac{p}{p-1} < 1$. Now, let us choose 
		\[
		\delta_{2} = 1-\frac{3}{p}, \quad \lambda_{0} > \frac{1}{\delta_{2}} C_{p}(f,\phi_{f}).
		\]
		From (\ref{morrey}), $\|\nabla_{x}\phi_{f}\|_{C^{0,1-\frac{3}{p_{1}}}} \lesssim \|wf\|_{\infty} \lesssim e^{-\lambda t}$ holds for any $p_{1} > 3$. We choose $p_{1} \gg 3$ so that $\frac{3}{p_{1}}\lambda_{0} < \lambda$. Finally we apply (\ref{phi_interpolation}) to conclude
		\Be\begin{split}
			%&
			\|\nabla^2_x \phi(t )\|_{L^\infty_x}%\\
			%=& \sup_x\sup_{\hat{\omega} \in \mathbb{S}^2}|\big(\hat{\omega} \cdot \nabla\big) \nabla \phi (t,x)|\\
			&\leq
			%& 
			e^{\frac{3}{p_{1}}\lambda_0t}[\nabla_x \phi(t)]_{C^{0,1-\frac{3}{p_{1}}}_x}
			+ e^{-(1-\frac{3}{p}) \lambda_0t}[\nabla^2 \phi(t)]_{C_x^{0, 1-\frac{3}{p}}}  \\
			&\lesssim e^{ -\lambda_{1} t } ,
		\end{split}\Ee
		where
		\[
		\lambda_{1} := \min\{ \lambda-\frac{3}{p}\lambda_{0}, (1-\frac{3}{p})\lambda_{0} - C_{p}(f,\phi_{f}) \}.	
		\]
		\unhide\unhide
	\end{proof}

	\hide
	\vspace{4pt}

	\section{Weighted $W^{1,p}$ estimates of $f$ and $C^2$ estimates of $\phi$}

	%\subsection{Preliminary}
	For the hard sphere cross section and the global Maxwellian $\mu(v) = e^{-\frac{|v|^{2}}{2}}$,
	\begin{equation}\label{def_Kf}
	\begin{split}
	Kg  &:=K_{2}g -K_{1}g\\
	&:= \frac{1}{\sqrt{\mu}} \big[Q_{\mathrm{gain}}(\mu, \sqrt{\mu}g) + Q_{\mathrm{gain}}(  \sqrt{\mu}g,\mu)\big]
	- \frac{1}{\sqrt{\mu}} Q_{\mathrm{loss}} (\sqrt{\mu}g, \mu)\\
	&:=\int_{\mathbb{R}^{3}}\mathbf{k}_{2}(v,u)g(u)\mathrm{d}u - \int_{\mathbb{R}^{3}}\mathbf{k}_{1}(v,u)g(u)\mathrm{d}u,  
	\end{split}
	\end{equation}%
	where, for some constants $C_{\mathbf{k}_{1}}>0$ and $C_{\mathbf{k}_{2}}>0$,
	\begin{equation}\label{k_estimate}
	\begin{split}
	\mathbf{k}_{1}(u,v)=  & C_{\mathbf{k}_{1}}  |v-u|   e^{-\frac{|v|^{2} +|u|^{2}}{4}} ,
	%\label{eq:kernel_K}
	\\
	\mathbf{k}_{2}(u,v) =& C_{\mathbf{k}_{2}} 
	|v-u|^{-1} e^{- \frac{|v-u|^2}{8}} 
	e^{-   \frac{  | |v|^2- |u|^2   |^2}{8|v-u|^2}}.
	%&\times \int_{w\cdot (u-v)=0}q_{0}\Big( \frac{u-v}{\sqrt{ |u-v|^{2 } + |w|^{2}}}  \cdot \frac{u-v}{|u-v|}  \Big) e^{-|w+\varsigma |^{2}}(|w|^{2}+|u-v|^{2})^{\kappa /2}\mathrm{d}
	%w,
	\end{split}
	\end{equation}%
	%where $\varsigma:= \big( \frac{v+u}{2} \cdot \frac{w}{|w|}\big) \frac{w}{|w|}.$ See page 315 of \cite{Guo03} for details.
	%
	%
	%The precise forms of $\mathbf{k}_1$ and $\mathbf{k}_2$ are given by 
	%\Be\begin{split}
	% \mathbf{k}_{1}(u,v)=& |u-v|\sqrt{\mu (u)\mu (v)} \int_{%
	%\mathbb{S}^{2}} \Big|\frac{v-u}{|v-u|}\cdot \omega \Big| \mathrm{d}\omega ,
	%\label{kernel_K} \\
	% \mathbf{k}_{2}(u,v)=&\frac{2}{|u-v|^{2}}e^{-\frac{1}{8}|u-v|^{2}-\frac{1}{8}%
	%\frac{(|u|^{2}-|v|^{2})^{2}}{|u-v|^{2}}}\\
	%&\times\int_{w\cdot (u-v)=0}q_{0}^{\ast }({%
	%|w|},{|u-v|})e^{-|w+\varsigma |^{2}}(|w|^{2}+|u-v|^{2})^{1 /2}\mathrm{d}%
	%w,   \end{split}
	%\Ee%
	%where $q_{0}^{\ast }$ is bounded, and $\varsigma =\frac{1}{4}(u+v)-\frac{1}{4%
	%}\frac{|u|^{2}-|v|^{2}}{|u-v|^{2}}(u-v)$, so that $\varsigma \cdot (u-v)=0$
	%and $|\varsigma |^{2}=\frac{1}{8}|u+v|^{2}-\frac{1}{8}\frac{%
	%(|u|^{2}-|v|^{2})^{2}}{|u-v|^{2}}$.  

	The nonlinear Boltzmann operator equals
	\begin{equation}\label{carleman}
	\begin{split}
	\Gamma (g_{1},g_{2})(v) 
	=  &
	\int_{\mathbb{R}^{3}}   \int_{\S^2}   | u \cdot \omega|
	g_1(v+ u_\perp) g_2(v + u_\parallel)
	\sqrt{\mu(v+u)} \dd \omega  \mathrm{d}u \\
	& - \int_{\mathbb{R}^{3}}   \int_{\S^2}   | u \cdot \omega|
	g_1(v+u) g_2(v  )
	\sqrt{\mu(v+u)} \dd \omega  \mathrm{d}u
	%,\\
	%=&
	%C\int_{\mathbb{R}^{3}} \mathrm{d}u_{ \parallel} \int_{u_{ \parallel}\cdot u_{ \perp} =0} \mathrm{d}u_{ \perp} \ g_{2}(v+ u_{ \perp}) g_{1} (v+ u_{\parallel})
	%q_{0}^{*} \big( \frac{|u_{ \parallel}|}{|u_{ \parallel} + u_{ \perp}|} \big) \frac{|u_{ \parallel} + u_{ \perp}|^{\kappa-1}}{|u_{ \parallel}| } e^{- \frac{|u_{ \parallel} + v + u_{ \perp}|^{2}}{4}}
	%,\\
	%=& C\int_{\mathbb{R}^{3}} \mathrm{d}u_{ \parallel} \int_{ (u_{ \parallel} - v)\cdot u_{ \perp}=0}  \mathrm{d}u_{ \perp} \ g_{1}(v+u_{ \perp}) g_{2}(u_{ \parallel})
	%q_{0}^{*}\big(  \frac{|u_{ \parallel}-v|}{ { |u_{ \parallel}-v+u_{ \perp}|    }}\big)  \frac{\big| u_{\parallel}-v + u_{ \perp} \big|^{{\kappa-1}}}{|u_{ \parallel}-v|}
	%e^{-\frac{|u_{ \parallel}+u_{ \perp} |^{2}}{4}} ,\\
	%=&C \int_{\mathbb{R}^{3}} \mathrm{d}u_{ \parallel} \int_{ (u_{ \parallel} - v)\cdot u_{ \perp}=0}  \mathrm{d}u_{ \perp} \  g_{2}(v+u_{ \perp}) g_{1}( u_{ \parallel})
	%q_{0}^{*}\big(  \frac{|u_{ \parallel}-v|}{ { |u_{ \parallel}-v+u_{\perp}|    }}\big)  \frac{\big| u_{ \parallel}-v + u_{ \perp} \big|^{ {\kappa-1} }}{|u_{ \parallel}-v|}
	%e^{-\frac{|u_{ \parallel}+u_{ \perp} |^{2}}{4}}
	%,
	\end{split}
	\end{equation}
	where $u_\parallel = (u \cdot \omega)\omega$ and $u_\perp = u - u_\parallel$.
	
	We define a notation
	\begin{equation}\label{kzeta}
	\mathbf{k}_{  \varrho}(v,u) := \frac{e^{-\frac{\varrho}{4}|v-u|^{2}    }}{|v-u| } .
	\end{equation}
	
	\begin{lemma} For $0<\varrho< \frac{1}{8}$,
		\Be\label{vK}\begin{split}
			| \nabla_v Kg(v) | \lesssim    \ \| w g \|_\infty
			+ \int_{\R^3} \mathbf{k}_\varrho (v,u) |\nabla_v g(u)| \dd u,
		\end{split}
		\Ee
		and
		\Be\label{vGamma}
		\begin{split}
			&|\nabla_v \Gamma (g,g) (v)|\\
			& \  \lesssim \| w g \|_\infty \int_{\R^3} \mathbf{k}_\varrho (v,u) |\nabla_v g (u)| \dd u + \langle v\rangle  \| w g \|_\infty |\nabla_v g (v)| + w(v)^{-1} \| w g \|_\infty^2.
		\end{split}
		\Ee
		
	\end{lemma}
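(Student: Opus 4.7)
The plan is to establish both pointwise estimates by first rewriting $K$ and $\Gamma$ in shifted form so that the velocity derivative $\nabla_v$ can be distributed cleanly between the integration kernel and the unknown $g$. Writing $K g(v)=\int_{\R^3}[\mathbf{k}_2(v,v+u)-\mathbf{k}_1(v,v+u)]g(v+u)\dd u$ by the change of variables $u\mapsto u+v$, the chain rule gives
\[
\nabla_v K g(v)=\int_{\R^3}[\nabla_v\mathbf{k}_2-\nabla_v\mathbf{k}_1](v,v+u)\, g(v+u)\dd u+\int_{\R^3}[\mathbf{k}_2-\mathbf{k}_1](v,v+u)\,\nabla_v g(v+u)\dd u.
\]
The kernel-derivative estimates already recorded in \eqref{nabla_k1}--\eqref{nabla_k2} bound the derivatives of $\mathbf{k}_i$ by $\mathbf{k}_\varrho(v,v+u)$, so the first piece is controlled by $\int\mathbf{k}_\varrho(v,u+v)w(u+v)^{-1}\dd u\,\|wg\|_\infty\lesssim\|wg\|_\infty$ using a shifted version of \eqref{grad_estimate}. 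Reverting the change of variables in the second integral produces exactly $\int\mathbf{k}_\varrho(v,u)|\nabla_v g(u)|\dd u$, which proves \eqref{vK}.

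For $\nabla_v\Gamma(g,g)$ I would start from the Carleman form (\ref{carleman}) and separate the gain and loss pieces. In the gain part $\int\int|u\cdot\omega|g(v+u_\perp)g(v+u_\parallel)\sqrt{\mu(v+u)}\dd\omega\dd u$, the velocity dependence occurs only through the translations $v+u_\perp$, $v+u_\parallel$ and through the Maxwellian $\sqrt{\mu(v+u)}$. Distributing $\nabla_v$ then gives three groups: (i) $\Gamma_{\mathrm{gain}}(\nabla_v g,g)+\Gamma_{\mathrm{gain}}(g,\nabla_v g)$, (ii) the analogous loss derivatives $\Gamma_{\mathrm{loss}}(\nabla_v g,g)$ and $\Gamma_{\mathrm{loss}}(g,\nabla_v g)$, and (iii) the Maxwellian-derivative pieces $\Gamma_{v,\mathrm{gain}}-\Gamma_{v,\mathrm{loss}}$ with an extra factor $\nabla_v\sqrt{\mu(v+u)}=-\tfrac{v+u}{2}\sqrt{\mu(v+u)}$.

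Groups (i) and the convolution contribution to (ii) are handled by exchanging the roles of $\sqrt\mu$ and $w^{-1}$ in the classical Grad reduction used to derive \eqref{k_estimate}: this yields the usual kernel $\mathbf{k}_\varrho$ against $\nabla_v g$ with one factor of $\|wg\|_\infty$ outside, giving the first term on the right of \eqref{vGamma}. The loss contribution in which $\nabla_v$ hits $g(v)$ does not get integrated and produces the pointwise piece $\langle v\rangle\|wg\|_\infty|\nabla_v g(v)|$ via $\nu(v)\sim\langle v\rangle$. For group (iii), the extra polynomial factor from $\nabla_v\sqrt{\mu(v+u)}$ is harmless: the loss part bounds by $\|wg\|_\infty^2\cdot\langle v\rangle/w(v)$ directly from Cauchy--Schwarz against $\mu^{1/2}$, while the gain part is again Carleman-reduced to a $\mathbf{k}_\varrho$ convolution of $|wg|$ that is absorbed into $\|wg\|_\infty$; the worst of these supplies the final term $w(v)^{-1}\|wg\|_\infty^2$.

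The step I expect to be the main obstacle is the organization of the Maxwellian-derivative terms $\Gamma_v$ so that the extra $|v+u|$ weight does not destroy either the exponential decay needed for the kernel bound or the factor $w(v)^{-1}$. The key trick is to absorb the polynomial against a fraction of $\sqrt{\mu(v+u)}$, and to use the energy identity $|v|^2+|u|^2=|v'|^2+|u'|^2$ on the pre-collisional variables so that $w(v')w(u')\gtrsim w(v)$, exactly as in Chapter~3 of \cite{gl}; this is what allows both the convolution form and the $w(v)^{-1}\|wg\|_\infty^2$ term to appear together without losing the smallness of $\varrho<1/8$.
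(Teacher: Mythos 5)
Your proposal is correct and follows essentially the same route as the paper: shift variables in $K$ and apply the chain rule with the kernel-derivative bounds \eqref{nabla_k1}--\eqref{nabla_k2}, and for $\Gamma$ decompose $\nabla_v\Gamma(g,g)=\Gamma(\nabla_v g,g)+\Gamma(g,\nabla_v g)+\Gamma_v(g,g)$ in Carleman form, then bound each piece via the Grad-type reduction with $\sqrt\mu$ and $w^{-1}$ swapped. The only cosmetic discrepancy is the $\langle v\rangle$ factor on the $\Gamma_{v,\mathrm{loss}}$ term, which the lemma statement absorbs into a slightly relaxed weight (as the paper also does implicitly); this does not affect the argument.
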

	\begin{proof}
		
		Clearly $|\nabla_v \mathbf{k}_1 (u+v,v)| \lesssim \mathbf{k}_{  \varrho}(v,u)$. Note that 
		\Be\label{nabla_k2}
		\begin{split}
			|\nabla_v \mathbf{k}_2 (u+v,v)|
			%\lesssim & \  
			%|u|   ( |v|^2- |u+v|^2   )   
			% |u|^{-3} e^{- \frac{| u|^2}{8}} 
			%e^{-   \frac{  | |v|^2- |u+v|^2   |^2}{8| u|^2}}\\
			\lesssim & \ \frac{\big||v|^2 - |u+v|^2\big|}{|u|^2}  e^{- \frac{| u|^2}{8}} 
			e^{-   \frac{  | |v|^2- |u+v|^2   |^2}{8| u|^2}}\\
			\lesssim & \ \frac{  |v| + |u+v| }{|u|}   e^{ - \frac{ ( |v|+ |u+v|  )^2}{8|u|^2}}e^{- \frac{| u|^2}{8}}  e^{- (|v|-|u+v|)^2}\\
			\lesssim & \ \mathbf{k}_{  \varrho}(v,u) .
		\end{split}\Ee
		
		Therefore we have
		\Be\label{vK}\begin{split}
			&| \nabla_v Kg(v) |\\
			\leq& \ \sum_{i}\int_{\R^3}\{| \nabla_v\mathbf{k}_i  (v,u+v)||  g(u+v) | + |\mathbf{k}_i  (v,u+v) ||\nabla_v g(u+v)|\}\dd u\\
			\lesssim&
			\ 
			\int_{\R^3} \mathbf{k}_\varrho (v,u) \{ |g(u)| +|\nabla_v g(u)|\} \dd u,
			%\\
			%\lesssim &  \ \| w g \|_\infty
			%+ \int_{\R^3} \mathbf{k}_\varrho (v,u) |\nabla_v g(u)| \dd u
		\end{split}
		\Ee 
		and conclude (\ref{vK}).

		By taking derivatives 
		\Be\begin{split}\label{nabla_Gamma}
			&\nabla_v \Gamma(f,f) (v) \\
			=& \  \Gamma(\nabla_v f, f) + \Gamma(f, \nabla_v f) \\
			= & \ \Gamma_{\textrm{gain}} (\nabla_v f,f) + \Gamma_{\textrm{gain}} ( f,\nabla_vf)
			- \Gamma_{\textrm{loss}} (\nabla_v f,f) -\Gamma_{\textrm{loss}} ( f,\nabla_vf)+ \Gamma_v (f,f),
		\end{split}\Ee
		where we define
		\Be\begin{split}\label{Gamma_v}
			\Gamma_v (f,f)(v)  =&\int_{\mathbb{R}^{3}}   \int_{\S^2}   | u \cdot \omega|
			g_1(v+ u_\perp) g_2(v + u_\parallel)
			\nabla_v\sqrt{\mu(v+u)} \dd \omega  \mathrm{d}u  \\
			& - \int_{\mathbb{R}^{3}}   \int_{\S^2}   | u \cdot \omega|
			g_1(v+u) g_2(v  )
			\nabla_v \sqrt{\mu(v+u)} \dd \omega  \mathrm{d}u.
		\end{split}\Ee
		
		Note that 
		\Be\begin{split}\notag
			&|\Gamma_{\textrm{gain}} (\nabla_v f,f) |+ |\Gamma_{\textrm{gain}} ( f,\nabla_vf)|\\
			\lesssim & \ \| w f \|_\infty \big\{|\Gamma_{\textrm{gain}} (|\nabla_v f|, w^{-1}) | + |\Gamma_{\textrm{gain}} ( w^{-1}, |\nabla_vf|)|\big\} \\
			\lesssim & \ \| w f \|_\infty  
			\int_{\R^3} \int_{\S^2} 
			|(v-u) \cdot \omega| w(u )  \Big\{ \frac{|\nabla_v f (u^\prime)|}{w(v^\prime) } + \frac{ |\nabla_v f (v^\prime)|}{w(u^\prime)}\Big\}
			\dd \omega \dd u.%\\
			%\lesssim & \ \| w f \|_\infty \int_{\R^3} \mathbf{k}_{\varrho} (v,u) |\nabla_v f(u)| \dd u
		\end{split}\Ee

		Then following the derivation of (\ref{k_estimate}) we can obtain a bound of 
		\Be\label{bound_Gamma_nabla_vf1}
		%\begin{split}
		%&
		|\Gamma_{\textrm{gain}} (\nabla_v f,f) |+ |\Gamma_{\textrm{gain}} ( f,\nabla_vf)|%\\
		% \lesssim & \ \| w f \|_\infty \big\{|\Gamma_{\textrm{gain}} (|\nabla_v f|, w^{-1}) | + |\Gamma_{\textrm{gain}} ( w^{-1}, |\nabla_vf|)|\big\} \\
		% \lesssim & \ \| w f \|_\infty  
		% \int_{\R^3} \int_{\S^2} 
		% |(v-u) \cdot \omega| w(u )  \{ |\nabla_v f (u^\prime)|w(v^\prime)^{-1} + w(u^\prime)^{-1} |\nabla_v f (v^\prime)|\}
		% \dd \omega \dd u\\
		\lesssim %& \
		\| w f \|_\infty \int_{\R^3} \mathbf{k}_{\varrho} (v,u) |\nabla_v f(u)| \dd u
		%\end{split}
		\Ee
		
		Clearly 
		\Be\label{bound_Gamma_nabla_vf2}
		\begin{split}
			|\Gamma_{\textrm{loss}}(\nabla_v f, f)| &\lesssim  \| w f \|_\infty   \int_{\R^3} 
			|\nabla_v f (u)| \mu(u)^{\frac{1}{4}} 
			\dd u,\\
			|\Gamma_{\textrm{loss}}( f, \nabla_v f)| &\lesssim \langle v\rangle  \| w f \|_\infty   
			|\nabla_v f (v)|  ,\\
			|\Gamma_v (f,f) | & \lesssim w^{-1} \| w f \|_\infty^2.
		\end{split}
		\Ee
		
	\end{proof}

	%where $q_{0}^{*}(\cos \theta )=  \frac{q_{0}(\cos\theta)}{|\cos\theta|}.$  This is due to two change of variables (37),(38) and page 316 of \cite{Guo03}.

	\begin{lemma}
		\label{lemma_K} %Recall (\ref{k_estimate}) and the Grad estimate \cite%
		%{gl} for hard potential, $0\leq \kappa \leq 1,$
		%\%begin{equation}\notag
		%|\mathbf{k}(u,v)| \ \lesssim \ \{|v-u|^{\kappa }+|v-u|^{-2+\kappa }\}e^{-\frac{1}{8}%
		%|v-u|^{2}-\frac{1}{8}\frac{(|v|^{2}-|u|^{2})^{2}}{|v-u|^{2}}}\lesssim \frac{e^{-\frac{1}{10}%
		%|v-u|^{2}-\frac{1}{10}\frac{(|v|^{2}-|u|^{2})^{2}}{|v-u|^{2}}} }{|v-u|^{2-\kappa }} .
		%\label{eq:grad_estimate}
		%\end{equation}%
		For $\varrho>0$ and $-2\varrho<  \vartheta <2\varrho$ and $\zeta\in\mathbb{R}$, we have \begin{equation}\notag
		\int_{\mathbb{R}^{3}}
		\mathbf{k}_{ \varrho}(v,u)
		\frac{\langle
			v\rangle ^{\zeta }e^{\theta |v|^{2}}}{\langle u\rangle ^{\zeta }e^{\theta
				|u|^{2}}}\mathrm{d}u \ \lesssim \ \langle v\rangle ^{-1}.  \label{int_k}
		\end{equation}%
		%and, for $\kappa =0,$
		%\begin{equation}\notag
		%\int_{\mathbb{R}^{3}}
		%\mathbf{k}_{0,\varrho}(v,u)  \frac{\langle
		%v\rangle ^{\zeta }e^{\theta |v|^{2}}}{\langle u\rangle ^{\zeta }e^{\theta
		%5|u|^{2}}} \mathrm{d}u \ \lesssim \ \langle v\rangle ^{-1+\delta }, \ \ \text{for any} \ 0<\delta \ll 1.
		%\label{int_k_maxwell}
		%\end{equation}%
	\end{lemma}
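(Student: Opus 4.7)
The plan is to absorb both auxiliary weights into the kernel's Gaussian factors so that the integral reduces to a standard polar-coordinate computation. First I would control the polynomial ratio: from $\langle v\rangle\le\sqrt{2}\,\langle v-u\rangle\langle u\rangle$ and its symmetric partner one gets $\langle v\rangle^{\zeta}/\langle u\rangle^{\zeta}\lesssim\langle v-u\rangle^{|\zeta|}$ for every $\zeta\in\R$, and then $\langle v-u\rangle^{|\zeta|}\le C_{\varepsilon}e^{\varepsilon|v-u|^{2}}$ for arbitrarily small $\varepsilon>0$. Spending an $\varepsilon$ of Gaussian decay from $\mathbf{k}_{\varrho}$ on this replaces $\varrho$ by a slightly smaller $\varrho'$ in the $e^{-\varrho|v-u|^{2}}$ factor and reduces matters to the $\zeta=0$ estimate.

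Next I would absorb $e^{\vartheta(|v|^{2}-|u|^{2})}$ by completing the square in the second exponential of $\mathbf{k}_{\varrho}$. With the shorthand $t:=|v-u|^{2}$ and $X:=|v|^{2}-|u|^{2}$, the exponent becomes
\[
-\varrho' t-\frac{\varrho X^{2}}{t}+\vartheta X
=-\Bigl(\varrho'-\frac{\vartheta^{2}}{4\varrho}\Bigr)t-\frac{\varrho}{t}\Bigl(X-\frac{\vartheta t}{2\varrho}\Bigr)^{\!2}.
\]
The hypothesis $|\vartheta|<2\varrho$ allows me to choose $\varrho'$ close enough to $\varrho$ that $\tilde\varrho:=\varrho'-\vartheta^{2}/(4\varrho)>0$, so both the Gaussian in $|v-u|$ and the quadratic penalty in $|v|^{2}-|u|^{2}$ survive with positive coefficients; nothing is discarded yet.

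Finally I would change variables $u=v+z$ and pass to polar coordinates $z=r\omega$, using $|v|^{2}-|u|^{2}=-2v\cdot z-|z|^{2}$. Expanding the $X$-quadratic and collecting powers of $r$ and $v\cdot\omega$ produces an exponent
\[
-(2\varrho+\vartheta)r^{2}-4\varrho(v\cdot\omega)^{2}-2(2\varrho+\vartheta)r(v\cdot\omega),
\]
and a further completion of the square in $r$ yields $-\beta(r+v\cdot\omega)^{2}-(2\varrho-\vartheta)(v\cdot\omega)^{2}$ with $\beta:=2\varrho+\vartheta$. The radial integral $\int_{0}^{\infty}e^{-\beta(r+v\cdot\omega)^{2}}r\,dr$ is bounded by $C(1+|v\cdot\omega|)$, and the remaining angular integral
\[
\int_{S^{2}}\bigl(1+|v\cdot\omega|\bigr)e^{-(2\varrho-\vartheta)(v\cdot\omega)^{2}}\,d\omega\lesssim\langle v\rangle^{-1}
\]
follows from the substitution $v\cdot\omega=|v|\cos\theta$ and the one-dimensional bound $\int_{-1}^{1}e^{-c|v|^{2}x^{2}}\,dx\lesssim\langle v\rangle^{-1}$, which gives the claimed decay.

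The delicate point is that the hypothesis $-2\varrho<\vartheta<2\varrho$ is used three times and cannot be relaxed: $\vartheta<2\varrho$ gives positivity of $2\varrho-\vartheta$ in the angular Gaussian, $\vartheta>-2\varrho$ gives positivity of $\beta=2\varrho+\vartheta$ in the radial Gaussian, and $\vartheta^{2}<4\varrho^{2}$ gives $\tilde\varrho>0$ after the square completion. The $\langle v\rangle^{-1}$ decay genuinely requires the $\exp\bigl\{-\varrho(|v|^{2}-|u|^{2})^{2}/|v-u|^{2}\bigr\}$ factor of $\mathbf{k}_{\varrho}$; without it the angular direction $v\cdot\omega$ would not be penalized and $\int|v-u|^{-1}e^{-c|v-u|^{2}}\,du$ would only be bounded rather than decaying in $|v|$, so keeping this factor intact through the two completions of the square is the main technical point.
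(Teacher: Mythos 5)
Your proof is correct, and it reproduces the key mechanism of the paper's argument — changing to the variable $\eta=v-u$, identifying a quadratic form in $|\eta|$ and $v\cdot\eta/|\eta|$ (equivalently $r$ and $v\cdot\omega$), using $|\vartheta|<2\varrho$ to make that form negative definite, and extracting the $\langle v\rangle^{-1}$ from the surviving penalty in the $v$-direction — but you carry it out by a genuinely different calculation. The paper (following Lemma 7 of Guo's 2010 paper) checks negative definiteness through the discriminant of the quadratic form and then invokes an AM--GM step to pass from $-C\bigl(|\eta|^2+|v\cdot\eta|^2/|\eta|^2\bigr)$ to the linear exponential $-C\bigl(|\eta|^2/2+|v\cdot\eta|\bigr)$, after which it splits $\eta$ into components parallel and perpendicular to $v$ and integrates the one-dimensional linear exponential $\int e^{-C|v|\,|\eta_\parallel|}\,d|\eta_\parallel|\sim\langle v\rangle^{-1}$. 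You instead perform two explicit completions of the square (first in $X=|v|^2-|u|^2$ at fixed $|v-u|$, then in $r$ after polar coordinates), landing on the exact Gaussian factorization $-\beta(r+v\cdot\omega)^2-(2\varrho-\vartheta)(v\cdot\omega)^2$, and you integrate the angular Gaussian $\int_{-1}^1 e^{-c|v|^2 x^2}\,dx\lesssim\langle v\rangle^{-1}$. The two routes encode the same positivity of the same quadratic form (your $\tilde\varrho>0$, $\beta>0$, and $2\varrho-\vartheta>0$ together are just the negative-definiteness that the paper checks through the discriminant $4\vartheta^2-16\varrho^2<0$), but your version is more explicit about the constants and dispenses with AM--GM, at the cost of tracking the small $\varepsilon$-adjustment $\varrho\to\varrho'$ through both square completions; you elide this bookkeeping (your displayed $r^2$-coefficient $-(2\varrho+\vartheta)$ is exact only when $\varrho'=\varrho$), but the argument is robust to it because all inequalities are strict. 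Both derivations handle the prefactors equally well: your polar radial weight $r^2\cdot|z|^{-1}=r$ is integrable against a shifted Gaussian, exactly as the paper's local splitting of $|v-u|^{-2+\kappa}$ is.
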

	
	\begin{proof}
		The proof is based on \cite{Guo10}. Note that
		\[
		\frac{\langle v\rangle^\zeta e^{\theta|v|^2}}{\langle u \rangle^\zeta e^{\theta|u|^2}}
		\ \lesssim \  [1+|v-u|^2]^{\frac{\zeta}{2}} e^{-\theta(|u|^2 -|v|^2)}.
		\]
		Set $v-u = \eta$ and $u =v-\eta$ in the integration of (\ref{int_k}). Now we compute the total exponent of the integrand of (\ref{int_k}) and (\ref{int_k_maxwell}) as
		\begin{equation*}
		\begin{split}
		&-\varrho|\eta|^2 -\varrho \frac{||\eta|^2-2 v\cdot \eta|^2}{|\eta|^2} -\theta\{|v-\eta|^2 -|v|^2\}
		=  - 2\varrho|\eta|^2 + 4\varrho \{v\cdot \eta\} - 4\varrho \frac{|v\cdot \eta|^2}{|\eta|^2} -\theta\{|\eta|^2 -2v\cdot \eta\}\\
		& =\left(-\theta-2\varrho\right) |\eta|^2 + \left( 4\varrho+ 2\theta\right)v\cdot \eta -4\varrho\frac{|v\cdot \eta|^2}{|\eta|^2}.
		\end{split}
		\end{equation*}
		Since $-2\varrho <\theta< 2\varrho,$ the discriminant of the above quadratic form of $|\eta|$ and $\frac{v\cdot \eta}{|\eta|}$ is negative : $\left(4\varrho+ 2\theta\right)^2 +16 \varrho(-\theta-2\varrho)   = 4\theta^2 -16 \varrho^2<0$. We thus have
		\begin{equation*}
		-\varrho|\eta|^2 -\varrho \frac{||\eta|^2-2 v\cdot \eta|^2}{|\eta|^2} -\theta\{|v-\eta|^2 -|v|^2\} \
		\lesssim_{\varrho, \theta} \ \frac{|\eta|^2}{2}+|v\cdot \eta| .
		\end{equation*}
		Therefore, for $0< \kappa \leq 1$
		\begin{equation*}
		\begin{split}
		&\int_{\mathbb{R}^{3}}\Big\{|v-u|^{\kappa }+|v-u|^{-2+\kappa }\Big\}e^{-\varrho
			|v-u|^{2}-\varrho\frac{(|v|^{2}-|u|^{2})^{2}}{|v-u|^{2}}}\frac{\langle
			v\rangle ^{\zeta }e^{\theta |v|^{2}}}{\langle u\rangle ^{\zeta }e^{\theta
				|u|^{2}}}\mathrm{d}u \\
		&\lesssim \ \int_{\mathbb{R}^3} \Big\{|\eta|^\kappa + |\eta|^{-2+\kappa}\Big\}\langle\eta\rangle  ^{\zeta } e^{- {C_{\varrho,\theta}}   |\eta|^2 }  \ \lesssim_{\varrho, \theta, \kappa} \ 1.
		\end{split}
		\end{equation*}%
		Therefore, in order to show (\ref{int_k}) it suffices to consider the case $|v|\geq 1$. We make another change of variables $\eta_\parallel = \left\{\eta\cdot \frac{v}{|v|}\right\}\frac{v}{|v|}$ and $\eta_\perp = \eta-\eta_\parallel$, so that $|v\cdot\eta|=|v||\eta_\parallel|$ and $|v-u|\geq |\eta_\perp|.$ We can absorb $\langle \eta \rangle^\zeta, \ |\eta|\langle \eta \rangle^\zeta$ by $e^{-C_{\varrho,\theta}|\eta|^2}$, and bound the integral of (\ref{int_k}) by \begin{equation*}
		\begin{split}
		& \ \ \int_{\mathbb{R}^3} \big\{ 1+|\eta|^{-2+ \kappa}\big\} e^{-C_{\varrho,\theta}\left\{\frac{|\eta|^2}{2}+|v\cdot \eta|\right\}} \mathrm{d} \eta \ \leq \ \int_{\mathbb{R}^3}  \big\{ 1+|\eta|^{-2+ \kappa }\big\} e^{-\frac{C_{\varrho,\theta}}{2}|\eta|^2}     e^{-C_{\varrho,\theta}|v\cdot \eta|} \mathrm{d} \eta\\
		&\leq \int_{\mathbb{R}^2} \{1+|\eta_\perp|^{-2+\kappa  }\} e^{-\frac{C_{\varrho,\theta}}{2}|\eta_{\perp}|^2} \left\{\int_{\mathbb{R}}  e^{-C_{\varrho,\theta} |v|\times |\eta_\parallel|} \mathrm{d} |\eta_\parallel|\right\} \mathrm{d} \eta_\perp\\
		& \lesssim \langle v\rangle^{-1 } \int_{\mathbb{R}^2} \{1+|\eta_\perp|^{-2+\kappa  }\} e^{-\frac{C_{\varrho,\theta}}{2}|\eta_{\perp}|^2} \left\{ \int_{0}^\infty  e^{-C_{\varrho,\theta}  y } \mathrm{d} y\right\} \mathrm{d} \eta_\perp, \ \ (y=|v||\eta_\parallel|).
		\end{split}
		\end{equation*}
		%If $\kappa>0$ we can set $\delta=0$ to conclude (\ref{int_k}) since $ |\eta_\perp|^{-2+\kappa} \in L^1(\mathbb{R}^2).$ On the other hand if $\kappa=0$ we conclude (\ref{int_k_maxwell}) since $|\eta_\perp|^{-2+\delta}\in L^1(\mathbb{R}^2)$ for $\delta>0$.
	\end{proof}

	%For (\ref{W1p}) first we need
	%\Be\label{transp_alpha}
	%[\p_{t} + v\cdot \nabla_{x} - \nabla_{x} \phi \cdot \nabla_{v}] |n(\xb(t,x,v)) \cdot \vb(t,x,v)|=0.
	%\Ee
	%%This might be true since, for $|s-t| \ll 1$,
	%\Bes
	%\xb(s, X(s;t,x,v), V(s;t,x,v)) &\equiv& \xb (t,x,v),\\
	%\vb(s, X(s;t,x,v), V(s;t,x,v)) &\equiv& \vb (t,x,v).
	%\Ees

	%Then we have to check \cite{GKTT1} for our case. 

	\begin{proof}[Proof of Proposition ]
		\textit{Step 1.} By taking derivatives to (\ref{smallfphi}),
		\Be \label{eqtn_nabla_f} 
		\begin{split}
			&\p_t f_{x_{i}} + v\cdot \nabla_x f_{x_{i}} - \nabla_x  \phi_f  \cdot \nabla_v f_{x_{i}} + \frac{v}{2} \cdot \nabla_x \phi_f f_{x_{i}} 
			+   \nu f _{x_{i}} - K f _{x_{i}} \\
			&
			=  
			\nabla_x    \p_{x_{i}} \phi_f  \cdot \nabla_v f 
			- \frac{v}{2} \cdot \nabla_x \p_{x_{i}} \phi_f f
			+
			\Gamma(f,f)_{x_{i}} - v\cdot \nabla_x \p_{x_{i}} \phi_f \sqrt{\mu}  
			, \\
			&
			\p_{t} f_{v_{i}} + v\cdot \nabla_{x} f_{v_{i}}- \nabla_{x} \phi_f \cdot \nabla_{v} f_{v_{i}} + \frac{v}{2}  \cdot \nabla \phi _f f_{v_{i}} + \nu f_{v_{i}}- (Kf)_{v_{i}}\\
			&= 
			(\Gamma(f,f))_{v_{i}}
			- f_{x_{i}} - \frac{1}{2} \p_{x_{i}} \phi_f f - \nu_{v_{i}} f - \p_{x_{i}} \phi_f \sqrt{\mu} - v\cdot \nabla_{x}  \phi_f (\sqrt{\mu})_{v_{i}}.
		\end{split}\Ee
		Let us adopt the next notation as a matter of convenience
		\Be
		\nu_{\phi_f} = \nu_{\phi_f} (t,x,v) := \nu(v) + \frac{v}{2} \cdot \nabla \phi_f(t,x).
		\Ee
		
		Let us choose 
		\Be\label{beta_p_1}
		\frac{p-2}{p}< \beta <\frac{p-1}{p} , \ \ p>3.
		\Ee
		We will restrict more a range of $\beta$ later in (\ref{parameters_k}). From (\ref{alpha_invariant}) and  the notations $K_v f, \Gamma_v (f,f)$ in (\ref{Kv}), (\ref{Gammav}), we deduce
		\Be \begin{split}
			&  \big[\p_t + v\cdot \nabla_x - \nabla_x \phi_f \cdot \nabla_v + \nu_{\phi_f} % + \frac{v}{2} \cdot \nabla \phi _f
			\big] |\alpha^\beta  \p f |^p \\
			& = 
			\alpha^{\beta p}  \text{sign}(\p f)|\p f|^{p-1}\{ K\p f +  \Gamma(\p f,f) +  \Gamma(f,\p f)\}  + I + II + III + IV,\label{Boltzmann_p}
		\end{split}\Ee 
		where%, for $K_v f, \Gamma_v (f,f)$ in (\ref{Kv}) and (\ref{Gammav}) of the Appendix, 
		\Bes  
		|I|  &\leq&\alpha^{\beta p}  |\p f|^{p-1}    | K_v f| +\alpha^{\beta p}  |\p f|^{p-1}  |\Gamma_v (f,f)|,\\
		|II| &\leq & \{1+ |\nabla_x^2 \phi_f| \}\alpha^{\beta p}  |\p f|^{p }   ,\\
		|III| &\leq& \{|\nabla_x \phi_f| + |v||\nabla_x^2 \phi_f| + |v| \}\alpha^{\beta p}  |\p f|^{p-1}   |f|
		,\\
		| IV| &\leq &\{ |v| \sqrt{\mu } |\nabla^2 _x\phi_f| + \langle v\rangle^2 \sqrt{\mu } |\nabla_x \phi _f|\} \alpha^{\beta p}  |\p f|^{p-1} . 
		\Ees

		From (\ref{Boltzmann_p}), and the Green's identity, we obtain that 
		\Be\begin{split}\label{pf_green}
			& \| \alpha^\beta 
			\p f (t)\|_p^p
			+ \int^t_0 \| \nu_{\phi_f}^{1/p} \alpha^\beta 
			\p f   \|_p^p  
			+ \underbrace{\int^t_0 |\alpha^\beta 
				\p f   |_{p,+}^p  }_{(\ref{pf_green})_{1}}
			\\
			\lesssim&  \  \| \alpha^\beta 
			\p f (0)\|_p^p + (1+ \| \nabla^2 \phi \|_\infty) \int^t_0 \| \alpha^\beta 
			\p f  \|_p^p  + 
			\underbrace{
				\int^t_0 |\alpha^\beta 
				\p f  |_{p,-}^p
			}_{(\ref{pf_green})_1}
			\\
			& 
			+(1+ \| w f \|_\infty) \\
			& \ \ \  \times \underbrace{\int^t_0  \int_\O 
				\int_{\R^3}
				\alpha(v)^{p\beta}    |  \p f (v)|^{p-1}  
				\int_{\R^3} \mathbf{k}_\zeta(v,u )  |  \p f (u)| \dd u
				\dd v
				\dd x \dd s}_{(\ref{pf_green})_2} \\
			&+ \cdots.
		\end{split}\Ee
		
		\vspace{4pt}
		
		\textit{Step 2.} We focus on $(\ref{pf_green})_1$.

		$\clubsuit$See Page 120-122, 173-175, 182-185 plus Lemma 8 in \cite{GKTT1}.
		
		Note that $\alpha (t,x,v) = |n(x) \cdot v|$ for $(x,v) \in \gamma_-$. It is important to note that, from (\ref{beta_p_1}), we have $(\beta-1)p +1>-1$ and $|n(x) \cdot v|^{\beta p+1}, |n(x) \cdot v|^{(\beta-1) p+1} \in L^1_{loc} (\R^3)$. Then from (\ref{BC_deriv})
		\Be \label{BC_deriv_1}
		\begin{split}
			&
			\int_{n(x) \cdot v<0}
			|n(x) \cdot v|^{\beta p}
			|\nabla_{x,v} f(t,x,v) |^p
			|n(x) \cdot v| \dd v
			\\
			\lesssim&  \int_{n(x) \cdot v<0}
			\langle v\rangle^p \mu(v)^{\frac{p}{2}}
			\Big(|n(x) \cdot v|^{\beta p+1}+ |n(x) \cdot v|^{(\beta-1) p+1} 
			\Big)\\
			& \times \bigg\{
			\int_{n(x) \cdot u>0} 
			\big\{
			\langle u\rangle  + |\nabla_x \phi_f|
			\big\}
			|\nabla_{x,v} f(t,x,u) |
			\sqrt{\mu(u)}\{n(x) \cdot u\} \dd u\\
			&  \ \ \ \ \  
			+ %\frac{\langle v\rangle \sqrt{\mu(v)}  }{|n(x) \cdot v|}
			\| w f \|_\infty (1+ \| w f \|_\infty) (1+ |\nabla_x \phi_f|)
			+  |\nabla_x \phi_f| \bigg\}^p \dd v \\
			\lesssim  & 
			(1+ |\nabla_x \phi_f|)^p \bigg\{
			\int_{n(x) \cdot u>0}  |\nabla_{x,v} f(t,x,u)|\langle u\rangle \sqrt{\mu(u)} \{n(x) \cdot u\} \dd u\bigg\}^p
			\\
			&
			+ \big\{ \| w f \|_\infty (1+ \| w f \|_\infty) (1+ |\nabla_x \phi_f|)
			+  |\nabla_x \phi_f|\big\}^p.
		\end{split}\Ee
		Now we split the above $u$-integration into $\gamma_+^\e(x) \cup \gamma_+ (x) \backslash \gamma_+^\e(x)$ and apply the H\"older inequality to deduce that 
		\begin{eqnarray}
		&& \bigg\{\int_{n(x) \cdot u>0}  |\alpha^\beta\nabla_{x,v} f(s,x,u)| |n(x) \cdot u|^{- \beta}\langle u\rangle \sqrt{\mu(u)} \{n(x) \cdot u\}  \dd u\bigg\}^p
		\label{W1p_bdry}
		\\
		&\lesssim& 
		\bigg\{\int_{\gamma_+^\e (x)}  |\alpha^\beta\nabla_{x,v} f(s,x,u)|^p\{n(x) \cdot u\}  \dd u\bigg\}\notag\\
		&& \ \ \ \  \times 
		\bigg\{ \int_{\gamma_+^\e (x)}
		\alpha(s,x,u)^{- \beta q}
		|n(x) \cdot u|  \mu ^{\frac{q}{4}}\dd u 
		\bigg\}^{p/q}\notag\\
		& &+\bigg\{\int_{\gamma_+(x) \backslash\gamma_+^\e (x)}  |\alpha^\beta\nabla_{x,v} f(s,x,u)|^p
		\mu ^{\frac{p}{8}}
		\{n(x) \cdot u\}  \dd u\bigg\}\notag\\
		&& \ \ \ \   \times \bigg\{ \int_{\R^3}
		\alpha(s,x,u)^{- \beta q}
		|n(x) \cdot u| \mu ^{\frac{q}{8}}\dd u 
		\bigg\}^{p/q}.\notag
		\end{eqnarray}
		From (\ref{beta_p_1}), $1- \beta q>0$ and $|n(x) \cdot u|^{1- \beta q} \in L^1_{loc} (\R^3)$. Since $\mathbf{1}_{\gamma^\e_+ (x)} \downarrow 0$ almost everywhere in $\R^3$ as $\e \downarrow 0$
		\Be\begin{split}\notag
			(\ref{W1p_bdry}) \lesssim& \  O(\e)  \int_{\gamma_+^\e (x)}  |\alpha^\beta\nabla_{x,v} f(s,x,u)|^p\{n(x) \cdot u\}  \dd u\\
			&+\int_{\gamma_+(x) \backslash\gamma_+^\e (x)}  |\alpha^\beta\nabla_{x,v} f(s,x,u)|^p \mu(u)^{{p}/{8}}\{n(x) \cdot u\}  \dd u.
		\end{split} 
		\Ee
		Now we collect estimates of (\ref{BC_deriv_1}), (\ref{W1p_bdry}), and the above estimate to derive that 
		\Be\begin{split}\notag
			(\ref{pf_green})_1 \lesssim & \ 
			(1+ \| \nabla \phi_f \|_\infty^p) \times  \Big\{O(\e) \int^t_0 |\alpha^\beta \p f |_{p,+}^p + \underline{\underline{ \int^t_0
					\int_{\gamma_+ \backslash \gamma_+^\e} 
					|
					\alpha^\beta \p f | ^p \mu^{p/8}}}\Big\}\\
			&+  \big\{ \| w f \|_\infty (1+ \| w f \|_\infty) (1+ \|\nabla_x \phi_f\|_\infty)
			+  \|\nabla_x \phi_f\|_\infty\big\}^p.
		\end{split} \Ee
		
		For the underlined term above we use Lemma \ref{le:ukai}. From (\ref{Boltzmann_p})
		
		Note that 
		\Be\begin{split}\notag
			&
			\iint_{ \O \times \R^3}
			\big|[\p_t + v\cdot \nabla_x - \nabla_x \phi_f \cdot \nabla_v + \nu_\phi  ] |\alpha^\beta\nabla_{x,v} f 
			\mu^{1/8}|^p
			\big|\\
			\leq& \ 
			p\alpha^{\beta p}  |\nabla_{x,v} f|^{p-1}  \mu^{p/8}
			\big|
			[\p_t + v\cdot \nabla_x - \nabla_x \phi_f \cdot \nabla_v + \nu_\phi  ]  \nabla_{x,v} f \big|
			+ ( |\nabla_x \phi_f| |v| 
			+\nu_\phi)\mu^{p/8} |\alpha^\beta\nabla_{x,v} f |^p\\
			\lesssim&  \  \int_{\O} \int_{\R^3}\alpha(v)^{\beta p } |\p f(v)|^{p-1} \int_{\R^3} \mathbf{k}_\varrho (v,u) |\p f(u)| \dd u \dd v \dd x\\
			= & \ 
			\int_{\O} \int_{\R^3}\alpha(v)^{\beta (p-1) } |\p f(v)|^{p-1} \int_{\R^3}  \mathbf{k}_\varrho (v,u)  \frac{1%\alpha(v)^{\beta}
			}{ \alpha(u)^{\beta}
			}
			\alpha(u)^{\beta} | \p f(u)| \dd u \dd v \dd x
			\\
			%\lesssim & \  \sup_{x,v}
			%\Big(\int_{\R^3} \mathbf{k} _\varrho (v,u) \frac{\alpha(v)^\beta}{\alpha(u)^\beta} \dd u\Big)^{1/q}
			%\int_\O \int_{\R^3}
			%\alpha(v)^{\beta (p-1) } |\p f(v)|^{p-1}
			%\Big(\int_{\R^3} \mathbf{k}_\varrho (v,u) | \alpha^\beta \p f(u)|^p \dd u\Big)^{1/p} \dd v \dd x\\
			%\lesssim& \ \sup_{x,v}
			%\Big(\int_{\R^3} \mathbf{k} _\varrho (v,u) \frac{\alpha(v)^\beta}{\alpha(u)^\beta} \dd u\Big)^{1/q}\Big(
			%\int_\O \int_{\R^3} |\alpha^\beta \p f (v)|^p \Big)^{\frac{p-1}{p}}
			% \Big(
			%\int_\O \int_{\R^3} \int_{\R^3} \mathbf{k}_\varrho (v,u) |\alpha^\beta \p f (u)|^p \dd u \dd v \dd x \Big)^{\frac{1}{p}}\\
			%\lesssim & \  \sup_{x,v} \mu(v)^{\frac{p}{8}}
			%\Big(\int_{\R^3} \mathbf{k} _\varrho (v,u) \frac{\alpha(v)^\beta}{\alpha(u)^\beta} \dd u\Big)^{1/q}\sup_{x,u} \Big(\int_{\R^3} \mathbf{k}_\varrho (v,u) \dd u \Big)^{1/p} \Big(
			%\int_\O \int_{\R^3} |\alpha^\beta \p f (v)|^p \dd v   \dd x \Big) 
			\lesssim & \ \| \alpha^\beta \p f \|_{L^p(\O \times \R^3)}^{p-1}
			\left\|  \int_{\R^3}  \mathbf{k}_\varrho (v,u)  \frac{1%\alpha(v)^{\beta}
			}{\alpha(u)^{\beta}}
			\alpha(u)^{\beta} | \p f(x,u)| \dd u \right\|_{L_{x,v}^p(\O \times \R^3)}\\
			\lesssim & \ \| \alpha^\beta \p f \|_{L^p(\O \times \R^3)}^{p-1}
			\| \alpha^\beta \p f 
			\|_{L^p_{x,v}} \times \sup_{x} 
			\left\| 
			\left|
			\int_{\R^3}
			|\mathbf{k}_{\varrho} (v,u)|^q \frac{
				1%\alpha(v)^{q \beta}
			}{\alpha(u)^{q \beta}}
			\dd u
			\right|^{1/q}
			\right\|_{L^p_v (\R^3)},
		\end{split}\Ee
		where we have used $\alpha(v)^\beta \mu(v)^{p/8}\lesssim 1$.
		
		Note that 
		\Be
		\begin{split}
			& \left\| 
			\left|
			\int_{\R^3}
			| \mathbf{k}_{\varrho} (v,u)|^q \frac{
				1%\alpha(v)^{q \beta}
			}{\alpha(u)^{q \beta}}
			\dd u
			\right|^{1/q}
			\right\|_{L^p_v (\R^3)}\\
			\lesssim  &  \    \left\| 
			\left|
			\int_{\R^3}
			\frac{e^{- \frac{q\varrho}{4}|v-u|^2 }}{|v-u|^q} \frac{
				1%\alpha(v)^{q \beta}
			}{\alpha(u)^{q \beta}}
			\dd u
			\right|^{1/q}
			\right\|_{L^p_v (\R^3)}
		\end{split}
		\Ee

		\vspace{4pt}
		
		\textit{Step 3.} We focus on $(\ref{pf_green})_2$. 
		Recall a standard estimate of $\mathbf{k}(v,u)\lesssim  \frac{e^{- C|v-u|^2}}{|v-u|}$ in (\ref{estimate_k}). %Set $a>1$. From the H\"older estimate with $\frac{1}{p}+ \frac{1}{ qa} + \frac{1}{\frac{a}{a-1} \frac{p}{p-1}}=1$
		With $M>0$, we split $\{|u| \leq M\} \cup \{|u| \geq M\}$.
		
		For $\{|u| \geq M\}$, by Holder inequality
		\Be\begin{split}\notag
			&\alpha(v)^\beta \int_{|u| \geq M} \mathbf{k}_\zeta (v,u) |\p f(u)|\\
			\leq & \ 
			\alpha(v)^\beta
			{\left(\int_{|u| \geq M} \mathbf{k}_\zeta(v,u) \frac{1}{\alpha(u)^{\beta q}}  \dd u \right)^{1/q}}
			\left(
			\int_{|u| \geq M} \mathbf{k}_\zeta(v,u)  |  \alpha^{\beta  }\p f (u)|^p \dd u 
			\right)^{1/p}\\
			\lesssim & \ \alpha(v)^\beta \left(
			\int_{|u| \geq M} \mathbf{k}_\zeta(v,u)  |\alpha^{\beta  }\p f (u)|^p \dd u 
			\right)^{1/p},
		\end{split}\Ee
		where have used Proposition ?? at the last line. 
		
		Then the contribution of $\{|u| \geq M\}$ in $(\ref{pf_green})_2$ is bounded by 
		\Be\begin{split}
			&\int^t_0 \int_{\O} \int_{v \in \R^3}  | \nu_\phi^{1/p}\alpha^\beta \p f (v)|^{p-1} \frac{\alpha(v)^\beta}{ \nu_\phi (v)^{\frac{p}{p-1}}}
			\int_{|u| \geq M} \mathbf{k}_\zeta (v,u) |\p f(u)| \\
			\leq& \ \int^t_0 \int_{\O} \bigg(\int_{v}  |\nu_\phi^{1/p}\alpha^\beta \p f (v)|^{p }\bigg)^{1/q}
			\bigg(
			\int_{|u| \geq M} |\alpha^{\beta  }\p f (u)|^p  \int_{v}\mathbf{k}_\zeta(v,u)  
			\bigg)^{1/p}\\
			\lesssim & \ 
			O(\e) 
			\int_0^t \| \nu_\phi^{1/p}\alpha^\beta \p f  (s)\|_p^p \dd s
			+\int_0^t \| \alpha^\beta \p f  (s)\|_p^p \dd s,
		\end{split}\Ee
		where we have used $\frac{\alpha(v)^\beta}{ \nu_\phi (v)^{\frac{p}{p-1}}}\lesssim 1$ for $\beta$ in (\ref{beta_condition}).

		For $\{|u| \leq M\}$ we use the H\"older inequality with $\frac{1}{p} +\frac{1}{q}=1$ 
		\begin{eqnarray}
		&& \int^t_0  \int_\O 
		\int_{\R^3}
		| \nu_\phi^{1/p}\alpha^\beta \p f (v)|^{p-1} 
		\int_{\R^3} \mathbf{k}(v,u) \frac{ \alpha(v)^{\beta} |  \alpha^\beta \p f (u)| }{\nu_\phi(v)^{(p-1)/p}  \alpha(u)^{\beta}}     \dd u
		\dd v
		\dd x \dd s \notag\\
		%&\leq&\int^t_0
		%\int_\O
		%\Big[
		%\int_{\R^3}
		%| \nu_\phi^{1/p}\alpha^\beta \p f |^{(p-1)q}
		%\dd v
		%\Big]^{1/q}\\
		%&& \ \ \ \ \ \ \    \times 
		%\Big[\int_{\R^3}
		%\Big(
		%\int_{\R^3} 
		%\mathbf{k}(v,u) \frac{\alpha(v)^{\beta}}{ \nu_\phi(v)^{(p-1)/p}\nu_\phi(u)^{1/p}\alpha(u)^{\beta}} | \nu_\phi^{1/p} %\alpha^\beta \p f (u)|
		%\dd u
		%\Big)^p
		%\dd v
		%\Big]^{1/p}
		%\dd x
		% \dd s\\
		&\leq& 
		\int^t_0    \| \nu_\phi^{1/p} \alpha^\beta \p f  (s)\|_p^{ {p}/{q}}\notag\\
		&& \ \ \   \times 
		\Big[
		\int_\O
		\int_{\R^3}
		\Big(
		\underline{\underline{
				\int_{|u| \leq M} 
				\mathbf{k}(v,u) 
				\frac{\alpha(v)^{\beta} |
					% \nu_\phi^{1/p}
					\alpha^\beta \p f ( u)|}{ \nu_\phi(v)^{(p-1)/p}%\nu_\phi(u)^{1/p}
					\alpha(u)^{\beta}}
				\dd u}}
		\Big)^p
		\dd v
		\dd x  \Big]^{1/p}
		\dd s 
		.\label{double_underline}
		\end{eqnarray}
		By the H\"older inequality, we bound an underlined $u$-integration inside (\ref{double_underline}) as
		\begin{eqnarray}
		%&&
		%(\ref{double_underline_split})\\
		%\underbrace{\int_{|u|\leq M} 
		% \mathbf{k}(v,u) 
		%\frac{\alpha(v)^{\beta} |
		% \nu_\phi^{1/p}
		% \alpha^\beta \p f ( u)|}{ \nu_\phi(v)^{(p-1)/p}%\nu_\phi(u)^{1/p}
		% \alpha(u)^{\beta}}
		%\dd u}_{(\ref{double_underline_split})_1} + 
		%\underbrace{\int_{|u|\geq M} 
		%\mathbf{k}(v,u) 
		%\frac{\alpha(v)^{\beta} |
		% \nu_\phi^{1/p}
		% \alpha^\beta \p f ( u)|}{ \nu_\phi(v)^{(p-1)/p}%\nu_\phi(u)^{1/p}
		% \alpha(u)^{\beta}}
		%\dd u}_{(\ref{double_underline_split})_2} \\
		&%\leq
		&
		\frac{\alpha(v)^{\beta}}{\nu_\phi(v)^{(p-1)/p}}
		\|  \alpha^\beta \p f(\cdot) \|_{L^p(\R^3)}
		\notag
		\\
		&\times& 
		%\bigg\{
		%\underbrace{
		\bigg(
		\int_{u}
		\frac{e^{-qC |v-u|^2}}{|v-u|^q} \frac{\mathbf{1}_{|u| \leq M}}{\alpha(u)^{\beta q}}
		%\dd u
		\bigg)^{1/q}%}%_{(\ref{double_underline_split})
		%_1}
		%+  \underbrace{\bigg(
		%\int_{u}
		%\frac{e^{-qC |v-u|^2}}{|v-u|^q} \frac{\mathbf{1}_{|u| \geq M}}{\alpha(u)^{\beta q}}
		%\dd u
		%\bigg)^{1/q}}_{(\ref{double_underline_split})
		%_2}
		%\bigg\}
		\label{double_underline_split}.% \\
		%&&+ \frac{\alpha(v)^{\beta}}{\nu_\phi(v)^{(p-1)/p}}
		%\left(
		%\int_{\R^3} |\alpha^\beta \p f(u)|^p \dd u
		%\right)^{1/p}
		%\left(
		%\int_{\R^3}
		%\frac{e^{-qC |v-u|^2}}{|v-u|^q} \frac{\mathbf{1}_{|u| \leq M}}{\alpha(u)^{\beta q}}
		%\dd u
		%\right)^{1/q}
		%\label{double_underline_split_2}.
		\end{eqnarray}
		
		We bound
		\[
		(\ref{double_underline_split})%_1
		\leq \bigg|
		\frac{1}{| \cdot |^q} *  \frac{\mathbf{1}_{|\cdot| \leq M}}{\alpha(\cdot)^{q\beta}}
		\bigg| ^{1/q}
		\]
		%
		%\Bes
		%&&(\ref{double_underline_split})_1\\
		%&\leq&  \frac{\alpha(v)^{\beta}}{\nu_\phi(v)^{(p-1)/p}}
		%\left(
		%\int_{\R^3} |\alpha^\beta \p f(u)|^p \dd u
		%\right)^{1/p}
		%\left(
		%\int_{\R^3}
		%\frac{e^{-qC |v-u|^2}}{|v-u|^q} \frac{\mathbf{1}_{|u| \leq M}}{\alpha(u)^{\beta q}}
		%\dd u
		%\right)^{1/q}
		%\\
		%%&\leq &
		% \frac{\alpha(v)^{\beta}}{\nu_\phi(v)^{(p-1)/p}}
		% \|  \alpha^\beta \p f (\cdot)\|_{L^p(\R^3)}
		%\bigg|
		%\frac{1}{| \cdot |^q} *  \frac{\mathbf{1}_{|\cdot| \leq M}}{\alpha(\cdot)^{q\beta}}
		%\bigg| ^{1/q}\\
		%&\lesssim& \|  \alpha^\beta \p f (\cdot)\|_{L^p(\R^3)}
		%\bigg|
		%\frac{1}{| \cdot |^q} *  \frac{\mathbf{1}_{|\cdot| \leq M}}{\alpha(\cdot)^{q\beta}}
		%\bigg| ^{1/q}
		%.
		%\Ees
		By the Hardy-Littlewood-Sobolev inequality with $1+ \frac{1}{p/q} = \frac{1}{3/q} + \frac{1}{
			\frac{3}{2} \frac{p-1}{p}
		}$
		\Bes
		%&&(\ref{double_underline})\\
		%&\leq&
		\big\|(\ref{double_underline_split})_1 \big\|_{L^p_{ v}}%\Big\|_{L^p(\O)} 
		&\lesssim& % \Big\| \|  \alpha^\beta \p f \|_{L^p(\R^3)}\Big\|_{L^p(\O)} \times 
		%\sup_{x \in \O}
		\left\|
		\bigg|
		\frac{1}{| \cdot |^q} *  \frac{\mathbf{1}_{|\cdot| \leq M}}{\alpha(\cdot)^{q\beta}}
		\bigg| ^{1/q}\right\|_{L^p(\R^3)} = % \|  \alpha^\beta \p f  \|_{L^p_{x,v}}  \times \sup_{x \in \O}
		\bigg\|
		\frac{1}{| \cdot |^q} *  \frac{\mathbf{1}_{|\cdot| \leq M}}{\alpha(\cdot)^{q\beta}}
		\bigg\|_{L^{p/q} (\R^3)}  ^{1/q}\\
		&\lesssim& %\|  \alpha^\beta \p f  \|_{L^p _{x,v}} \times \sup_{x \in \O} 
		\left\| \frac{\mathbf{1}_{|\cdot| \leq M}}{\alpha(\cdot)^{q\beta}}\right\|_{L^{
				\frac{3(p-1)}{2p}
			} (\R^3)}^{1/q}\\
		&\lesssim&% \|  \alpha^\beta \p f  \|_{L^p_{x,v}}\times \sup_{x \in \O} 
		\left( \int_{\R^3} \frac{\mathbf{1}_{|v| \leq M}}{\alpha(v) ^{\frac{p}{p-1} \beta 
				\frac{3(p-1)}{2p}
			}
		} \dd v\right)^{
			\frac{2p}{3(p-1)} \frac{p-1}{p}
		}
		\\
		&=&
		\left( \int_{\R^3} \frac{\mathbf{1}_{|v| \leq M}}{\alpha(v) ^{  3\beta/2
			}
		} \dd v\right)^{
			2/3
		}
		.
		\Ees
		For $3<p<6$, we have $\frac{3}{2}\frac{p-2}{p} < 1$.
		In this case let us choose 
		\Be\label{beta_condition}
		\frac{p-2}{p}<\beta< \frac{2}{3}, \ \ \ 3< p < 6. 
		\Ee
		%Then  
		%\Be\label{less_than_1}
		%\frac{p-2}{p-1} \frac{1}{1- \frac{1}{3} \frac{p-3}{p-1}} <
		%0<\frac{p}{p-1} \beta \frac{1}{1- \frac{1}{3} \frac{p-3}{p-1}}<  1.
		%\Ee
		Now we apply Proposition \ref{prop_int_alpha} and conclude that 
		\Be\label{est_DUS1}
		%\sup_{x \in \O} 
		\big\|(\ref{double_underline_split})%_1 
		\big\|_{L^p_{ v}}%\Big\|_{L^p(\O)} 
		\lesssim 
		\left( \int_{\R^3} \frac{\mathbf{1}_{|v| \leq M}}{\alpha(v) ^{ 3\beta/2}
		} \dd v\right)^{2/3}\lesssim_{p, \beta, M,\O} 1 < \infty.
		\Ee

	\end{proof}

	\unhide

		\section{$L^3_xL^{1+}_v$ bound of $\nabla_v f^\ell$ }
	\hide In this section we prove the uniqueness statement assuming the extra condition (\ref{extra_con_uniqueness}). We need the following lemma.
	\begin{lemma}
		Now we claim that for $0<\kappa\leq2$ and $0< \sigma<1$
		\Be\label{nonlocal_finite}
		\sup_{t,x}\int_{\R^3}
		\frac{e^{-C|v-u|}}{|v-u|^{2-\kappa}} \frac{\mathbf{1}_{\tb(t,x,v) \leq 2}}{|n(\xb(t,x,v)) \cdot \vb(t,x,v)|^\sigma} \dd u \lesssim 1.
		\Ee
	\end{lemma}\unhide
	
		\begin{proposition}\label{prop_better_f_v}
%Assume $f$ and $\phi_{f}$ solve (\ref{2fVPB}), (\ref{smallfphi}), (\ref{diffusef}), and satisfy estimates (\ref{main_Linfty}), (\ref{W1p_main}), (\ref{integrable_nabla_phi_f}) with the condition (\ref{delta_1/lamdab_1}). We also assume extra initial condition
Assume the inital condition satisfies (\ref{wf0small}), (\ref{unif_Em}), and 
		\Be \label{Extra_uniq}
		\| w_{\tilde{\vartheta}} \nabla_v f_{0} \|_{L^3_{x,v}} < \infty.
		\Ee 
		Then for $T^{**} \ll1 $, the sequence \eqref{fell_local} satisfies
		\Be\label{bound_nabla_v_g_global}
		\sup_\ell \sup_{0 \le t \le T^{**} } \| \nabla_v f^\ell(t) \|_{L^3_x (\O)L^{1+\delta}_v(\R^3)} \lesssim 1 \ \ \text{for all } \ t\geq 0.
		\Ee
		%
		%
		%
		%Suppose $f$ solve (\ref{2fVPB}), (\ref{diffusef}), and (\ref{smallfphi}). Then 
		%\Be\label{bound_nabla_v_g_global}
		%\| \nabla_v f(t) \|_{L^3_x (\O)L^{1+\delta}_v(\R^3)} \lesssim_t 1 \ \ \text{for all } \ t\geq 0.
		%\Ee
	\end{proposition}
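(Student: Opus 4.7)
My plan is to deduce \eqref{bound_nabla_v_g_global} directly from Proposition \ref{prop_W1p} using H\"older's inequality in $v$ and $x$, combined with the integrability estimates of Proposition \ref{prop_int_alpha}. For each fixed $(t,x)$ and component $\iota \in \{+,-\}$, I would factor
\[
|\nabla_v f^{\ell+1}_\iota(t,x,v)| = \big|w_{\tilde{\vartheta}}(v)\, \alpha_{f^\ell,\e,\iota}(t,x,v)^\beta\, \nabla_v f^{\ell+1}_\iota\big| \cdot \big(w_{\tilde{\vartheta}}(v)\, \alpha_{f^\ell,\e,\iota}(t,x,v)^\beta\big)^{-1},
\]
and apply H\"older in $v$ with conjugate exponents $p/(1+\delta)$ and $p/(p-1-\delta)$ to obtain
\[
\|\nabla_v f^{\ell+1}_\iota(t,x,\cdot)\|_{L^{1+\delta}_v} \le \|w_{\tilde{\vartheta}}\, \alpha_{f^\ell,\e,\iota}^\beta\, \nabla_v f^{\ell+1}_\iota(t,x,\cdot)\|_{L^p_v} \cdot A_\iota(t,x),
\]
where $A_\iota$ is a power of the integral of $w_{\tilde{\vartheta}}^{-c}\, \alpha^{-c\beta}_{f^\ell,\e,\iota}$ with $c = (1+\delta)p/(p-1-\delta)$.

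The key step is to establish $\sup_{(t,x)}A_\iota(t,x)\lesssim 1$ uniformly in $\ell$ for an appropriate small $\delta = \delta(p,\beta)>0$. Setting $\sigma := c\beta$, note that at $\delta = 0$ the exponent becomes $\sigma|_{\delta=0} = \beta p/(p-1)$, and \eqref{beta_condition} together with $p \in (3,6)$ gives $\beta < 2/3 < (p-1)/p$, so $\sigma|_{\delta = 0} < 1$. By continuity I pick $\delta>0$ small enough that $\sigma<1$ persists. I then split the $v$-integral as $\{|v|\le N\}\cup \{|v|>N\}$: the bounded portion is controlled by \eqref{NLL_split2}, while for the tail I use \eqref{NLL_split3} (applied with $v=0$ and $\kappa=2$) together with the Gaussian bound $w_{\tilde{\vartheta}}^{-c}(v) \lesssim e^{-C|v|^2}$ to absorb the behavior of $\alpha^{-c\beta}$ at large $|v|$.

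Finally, since $p>3$, H\"older in $x$ over the bounded domain $\Omega$ gives
\[
\|\nabla_v f^{\ell+1}\|_{L^3_x L^{1+\delta}_v} \lesssim |\Omega|^{1/3-1/p}\, \|w_{\tilde{\vartheta}}\, \alpha_{f^\ell,\e}^\beta\, \nabla_v f^{\ell+1}\|_{L^p(\Omega \times \R^3)},
\]
and the right-hand side is uniformly bounded in $\ell$ on $[0,T^{**}]$ by Proposition \ref{prop_W1p}. The hypothesis \eqref{Extra_uniq} ensures the initial datum is finite in the target space and gives continuity at $t=0$. The main technical obstacle is threading the exponent inequalities: $\delta$ must be strictly positive (to yield a nontrivial $L^{1+\delta}_v$ statement) yet small enough that $\beta(1+\delta)p/(p-1-\delta)<1$ (so that Proposition \ref{prop_int_alpha} applies to the inverse $\alpha$-weight). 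The strict inequality $\beta < 2/3 < (p-1)/p$ from \eqref{beta_condition} supplies exactly the margin needed to accommodate a nontrivial $\delta=\delta(p,\beta)>0$.
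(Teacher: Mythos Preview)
Your argument is correct and takes a genuinely different, more direct route than the paper.

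The paper does \emph{not} simply invoke the weighted $W^{1,p}$ bound of Proposition~\ref{prop_W1p} and H\"older. Instead it writes the transport equation for $\partial_v f^{\ell+1}$ (equation~\eqref{eqtn_g_v}), represents the solution by Duhamel along the characteristics $(X_\iota^\ell,V_\iota^\ell)$, and splits into five pieces \eqref{g_initial}--\eqref{g_phi}: an initial-data term, a boundary term, a forcing by $\partial_x f^\ell$, a nonlocal $K/\Gamma$ term, and a potential term. Each is estimated separately in $L^3_x L^{1+\delta}_v$; the $\partial_x f^\ell$ piece (Step~2) uses exactly the H\"older/Proposition~\ref{prop_int_alpha} combination you describe, and the nonlocal piece (Step~3) uses Hardy--Littlewood--Sobolev plus Proposition~\ref{prop_int_alpha}. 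The argument closes inductively in $\ell$ via a small-time factor in \eqref{bound_nabla_v_g}, and this is where the hypothesis \eqref{Extra_uniq} enters (as the base of the induction through \eqref{est_g_initial}).

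Your approach bypasses all of this by observing that the desired $L^3_x L^{1+\delta}_v$ norm is dominated, via a single pointwise-in-$x$ H\"older step and Proposition~\ref{prop_int_alpha}, by the weighted $L^p$ norm already controlled in \eqref{unif_Em}. This is shorter and shows that \eqref{Extra_uniq} is in fact redundant once \eqref{unif_Em} is in hand (the same H\"older at $t=0$ recovers $\|\nabla_v f_0\|_{L^3_x L^{1+\delta}_v}$ from $\|w_{\tilde\vartheta}\alpha_{f_0,\e}^\beta\nabla_v f_0\|_{L^p}$). The paper's longer argument, on the other hand, produces an explicit Duhamel decomposition that is later reused (Step~5 of the proof of Theorem~\ref{local_existence}) to establish continuity in $t$ of $\|\nabla_v f(t)\|_{L^3_x L^{1+\delta}_v}$, and it isolates exactly which structural pieces of the dynamics contribute to the bound. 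Your threading of the exponent condition $\beta\cdot \tfrac{p(1+\delta)}{p-1-\delta}<1$ is precisely the one the paper records at \eqref{less_than_1}.
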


	\hide In this section we prove the uniqueness statement assuming the extra condition (\ref{extra_con_uniqueness}). We need the following lemma.
	\begin{lemma}
		Now we claim that for $0<\kappa\leq2$ and $0< \sigma<1$
		\Be\label{nonlocal_finite}
		\sup_{t,x}\int_{\R^3}
		\frac{e^{-C|v-u|}}{|v-u|^{2-\kappa}} \frac{\mathbf{1}_{\tb(t,x,v) \leq 2}}{|n(\xb(t,x,v)) \cdot \vb(t,x,v)|^\sigma} \dd u \lesssim 1.
		\Ee
	\end{lemma}\unhide

	\begin{proof} %[\textbf{Proof of Proposition \ref{prop_better_f_v}}] 
		\textit{Step 1. } Note that from (\ref{fell_local}) and (\ref{boundary_v}), we have
		\Be\begin{split}\label{eqtn_g_v}
			&[\p_t + v\cdot \nabla_x - q \nabla_x \phi^\ell \cdot \nabla_v + \nu(v) + q \frac{v}{2} \cdot \nabla_x \phi^\ell  ] \p_v f^{\ell+1} \\
			=&  -\p_x f^{\ell+1}-  q \frac{1}{2} \p_x \phi^\ell f^{\ell+1} - \p_v \nu f^{\ell+1} + \p_v(Kf^\ell) + \p_v (\Gamma_\text{gain} (f^\ell,f^\ell)) - \p_v (\Gamma_\text{loss} ( f^{\ell+1}, f^\ell ) )- q_1 (\p_x \phi^\ell \sqrt \mu - \frac{v_i^2}{2} \p_x \phi^\ell \sqrt \mu)
			%|\nabla_x \phi^\ell| \langle v\rangle^2\sqrt{\mu} , 
		\end{split}\Ee
		with the boundary bound for $(x,v) \in\gamma_-$
		\Be\label{bdry_g_v}
		\big|\p_v f^{\ell+1}  \big| \lesssim   |v| \sqrt{\mu} \int_{n \cdot u>0} |f^\ell| \sqrt{\mu} \{n \cdot u \} \dd u \ \ \text{on } \ \gamma_-.
		\Ee
		
		From (\ref{nabla_nu}), (\ref{vKsum}), (\ref{bound_Gamma_nabla_vf1}), (\ref{bound_Gamma_nabla_vf2}), (\ref{Gvloss}), and (\ref{Gvgain}), we obtain the following bound along the characteristics for $f_+$ and $f_-$ seperately. For $\iota = +$ or $-$ as in \eqref{iota}, 
		\begin{eqnarray}
		&&|\p_v f^{\ell+1}_\iota(t,x,v)|\nonumber
		\\
		&\leq &   \mathbf{1}_{t^{\ell}_{1,\iota}(t,x,v)> t}  
		|\p_v f^{\ell+1}(0,X_\iota^\ell(0;t,x,v), V_\iota^\ell(0;t,x,v))|\label{g_initial}\\
		& +&   \ \mathbf{1}_{t^{\ell}_{1,\iota}(t,x,v)<t}% \nonumber\\
		% && \ \ \ \  \times 
		\mu(\vb)^{\frac{1}{4}}  \int_{n(\xb) \cdot u>0} 
		| f^{\ell+1}(t-t^{\ell}_{1,\iota}, \xb, u) |\sqrt{\mu} \{n(\xb) \cdot u\} \dd u\label{g_bdry}\\
		&  +&   \int^t_{\max\{t-t^{\ell}_{1,\iota}, 0\}} 
		|\p_x f^\ell(s, X_\iota^\ell(s;t,x,v),V_\iota^\ell(s;t,x,v))|
		\dd s\label{g_x}\\
		&   + &\int^t_{\max\{t-t^{\ell}_{1,\iota}, 0\}} 
		(1+ \| w_{\vartheta} f^\ell \|_\infty + \| w_\vartheta f^{\ell+1} \|_\infty)%\nonumber
		%\\
		%&& \ \ \ \   \times 
		\int_{\R^3} \mathbf{k}_\varrho (V_\iota^\ell(s),u) |\p_v f^\ell(s,X_\iota^\ell(s),u)| \dd u 
		\dd s\label{g_K}\\
		& +  &
		\int^t_{\max\{t-t^{\ell}_{1,\iota}, 0\}} \| f^{\ell+1}(s) \|_\infty
		|\nabla_x \phi^\ell (s, X_\iota^\ell(s;t,x,v))| \mu ^{1/4}
		\dd s \label{g_phi}
%		&+ &
%		\int^t_{\max\{t-t^{\ell}_{1,\pm}, 0\}}  
%		(1+ \delta_1)  \delta_1  |{w}_{\tilde{\vartheta}}( V_\pm^\ell(s;t,x,v))|^{-1} 
%		\dd s 
%		\label{g_infty}
		,
		\end{eqnarray}
		where $\delta_1$ is in (\ref{integrable_nabla_phi_f}). Here we used that from \eqref{bound_Gamma_nabla_vf2}, on the RHS of \eqref{eqtn_g_v}, $| \Gamma_\text{loss} ( \p_v f^{\ell+1}, f^\ell ) | \lesssim \langle v \rangle \| w_\theta f^\ell \|_\infty |\p_v f^{\ell + 1 } | \le \frac{\nu(v)}{8} |\p_v f^{\ell+1} | $, and thus can be absorbed to the LHS.
		
		Note that if $|v| > 2 \frac{\delta_1}{\Lambda_1}$, then from (\ref{integrable_nabla_phi_f}) and (\ref{delta_1/lamdab_1}), for $0 \leq s \leq t$,
		\Be\label{V_lower_bound_v}
		\begin{split}
			|V_\iota^\ell(s;t,x,v)| &\geq |v| - \int^t_0 |\nabla_x \phi^\ell (\tau;t,x,v)| \dd \tau \\
			&\geq |v| 
			- \delta_1/\Lambda_1
			\\
			&    \geq \frac{|v|}{2}.
		\end{split}
		\Ee
		Therefore 
		\Be\label{tilde_w_integrable}
		\sup_{s,t,x}\left\|    \frac{1}{{w}_{\tilde{\vartheta}} (V_\iota^\ell(s;t,x,v)) }    \right\|_{ L^{r}_v} \lesssim 1 \  \text{ for any }    1 \leq r \leq \infty .
		\Ee

		We derive 
		\Be\begin{split}\label{est_g_initial}
			&\| (\ref{g_initial})\|_{L^3_x L^{1+ \delta}_v}\\
			\lesssim & \ \left(
			\int_{\O}
			\left(\int_{\R^3} |w_{\tilde{\vartheta}} \p_v f^{\ell+1}(0,X_\iota^\ell(0 ), V_\iota^\ell(0 ))|^3 
			\right)
			\left(
			\int_{\R^3} \frac{\dd v}{|w_{\tilde{\vartheta}} (V_\iota^\ell(0 ))|^{(1+ \delta) \frac{3}{2-\delta}}}
			\right)^{\frac{2-\delta}{1+ \delta}}
			\right)^{1/3} \\
			\lesssim & \
			\left(\iint_{\O \times \R^3} |w_{\tilde{\vartheta}}(V_\iota^\ell(0;t,x,v)) \p_v f^{\ell+1}(0,X_\iota^\ell(0;t,x,v), V_\iota^\ell(0;t,x,v))|^3 \dd v \dd x\right)^{1/3}\\
			\lesssim & \ \| w_{\tilde{\vartheta}} \p_v f (0) \|_{L^3_{x,v}},
		\end{split}
		\Ee
		where we have used a change of variables $(x,v) \mapsto (X_\iota^\ell(0;t,x,v), V_\iota^\ell(0;t,x,v))$ and (\ref{tilde_w_integrable}).

		\hide
		Also we use $|V_\iota^\ell(0;t,x,v)| \gtrsim |v|$ for $|v|\gg 1$, from (\ref{decay_phi}), and hence $\tilde{w}(V_\iota^\ell(0;t,x,v))^{- (1+ \delta) \frac{3}{2-\delta}} \in L_v^1 (\R^3)$.\unhide
		
		Clearly 
		\Be\label{est_g_bdry}
		\|(\ref{g_bdry})\|_{L^3_x L^{1+ \delta}_v}
		% + \|(\ref{g_infty})\|_{L^3_x L^{1+ \delta}_v} 
		 \lesssim \sup_{0 \leq s \leq t} \| w_{\vartheta} f^{\ell+1}(s) \|_\infty.
		\Ee
		
		From $W^{1,2}(\O)\subset L^6(\O)\subset L^2(\O)$ for a bounded $\O \subset \R^3$, and the change of variables $(x,v) \mapsto (X_\iota^\ell(s;t,x,v), V_\iota^\ell(s;t,x,v))$ for fixed $s\in(\max\{t-\tb,0\},t)$,\Be
		\begin{split}\label{est_g_phi}
			\|(\ref{g_phi})\|_{L^3_x L^{1+ \delta}_v} 
			\lesssim & \ \|w_\vartheta f^{\ell+1} \|_\infty \int^t_0 \| \mu^{1/8}  \nabla_x \phi^\ell (s,X_\iota^\ell(s;t,x,v))  \|_{L^3_{x,v} }\| 
			\mu^{1/8}
			\|_{L^{% \frac{1}{\frac{1}{1+ \delta} - \frac{1}{3} }
					\frac{3(1+ \delta)}{2- \delta}
				}_v}\\
			\lesssim & \ \|w_\vartheta f^{\ell+1} \|_\infty  \int^t_0 \| \nabla_x \phi^\ell (s)   \|_{L^3_{x} }
			\lesssim   \|w_\vartheta f^{\ell+1} \|_\infty  \int^t_{\max\{t-\tb,0\}} \| \phi^\ell (s) \|_{W^{2,2}_{x} } 
			\\
			\lesssim & \ \|w_\vartheta f^{\ell+1} \|_\infty \int^t_0 \|w_{\tilde{\vartheta}} f^\ell(s) \|_{2}.
		\end{split}
		\Ee

		\vspace{4pt}
		
		\textit{Step 2. }  We claim 
		\Be\label{est_g_x}
		\|(\ref{g_x})\|_{L^3_x L^{1+ \delta}_v} \lesssim \int^t_0 \| w_{\tilde{\vartheta}} \alpha_{f^{\ell-1},\e}^\beta \p_x f^\ell (s) \|_{L^p_{x,v}}. 
		\Ee
		%Set $\tilde{w}(v) = e^{\tilde{\vartheta}|v|^2}$ for $\vartheta> \tilde{\vartheta}>0$ in (\ref{extra_con_uniqueness}). 
		Now we have for $3<p<6$, by the H\"older inequality $\frac{1}{1+ \delta}= \frac{1}{ \frac{p+p \delta}{p-1 - \delta}}+ \frac{1}{p}$,
		\Be\label{init_p_xf}
		\begin{split}
			&\left\|\left\| \int^t_{\max\{t-t_{1,\iota}^\ell, 0\}}%e^{- \int^t_{s} \nu_\phi (\tau,X_\iota^\ell(\tau),V_\iota^\ell(\tau)) \dd \tau}
			| \p_x f^\ell(s,X_\iota^\ell(s;t,x,v),V_\iota^\ell(s;t,x,v)) | \dd s
			\right\|_{L_{v}^{1+ \delta}(  \R^3)}\right\|_{L^{3}_x}\\
			\lesssim & \ \left\|\left\|  \int^t_{\max\{t-t_{1,\iota}^\ell, 0\}} \frac{ |  w_{\tilde{\vartheta}} \alpha_{f^{\ell-1},\e}^{\beta} \p_x f^\ell(s,X_\iota^\ell(s;t,x,v),V_\iota^\ell(s;t,x,v))|}{w_{\tilde{\vartheta}}\alpha_{f^{\ell-1},\e,\iota}(s,X_\iota^\ell(s;t,x,v),V_\iota^\ell(s;t,x,v))^{\beta} }
			\dd s
			\right\|_{L_{v}^{1+ \delta}(  \R^3)}\right\|_{L^{3}_x}
			\\
			\lesssim & \ \left\| \frac{w_{\tilde{\vartheta}}(v)^{-1}}{\alpha_{f^{\ell-1},\e,\iota}(t,x,v)^\beta}\right\|_{L_v^{\frac{p+p \delta}{p-1 - \delta}} (\R^3) }\\
			& \times \left\|
			\left\| \int^t_0 |w_{\tilde{\vartheta}}\alpha_{f^{\ell-1},\e}^\beta \p_x f^\ell(s,X_\iota^\ell(s;t,x,v), V_\iota^\ell(s;t,x,v))| \dd s\right\|_{L_{v}^p( \R^3)}\right\|_{L^{3}_x}\\
			\lesssim & \ \left\| \frac{ w_{\tilde{\vartheta}}(v)^{-1}}{\alpha_{f^{\ell-1},\e,\iota}(t,x,v)^\beta}\right\|_{L_v^{\frac{p+p \delta}{p-1 - \delta}} (\R^3) }
			\times \int^t_0 \| w_{\tilde{\vartheta}} \alpha_{f^{\ell-1},\e}^\beta \p_x f^\ell (s) \|_{L^p_{x,v}} \dd s 
			,%\\
			%\lesssim &  \
			%\legt\|
			%\grac{\tb(t,x,v)}{\alpha(t,x,v)}
			%\right\|_{L^{\grac{3p}{p-3}}_{x,v}}
			%\| \alpha^\beta \p_x g (t,x,v)\|_{L^p_{x,v} (\O \times \R^3)},
		\end{split}\Ee
		where we have used $\alpha_{f^{\ell-1},\e,\iota}(t,x,v)=\alpha_{f^{\ell-1},\e,\iota}(s,X_\iota^\ell(s;t,x,v),V_\iota^\ell(s;t,x,v))$ for $t-\tb(t,x,v)\leq s \leq t$ and the change of variables $(x,v) \mapsto (X_\iota^\ell(s;t,x,v), V_\iota^\ell(s;t,x,v))$ and the Minkowski inequality.
		
		For $\beta$ in (\ref{W1p_initial}), we have $
		\beta \frac{p}{p-1} < 1$ since $\frac{2}{3} < \frac{p-1}{p}$ for $3 < p$. Therefore, we can choose $0 < \delta \ll 1$ so that $\beta$ in (\ref{W1p_initial}) satisfies 
		\Be\label{less_than_1}
		\beta \times \frac{p+p \delta}{p-1 - \delta} < 1.
		\Ee
		\hide
		For $\beta$ in (\ref{W1p_initial}) we have $
		\beta \frac{p+p \delta}{p-1 - \delta}> \frac{p+p\delta}{p} \frac{p-2}{p-1-\delta}.$ Therefore for $0<\delta\ll1$ we can choose $\beta$ in (\ref{W1p_initial}) and satisfies 
		\Be\label{less_than_1}
		\beta \times \frac{p+p \delta}{p-1 - \delta} < 1.
		\Ee\unhide
		We apply Proposition \ref{prop_int_alpha} to conclude that 
		\Be\label{bound_g_x}
		\sup_{t,x} \left\| \frac{w_{\tilde{\vartheta}} (v)^{-1}}{\alpha_{f^{\ell-1},\e,\iota}(t,x,v)^\beta}\right\|_{L_v^{\frac{p+p \delta}{p-1 - \delta}} (\R^3) }^{\frac{p+p \delta}{p-1 - \delta}} 
		= \sup_{t,x} \int_{\R^3}  \frac{
			e^{- \tilde{\vartheta} \frac{p+p \delta}{p-1 - \delta} |v|^2 }
		}{\alpha_{f^{\ell-1},\e,\iota}(t,x,v)^{\beta \frac{p+p \delta}{p-1 - \delta}   }}   \dd v \lesssim 1.
		%+ \int_{|v| \geq  N} \frac{1}{\alpha(t,x,v)^{\beta \frac{p+p \delta}{p-1 - \delta}   }}   \dd v
		\Ee
		\hide
		We have 
		\Be
		\begin{split}
			| \tilde{w} \mathcal{G}|\lesssim&  \tilde{w} (\ref{G est})\\
			\lesssim & \tilde{w} |\nabla_{x,v} g| +
		\end{split}
		\Ee

		{\color{red}DETAILS}\unhide
		Finally, from (\ref{init_p_xf}), (\ref{bound_g_x}), and (\ref{W1p_main}), we conclude the claim (\ref{est_g_x}).
		
		\hide
		Now for $6>p>3$ and $\beta$ in (\ref{W1p_initial}) and a bounded domain $\O$, by the Minkowski inequality and a change of variables $(x,v) \mapsto (X_\pm^\ell(s;t,x,v), V_\pm^\ell(s;t,x,v))$ for fixed $s$, whose Jacobian equals 1,
		\Be\notag
		\begin{split}
			&\left\| (\ref{init_p_xf}) \right\|_{L^{3}_x}\\
			\lesssim& \  \left\| \int^t_0 \tilde{w} \alpha^\beta \p_x f(s,X_\pm^\ell(s;t,x,v), V_\pm^\ell(s;t,x,v)) \dd s\right\|_{L ^p(\O \times \R^3)}\\
			\lesssim & \ \int^t_0 \| \tilde{w} \alpha^\beta \p_x f(s) \|_p^p \dd s.
		\end{split}
		\Ee
		From (\ref{tilde_w_W1p}) we conclude the claim (\ref{est_g_x}).
		\unhide
		
		\vspace{4pt}
		
		\textit{Step 3. } We consider (\ref{g_K}). We split the $u$-integration of (\ref{g_K}) into two parts with $N\gg 1$ as 
		\begin{eqnarray}
		&&\int_{|u| \leq N} \mathbf{k}_\varrho (V_\iota^\ell(s), u) |\nabla_v f^\ell(s,X_\iota^\ell(s ), u) | \dd u \label{g_K_split1}\\
		&+& \int_{|u| \geq N} \mathbf{k}_\varrho (V_\iota^\ell(s), u) |\nabla_v f^\ell(s,X_\iota^\ell(s ), u) | \dd u .\label{g_K_split2}
		\end{eqnarray}

		First we bound (\ref{g_K_split1}). From the change of variables $(x,v) \mapsto (X_\iota^\ell(s;t,x,v), V_\iota^\ell(s;t,x,v))$ for $t-t^\ell_{1,\iota} \leq s \leq t$
		\Be\label{g_K_COV}
		\begin{split}
			&\left\|\int_{|u| \leq N} \mathbf{k}_\varrho (V_\iota^\ell(s;t,x,v), u) |\nabla_v f^\ell(s,X_\iota^\ell(s;t,x,v ), u) | \dd u  \right\|_{L^3_x L^3_v}\\
			= & \ \left\|\int_{|u| \leq N} \mathbf{k}_\varrho (v, u) |\nabla_v f^\ell(s,x, u) | \dd u  \right\|_{L^3_x L^3_v}.
		\end{split}
		\Ee
		If $|v|\geq 2N$ then $|v-u|^2\gtrsim |v|^2$ and $\mathbf{k}_\varrho (v,u) \lesssim \frac{e^{-C|v|^2}}{|v-u|^2}$ for $|v|\geq 2N$ and $|u| \leq N$. For $0 < \delta \ll 1$ with $\frac{3(1+ \delta)}{1-2\delta}>3$,  
		\Be
		\begin{split}\label{g_K_COV1}
			&(\ref{g_K_COV})\\
			\lesssim & \ C_N \left\| \left\| \int_{|u| \leq N} \mathbf{k}_\varrho (v, u) |\nabla_v f^\ell(s,x, u) | \dd u \right\|_{L^{\frac{3(1+ \delta)}{1-2\delta}}_v
				(\{|v| \leq 2N\})
			} \right\|_{L^3_x  }\\
			&+ \left\| \left\|  e^{-C|v|^2}\right\|_{L^{3/2}_v} \left\| \int_{|u| \leq N} \frac{1}{|v-u|} |\nabla_v f^\ell(s,x, u) | \dd u \right\|_{L^{\frac{3(1+ \delta)}{1-2\delta}}_v
				(\{|v| \geq 2N\})
			} \right\|_{L^3_x  }\\
			\lesssim & \ \left\|   \left\|   \frac{1}{|v- \cdot |} * |\nabla_v f^\ell(s,x, \cdot ) |   \right\|_{L^{\frac{3(1+ \delta)}{1-2\delta}}_v 
			} \right\|_{L^3_x  }.
		\end{split}
		\Ee
		Then by the Hardy-Littlewood-Sobolev inequality with $1+ \frac{1}{\frac{3(1+ \delta)}{1- 2\delta}} = \frac{1}{3} + \frac{1}{1+ \delta}$, we derive that   
		\Be\begin{split}\notag
			(\ref{g_K_COV1}) %\lesssim& \  \left\| 
			%\frac{1}{|v-\cdot |} *|\nabla_v g (s,X_\iota^\ell(s;t,x,v), \cdot)|
			%\right\|_{L^{\frac{3(1+ \delta)}{1-2\delta}}_v (\R^3)}\\
			\lesssim & \ \left\| \| \nabla_v f^\ell(s, x,v)  \|_{L^{1+ \delta}_v  }\right\|_{L^3_x}
			= \| \nabla_v f^\ell(s) \|_{L^3_x L^{1+ \delta}_v}
			.
		\end{split}\Ee 
		Combining the last estimate with (\ref{g_K_COV}), (\ref{g_K_COV1}), we prove that 
		\Be\label{est_g_K_split1}
		\|(\ref{g_K_split1})\|_{L^3_x L^{1+ \delta}_v} \lesssim    \| \nabla_v f^\ell(s) \|_{L^3_x L^{1+ \delta}_v}.
		\Ee
		
		Now we consider (\ref{g_K_split2}). Choose $0< {\delta'} \ll  1$. We have
		\Be\notag
		\begin{split}
			&(\ref{g_K_split2}) \\
			\le \  &\int_{|u| \geq N} \frac{1}{w_{\tilde{\vartheta}} (V_\iota^\ell(s;t,x,v))^{1- {\delta'}}} \frac{w_{\tilde{\vartheta}}(V_\iota^\ell(s;t,x,v))  }{w_{\tilde{\vartheta}}(u)} \frac{\mathbf{k}_\varrho (V_\iota^\ell(s;t,x,v), u)}{\alpha_{f^{\ell-1},\e,\iota}(s,X_\iota^\ell(s;t,x,v), u)^\beta} \\
			& \ \  \times \frac{1}{w_{\tilde{\vartheta}}(V_\iota^\ell(s;t,x,v))^{ {\delta'}}} w_{\tilde{\vartheta}}(u)|\alpha_{f^{\ell-1},\e}(s,X_\iota^\ell(s;t,x,v), u)^\beta  \nabla_v f^\ell(s,X_\iota^\ell(s;t,x,v), u)| \dd u .
		\end{split} 
		\Ee

		By the H\"older inequality with $\frac{1}{p} + \frac{1}{p^*}=1$ with $3<p<6$
		\Be
		\begin{split}\label{g_K_split2_Holder}
			&|(\ref{g_K_split2})| \\
			&\lesssim  \ \frac{1}{w_{\tilde{\vartheta}} (V_\iota^\ell(s;t,x,v))^{1- {\delta'}}} \left\|  \frac{w_{\tilde{\vartheta}}(V_\iota^\ell(s;t,x,v))  }{w_{\tilde{\vartheta}}(u)} \frac{\mathbf{k}_\varrho (V_\iota^\ell(s;t,x,v), u)}{\alpha_{f^{\ell-1},\e,\iota}(s,X_\iota^\ell(s;t,x,v), u)^\beta} \right\|_{L^{p^*} (\{ |u|\geq N \})} \\
			&  \times  \left\|  \frac{w_{\tilde{\vartheta}}(u) }{w_{\tilde{\vartheta}}(V_\iota^\ell(s;t,x,v))^{ {\delta'}}} \alpha_{f^{\ell-1},\e}(s,X_\iota^\ell(s;t,x,v), u)^\beta  \nabla_v f^\ell(s,X_\iota^\ell(s;t,x,v), u) \right\|_{L^p_u (\R^3)}.
		\end{split}
		\Ee
		
		Then by the H\"older inequality with $\frac{1}{1+ \delta} = \frac{1}{p} + \frac{1}{\frac{(1+ \delta)p}{p - (1+ \delta)}}$,
		\Be
		\begin{split}\notag
			&\|(\ref{g_K_split2})\|_{L^{1+ \delta}_v}\\
			\lesssim & \ \left\| \frac{1}{w_{\tilde{\vartheta}} (V_\iota^\ell(s;t,x,v))^{1- {\delta'}}} \right\|_{L_v^{\frac{(1+ \delta) p}{ p- (1+ \delta)}}} \\
			& \times \sup_v \left\|  \frac{w_{\tilde{\vartheta}}(V_\iota^\ell(s;t,x,v))  }{w_{\tilde{\vartheta}}(u)} \frac{\mathbf{k}_\varrho (V_\iota^\ell(s;t,x,v), u)}{\alpha_{f^{\ell-1},\e,\iota}(s,X_\iota^\ell(s;t,x,v), u)^\beta} \right\|_{L^{p^*} (\{ |u|\geq N \})} \\
			& \times \left\| \left\|  \frac{w_{\tilde{\vartheta}}(u) }{w_{\tilde{\vartheta}}(V_\iota^\ell(s;t,x,v))^{ {\delta'}}} \alpha_{f^{\ell-1},\e}(s,X_\iota^\ell(s;t,x,v), u)^\beta \nabla_v f^\ell(s,X_\iota^\ell(s;t,x,v), u) \right\|_{L^p_u  }\right\|_{L^{p}_v}.
		\end{split}
		\Ee
		
		Note that, from (\ref{k_vartheta_comparision}), $\mathbf{k}_\varrho (v,u) \frac{e^{\tilde{\vartheta} |v|^2}}{e^{\tilde{\vartheta} |u|^2}} \lesssim \mathbf{k}_{\tilde{\varrho}} (v,u)$ for some $0<\tilde{\varrho}< \varrho$. Hence we derive, using (\ref{tilde_w_integrable}) 
		\Be
		\begin{split}\notag
			&\left\|\|(\ref{g_K_split2})\|_{L^{1+ \delta}_v}\right\|_{L^3_x}\\
			\lesssim_\O &  \ \sup_{X_\iota^\ell,V_\iota^\ell} \left\| \frac{e^{- \frac{\tilde{\vartheta}}{10}|V_\iota^\ell-u|^2  }}{|V_\iota^\ell-u|}  \frac{1}{\alpha_{f^{\ell-1},\e,\iota}(s,X_\iota^\ell , u)^\beta} \right\|_{L^{p^*} (\{ |u|\geq N \})} \\
			&\times  \left\|  \frac{\tilde{w}(u) }{\tilde{w}(V_\iota^\ell(s;t,x,v))^{ {\delta'}}} \alpha_{f^{\ell-1},\e}(s,X_\iota^\ell(s;t,x,v), u)^\beta  \nabla_v f^\ell(s,X_\iota^\ell(s;t,x,v), u) \right\|_{L^p_{u,v,x}  } .
		\end{split}
		\Ee
		Finally using (\ref{NLL_split3}) in Proposition \ref{prop_int_alpha} with $ \frac{p-2}{p-1}<\beta p^*< 1$ from (\ref{W1p_initial}) and applying the change of variables $(x,v) \mapsto (X_\iota^\ell(s;t,x,v), V_\iota^\ell(s;t,x,v))$, we derive that 
		\Be
		\begin{split}\label{g_K_split2_Holder2}
			&\left\|\|(\ref{g_K_split2})\|_{L^{1+ \delta}_v}\right\|_{L^3_x}\\
			\lesssim_\O &    
			\left\| \left\|  \frac{1 }{w_{\tilde{\vartheta}}(v)^{ {\delta'}}}w_{\tilde{\vartheta}}(u) \alpha_{f^{\ell-1},\e}(s,x, u)^\beta  \nabla_v f^\ell(s,x, u) \right\|_{L^p_v} \right\|_{L^p_{u ,x}  } \\
			\lesssim \ & \left\|  \frac{1 }{w_{\tilde{\vartheta}}(v)^{ {\delta'}}} \right\|_{L^p_v}    \left\| w_{\tilde{\vartheta}}(u) \alpha_{f^{\ell-1},\e}(s,x, u)^\beta  \nabla_v f^\ell(s,x, u)   \right\|_{L^p_{u ,x}  }\\
			\lesssim \ &   \left\| w_{\tilde{\vartheta}} \alpha_{f^{\ell-1},\e}^\beta \nabla_v f^\ell(s )  \right\|_{L^p  }.
		\end{split}
		\Ee
		Combining (\ref{g_K_split2_Holder}) and (\ref{g_K_split2_Holder2}) we conclude that 
		\Be
		\|(\ref{g_K_split2})\|_{L^3_x L^{1+ \delta}_v} \lesssim   \|w_{\tilde{\vartheta}} \alpha_{f^{\ell-1},\e}^\beta   \nabla_v f^\ell(s )  \|_{L^p_{x,v}}.\label{est_g_K_split2}
		\Ee
		
		Finally from (\ref{est_g_K_split1}) and (\ref{est_g_K_split2}), and using the Minkowski inequality, we conclude that 
		\Be\label{est_g_K}\begin{split}
			& \| (\ref{g_K}) \|_{L^3_v L^{1+ \delta}_x} \\
			\lesssim & \  (1+ \| w_{\vartheta} f^\ell \|_\infty + \|w_\vartheta f^{\ell+1} \|_\infty) \int^t_0  \big[
			\| \nabla_v f^\ell (s) \|_{L^3_x L^{1+ \delta}_v}
			+
			\| w_{\tilde{\vartheta}} \alpha_{f^{\ell-1},\e}^\beta  \nabla_v f^\ell(s ) \|_{L^p_{x,v}}\big] \dd s. %\\
			%\lesssim & \  t(1+ \| w_{\vartheta} f\|_\infty) \big\{ \sup_{0 \leq s \leq t}
			%\| \nabla_v f (s) \|_{L^3_x L^{1+ \delta}_v}
			%+\sup_{0 \leq s \leq t}
			%\|w_{\tilde{\vartheta}} \alpha_{f^{\ell-1},\e}^\beta | \nabla_v f(s )| \|_{L^p_{x,v}}\big\}.
		\end{split}\Ee
		
		Collecting terms from (\ref{g_initial})-(\ref{g_phi}), and (\ref{est_g_initial}), (\ref{g_bdry}), (\ref{est_g_phi}), (\ref{est_g_x}), (\ref{est_g_K}), we derive
		\Be\begin{split}\label{bound_nabla_v_g}
			\sup_{0 \leq s \leq t}& \| \nabla_vf^{\ell+1}(s) \|_{L^3_xL^{1+ \delta}_v}
			\\ \lesssim &  \sup_{0 \leq s \leq t}\| \nabla_vf^{\ell+1}_+(s) \|_{L^3_xL^{1+ \delta}_v} + \sup_{0 \leq s \leq t}\| \nabla_vf^{\ell+1}_-(s) \|_{L^3_xL^{1+ \delta}_v}
			\\ \lesssim &  \
			\| w_{\tilde{\vartheta}} \nabla_v f(0) \|_{L^3_{x,v}} + 
			\sup_{0 \leq s \leq t}
			\| w_{\vartheta} f^{\ell+1}(s) \|_\infty + \sup_{0 \le s \le t }\| w_{\vartheta} f^\ell(s)\|_\infty\\
			&  
			+
			t (1+ \sup_{0 \leq s \leq t}\| w_{\vartheta} f^{\ell+1}(s) \|_\infty + \sup_{0 \le s \le t }\| w_{\vartheta} f^\ell(s)\|_\infty )
			   ( \sup_{0 \leq s \leq t} \|w_{\tilde{\vartheta}} \alpha_{f,\e}^\beta \nabla_{x,v} f^\ell(s) \|_{p} + \| \nabla_v f^\ell(s) \|_{L^3_x L^{1+ \delta}_v} ).
%		\\	&
%			+(1+ \| w_{\vartheta} f\|_\infty) \int^t_0  
%			\| \nabla_v f (s) \|_{L^3_x L^{1+ \delta}_v}
		\end{split} \Ee
		%where the first two lines of RHS is bounded due to the assumptions (\ref{main_Linfty}), (\ref{W1p_main}). 
Therefore from (\ref{uniform_h_ell}) and (\ref{unif_Em}), we can choose $T^{**} \ll 1 $ and conclude \eqref{bound_nabla_v_g_global}.

		\hide

		Then by the Holder inequality $\frac{1}{3} = \frac{1}{\frac{3p}{p-3}} + \frac{1}{p}$
		\Be\label{g_K_split2_Holder_Holder}
		\begin{split}
			&\|(\ref{g_K_split2_Holder})\|_{L^3_x L^{1+ \delta}_v }\\
			\lesssim  & \  \left\| \left\|   \frac{\mathbf{k}_\varrho (V_\iota^\ell(s;t,x,v), u)}{\tilde{w} (u)\alpha(s, X_\iota^\ell(s;t,x,v), u )^\beta} \right\|_{L^{p^*} (\{ |u|\geq N \})  } \right\|_{L_x^{\frac{3p}{p-3} } L^{1+ \delta}_v } \\
			& \times \| \tilde{w} \alpha^\beta \p_v g(s, X_\iota^\ell(s;t,x,v), u) \|_{L^p_{x,u }}  .%\\
			%\lesssim & \int^t_0 \| \tilde{w} \alpha^\beta \p_v g(s) \|_{L^p_{x,v}} \dd s
		\end{split}
		\Ee
		
		\Be\begin{split}\label{g_K_split2_Holder_Holder1}
			&\left\|  \left\|   \frac{\mathbf{k}_\varrho (V_\iota^\ell(s;t,x,v), u)}{\tilde{w} (u)\alpha(s, X(s;t,x,v), u )^\beta} \right\|_{L^{p^*}(\{ |u|\geq N \})   } \right\|_{L^{1+ \delta}_v }
			\\
			\lesssim&  \ 
			\left\| \frac{1}{\tilde{w} (V_\iota^\ell(s;t,x,v))} \left\|   \frac{\mathbf{k}_{\tilde{\varrho}} (V_\iota^\ell(s;t,x,v), u)}{ \alpha(s, X(s;t,x,v), u )^\beta} \right\|_{L^{p^*}(\{ |u|\geq N \})  }
			\right\|_{ L^{1+ \delta}_v  }\\
			\lesssim & \  
			\left\|    \tilde{w} (V_\iota^\ell(s;t,x,v))^{-1}   \right\|_{ L^{1+ \delta}_v  }
			\times \sup_v  \left\|   \frac{\mathbf{k}_{\tilde{\varrho}} (V_\iota^\ell(s;t,x,v), u)}{ \alpha(s, X(s;t,x,v), u )^\beta} \right\|_{L^{p^*}(\{ |u|\geq N \})  }.
		\end{split}\Ee

		On the other hand from (\ref{NLL_split3}) in Proposition \ref{prop_int_alpha} the last term of (\ref{g_K_split2_Holder_Holder1}) is bounded.

		From (\ref{g_K_split2_Holder}), (\ref{g_K_split2_Holder_Holder}), (\ref{g_K_split2_Holder_Holder1})
		\Be
		\|(\ref{g_K_split2})\|_{L^3_x L^{1+ \delta}_x}\lesssim  \| \tilde{w} \alpha^\beta \nabla_v g (s) \|_{L^p_{x,v}}
		\Ee

		Now we check that a change of variables $v \mapsto V_\iota^\ell(s;t,x,v)$ for fixed $s,t,x$ with $|t-s|\ll \lambda_\infty$ in (\ref{main_Linfty}) is well-defined and has a Jacobian 
		\Be\label{V_v_short}
		\begin{split}
			&\det \left(\frac{\p V(s;t,x,v)}{\p v}\right)\\
			= & \  \det \left(\text{Id}_{3 \times 3}- \int^s_t \frac{\p X(\tau;t,x,v)}{\p v} \cdot \nabla_x \nabla_x \phi (\tau, X(\tau;t,x,v)) \dd \tau \right)\\
			\gtrsim & \ 1  - \int^t_s |t- \tau| e^{- \frac{\lambda_\infty}{2} \tau } \dd \tau\\
			\gtrsim & \ 1- \frac{|t-s|}{\lambda_\infty}\\
			\gtrsim & \ 1 \ \ \  \text{for} \ \   |t-s| \ll \lambda_\infty,
		\end{split}
		\Ee
		where we have used (\ref{result_X_v}).

		We need the following estimate
		\Be\label{X_x_V_v_local}
		|\nabla_x X(s;t,x,v)| + |\nabla_v V(s;t,x,v)| \lesssim e^{C|t-s|},
		\Ee
		where $C>0$ depends on $\sup_{s \leq \tau \leq t }\|\nabla_x^2 \phi_g(\tau) \|_\infty$. This is a direct consequence of Gronwall inequality.

		Note that for fixed $t,x,v$ with $|t-s| \ll 1$ in (\ref{main_Linfty}), the change of variables $x \mapsto X(s;t,x,v)$ is well-defined and has a Jacobian
		\Be\label{X_x_jacobian}
		\begin{split}
			&\det \left(\frac{\p X(s;t,x,v)}{\p x}\right)\\
			= & \  \det \left( \text{Id}_{3 \times 3} - \int^s_t \int^{\tau }_t \frac{\p X(\tau^\prime;t,x,v)}{\p x} \cdot \nabla_x \nabla_x \phi (\tau^\prime, X(\tau^\prime;t,x,v)) \dd \tau^\prime \dd \tau \right)\\
			\gtrsim &  \ 1 - O\left(|t-s|^2\right)\\
			\gtrsim& \ 1 .
		\end{split}
		\Ee
		Here we have used (\ref{main_Linfty}) and (\ref{X_x_V_v_local}).
		
		Then from the above change of variables with (\ref{X_x_jacobian}) and the Minkowski inequality we derive
		\Be
		\begin{split}
			&\left\|(\ref{g_K_split1})
			\right\|_{L^3_x (\O)}\\
			\lesssim & \ \int_{|u| \leq N} \mathbf{k}_\varrho (V(s), u) \|\nabla_v g(s,x, u) \|_{L^3_x  } \dd u
		\end{split}
		\Ee

		If $|v|\geq 10N\gg  \frac{\| w f_0 \|_\infty}{\lambda_\infty} $ then from (\ref{Morrey})
		\Be\notag
		\begin{split}
			|V(s;t,x,v)| &\geq |v| - \int^t_s |\nabla_x \phi (\tau;t,x,v)| \dd \tau \\
			&\geq |v| - C \| w f_0 \|_\infty   \int^t_s e^{- \lambda_\infty \tau } \dd \tau \\
			&\geq |v| - \frac{C}{\lambda_\infty}\| w f_0 \|_\infty \\
			& \geq \frac{|v|}{2}.
		\end{split}
		\Ee
		Therefore if $|u| \leq N$ and $|v|\geq 10N$ then 
		\[
		|V(s;t,x,v)- u|^2 \gtrsim  |v|^2,
		\]
		which implies that 
		\Be \notag
		\mathbf{k}_\varrho (V(s;t,x,v), u) 
		\lesssim    \frac{e^{-C|v  |^2}}{|V(s;t,x,v) - u|} .
		\Ee
		Therefore we have 
		\Be
		\begin{split}\label{g_K_small}
			&\left\|\int_{|u | \leq N} \mathbf{k}_\varrho (V(s;t,x,v), u) |\p_v  g(s,X(s;t,x,v), u)| \dd u\right\|_{L^{1+ \delta}_v (\R^3)}\\
			%\lesssim & \
			%\int_{\R^3} \mathbf{k}_{\tilde{\varrho}} (V(s;t,x,v), u) |\nabla_v  g(s,X(s;t,x,v), u)|  \dd u
			%\\
			\lesssim & \ \left\{1+\| e^{-C|v|^2} \|_{L^{3/2}_v (\R^3)}\right\}\\
			& \times \left\| 
			\frac{1}{|V(s;t,x,v)-\cdot |} *|\nabla_v g (s,X(s;t,x,v), \cdot)|
			\right\|_{L^{\frac{3(1+ \delta)}{1-2\delta}}_v (\R^3)}.%\\
			%\lesssim & \  \| \nabla_v g(s,X(s;t,x,v), \cdot)\|_{L^{1+ \delta}_v (\R^3)}
		\end{split}
		\Ee
		Using (\ref{V_v_short}) and the Hardy-Littlewood-Sobolev inequality with $1+ \frac{1}{\frac{3(1+ \delta)}{1- 2\delta}} = \frac{1}{3} + \frac{1}{1+ \delta}$, we derive that   
		\Be\begin{split}
			(\ref{g_K_small}) \lesssim& \  \left\| 
			\frac{1}{|v-\cdot |} *|\nabla_v g (s,X(s;t,x,v), \cdot)|
			\right\|_{L^{\frac{3(1+ \delta)}{1-2\delta}}_v (\R^3)}\\
			\lesssim & \  \| \nabla_v g(s,X(s;t,x,V(s;t,x,v)), \cdot)\|_{L^{1+ \delta}_v (\R^3)}
			.
		\end{split}\Ee

		Therefore we have 
		\Be
		\begin{split}
			&\left\|\int_{\R^3} \mathbf{k}_\varrho (V(s;t,x,v), u) |\p_v  g(s,X(s;t,x,v), u)| \dd u\right\|_{L^{1+ \delta}_v (\R^3)}\\
			\lesssim & \
			\int_{\R^3} \mathbf{k}_{\tilde{\varrho}} (V(s;t,x,v), u) |\nabla_v  g(s,X(s;t,x,v), u)|  \dd u
			\\
			\lesssim & \ \left\| 
			\frac{1}{|V(s;t,x,v)-\cdot |} *|\nabla_v g (s,X(s;t,x,v), \cdot)|
			\right\|_{L^{\frac{3(1+ \delta)}{1-2\delta}}_v (\R^3)}\\
			\lesssim & \  \| \nabla_v g(s,X(s;t,x,v), \cdot)\|_{L^{1+ \delta}_v (\R^3)}
		\end{split}
		\Ee

		\vspace{4pt}
		
		\textit{Step .}
		
		\unhide
		%Now we split the time interval as $[0,t] \cup [t,2t] \cup [2t,3t] \cup \cdots$ and derive (\ref{bound_nabla_v_g}) for each interval. 
		%By the Gronwall inequality we prove (\ref{bound_nabla_v_g_global}).

		\hide
		we derive that 
		\Be\label{improve_v}
		\sup_{0 \leq s \leq   t} \| \nabla_v g (s)\|_{L^3_x L^{1+ \delta}_v} \lesssim 1.
		\Ee
		From (\ref{improve_v}), we conclude the uniqueness by combining (\ref{f-g_energy}), (\ref{gronwall_f-g}), (\ref{gronwall_f-g_last}), (\ref{gamma_+,1+delta}), (\ref{improve_v}) and applying the Gronwall inequality. 
		
		\unhide\end{proof}
	
	\section{Local existence}
		\begin{theorem}\label{local_existence}
		Let $0< \tilde{\vartheta}< \vartheta \ll1$. Assume that for sufficiently small $M>0$, $F_0= \mu+ \sqrt{\mu}f_0\geq 0$ satisfying \eqref{wf0small}, \eqref{unif_Em}, \eqref{Extra_uniq} and the compatibility condition (\ref{compatibility_condition}).
%		where $(p, \beta)$ satisfy (\ref{beta_condition}), and $\| w_{\tilde{\vartheta}} \nabla_v f_0 \|_{L^{3}_{x,v}}<+\infty$. %for sufficiently small $0<\delta \ll1$ 

	Then there exists $T^*(M)>0$ and a unique solution $F(t,x,v) =\mu+ \sqrt{\mu} f(t,x,v)\geq 0$ to (\ref{2fVPB}), (\ref{smallfphi}), and (\ref{diffusef}) in $[0, T^*(M)) \times \O \times \R^3$ such that 
		\Be\begin{split}\label{infty_local_bound} \sup_{0 \leq t \leq T*} 
			%\Big\{
			\| w_\vartheta f  (t) \|_{\infty}
			%+  \| \alpha_{f }^\beta \nabla_{x,v} f (t) \|_{p} 
			%+ 
			%\int^t_0  | \alpha_{f }^\beta \nabla_{x,v} f (t) |_{\gamma, p}
			%\Big\}
			\leq M.
		\end{split}\Ee 
		
	%	If we assume extra conditions $ \| w_{\tilde{\vartheta}} \alpha_{f_0 ,\e}^\beta \nabla_{x,v} f _0 \|_{p} <\infty$ for $0< \e \ll1$ and (\ref{compatibility_condition}), where $(p, \beta)$ satisfy (\ref{beta_condition}), and $\| w_{\tilde{\vartheta}} \nabla_v f_0 \|_{L^{3}_{x,v}}<+\infty$ %for sufficiently small $0<\delta \ll1$ 
		%then the solution we construct is unique. 
		Moreover
		\Be\label{31_local_bound}
		\sup_{0 \leq t \leq T^*}\| \nabla_v f (t) \|_{L^3_xL^{1+ \delta}_v}< \infty \ \ \text{for} \ 0< \delta \ll1,
		\Ee
		and 
		\Be\begin{split}\label{W1p_local_bound}
			\sup_{0 \leq t \leq T*} 
			\Big\{
			\| w_{\tilde{\vartheta}}\alpha_{f,\e }^\beta \nabla_{x,v} f (t) \|_{p} ^p
			+ 
			\int^t_0  |w_{\tilde{\vartheta}} \alpha_{f,\e}^\beta \nabla_{x,v} f (t) |_{p,+}^p
			\Big\}
			< \infty.
		\end{split}\Ee
		Furthermore, $\|  w_\vartheta f  (t) \|_{\infty}$, $\| \nabla_v f (t) \|_{L^3_xL^{1+ \delta}_v}$ and $\| w_{\tilde{\vartheta}}\alpha_{f,\e }^\beta \nabla_{x,v} f (t) \|_{p} ^p
		+ 
		\int^t_0  |w_{\tilde{\vartheta}} \alpha_{f,\e }^\beta \nabla_{x,v} f (t) |_{p,+}^p$ are continuous in $t$.	
\end{theorem}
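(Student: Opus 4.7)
The plan is to pass to the limit in the linearized sequence $\{(f^\ell, \phi^\ell)\}$ defined by \eqref{fell_local}--\eqref{XV_ell}. By the $L^\infty$ proposition (\ref{uniform_h_ell}), Proposition \ref{prop_W1p} applied with the verified hypotheses \eqref{small_bound_f_infty}--\eqref{delta_1/lamdab_1} (which follow from \eqref{uniform_h_ell} together with Lemma \ref{lemma_apply_Schauder}), and Proposition \ref{prop_better_f_v}, I already have a uniform time $T^{**}\leq T^*(M)$ and uniform bounds
\begin{equation*}
\sup_\ell \sup_{0\le t\le T^{**}} \Big\{ \|w_\vartheta f^\ell(t)\|_\infty
+ \|w_{\tilde\vartheta}\alpha_{f^{\ell-1},\e}^\beta \nabla_{x,v} f^\ell(t)\|_p^p
+ \|\nabla_v f^\ell(t)\|_{L^3_xL^{1+\delta}_v} \Big\} \lesssim 1.
\end{equation*}

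The core step is to show that $\{f^\ell\}$ is Cauchy in $L^\infty([0,T^{**}]; L^\infty_{x,v})$ with the weight $w_\vartheta$, for $T^{**}$ possibly shrunk. Subtracting the equations for $f^{\ell+1}$ and $f^{\ell}$ gives
\begin{equation*}
\big[\partial_t + v\cdot\nabla_x - q\nabla\phi^\ell\cdot\nabla_v + \nu_{\phi^\ell,w_\vartheta}\big]
(h^{\ell+1}-h^\ell) = q(\nabla\phi^\ell-\nabla\phi^{\ell-1})\cdot \nabla_v(h^\ell/w_\vartheta)\, w_\vartheta
+ \mathcal{R}^{\ell},
\end{equation*}
where $h^\ell = w_\vartheta f^\ell$ and $\mathcal{R}^{\ell}$ collects the $K$, $\Gamma$, lower-order field, and Maxwellian-source differences, each of which is linear in $(h^\ell-h^{\ell-1})$ or $(h^{\ell+1}-h^\ell)$ modulo $\|h^m\|_\infty$-factors. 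The dangerous term is the transport one: $\nabla_v f^\ell$ is only controlled in $L^3_x L^{1+\delta}_v$, while $\nabla\phi^\ell-\nabla\phi^{\ell-1}$ is controlled in $L^\infty_x$ by Morrey (\ref{Morrey}) in terms of $\|w_\vartheta(f^\ell-f^{\ell-1})\|_\infty$. Integrating along the characteristics $(X_\iota^\ell,V_\iota^\ell)$ (which match the boundary condition difference, a similar decaying term), one obtains
\begin{equation*}
\sup_{0\le t\le T^{**}}\|h^{\ell+1}(t)-h^\ell(t)\|_\infty \le C\, T^{**}\!\left(1 + \sup_{m,t} \|\nabla_v f^m\|_{L^3_xL^{1+\delta}_v}\right) \sup_{0\le t\le T^{**}}\|h^\ell(t)-h^{\ell-1}(t)\|_\infty,
\end{equation*}
after splitting the $u$-integration in the $K$-term by Proposition \ref{prop_int_alpha} and invoking Lemma \ref{le:ukai} for the non-grazing boundary piece. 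Shrinking $T^{**}$ so that $CT^{**}(1+\cdots)\le 1/2$ makes $\{h^\ell\}$ Cauchy in $L^\infty$.

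The limit $f=\lim_\ell f^\ell$ (and $\phi_f=\lim \phi^\ell$ by elliptic regularity) then inherits, via lower semicontinuity of the norms, the bounds \eqref{infty_local_bound}, \eqref{31_local_bound}, \eqref{W1p_local_bound}; passing to the limit in \eqref{fell_local} in the sense of distributions produces a solution of \eqref{2fVPB}--\eqref{diffusef}. Positivity $F\ge 0$ follows from an induction on $F^\ell$: assuming $F^\ell \geq 0$, the equation for $F^{\ell+1}$ in \eqref{Fell} is linear with nonnegative gain term and multiplicative loss coefficient $\nu(F^\ell)= \int|v-u|F^\ell\dd u\geq 0$, so the Duhamel formula along $(X_\iota^\ell,V_\iota^\ell)$, combined with the nonnegative diffuse boundary datum, propagates $F^{\ell+1}\geq 0$; this passes to the strong $L^\infty$ limit. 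Continuity in $t$ of $\|w_\vartheta f(t)\|_\infty$, $\|\nabla_v f(t)\|_{L^3_xL^{1+\delta}_v}$, and the weighted $W^{1,p}$-norm follows from Lemma \ref{lem_Green} applied to $f$, $\nabla_{x,v}f$ (with the equation \eqref{eqtn_nabla_f} now for the true $f$) and to the equation for $\nabla_v f$; absolute continuity of the trajectory representation gives the claimed $C^0$-in-$t$ regularity.

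For uniqueness, given two solutions $(f,\phi_f),(g,\phi_g)$ with the same data, I subtract their equations, pair against $|f-g|^{p-1}\text{sgn}(f-g)$ in $L^{1+\delta}$ (so the dangerous term $(\nabla\phi_f-\nabla\phi_g)\cdot \nabla_v g$ is handled by the Hölder pairing $L^\infty_x\cdot L^3_x L^{1+\delta}_v$ exactly as above), and apply Gronwall, using \eqref{31_local_bound} for $g$. The main obstacle throughout is precisely this $\nabla_v$ loss: without the $L^3_xL^{1+\delta}_v$ bound of Proposition \ref{prop_better_f_v}, the difference estimate could not close in a norm strong enough to imply uniqueness, and weaker norms would be incompatible with the $L^\infty_x$ control of $\nabla\phi$ obtained only via Morrey.
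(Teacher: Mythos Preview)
Your central step---proving $\{h^\ell\}$ is Cauchy in $L^\infty_t L^\infty_{x,v}$---does not close. After subtracting the equations and integrating along the characteristics $(X_\iota^\ell,V_\iota^\ell)$, the Duhamel term coming from the field difference is
\[
\int_{\max\{t_{1,\iota}^\ell,0\}}^{t} \big|(\nabla\phi^\ell-\nabla\phi^{\ell-1})(s,X_\iota^\ell(s))\big|\,\big|\nabla_v f^\ell(s,X_\iota^\ell(s),V_\iota^\ell(s))\big|\,\dd s,
\]
and to bound the supremum over $(x,v)$ of this you need a pointwise (i.e.\ $L^\infty_{x,v}$) bound on $\nabla_v f^\ell$. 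You only have $\nabla_v f^\ell\in L^3_x L^{1+\delta}_v$ from Proposition~\ref{prop_better_f_v}, which gives no pointwise control whatsoever; the displayed contraction inequality you write, with the factor $\sup_{m,t}\|\nabla_v f^m\|_{L^3_xL^{1+\delta}_v}$ multiplying an $L^\infty$ difference, simply does not follow. (You correctly identify this as the dangerous term, and you even use the right $L^{1+\delta}$ pairing for it in your uniqueness paragraph---but that pairing is incompatible with an $L^\infty$ output.)

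The paper instead proves the sequence is Cauchy in $L^\infty_t L^{1+\delta}_{x,v}$: the $L^{1+\delta}$ energy identity for $f^{\ell+1}-f^\ell$ produces $\iint |\nabla_x\phi_{f^\ell-f^{\ell-1}}\cdot\nabla_v f^{\ell-1}|\,|f^{\ell+1}-f^\ell|^\delta$, and the H\"older split $\frac{1}{1}=\frac{2-\delta}{3(1+\delta)}+\frac{1}{3}+\frac{\delta}{1+\delta}$ together with the Sobolev embedding $W^{1,1+\delta}(\O)\hookrightarrow L^{3(1+\delta)/(2-\delta)}(\O)$ bounds $\|\nabla\phi_{f^\ell-f^{\ell-1}}\|_{L^{3(1+\delta)/(2-\delta)}_x}$ by $\|f^\ell-f^{\ell-1}\|_{1+\delta}$ itself, closing the estimate. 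Strong $L^{1+\delta}$ convergence plus the uniform $L^\infty$ bound and the velocity averaging lemma (Lemma~\ref{extension_dyn}) then suffice to pass to the limit in the weak formulation. A secondary issue you also glossed over: lower semicontinuity alone does not give \eqref{W1p_local_bound}, because the weight $\alpha_{f^{\ell},\e}^\beta$ depends on $\ell$; the paper spends a nontrivial step (Step 8 there) showing $\alpha_{f^{\ell},\e}^\beta\overset{\ast}{\rightharpoonup}\alpha_{f,\e}^\beta$ and $\nabla_{x,v}\alpha_{f^{\ell},\e}^\beta\overset{\ast}{\rightharpoonup}\nabla_{x,v}\alpha_{f,\e}^\beta$ away from $\gamma_0$, which requires a uniform-in-$\ell$ lower bound on $|n(\xb^{f^\ell})\cdot\vb^{f^\ell}|$ on compact sets via a velocity-lemma argument.
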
		

\begin{proof}

%	Once we have Proposition \ref{prop_better_f_v}, we can prove the following stability result.
%	\begin{proposition}\label{L1+stability}
%	Suppose $f$ and $g$ solve (\ref{2fVPB}), (\ref{smallfphi}), (\ref{diffusef}), and satisfy (\ref{main_Linfty}). Also we assume $f,g$ satisfy (\ref{bound_nabla_v_g_global}). Then
%		%Suppose $f$ and $g$ solve (\ref{2fVPB}), (\ref{diffusef}), and (\ref{smallfphi}). Then 
%		\Be\label{1+delta_stability}
%		\| f(t) - g(t) \|_{L^{1+\delta}(\O \times \R^3)}\lesssim_t \| f_0 - g_0 \|_{L^{1+\delta}(\O \times \R^3)}.
%		\Ee
\textit{Step 1. }  We claim that for $T^{**} \ll 1$, the whole sequence \eqref{fell_local} satisfies
\Be \label{1+delta_stability}
f^\ell \to f \text{ strongly in }  L^\infty ((0,T); L^{1 + } (\Omega \times \mathbb R^3 )).
\Ee
%\end{proposition}
%	\begin{proof}[{Proof of Proposition \ref{L1+stability}}]
	%[Proof of the Uniqueness statement in Theorem \ref{main_existence}]
		%Assume that $f$ and $g$ solve (\ref{eqtn_nabla_f}). %
		%\begin{equation}\notag
		% \p_t f + v\cdot \nabla_x f - \nabla_x  \phi_f  \cdot \nabla_v f + \frac{v}{2} \cdot \nabla_x \phi_f f +   Lf 
		%=   \Gamma(f,f) - v\cdot \nabla_x \phi_f \sqrt{\mu}  ,
		%\end{equation} 
		Note that $f^{\ell + 1 } - f^\ell $ satisfies $(f^{\ell +1 } - f^\ell ) |_{t = 0 } = 0$, so 
		\begin{equation}\label{eqtn_f-g}
		\begin{split}
		& \p_t [f^{\ell+1}-f^{\ell}] + v\cdot \nabla_x [f^{\ell+1}-f^{\ell}] - q \nabla_x  \phi^\ell \cdot \nabla_v [f^{\ell+1}-f^{\ell}]+ q \frac{v}{2} \cdot \nabla_x \phi^\ell [f^{\ell+1}-f^{\ell}]+  \nu [f^{\ell+1}-f^{\ell}] \\
		= & q \nabla_x \phi_{f^{\ell}-f^{\ell-1}} \cdot \nabla_v f^{\ell-1}
		\\ & +\Gamma_\text{gain}(f^{\ell},f^{\ell}) - \Gamma_\text{loss}(f^{\ell +1} ,f^\ell ) -\Gamma_\text{gain}(f^{\ell-1},f^{\ell-1}) + \Gamma_\text{loss}(f^\ell, f^{\ell - 1 } )
		\\ &  + K[f^{\ell}-f^{\ell-1}]  - q \frac{v}{2} \cdot \nabla_x \phi_{f^{\ell}-f^{\ell-1}} f^{\ell-1}  - q_1 v\cdot \nabla_x \phi_{f^{\ell}-f^{\ell-1}} \sqrt{\mu} 
		.
		\end{split}\end{equation} 
		%with 
		%\[
		% f(0,x,v) =g(0,x,v)  \ \ \text{almost everywhere}.
		%\]Note that $\| \nabla_x \phi_f \|_\infty \ll 1$. 
		By Lemma \ref{lem_Green} for $L^{1+ \delta}$-space with $0 < \delta \ll 1$, we obtain 
		\Be\begin{split}\label{f-g_energy}
			&\| [f^{\ell+1}- f^{\ell}](t)\|_{1+ \delta}^{1+ \delta} + \int^t_0 \| \nu_{\phi^\ell}^{ {1}/{1+ \delta}} [f^{\ell+1}-f^{\ell}] \|_{1+ \delta}^{1+ \delta}  + \int_0^t |[f^{\ell+1}-f^{\ell}]|_{1+ \delta, + }^{1 + \delta} \\
			\leq & \ \| [f^{\ell+1}- f^{\ell}](0)\|_{1+ \delta}^{1+ \delta}+
			\int^t_0 \iint_{\O \times \R^3}
			|\text{RHS of } (\ref{eqtn_f-g}) | |f^{\ell+1}-f^{\ell}|^{\delta}
			+\int_0^t |[f^{\ell+1}-f^{\ell}]|_{1+ \delta, - }^{1 + \delta},
		\end{split}
		\Ee
where $\nu_{\phi^\ell } $ is defined as (\ref{def_nu_phi}).		
		
		%\vspace{4pt}
		
Now for $0 < \delta \ll 1$, by the H\"older inequality with $1=\frac{1}{\frac{3(1+ \delta)}{2- \delta}} + \frac{1}{3} +  \frac{1}{\frac{1+ \delta}{\delta}}$ and the Sobolev embedding $W^{1, 1+ \delta} (\O)\subset L^{\frac{3(1+ \delta)}{2- \delta}}(\O)$ when $\O \subset \R^3$,
		\Be\label{gronwall_f-g}
		\begin{split}
			% \| [f-g](t) \|_{1+ \delta}^{1+\delta}
			&\int^t_0 \iint_{\O \times \R^3} |\nabla_x \phi_{f^{\ell}-f^{\ell-1}} \cdot \nabla_v f^{\ell-1}|  |f^{\ell+1}-f^{\ell}|^{\delta}  \\
			& \lesssim 
			\int^t_0 \| \nabla_x \phi_{f^{\ell}-f^{\ell-1}}\|_{L^{ \frac{3(1+ \delta)}{2- \delta}}_x} \| \nabla_v f^{\ell-1} \|_{L^{3}_xL^{1+ \delta}_v} \left\| |f^{\ell+1}-f^{\ell}|^\delta\right\|_{L_{x,v}^{\frac{1+ \delta}{\delta}}}\\
			& \lesssim \sup_{0 \leq s \leq t} \| \nabla_v f^{\ell-1} (s) \|_{L^{3}_xL^{1+ \delta}_v}\times \int^t_0 \| [f^{\ell+1}-f^\ell](s) \|_{1+ \delta}^{1+ \delta} \dd s.
		\end{split} \Ee

		A simple modification of (\ref{k_p_bound}) and (\ref{phi_p_bound}) as 
		\Be
		\begin{split}\notag
			&\int^t_0 \int_x \int_v \int_u \mathbf{k}_\varrho(v,u) |f^{\ell}(u)-f^{\ell-1}(u)| |f^{\ell+1}(v)-f^{\ell}(v)|^\delta\\
			\lesssim & \ \int^t_0 \int_x \int_v \int_u\mathbf{k}_\varrho(v,u)^{\frac{1}{1+\delta}} |f^{\ell}(u)-f^{\ell-1}(u)|  \mathbf{k}_\varrho(v,u)^{\frac{\delta}{1+\delta}}|f^{\ell+1}(v)-f^{\ell}(v)|^\delta\\
			\lesssim & \ \int^t_0 \int_x \int_{v } ( |f^{\ell}(v)- f^{\ell-1}(v)|^{1+ \delta} + |f^{\ell+1}(v)-f^{\ell}(v)|^{1 +\delta} )\int_u\mathbf{k}_\varrho (v,u)\\
			\lesssim & \ \int^t_0   \|f^{\ell} - f^{\ell-1} \|^{1+ \delta}_{1+\delta} + \ \int^t_0   \|f^{\ell+1} - f^{\ell} \|^{1+ \delta}_{1+\delta}  ,
		\end{split}
		\Ee
		leads to 
		\Be\label{gronwall_f-g_last}\begin{split}
			&\int^t_0 \iint_{\O \times \R^3} | \text{the }  2^{\text{nd} } \text{and }  3^{\text{rd} } \text{ line of RHS of } (\ref{eqtn_f-g})|  |f^{\ell+1}-f^\ell|^{\delta}  \\
			\lesssim& \sup_{0 \leq s \leq t}\big\{ 1+ \| w_{\vartheta}f^{\ell+1}(s) \|_\infty + \|w_{\vartheta} f^\ell(s) \|_\infty
			\big\}  \left(\int^t_0   \|f^{\ell} - f^{\ell-1} \|^{1+ \delta}_{1+\delta} + \ \int^t_0   \|f^{\ell+1} - f^{\ell} \|^{1+ \delta}_{1+\delta} \right).
		\end{split}\Ee

		Then following the proof of (\ref{bound_p_est}) and applying (\ref{gronwall_f-g}) to (\ref{gronwall_f-g_last}), we can obtain
		\Be\label{gamma_+,1+delta}
		\begin{split}
			&\int_0^t |[f^{\ell+1}-f^\ell]|_{1+ \delta, - }^{1 + \delta} \\
			&\lesssim   o(1) \int_0^t |[f^{\ell}-f^{\ell-1}]|_{1+ \delta, + }^{1 + \delta}+\| [f^{\ell}- f^{\ell-1}](0)\|_{1+ \delta}^{1+ \delta} \\
			&  +
			\sup_{0 \leq s \leq t}\big\{ 1+ \| \nabla_v f^{\ell-2} (s) \|_{L^{3}_xL^{1+ \delta}_v} + \| w_{\vartheta} f^{\ell}(s) \|_\infty + \|w_{\vartheta} f^{\ell-1}(s) \|_\infty
			\big\}  \left( \int^t_0 \| f^{\ell}-f^{\ell-1} \|_{1+ \delta}^{1+ \delta} + \int^t_0 \| f^{\ell-1}-f^{\ell-2} \|_{1+ \delta}^{1+ \delta}\right).
		\end{split}
		\Ee
		Using (\ref{uniform_h_ell}), (\ref{bound_nabla_v_g_global}), (\ref{f-g_energy}), (\ref{gronwall_f-g}), (\ref{gronwall_f-g_last}), (\ref{gamma_+,1+delta}) and $[f^{\ell+1} -f^\ell ]|_{t = 0 } = 0 $ we get
		\Be \begin{split} \label{indll1stability}
		\sup_{0 \le s \le t } & \| f^{\ell+1} (s) - f^\ell (s) \|_{1 +\delta}^{1 + \delta} + \int_0^t  | f^{\ell+1} -f^\ell |_{1+\delta, +}^{1+\delta} 
		\\ \le & \left[O(t) + o(1) \right]\left( \sup_{0 \le s \le t } \| f^\ell - f^{\ell - 1 } \|_{1 + \delta}^{1 + \delta} + \int_0^t  | f^{\ell} -f^{\ell-1} |_{1+\delta, +}^{1+\delta}  + \sup_{0 \le s \le t } \| f^{\ell-1} - f^{\ell - 2} \|_{1 + \delta}^{1 + \delta}
		 \right).
		\end{split} \Ee
Thus adding (\ref{indll1stability}) with the same estimate  $\eqref{indll1stability}|_{f^{\ell +2 } -f^{\ell +1}}$ we get
\Be \notag \begin{split}
& \sup_{0 \le s \le t } \| f^{\ell+1} (s) - f^\ell (s) \|_{1 +\delta}^{1 + \delta} + \int_0^t  | f^{\ell+1} -f^\ell |_{1+\delta, +}^{1+\delta}  + \sup_{0 \le s \le t }  \| f^{\ell+2} (s) - f^{\ell+1} (s) \|_{1 +\delta}^{1 + \delta} + \int_0^t  | f^{\ell+2} -f^{\ell+1} |_{1+\delta, +}^{1+\delta} 
		\\ \le & \left[O(t) + o(1) \right]\left( \sup_{0 \le s \le t } \| f^\ell - f^{\ell - 1 } \|_{1 + \delta}^{1 + \delta} + \int_0^t  | f^{\ell} -f^{\ell-1} |_{1+\delta, +}^{1+\delta}  + \sup_{0 \le s \le t } \| f^{\ell-1} - f^{\ell - 2} \|_{1 + \delta}^{1 + \delta}
		 \right).
\end{split} \Ee
Therefore, inductively we have
\Be \notag
\sup_{0 \le s \le t } \| f^{\ell+1} (s) - f^\ell (s) \|_{1 +\delta}^{1 + \delta} + \int_0^t  | f^{\ell+1} -f^\ell |_{1+\delta, +}^{1+\delta}  \le \left[O(t) + o(1) \right]^m.
\Ee
Hence we derive stability
\Be \label{l1+stabfinal}
\sup_{0 \le s \le t } \| f^{\ell} (s) - f^m (s) \|_{1 +\delta}^{1 + \delta} \le \left[O(t) + o(1) \right]^{\min\{m,\ell\}},
\Ee
and this concludes \eqref{1+delta_stability}.		
		%and applying the Gronwall inequality, we prove $L^{1+\delta}$-stability (\ref{1+delta_stability})
		
%		 \hide in a small time interval on $[0,t]$ for $0< t \ll 1$. Now we split the time interval as $[0,t] \cup [t,2t] \cup [2t,3t] \cup \cdots$ and derive (\ref{bound_nabla_v_g}) for each interval. Then we prove (\ref{bound_nabla_v_g_global}) and then (\ref{1+delta_stability})\unhide 
		% for all time $t \geq 0$.
		 
%		 \end{proof}
		
\textit{Step 2.}		We combine (\ref{uniform_h_ell}) and (\ref{1+delta_stability}) to get unique weak-$\ast$ convergence (up to subsequence if necessary),
		 $(w_{\vartheta} f^\ell, w_{\vartheta} f^{\ell+1}) \overset{\ast}{\rightharpoonup} (w_{\vartheta} f, w_{\vartheta} f)$ weakly$-*$ in $L^\infty (\R \times \O \times \R^3; \R^2) \cap L^\infty (\R \times \gamma; \R^2)$. For $\varphi  = \begin{bmatrix} \varphi_+ \\ \varphi_- \end{bmatrix} \in C^\infty_c (\R \times \bar{\O} \times \R^3;\R^2)$, 
		\Be\begin{split}\label{weak_form_ell}
			&\int_0^T \langle f^{\ell+1} ,  [-\p_t  - v\cdot \nabla_x  + \nu   ] \varphi \rangle
			+\underbrace{ \langle q f^{\ell+1} , \nabla_x \phi^\ell \cdot \nabla_v\varphi
				+ \frac{v}{2} \cdot \nabla_x \phi^\ell \varphi \rangle }_{(\ref{weak_form_ell})_\phi}
			\\
			=& \int_0^T\langle K f^\ell , \varphi \rangle - \langle q_1 v\cdot \nabla_x \phi^\ell \sqrt{\mu} ,  \varphi  \rangle
			+   \underbrace{ \langle \Gamma_{\text{gain}} ( {f^\ell} , {f^\ell} ),  \varphi \rangle}_{(\ref{weak_form_ell})_\text{gain}} -   \underbrace{ \langle \Gamma_{\text{loss}} ( {f^{\ell+1 }} , {f^{\ell}}  ), \varphi  \rangle}_{(\ref{weak_form_ell})_{\text{loss}}}\\
			&+ \int^T_0 \langle f^{\ell+1} ,\varphi \rangle_{\gamma_+} - \int^T_0 \langle  
			c_\mu \sqrt{\mu} \int_{n \cdot u>0} f^\ell \sqrt{\mu}\{n \cdot u\} \dd u , 
			\varphi \rangle_{\gamma_- }.
		\end{split}\Ee
Except the underbraced terms in (\ref{weak_form_ell}) all terms converges to limits with $f$ instead of $f^{\ell+1}$ or $f^\ell$.

		%\begin{definition}
		We define, for $(t,x,v) \in \mathbb{R} \times  \bar{\Omega} \times \mathbb{R}^{3}$ and for $0 < \delta \ll 1$, 
		\begin{equation}\label{Z_dyn}
		\begin{split}
		f_{\delta}^\ell(t,x,v) 
		:=  &  \ \kappa_\delta (x,v) f^\ell(t,x,v)
		\\
		: =  &  \  \chi\Big(\frac{|n(x) \cdot v|}{\delta}-1\Big) %\Big[ 1-\chi \big(\frac{
		% -\xi(x)
		%}{\delta}-1\big)  \Big]
		\Big[ 1- \chi(\delta|v|) \Big]  \chi\Big(\frac{|v|}{\delta}-1\Big)  f^\ell(t,x,v )%\\
		%& \times  \big\{ \mathbf{1}_{ t\in[0, \infty)}  f(t,x,v)+ \mathbf{1}_{t\in(-\infty,0 ]} \chi(t) f_{0}(x,v) \big\}
		.
		\end{split}
		\end{equation}
		Note that $f_\delta (t,x,v)=0$ if either $|n(x) \cdot v| \leq \delta$, $|v| \geq \frac{1}{\delta}$, or $|v| \leq \delta$.

		%Here $n(x)$ is defined in (\ref{n_inte}).
		%
		%We define, for $\zeta(v) : = [ \sqrt{\mu}, v \sqrt{\mu}, \frac{|v|^{2} -3}{2} \sqrt{\mu}]$,
		%\begin{equation}\label{S1_dyn}
		%\begin{split}
		% \mathbf{S}_{1} f(t,x )\cdot \zeta(v) 
		% &: =  \mathbf{P} f_{\delta}(t,x,v)\\
		%& \ = \big\langle  f_{\delta}, \sqrt{\mu}  \big\rangle  \sqrt{\mu} +  \sum_{i=1}^{3} \big\langle  f_{\delta},v_{i} \sqrt{\mu}  \big\rangle v_{i} \sqrt{\mu}
		%%+   \big\langle  f_{\delta}, \frac{|v|^{2} -3 }{2} \sqrt{\mu}  \big\rangle \frac{|v|^{2} -3 }{2}  \sqrt{\mu} ,\\
		% \mathbf{S}_{2} f(t,x )\cdot \zeta(v)&: = \mathbf{P} f (t,x,v) - \mathbf{P} f_{\delta} (t,x,v).\\
		%\end{split}
		%\end{equation}
		%\end{definition}
		
		%Assume that $\varphi(t,x,v) \equiv 0$ if $|v|\geq N$. 
		From (\ref{uniform_h_ell})
		\Be\notag
		\begin{split}
			&\left|\int^T_0  (\ref{weak_form_ell})_\text{loss}
			- \int^T_0 \langle \Gamma_{\text{loss}} (f,f) , \varphi \rangle \right|
			\\
			\leq &\left| \int^T_0 \langle
			\int_{\R^3}   |v-u| \{f_+^\ell ( u) - f_+(u) + f_-^\ell (u ) - f_-(u)\}\sqrt{\mu(u)}  \dd u f^{\ell+1} (v),  \varphi(t,x,v
			) \rangle \dd t\right|\\
			&+\left|\int^T_0  \langle
			\int_{\R^3}   |v-u|  (f_+(u) + f_-(u) )\sqrt{\mu(u)}  \dd u \{ f^{\ell+1} (v)- f(v) \} , \varphi(t,x,v
			)\rangle \dd t\right|.
		\end{split}\Ee
		The second term converges to zero from the weak$-*$ convergence in $L^\infty$ and (\ref{uniform_h_ell}). The first term is bounded by, from (\ref{uniform_h_ell}),
		\Be \label{difference_f^ell-f}
		\begin{split}
			&\left[\int^T_0 \left\|  \int_{\R^3} \kappa_\delta (x,u) (f^\ell(t,x,u) - f(t,x,u)) \langle u\rangle \sqrt{\mu(u)} \dd u \right\|^2_{L^2(\O \times \R^3)}\right]^{1/2}\\
			&\times \sup_{0 \leq t \leq T} \|w_{\vartheta} f^{\ell+1}(t)\|_\infty 
			%+O( \frac{1}{N})
			+O(\delta).
		\end{split} \Ee
		
		On the other hand, from Lemma \ref{extension_dyn}, we have an extension $\bar{f}^\ell(t,x,v)$ of $\kappa_\delta (x,u)  f^\ell(t,x,u)$. We apply the average lemma (see Theorem 7.2.1 in page 187 of \cite{gl}, for example) to $\bar{f}^\ell(t,x,v)$. From (\ref{fell_local}) and (\ref{uniform_h_ell})
		
		\Be\label{H_1/4}
		\sup_\ell\left\| \int_{\R^3}   \bar{f}^\ell(t,x,u)  
		\langle u\rangle \sqrt{\mu(u)} \dd u
		\right\|_{H^{1/4}_{t,x} (\R \times \R^3)}< \infty.
		\Ee\hide
		\Be
		\sup_\ell\left\| \int_{\R^3}   f^\ell(t,x,u)  
		\langle u\rangle \sqrt{\mu(u)} \dd u
		\right\|_{H^{1/4}_{t,x} (\R \times \R^3)}< \infty.
		\Ee\unhide
		Then by $H^{1/4} \subset\subset L^2$, up to subsequence, we conclude that 
		\[
		\int_{\R^3} \kappa_\delta (x,u)  f^\ell(t,x,u)  \langle u\rangle \sqrt{\mu(u)} \dd u 
		\rightarrow \int_{\R^3} \kappa_\delta (x,u)  f(t,x,u) \langle u\rangle \sqrt{\mu(u)} \dd u \  \ \text{strongly in } L^2_{t,x}.
		\]
		So we conclude that $(\ref{difference_f^ell-f})\rightarrow 0$ as $\ell \rightarrow \infty$.
		
		For $(\ref{weak_form_ell})_{\text{gain}}$ let us use a test function $\varphi_1(v) \varphi_2 (t,x)$. From the density argument, it suffices to prove a limit by testing with $\varphi(t,x,v)$.
		
		We use a standard change of variables $(v,u) \mapsto(v^\prime,u^\prime)$ and $(v,u) \mapsto(u^\prime,v^\prime)$ (for example see page 10 of \cite{gl}) to get    
		\begin{eqnarray} 
		&&\int^T_0   (\ref{weak_form_ell})_{\text{gain}} - \int^T_0 \langle \Gamma_{\text{gain}}(f,f), \varphi \rangle \nonumber\\
		& =& \  \int^T_0 \langle \Gamma_{\text{gain}}(f^\ell ,f^\ell - f), \varphi \rangle + \int^T_0 \langle \Gamma_{\text{gain}}( f^\ell-f ,f) ,\varphi \rangle \nonumber \\
		&= &  \sum_{\iota = \pm}  \int^T_0 \iint_{\O \times \R^3} \left( \int_{\R^3}  \int_{\S^2} 
		%    f^\ell (v+\o)
		(   f_+^\ell (t,x,u) -  f_+(t,x,u) + f_-^\ell (t,x,u) -  f_-(t,x,u)  )   \sqrt{\mu(u^\prime)}     |(v-u) \cdot \o|  \varphi_{1,\iota}(v^\prime)            \dd \o\dd u\right)\nonumber\\
		&& \ \ \ \ \ \ \ \ \ \ \ \ \ \   \times 
		f_\iota^\ell (t,x,v)   
		\varphi_{2,\iota} (t,x)
		\dd v \dd x\dd t  \label{weak_fell-f1}\\
		&  +& \sum_{\iota = \pm}  \int^T_0 \iint_{\O \times \R^3} \left( \int_{\R^3}  \int_{\S^2} 
		%    f^\ell (v+\o)
		(   f_\iota^\ell (t,x,u) -  f_\iota(t,x,u)  )   \sqrt{\mu(v^\prime)}     |(v-u) \cdot \o|  \varphi_{1,\iota}(u^\prime)            \dd \o\dd u\right)\nonumber\\
		&& \ \ \ \ \ \ \ \ \ \ \ \ \ \   \times 
		(f_+ (t,x,v) +f_-(t,x,v))   
		\varphi_{2,\iota} (t,x)
		\dd v \dd x\dd t  . \label{weak_fell-f2}
		%  &+\left|\int^T_0 \iint_{\O \times \R^3}\int_{\R^3} \int_{(u-v) \cdot \o =0} 
		%    f  (v+\o) (f^\ell (u) - f(u))
		%    \frac{e^{- \frac{|u+\o|^2}{4}}}{|u-v|} \dd \o\dd u
		%   \varphi   (t,x,v) \dd v\dd x\dd t 
		%  \right|.
		\end{eqnarray} 
		For $N\gg 1$ we decompose the integration of (\ref{weak_fell-f1}) and (\ref{weak_fell-f2}) using 
		\Be
		\begin{split}\label{decomposition_N}
			1=& \{1- \chi (|u|-N)\}\{1- \chi (|v|-N)\}\\
			& +  \chi (|u|-N) + \chi (|v|-N) - \chi (|u|-N)   \chi (|v|-N).
		\end{split} \Ee
		Note that $\{1- \chi (|u|-N)\}\{1- \chi (|v|-N)\} \neq 0$ if $|v| \leq N+1$ and $|u| \leq N+1$, and if $ \chi (|u|-N) + \chi (|v|-N) - \chi (|u|-N)   \chi (|v|-N) \neq 0$ then either $|v|\geq N$ or $|u| \geq N$. From (\ref{uniform_h_ell}), the second part of (\ref{weak_fell-f1}) and (\ref{weak_fell-f2}) from (\ref{decomposition_N}) are bounded by 
		\Be
		\begin{split}
			&\int^T_0 \iint_{\O \times \R^3} \int_{\R^3} \int_{\S^2} [\cdots]  \times \{\chi (|u|-N) + \chi (|v|-N) - \chi (|u|-N)   \chi (|v|-N)\}\\
			\leq &  \ \sup_\ell \| w_{\vartheta}f^\ell \|_\infty \| w_{\vartheta} f\|_\infty \times 
			\{
			e^{-\frac{\vartheta}{2} |v|^2} e^{-\frac{\vartheta}{2} |u|^2} 
			\}\{ \mathbf{1}_{|v|\geq N} +  \mathbf{1}_{|u|\geq N}  \}\\
			\leq& \  O(\frac{1}{N}). \notag
		\end{split}
		\Ee
		
		Now we only need to consider the parts with $ \{1- \chi (|u|-N)\}\{1- \chi (|v|-N)\}$. Then 
		\Be\begin{split}\label{bound_weak_fell-f1}
			&(\ref{weak_fell-f1})\\
			= &  \sum_{\iota = \pm}  \int^T_0 \iint_{\O \times \R^3}    \int_{\R^3}  
			%    f^\ell (v+\o)
			(   f_+^\ell (t,x,u) -  f_+(t,x,u) + f_-^\ell (t,x,u) -  f_-(t,x,u)  ) \\
			& \ \ \ \ \ \ \ \ \ \ \ \ \ \   \ \  \times  \{1- \chi (|u|-N)\}\left(\int_{\S^2}  \sqrt{\mu(u^\prime)}     |(v-u) \cdot \o|  \varphi_{1,\iota}(v^\prime)            \dd \o \right)\dd u \\
			& \ \ \ \ \ \ \ \ \ \ \ \ \ \   \ \  \times 
			\{1- \chi (|v|-N)\}  f_\iota^\ell (t,x,v)   
			\varphi_{2,\iota} (t,x)
			\dd v \dd x\dd t.
		\end{split}
		\Ee
		Now, let us define 
		\Be\label{Phi_v}
		\Phi _{v,\iota}(u ) :=  \{1- \chi (|u|-N)\}\int_{\S^2}  \sqrt{\mu(u^\prime)}     |(v-u) \cdot \o|  \varphi_{1,\iota}(v^\prime)            \dd \o \ \ \text{for} \ |v| \leq N+1.
		\Ee          
		
		For $0<\delta\ll1$ we have $O(\frac{N^3}{\delta^3})$ number of $v_i \in \R^3$ such that $\{v \in \R^3: |v| \leq N+1\}\subset\bigcup_{i=1}^{O(\frac{N^3}{\delta^3})} B(v_i, \delta)$. Since (\ref{Phi_v}) is smooth in $u$ and $v$ and compactly supported, for $0<\e\ll1$ we can always choose $\delta>0$ such that
		\Be\label{Phi_v_continuous}
		|\Phi_{v,\iota}(u) - \Phi_{v_i,\iota} (u)|< \e  \ \ \text{if} \ v \in B(v_i, \delta).
		\Ee
		
		Now we replace $\Phi_{v,\iota}(u)$ in the second line of (\ref{bound_weak_fell-f1}) by $\Phi_{v_i,\iota}(u)$ whenever $v \in B(v_i, \delta)$. Moreover we use $\kappa_\delta$-cut off in (\ref{Z_dyn}). If $v$ is included in several balls then we choose the smallest $i$.  From (\ref{Phi_v_continuous}) and (\ref{uniform_h_ell}) the difference of (\ref{bound_weak_fell-f1}) and the one with $\Phi_{v_i}(u)$ can be controlled and we conclude that
		\Be
		\begin{split}\label{bound_weak_fell-f1_ell}
			(\ref{bound_weak_fell-f1}) & =\{O( \e)+ O(\delta)\} \sup_{\ell} \| w_{\vartheta} f^\ell \|_\infty  ^2\\
			&+ \sum_{\iota= \pm} \int^T_0 \int_{\O  } \sum_{i} \int_{\R^3} \mathbf{1}_{v \in B(v_i,\delta) } \int_{\R^3}  
			%    f^\ell (v+\o)
			\kappa_\delta(x,u)  (   f_+^\ell (t,x,u) -  f_+(t,x,u) +f_-^\ell (t,x,u) -  f_-(t,x,u)  ) \Phi_{v_i,\iota} (u)  \dd u \\
			& \ \ \ \ \ \ \ \ \ \ \ \ \ \ \ \ \ \ \ \ \  \ \ \ \ \ \ \ \ \ \  \times 
			\{1- \chi (|v|-N)\}  f_\iota^\ell (t,x,v)   
			\varphi_{2,\iota} (t,x)
			\dd v \dd x\dd t.
		\end{split}\Ee
		From Lemma \ref{extension_dyn} and the average lemma
		\Be\label{H_1/4_gain}
		\max_{ 1\leq i \leq O(\frac{N^3}{\delta^3})}\sup_\ell\left\| \int_{\R^3} \kappa_\delta (x,u) f^\ell(t,x,u)  
		\Phi_{v_i,\iota} (u) \dd u
		\right\|_{H^{1/4}_{t,x} (\R \times \R^3)}< \infty.
		\Ee
		For $i=1$ we extract a subsequence $\ell_1 \subset\mathcal{I}_1$ such that 
		\Be\label{strong_converge_extract}
		\int_{\R^3} \kappa_\delta (x,u) f^{\ell_1}(t,x,u)  
		\Phi_{v_i,\iota} (u) \dd u \rightarrow \int_{\R^3} \kappa_\delta (x,u) f (t,x,u)  
		\Phi_{v_i,\iota} (u) \dd u \  \text{ strongly in }  \ L^2_{t,x}.
		\Ee
		Successively we extract subsequences $\mathcal{I}_{O(\frac{N^3}{\delta^3})} \subset \cdots\subset \mathcal{I}_2 \subset \mathcal{I}_1$. Now we use the last subsequence $\ell \in\mathcal{I}_{O(\frac{N^3}{\delta^3})}$ and redefine $f^\ell$ with it. Clearly we have (\ref{strong_converge_extract}) for all $i$. Finally we bound the last term of (\ref{bound_weak_fell-f1_ell}) by
		\Be\notag
		\begin{split}
			&C_{\varphi_2,N} \max_i \int^T_0 \sum_{\iota = \pm } \left\|
			\int_{\R^3} \kappa_\delta (x,u) (f^\ell(t,x,u)- f(t,x,u)  )
			\Phi_{v_i,\iota} (u) \dd u
			\right\|_{L^2_{t,x}} \sup_\ell \| w_{\vartheta} f^\ell \|_\infty\\
			&\rightarrow 0 \ \ \text{as} \ \ \ell \rightarrow \infty.
		\end{split}
		\Ee
		Together with (\ref{bound_weak_fell-f1_ell}) we prove $(\ref{weak_fell-f1})\rightarrow 0$. Similarly we can prove $(\ref{weak_fell-f2})      \rightarrow 0$.  
		
		Now we consider $(\ref{weak_form_ell})_{\phi}$. From 
		\Be\begin{split}\notag
			- ( \Delta \phi^\ell-  \Delta \phi) 
			= \int  \kappa_\delta (f_+^\ell- f_+ + f_-^\ell -f_-)\sqrt{\mu} +   \int (1-\kappa_\delta) (f_+^\ell- f_+ + f_-^\ell -f_-)\sqrt{\mu} ,
		\end{split} \Ee
		we have
		\Be\begin{split}
			\| \nabla_x \phi^\ell - \nabla_x \phi \|_{L^2_{t,x}} 
			\leq  \left\| \int  \kappa_\delta (f^\ell- f)\sqrt{\mu}\right\|_{L^2_{t,x}}
			+ O(\delta) \sup_\ell \| w_{\vartheta} f ^\ell \|_\infty.
		\end{split}
		\Ee
		Then following the previous argument, we prove $  \nabla_x \phi^\ell \rightarrow  \nabla_x \phi$ strongly in $L^2_{t,x}$ as $\ell \rightarrow \infty$. Combining with $w_{\vartheta} f^\ell \overset{\ast}{\rightharpoonup} w_{\vartheta} f$ in $L^\infty$, we prove $\int^T_0 (\ref{weak_form_ell})_{\phi}$ converges to $\int^T_0  \langle q f , \{\nabla_x \phi \cdot \nabla_v\varphi
		+ \frac{v}{2} \cdot \nabla_x \phi \varphi\} \rangle$. This proves the existence of a (weak) solution $f \in L^\infty$. 
		
		\vspace{4pt}

		\textit{Step 7. }  We claim (\ref{W1p_local_bound}).
	\hide	\Be\begin{split}  \notag
			&\sup_{0 \leq t \leq T^{**}}  \|w_{\tilde{\vartheta}} f  (t) \|_p^p
			+
			\sup_{0 \leq t \leq T^{**}}  \| w_{\tilde{\vartheta}}\alpha_{f ,\e}^\beta \nabla_{x,v} f (t) \|_{p}^p
			+ 
			\int^{T^{**}}_0  | w_{\tilde{\vartheta}}\alpha_{f ,\e}^\beta \nabla_{x,v} f (t) |_{p,+}^p \\
			&\lesssim  \   \| w_{\tilde{\vartheta}}f _0 \|_p^p
			+
			\| w_{\tilde{\vartheta}}\alpha_{f_{0 },\e}^\beta \nabla_{x,v} f  _0 \|_{p}^p
			.
		\end{split}\Ee
		This implies .\unhide
		By the weak lower-semicontinuity of $L^p$ we know that (if necessary we further extract a subsequence out of  the subsequence of \textit{Step 6})
		\[
			w_{\tilde{\vartheta}}\alpha^\beta_{f^\ell, \e} \nabla_{x,v} f^{\ell+1} \rightharpoonup \mathcal{F},\quad 
			\sup_{0 \leq t \leq T^{**}}\|\mathcal{F}(t)\|_p^p \leq \liminf   \sup_{0 \leq t \leq T^{**}}  \| w_{\tilde{\vartheta}}\alpha_{f^{\ell} ,\e}^\beta \nabla_{x,v} f^{\ell+1} (t) \|_{p}^p,
		\]
		and 
		\[ 
		\int^{T^{**}}_0  | \mathcal{F}  |_{p,+}^p \leq \liminf   
		\int^{T^{**}}_0  | w_{\tilde{\vartheta}}\alpha_{f^{\ell} ,\e}^\beta \nabla_{x,v} f^{\ell+1} (t) |_{p,+}^p.
		\]
		We need to prove that 
		\Be\label{a_nabla_f_seq}
			\mathcal{F}
			%w_{\tilde{\vartheta}}\alpha^\beta_{f^\ell,\e} \nabla_{x,v} f^{\ell+1} \rightarrow 
			=w_{\tilde{\vartheta}}\alpha^{\beta}_{f,\e} \nabla_{x,v}f  \ \ \text{almost everywhere except } \gamma_0.%,
		\Ee
		%and hence $ = w_{\tilde{\vartheta}}\alpha_{f,\e}^{\beta} \nabla_{x,v}f $ a.e.

		We claim that, up to some subsequence, for any given smooth test function $\psi \in C_c^\infty(\bar{\O} \times \R^3\backslash \gamma_0; \mathbb R^2)$ 
\Be\label{lim_Fell=F}
\lim_{\ell  \rightarrow \infty}\int^t_0\iint_{\O \times \R^3} w_{\tilde{\vartheta}} \alpha^\beta_{f^{\ell },\e} \nabla_{x,v} f^{\ell +1} \psi \dd x\dd v
= \int^t_0\iint_{\O \times \R^3} w_{\tilde{\vartheta}} \alpha^\beta_{f ,\e} \nabla_{x,v} f  \psi \dd x\dd v.
\Ee 
We note that we need to extract a single subsequence, let say $\{\ell_*\} \subset \{\ell\}$, satisfying (\ref{lim_Fell=F}) for all test functions in $C_c^\infty(\bar{\O} \times \R^3\backslash \gamma_0;\mathbb R^2)$. Of course the convergent rate needs not to be uniform and it could vary with test functions. 

For each $N\in\mathbb{N}$ we define a set
\Be\label{set_N}
\mathcal{S}_N:= \Big\{ (x,v) \in \bar{\O} \times \R^3: \text{dist}(x, \p\O) \leq \frac{1}{N}  \ \text{and} \ |n(x) \cdot v| \leq \frac{1}{N}  \Big\}
		 \cup \{|v|>N\}.
\Ee
For a given test function we can always find $N\gg1$ such that 
		\Be\label{test_function_condition}
		supp(\psi) \subset (\mathcal{S}_N)^c:=\bar{\O} \times \R^3 \backslash \mathcal{S}_N.
		%\{ (x,v) \in \bar{\O} \times \R^3: \text{dist}(x, \p\O) \leq \delta  \ \text{and} \ |n(x) \cdot v| \leq \delta \}
		% \cup \{|v|>N\} \cup \{|v| \leq 1/N\}.
		 \Ee 
		 
We will exam (\ref{lim_Fell=F}) by the identity obtained from the integration by parts
		\begin{eqnarray}
			&&\int^t_0 \langle w_{\tilde{\vartheta}} \alpha^\beta_{f^{\ell },\e}, \nabla_{x,v} f^{{\ell }+1} \psi \rangle
			\notag
			\\
			&= &  
			-  \int^t_0 \langle \alpha^\beta_{f^{\ell },\e}  f^{{\ell }+1}, \nabla_{x,v}(w_{\tilde{\vartheta}}\psi)\rangle
			\label{af_l_1}
			\\ &&  + \sum_{\iota = \pm }\int^t_0 \iint_{\gamma} n  \alpha^\beta_{f^{\ell },\e, \iota}   f_\iota^{{\ell }+1} (w_{\tilde{\vartheta}}\psi)
			\label{af_l_2}\\
			&&-  {\int^t_0 \langle \nabla_{x,v}\alpha^\beta_{f^{\ell },\e}  ,f^{{\ell }+1} (w_{\tilde{\vartheta}}\psi) \rangle}. \label{af_l_3}
		\end{eqnarray}
	We finish this step by proving the convergence of (\ref{af_l_1}) and (\ref{af_l_2}). From (\ref{alphaweight}) and (\ref{uniform_h_ell}), if $(x,v) \in (\mathcal{S}_N)^c$ then
		\Be
		\begin{split}\notag
		\sup_{\ell \geq 0}	| \alpha^\beta_{f^\ell,\e,\iota}(t,x,v)|  \lesssim |v|^\beta + (t+\e)^\beta\sup_{\ell \geq 0} \| \nabla \phi_{f^\ell} \|_\infty^\beta \lesssim N^\beta  +(T^{**}+\e)^\beta \sup_{\ell \geq 0} \| w_{\vartheta} f^\ell \|_\infty^\beta\leq C_N <+\infty  .
		\end{split}
		\Ee
	Hence we extract a subsequence (let say $\{\ell_N\}$) out of subsequence in \textit{Step 6} such that $
	 \alpha_{f^{\ell_N},\e, \iota}^\beta  \overset{\ast}{\rightharpoonup} A_\iota \in L^\infty  \textit{ weakly}-* \textit{ in }  L^\infty ((0,T^{**}) \times (\mathcal{S}_N)^c) \cap L^\infty ((0,T^{**})  \times (\gamma \cap  (\mathcal{S}_N)^c)).$ Note that $\alpha_{f^{\ell_N},\e,\iota}^\beta$ satisfies $[\p_t + v\cdot \nabla_x {-\iota} \nabla_x \phi^{\ell_N}\cdot \nabla_v]  \alpha_{f^{\ell_N},\e,\iota}^\beta=0$ and $\alpha_{f^{\ell_N},\e,\iota}^\beta|_{\gamma_-} =|n \cdot v |^\beta$. By passing a limit in the weak formulation we conclude that $[\p_t + v\cdot \nabla_x {-\iota} \nabla_x \phi_f \cdot \nabla_v]  A_\iota =0$ and $A_\iota |_{\gamma_-} =|n \cdot v |^\beta$. By the uniqueness of the Vlasov equation ($\nabla \phi_f \in W^{1,p}$ for any $p<\infty$) we derive $A_\iota=  \alpha_{f ,\e,\iota}^\beta$ almost everywhere and hence conclude that 
	\Be\label{converge_alpha_ell}
	 \alpha_{f^{\ell_N},\e,\iota}^\beta  \overset{\ast}{\rightharpoonup}   \alpha_{f ,\e,\iota}^\beta  \textit{ weakly}-* \textit{ in }  L^\infty ((0,T^{**})  \times (\mathcal{S}_N)^c) \cap L^\infty ((0,T^{**})  \times (\gamma \cap  (\mathcal{S}_N)^c)).
	\Ee	
Now the convergence of (\ref{af_l_1}) and (\ref{af_l_2}) is a direct consequence of strong convergence of (\ref{1+delta_stability}) and the weak$-*$ convergence of (\ref{converge_alpha_ell}).

		\vspace{4pt}

		\textit{Step 8. } We devote the entire \textit{Step 8} to prove the convergence of (\ref{af_l_3}).

		\textit{Step 8-a. } Let us choose $(x,v) \in (\mathcal{S}_N)^c$. From (\ref{weight}) 
	\Be\label{alpha=1}
If \  \ \tbpm^{f^\ell} \geq t+ \e \  \ then \  \ 	\alpha_{f^\ell, \e,\iota}(t,x,v)=1.
	\Ee
	 From now we only consider that case 
	\Be\label{tb_upperT^**}
	\tbpm^{f^\ell} (t,x,v) \leq \e +  t.
	\Ee
	
	If $|v|\geq 2 (\e + T^{**})\sup_{\ell} \| \nabla \phi^\ell \|_\infty$ then 
	\Be \label{lowerbddVfell}
	\begin{split}
	|V_\iota^{f^\ell}(s;t,x,v)| &\geq |v| - \int^t_s \| \nabla \phi^\ell(\tau) \|_\infty \dd \tau \\
	&\geq  (\e + T^{**})\sup_{\ell} \| \nabla \phi^\ell \|_\infty \ \ \ for \ all \  \ell  \ and \ s \in [-\e ,T^{**}].
	\end{split}
	\Ee
		For this case we need a version of velocity lemma of $\tilde \alpha$ in \eqref{alphatilde}, which shows up in the author's previous paper \cite{CK}, but this time with neutral boundary condition $\pm \nabla \phi^\ell \cdot n = 0$ on $\p \Omega$. So $\tilde \alpha$ now takes the form 	
	\Be\label{beta}
			\tilde{\alpha}(t,x,v) : = \sqrt{\xi(x)^2  + |\nabla \xi(x) \cdot u|^2 - 2 (u\cdot \nabla_x^2 \xi(x) \cdot u) \xi(x)
			}.
			\Ee
						From a direct computation, 
			\Be\begin{split}\label{velocity_lemma_N}
				& [\p_t + u\cdot \nabla_x -\iota \nabla \phi^\ell (t,x) \cdot \nabla_u ]\{ \xi(x)^2  + |\nabla \xi(x) \cdot u|^2 - 2 (u\cdot \nabla_x^2 \xi(x) \cdot u) \xi(x) \}\\
				&=2 \{u \cdot \nabla  \xi \} \xi  \cancel{+ 2 \{u\cdot \nabla ^2 \xi  \cdot u\} \{ u \cdot \nabla_x \xi  \}}-2 u \cdot (u \cdot \nabla  \nabla ^2 \xi  \cdot u) \xi \\
				& \ \ \cancel{ - 2 \{u\cdot \nabla ^2 \xi  \cdot u\}\{ u\cdot \nabla  \xi\}} +2 \{-\iota  \nabla \phi^\ell \cdot \nabla \xi  \} \{ \nabla  \xi  \cdot u\}-4 \{ -\iota \nabla \phi^\ell \cdot \nabla^2 \xi \cdot u \}\xi\\
				&\lesssim |u \cdot \nabla  \xi|^2 + |\xi|^2 + \{|u|+ \frac{1}{|u|}\} (- 2 (u\cdot \nabla_x^2 \xi(x) \cdot u) \xi(x))
				+ | \nabla \phi^\ell  \cdot \nabla \xi  
				|| \nabla  \xi  \cdot u|.
			\end{split}\Ee
			From the Neumann BC ($n(x) \cdot E(t,x) =0$ on $x \in \p\O$), we have 
			\Be\begin{split}\label{phi_xi}
				& |\nabla \phi^\ell (t,x) \cdot \nabla \xi(x)| \\
				&\leq |\nabla \phi^\ell (t,x_*) \cdot \nabla \xi(x_*)| + \|  \nabla \phi^\ell(t) \|_{C^1(\bar{\O})} \| \xi \|_{C^2 (\bar{\O})}
				|x-x_*|\\
				&\lesssim_\O \| \nabla \phi^\ell(t) \|_{C^1(\bar{\O})} 
				|\xi(x )|,\end{split}
			\Ee
			where $x_* \in\p\O$ such that $|x-x_*|= \inf_{y \in \p\O}|x-y|$.
			
			By controlling the last term of (\ref{velocity_lemma_N}) by (\ref{phi_xi}) and using (\ref{lowerbddVfell}), we conclude that 
			\Be\begin{split}\notag
				& \frac{d}{ds} \tilde{\alpha}(s,X_\iota^{f^\ell}(s;t,x,u),V_\iota^{f^\ell}(s;t,x,u))^2 \\
				&\lesssim_\O \Big(1+ |V_\iota^{f^\ell}(s;t,x,u)| + \frac{1}{|V_\iota^{f^\ell}(s;t,x,u)|}\Big)\tilde{\alpha}(s,X_\iota^{f^\ell}(s;t,x,u),V_\iota^{f^\ell}(s;t,x,u))^2\\
				&\lesssim_{\O,R, N} \Big(1+ |V_\iota^{f^\ell}(s;t,x,u)|\Big)\tilde{\alpha}(s,X_\iota^{f^\ell}(s;t,x,u),V_\iota^{f^\ell}(s;t,x,u))^2,
			\end{split}\Ee
so	
	\Be\begin{split}\notag
	 |\tilde{\alpha}(s,X_\iota^{f^\ell}(s;t,x,v),V_\iota^{f^\ell}(s;t,x,v))| 
	\geq & \   \frac{1}{C_\O}\tilde{\alpha} (t,x,v) e^{-C_\O \frac{|t-s|}{(\e + T^{**})\sup_{\ell} \| \nabla \phi^\ell  \|_\infty}}\\
	\geq & \ 
	\frac{e^{- \frac{C_\O }{\sup_{\ell} \| \nabla \phi^\ell  \|_\infty}}}{C_\O } \times \frac{1}{N}
	   \ \ \ for \ all \  \ell  \ and \ s \in [-\e ,T^{**}].
	\end{split}\Ee
Especially at $s= t-\tbpm^{f^\ell}(t,x,v)$, from (\ref{beta}),
\Be\label{alpha_|v|>}
|n(\xbpm^{f^\ell}) \cdot \vbpm^{f^\ell}|% \geq   |\tilde{\alpha}(t-\tb^{f^\ell},X^{f^\ell}(t-\tb^{f^\ell};t,x,v),V^{f^\ell}(t-\tb^{f^\ell};t,x,v))|
 \geq \frac{e^{- \frac{C_\O }{\sup_{\ell} \| \nabla \phi^\ell  \|_\infty}}}{C_\O } \times \frac{1}{N} \ \ \ for \ all \ \ell. 
\Ee

	 	\vspace{4pt}
	 
	 \textit{Step 8-b. }  From now on we assume (\ref{tb_upperT^**}) and  
	\Be\label{upper_|V|}
	\begin{split}
	&|v|\leq 2 (\e + T^{**})\sup_{\ell} \| \nabla \phi^\ell \|_\infty,\\
	or& ,\ from \ (\ref{hamilton_ODE}), \ |V_\iota^{f^\ell}(s;t,x,v)|\leq 3 (\e + T^{**})\sup_{\ell} \| \nabla \phi^\ell  \|_\infty \ \    for \  \ s \in [-\e, T^{**}].
	\end{split}\Ee
	 Let $(X_{n ,\iota}^{f^\ell}, X_{\parallel ,\iota}^{f^\ell},V_{n ,\iota}^{f^\ell},V_{\parallel ,\iota}^{f^\ell})$ satisfy (\ref{dot_Xn_Vn}), (\ref{def_V_parallel}), and (\ref{hamilton_ODE_perp}) with $E=-\iota \nabla \phi^\ell$. 
	 
	 	 Let us define 
		\Be \label{tau_1}
		\tau_1  
		: = \sup  \big\{ \tau \geq  0: V_{n ,\iota}^{f^\ell} (s;t,x,v)\geq 0 \ for \ all \ s \in [t-\tbpm^{f^\ell}(t,x,v),  \tau ]
		\big\}
		. 
		\Ee
		Since $(X_\iota^{f^\ell} (s;t,x,v),V_\iota^{f^\ell} (s;t,x,v))$ is $C^1$ (note that $\nabla\phi^\ell \in C^1_{t,x}$) in $s$ we have $V_{n ,\iota}^{f^\ell} (\tau_1 ;t,x,v)=0$.

	 We claim that, there exists some constant $\delta_{**} = O_{\e, T^{**}, \sup_{\ell} \| \nabla \phi^\ell  \|_{C^1}}(\frac{1}{N})$ in (\ref{choice_delta**}) which does not depend on $\ell$ such that 
	 \Be\label{claim_Vperp_growth}
	 \begin{split}
	 If \ \ &0\leq V_{n ,\iota}^{f^\ell} (t-\tbpm^{f^\ell}(t,x,v);t,x,v) < \delta_{**} and \  (\ref{upper_|V|}),
	 \\
	&then \ \ V_{n ,\iota}^{f^\ell} (s;t,x,v) \leq e^{C|s-(t-\tbpm^{f^\ell}(t,x,v))|^2}V_{n ,\iota}^{f^\ell} (t-\tbpm^{f^\ell}(t,x,v);t,x,v) \ \ for  \ s \in [ t-\tbpm^{f^\ell},\tau_1].
	 \end{split}\Ee
For the proof we regard the equations (\ref{dot_Xn_Vn}), (\ref{def_V_parallel}), and (\ref{hamilton_ODE_perp}) as the forward-in-time problem with an initial datum at $s=t-\tbpm^{f^\ell} (t,x,v)$. 	Clearly we have $X_{n ,\iota}^{f^\ell}(t-\tbpm^{f^\ell}(t,x,v);t,x,v)=0$ and $V_{n ,\iota}^{f^\ell}(t-\tbpm^{f^\ell}(t,x,v);t,x,v)\geq0$ from Lemma \ref{cannot_graze}. Again from Lemma \ref{cannot_graze}, if $V_{n ,\iota}^{f^\ell}(t-\tbpm^{f^\ell}(t,x,v);t,x,v)=0$ then $X_{n ,\iota}^{f^\ell}(s;t,x,v)=0$ for all $s\geq t-\tbpm^{f^\ell} (t,x,v)$. From now on we assume $V_{n ,\iota}^{f^\ell} (t-\tbpm^{f^\ell}(t,x,v);t,x,v)] >0$. From (\ref{hamilton_ODE_perp}), as long as $t - \tbpm^{f^\ell} (t,x,v) \leq s \leq T^{**}$ and 
	\Be\label{small_s_V_n} 
	V_{n ,\iota}^{f^\ell}(s;t,x,v) \geq 0 \ \ and \ \ 
	X_{\perp,\iota}^{f^\ell} (s;t,x,v) \leq \frac{1}{N} \ll 1,
	\Ee  
	then we have
	\Be\label{hamilton_ODE_perp_bound}\begin{split}
	\dot{V}_n^{f^\ell} (s) 
		= & \  \underbrace{[V^{f^\ell}_{\parallel,\iota} (s)\cdot \nabla^2 \eta (X^{f^\ell}_{\parallel,\iota}(s)) \cdot V^{f^\ell}_{\parallel,\iota}(s) ] \cdot n(X^{f^\ell}_{\parallel,\iota}(s)) }_{\leq 0  \    from \ (\ref{convexity_eta})} \\
	- 	& \underbrace{\nabla \phi^\ell  (s , X^{f^\ell} (s ) ) \cdot [-n(X^{f^\ell}_{\parallel,\iota}(s)) ]}_{
		=O(1) \sup_\ell  \|  \nabla \phi^\ell \|_{C^1} \times X_{n ,\iota}^{f^\ell} (s)
		 \   from \  (\ref{expansion_E})} \\
		  - &\underbrace{X_{n ,\iota}^{f^\ell} (s) [V^{f^\ell}_{\parallel,\iota}(s) \cdot \nabla^2 n (X^{f^\ell}_{\parallel,\iota}(s)) \cdot V^{f^\ell}_{\parallel,\iota}(s)]  \cdot n(X^{f^\ell}_{\parallel,\iota}(s)) }_{
		=O(1)  \{3 (\e + T^{**})\sup_{\ell} \| \nabla \phi^\ell  \|_\infty \}^2 \times X_{n ,\iota}^{f^\ell} (s) \ from \ 
		(\ref{upper_|V|})
		}\\
		 \leq & \ 
		C (1+ \e  + T^{**})^2 ( \sup_{\ell} \| \nabla \phi^\ell  \|_{C^1}  \sup_{\ell} \| \nabla \phi^\ell  \|_{\infty} )
		  \times X_{n ,\iota}^{f^\ell} (s).
	\end{split}\Ee	
	
Let us consider (\ref{hamilton_ODE_perp_bound}) together with $\dot{X}^{f^\ell}_{n}(s;t,x,v) = V^{f^\ell}_{n}(s;t,x,v)$. Then, as long as $s$ satisfies (\ref{small_s_V_n}), 
\Be
\begin{split}\notag
V_{n ,\iota}^{f^\ell} (s)& =    V_{n ,\iota}^{f^\ell} (t-\tbpm^{f^\ell} ) + \int^s_{t-\tbpm^{f^\ell}} \dot{V}_n^{f^\ell}(\tau) \dd \tau \\
&\leq V_{n ,\iota}^{f^\ell} (t-\tbpm^{f^\ell} )+\int^s_{t-\tbpm^{f^\ell}} 
C (1+ \e  + T^{**})^2  ( \sup_{\ell} \| \nabla \phi^\ell  \|_{C^1}  \sup_{\ell} \| \nabla \phi^\ell  \|_{\infty} )
		  \times X_{n ,\iota}^{f^\ell} (\tau)
\dd \tau\\
&= V_{n ,\iota}^{f^\ell} (t-\tbpm^{f^\ell} )+\int^s_{t-\tbpm^{f^\ell}} 
C (1+ \e  + T^{**})^2  ( \sup_{\ell} \| \nabla \phi^\ell  \|_{C^1}  \sup_{\ell} \| \nabla \phi^\ell  \|_{\infty} )
	\int^\tau_{t-\tbpm^{f^\ell}} V_{n ,\iota}^{f^\ell}(\tau^\prime) \dd \tau^\prime
\dd \tau.
\end{split}
\Ee
Following the same argument of the proof of Lemma \ref{est_X_v}, we derive that 
	\Be
\begin{split}\notag
V_{n ,\iota}^{f^\ell} (s) \leq&  \  V_{n ,\iota}^{f^\ell} (t-\tbpm^{f^\ell} )\\
& +
C (1+ \e  + T^{**})^2  ( \sup_{\ell} \| \nabla \phi^\ell  \|_{C^1}  \sup_{\ell} \| \nabla \phi^\ell  \|_{\infty} )
\int^s_{t-\tbpm^{f^\ell}} 
|s -  (t-\tbpm^{f^\ell}) |  V_{n ,\iota}^{f^\ell}(\tau^\prime)  \dd \tau^\prime.
\end{split}
\Ee
From the Gronwall's inequality, we derive that, as long as (\ref{small_s_V_n}) holds, 	
\Be\label{upper_bound_Vn}
V_{n ,\iota}^{f^\ell} (s;t,x,v) \leq  V_{n ,\iota}^{f^\ell} (t-\tbpm^{f^\ell}(t,x,v)) e^{C (1+ \e  + T^{**})^2 ( \sup_{\ell} \| \nabla \phi^\ell  \|_{C^1}  \sup_{\ell} \| \nabla \phi^\ell  \|_{\infty} ) \times |s -  (t-\tbpm^{f^\ell}(t,x,v)) | ^2}.
\Ee	

Now we verify the conditions of (\ref{small_s_V_n}) for all $- \e \leq t - \tbpm^{f^\ell} (t,x,v) \leq s \leq T^{**}$. Note that we are only interested in the case of $V_{n ,\iota}^{f^\ell} (t-\tbpm^{f^\ell}(t,x,v);t,x,v)< \delta_{**}$. From the argument of (\ref{hamilton_ODE_perp_bound}), ignoring negative curvature term,
		 \Be
		 \begin{split}\notag
		| X_{n ,\iota}^{f^\ell} (s;t,x,v)| \leq& %  \underbrace{X_{n ,\iota}^{f^\ell} (t-\tbpm^{f^\ell};t,x,v) }_{=0}
	\ 	(\e+ T^{**}) | V_{n ,\iota}^{f^\ell} (\tbpm^{f^\ell};t,x,v)  | \\
	&
		+   C[1 + (\e + T^{**})^2 \sup_\ell \| \nabla \phi^\ell \|_\infty]  \sup_\ell\| \nabla \phi^\ell \|_{C^1}
		 \int^s_{t-\tbpm^{f^\ell}} \int^\tau_{t-\tbpm^{f^\ell}} |X_{n ,\iota}^{f^\ell}(\tau;t,x,v)| \dd \tau  \dd s\\
		% \leq & 
	%	  C^\prime 		 \int^s_{t-\tbpm^{f^\ell}}  \int_{t-\tbpm^{f^\ell}} ^\tau |V_{n ,\iota}^{f^\ell}(\tau;t,x,v)|  \dd \tau \dd s\\
		  \leq & \  (\e+ T^{**}) | V_{n ,\iota}^{f^\ell} (\tbpm^{f^\ell};t,x,v)  | +  C  \int_{t-\tbpm^{f^\ell}} ^s |\tau- (t-\tbpm^{f^\ell})| |X_{n ,\iota}^{f^\ell}(\tau;t,x,v)|  \dd \tau.
		 \end{split}
		 \Ee
		 Then by the Gronwall's inequality we derive that, in case of (\ref{tb_upperT^**}),
		 \Be\label{upper_X_n}
		 | X_{n ,\iota}^{f^\ell} (s;t,x,v)|\leq C_{\e+ T^{**}} | V_{n ,\iota}^{f^\ell} (t-\tbpm^{f^\ell};t,x,v)  | \ \ for  \ all \  -\e \leq t-\tbpm^{f^\ell} \leq s \leq t  \leq T^{**}.
		 \Ee 
%From (\ref{upper_bound_Vn}), for $- \e \leq t - \tbpm^{f^\ell} (t,x,v) \leq s \leq T^{**}$,
%\Be\begin{split}\notag
%X_{n ,\iota}^{f^\ell}(s;t,x,v) =&   \int^s_{t-\tbpm^{f^\ell}} V_{n ,\iota}^{f^\ell}(\tau) \dd \tau \\
%\leq & \  |T^{**} + \e | \times\delta_{**} e^{C (1+ \e  + T^{**})^4 (1+ \sup_{\ell} \| \nabla \phi^\ell  \|_{C^1} )^2}  
%\\
%\leq & \  \frac{1}{N},
%\end{split}\Ee
If we choose 
\Be\label{choice_delta**}
\delta_{**} = \frac{o(1)}{ |T^{**} + \e |}\times \frac{1}{N}, 
\Ee
then (\ref{upper_bound_Vn}) holds for $- \e \leq t - \tbpm^{f^\ell} (t,x,v) \leq s \leq T^{**}$. Hence we complete the proof of (\ref{claim_Vperp_growth}).

	\hide

	\Be\notag
	%\label{dot_Xn_Vn}
	\dot{X}_{n}(s;t,x,v) = V_{n}(s;t,x,v) , \ \  \dot{X}_{\parallel}(s;t,x,v) = V_{\parallel}(s;t,x,v).
	\Ee

	Then the proof of (\ref{no_graze}) asserts that  
		\Be\label{alpha_lower_bound}
		|V_{n ,\iota}^f(s;t,x,v)| \gtrsim 1 \ \ \text{for all } \ (t,x,v) \in [0,\frac{\e}{2}] \times  \text{supp} (\psi) .
		\Ee
		Now we consider $\alpha_{f^\ell, \e}(t,x,v)$.

		 On the other hand if $\tbpm^{f^\ell}(t,x,v)< 2\e$, since (\ref{uniform_h_ell}), from (\ref{hamilton_ODE_perp}),
		\Be
		\begin{split}\notag
			&\sup_{\ell \geq 0}|V^f_n(s;t,x,v) - V^{f^\ell }_n(s;t,x,v)|\\
			\lesssim & \ \sup_{\ell \geq 0}\max\{ \tbpm^{f}, \tbpm^{f^\ell}\} \times \left\{N^2  + \| \nabla \phi_f \|_\infty + \| \nabla \phi_{f^\ell} \|_\infty\right\}\\
			\lesssim & \ \e (1+ N^2).
		\end{split}\Ee
	Therefore, from (\ref{alpha_lower_bound})	 for small $\e>0$, we prove the lower bound 
		\Be
	\inf_{\ell }	|n(\xb^{f^\ell})  \cdot \vb^{f^\ell}| \gtrsim 1 \ \ \text{for all } \ (t,x,v) \in [0,\frac{\e}{2}] \times  \text{supp} (\psi) .
		\Ee
		
		\unhide

		\vspace{4pt}

		\textit{Step 8-c. } Suppose that (\ref{upper_|V|}) holds and $0 \leq V_{n ,\iota}^{f^\ell} (t-\tbpm^{f^\ell}(t,x,v);t,x,v) < \delta_{**}$ with $\delta_{**}$ of  (\ref{choice_delta**}). Recall the definition of $\tau_1$ in (\ref{tau_1}). Inductively we define $\tau_2  
		: = \sup  \big\{ \tau \geq  0: V_{n ,\iota}^{f^\ell} (s;t,x,v)\leq 0 \ for \ all \ s \in [\tau_1,  \tau ]
		\big\}$ and $\tau_3, \tau_4, \cdots$. Clearly such points can be countably many at most in an interval of $[t-\tbpm^{f^\ell},t]$. Suppose $ \lim_{k\rightarrow \infty}\tau_k=t   $. Then choose $k_0\gg1$ such that $|\tau_{k_0} -t| \ll  |V_{n ,\iota}^{f^\ell} (t-\tbpm^{f^\ell} ;t,x,v)|$. Then, for $s \in [\tau_{k_0}, t] $, from (\ref{hamilton_ODE_perp_bound}) and (\ref{upper_|V|}),
		\Be\begin{split}\label{upper_V_n_0}
		|V_{n ,\iota}^{f^\ell} (t;t,x,v)| \lesssim   {|V_{n ,\iota}^{f^\ell} (t-\tbpm^{f^\ell} ;t,x,v)|} .
		\end{split}\Ee 
		
		Now we assume that $\tau_{k_0} < t \leq \tau_{k_0+1}$. From the definition of $\tau_i$ in  (\ref{tau_1}) we split the case in two.
		
		\textit{\underline{Case 1:} Suppose $V_{n ,\iota}^{f^\ell} (s;t,x,v)>0$ for $s \in (\tau_{k_0}, t)$. }

		From (\ref{hamilton_ODE_perp_bound}) and (\ref{upper_X_n})
		\Be
		\begin{split}\label{upper_V_n_1}
		 V_{n ,\iota}^{f^\ell} (t;t,x,v)  \lesssim \int^{T^{**}}_{\tau_{k_0}} X_{n ,\iota}^{f^\ell}(s) \lesssim |V_{n ,\iota}^{f^\ell} (t-\tbpm^{f^\ell};t,x,v)|.
		\end{split}
		\Ee

		\textit{\underline{Case 2:} Suppose $V_{n ,\iota}^{f^\ell} (s;t,x,v)<0$ for $s \in (\tau_{k_0}, t)$. }
		
		Suppose 
		\Be\label{assumption_lowerbound_V_n}
		-V_{n ,\iota}^{f^\ell} (t;t,x,v)= |V_{n ,\iota}^{f^\ell} (t;t,x,v)| \geq |V_{n ,\iota}^{f^\ell} (t-\tbpm^{f^\ell};t,x,v)|^A \ \ for \ any \  0<A<\frac{1}{2}. 
		\Ee \hide	
In this step we claim that 
\Be\label{max_Xn}
X_{n ,\iota}^{f^\ell} (
t-\tbpm^{f^\ell}(t,x,v)+
\tau_*^\ell (t,x,v);t,x,v)\geq \frac{1}{2} \frac{| V_{n ,\iota}^{f^\ell} (t- \tbpm^{f^\ell} (t,x,v);t,x,v)|^2}{1+\{3 (\e + T^{**})\sup_{\ell} \| \nabla \phi^\ell  \|_\infty\}^2 }.
\Ee		
\unhide
From (\ref{hamilton_ODE_perp_bound}), now taking account of the curvature term this time, we derive that 	\Be
	\begin{split}\notag
	-V_{n ,\iota}^{f^\ell} (t;t,x,v)  \leq & \   \int_{\tau_{k_0}}^{t} (-1) [V^{f^\ell}_{\parallel,\iota} (s)\cdot \nabla^2 \eta (X^{f^\ell}_{\parallel,\iota}(s)) \cdot V^{f^\ell}_{\parallel,\iota}(s) ] \cdot n(X^{f^\ell}_{\parallel,\iota}(s))  \dd s\\
	& +C | V_{n ,\iota}^{f^\ell} (t- \tbpm^{f^\ell} (t,x,v);t,x,v)|,
	\end{split}
	\Ee		
	where we have used (\ref{upper_|V|}) and (\ref{upper_X_n}). From (\ref{assumption_lowerbound_V_n}) the above inequality implies that, for $| V_{n ,\iota}^{f^\ell} (t- \tbpm^{f^\ell} (t,x,v);t,x,v)|\ll 1$, 
	\Be\notag%\label{lower_bound_V_||^2}
	|V_{n ,\iota}^{f^\ell} (t-\tbpm^{f^\ell};t,x,v)|^A \lesssim \int_{\tau_{k_0}}^{t} (-1) [V^{f^\ell}_{\parallel,\iota} (s)\cdot \nabla^2 \eta (X^{f^\ell}_{\parallel,\iota}(s)) \cdot V^{f^\ell}_{\parallel,\iota}(s) ] \cdot n(X^{f^\ell}_{\parallel,\iota}(s))  \dd s. 
	\Ee
Note that $|\frac{d}{ds} V^{f^\ell}_{\parallel,\iota} (s)|$ and $|\frac{d}{ds}X^{f^\ell}_{\parallel,\iota} (s)|$ are all bound from $\nabla \phi^\ell \in C^1$, (\ref{upper_|V|}), and (\ref{upper_X_n}). Hence we have 
\Be\label{lower_bound_V_||^2}
	\frac{1}{2}|V_{n ,\iota}^{f^\ell} (t-\tbpm^{f^\ell};t,x,v)|^A \lesssim \int_{\tau_{k_0}}^{t- |V_{n ,\iota}^{f^\ell} (t-\tbpm^{f^\ell};t,x,v)|^A} (-1) [V^{f^\ell}_{\parallel,\iota} (s)\cdot \nabla^2 \eta (X^{f^\ell}_{\parallel,\iota}(s)) \cdot V^{f^\ell}_{\parallel,\iota}(s) ] \cdot n(X^{f^\ell}_{\parallel,\iota}(s))  \dd s. 
	\Ee
	On the other hand, if $t- |V_{n ,\iota}^{f^\ell} (t-\tbpm^{f^\ell};t,x,v)|^A\leq \tau_{k_0}$ then $|t-\tau_{k_0}| \leq |V_{n ,\iota}^{f^\ell} (t-\tbpm^{f^\ell};t,x,v)|^A$, which implies that, from (\ref{hamilton_ODE_perp_bound}), (\ref{upper_|V|}), and (\ref{upper_X_n}),
	\Be\label{upper_V_n_2}
	|V_{n ,\iota}^{f^\ell}(t;t,x,v)| \lesssim  |V_{n ,\iota}^{f^\ell} (t-\tbpm^{f^\ell};t,x,v)|^A  \ \ for \ any \  0<A<\frac{1}{2}. 
	\Ee
	
	Now we consider $X_{n ,\iota}^{f^\ell} (t;t,x,v)$. From (\ref{hamilton_ODE_perp_bound}) and $\dot{X}^{f^\ell}_{n}(s;t,x,v) = V^{f^\ell}_{n}(s;t,x,v)$ together with (\ref{upper_X_n}) and (\ref{upper_|V|})
	\Be\label{X_n<0}
	\begin{split}
	&X_{n ,\iota}^{f^\ell} (t;t,x,v)\\
	 \lesssim & \   | V_{n ,\iota}^{f^\ell} (t-\tbpm^{f^\ell};t,x,v)  | 
	 + \int_{\tau_{k_0}}^t \int_{\tau_{k_0}}^\tau\underbrace{[V^{f^\ell}_{\parallel,\iota} (s)\cdot \nabla^2 \eta (X^{f^\ell}_{\parallel,\iota}(s)) \cdot V^{f^\ell}_{\parallel,\iota}(s) ] \cdot n(X^{f^\ell}_{\parallel,\iota}(s)) }_{\leq 0}  \dd s  \dd \tau \\
	\lesssim  & \   | V_{n ,\iota}^{f^\ell} (t-\tbpm^{f^\ell};t,x,v)  | \\
	&
	 +  |V_{n ,\iota}^{f^\ell} (t-\tbpm^{f^\ell};t,x,v)|^A \int_{\tau_{k_0}}^{t- |V_{n ,\iota}^{f^\ell} (t-\tbpm^{f^\ell};t,x,v)|^A}[V^{f^\ell}_{\parallel,\iota} (s)\cdot \nabla^2 \eta (X^{f^\ell}_{\parallel,\iota}(s)) \cdot V^{f^\ell}_{\parallel,\iota}(s) ] \cdot n(X^{f^\ell}_{\parallel,\iota}(s))   \dd s   \\
	 \lesssim & \  | V_{n ,\iota}^{f^\ell} (t-\tbpm^{f^\ell};t,x,v)  | -  |V_{n ,\iota}^{f^\ell} (t-\tbpm^{f^\ell};t,x,v)|^{2A}
	 \ \ \ from \ (\ref{lower_bound_V_||^2})
	 \\
	 \lesssim & \ | V_{n ,\iota}^{f^\ell} (t-\tbpm^{f^\ell};t,x,v)  | -  |V_{n ,\iota}^{f^\ell} (t-\tbpm^{f^\ell};t,x,v)|^{1-}\\
	 < & \ 0,
	\end{split}
	\Ee
	for $ | V_{n ,\iota}^{f^\ell} (t-\tbpm^{f^\ell};t,x,v)  | \ll 1$. Clearly this cannot happen since $x \in \bar{\O}$ and $x_n\geq 0$. Therefore our assumption (\ref{assumption_lowerbound_V_n}) was wrong and we conclude (\ref{upper_V_n_2}).

		\vspace{4pt}

		\textit{Step 8-d. } From (\ref{claim_Vperp_growth}), (\ref{upper_V_n_0}), (\ref{upper_V_n_1}), and (\ref{upper_V_n_2}) in \textit{Step 8-a} and \textit{Step 8-b}, we conclude that the same estimate (\ref{upper_V_n_2}) for $|V_{n ,\iota}^{f^\ell} (t-\tbpm^{f^\ell};t,x,v)|\ll 1$ in the case of (\ref{tb_upperT^**}) and (\ref{upper_|V|}). Finally from (\ref{alpha=1}), (\ref{alpha_|v|>}), (\ref{claim_Vperp_growth}), and (\ref{upper_V_n_2}) we conclude that 
		\Be\label{lower_bound_V_n_final}
		|V_{n ,\iota}^{f^\ell} (t-\tbpm^{f^\ell} (t,x,v);t,x,v )|\geq \left(\frac{1}{N}\right)^{1/A} \ \ (t,x,v) \in [0,T^{**}] \times (\mathcal{S}_N)^c.
		\Ee

	\hide

	\Be\notag
	%\label{dot_Xn_Vn}
	\dot{X}_{n}(s;t,x,v) = V_{n}(s;t,x,v) , \ \  \dot{X}_{\parallel}(s;t,x,v) = V_{\parallel}(s;t,x,v).
	\Ee

	Then the proof of (\ref{no_graze}) asserts that  
		\Be\label{alpha_lower_bound}
		|V_{n ,\iota}^f(s;t,x,v)| \gtrsim 1 \ \ \text{for all } \ (t,x,v) \in [0,\frac{\e}{2}] \times  \text{supp} (\psi) .
		\Ee
		Now we consider $\alpha_{f^\ell, \e}(t,x,v)$.

		 On the other hand if $\tb^{f^\ell}(t,x,v)< 2\e$, since (\ref{uniform_h_ell}), from (\ref{hamilton_ODE_perp}),
		\Be
		\begin{split}\notag
			&\sup_{\ell \geq 0}|V^f_n(s;t,x,v) - V^{f^\ell }_n(s;t,x,v)|\\
			\lesssim & \ \sup_{\ell \geq 0}\max\{ \tb^{f}, \tb^{f^\ell}\} \times \left\{N^2  + \| \nabla \phi_f \|_\infty + \| \nabla \phi_{f^\ell} \|_\infty\right\}\\
			\lesssim & \ \e (1+ N^2).
		\end{split}\Ee
	Therefore, from (\ref{alpha_lower_bound})	 for small $\e>0$, we prove the lower bound 
		\Be
	\inf_{\ell }	|n(\xb^{f^\ell})  \cdot \vb^{f^\ell}| \gtrsim 1 \ \ \text{for all } \ (t,x,v) \in [0,\frac{\e}{2}] \times  \text{supp} (\psi) .
		\Ee
		
		\unhide
 From (2.36), (2.37), (2.40), and (2.41) in Lemma 2.4 in \cite{KL1}, and combing with \eqref{upper_|V|}
we have
\Be \begin{split}
 \nabla_{x,v} \tbpm^{f^\ell} & \lesssim_{\Omega} \frac{1}{|v_n (\xbpm^{f^\ell} ) | } |\vbpm^{f^\ell}|  |\tbpm^{f^\ell } |^2 e^{ \| \nabla^2 \phi^\ell \|_\infty (\tbpm^{f^\ell})^2 /2 } \lesssim_{\Omega} |V_{n ,\iota}^{f^\ell} (t-\tbpm^{f^\ell} (t,x,v);t,x,v )|,
\\  \nabla_{x,v} v_n (\xbpm^{f^\ell} ) & \lesssim_\O \frac{1}{|v_n (\xbpm^{f^\ell} ) | } \left( |v|  |\tbpm^{f^\ell } |^2 e^{ \| \nabla^2 \phi^\ell \|_\infty (\tbpm^{f^\ell})^2 /2 } + |\vbpm^{f^\ell}| |\tbpm^{f^\ell } |^3 (1 + \tbpm^{f^\ell } ) e^{ \| \nabla^2 \phi^\ell \|_\infty (\tbpm^{f^\ell})^2 /2 } \right) 
\\ & \lesssim_\O  |V_{n ,\iota}^{f^\ell} (t-\tbpm^{f^\ell} (t,x,v);t,x,v )|.
\end{split}
\Ee

Therefore from above we have
\Be \label{deltaalphal}
|\nabla_{x,v} \alpha^\beta_{f^\ell,\e,\iota}(t,x,v)| \lesssim_\chi \beta |\alpha^\beta_{f^\ell,\e,\iota}(t,x,v)|^{\beta -1 } | \nabla_{x,v} \tbpm^{f^\ell} + \nabla_{x,v} v_n (\xbpm^{f^\ell } ) | \lesssim_\chi  |V_{n ,\iota}^{f^\ell} (t-\tbpm^{f^\ell};t,x,v)|^{\beta -1 }   |V_{n ,\iota}^{f^\ell} (t-\tbpm^{f^\ell};t,x,v)|^{-1}.
\Ee
Combing \eqref{lower_bound_V_n_final} and \eqref{deltaalphal} we achieve

		\Be\notag
	\sup_{ \substack{ \ell \in \mathbb{N}
, \ 
 (x,v) \in (\mathcal{S}_N)^c , \\  - \e \leq t - \tbpm^{f^\ell} (t,x,v)   \leq t \leq  T^{**}}}		|\nabla_{x,v} \alpha^\beta_{f^\ell,\e,\iota}(t,x,v)| \lesssim \frac{1}{|V_{n ,\iota}^{f^\ell} (t-\tbpm^{f^\ell};t,x,v)|^{2- \beta}}  \lesssim_{\e,N,T^{**}  } 1.
		\Ee
	Hence we extract another subsequence out of all previous steps for $\iota = +$ first, and then from that subsequence extract further another subsequence for $\iota = -$ (and redefine this as $\{\ell_N\}$) such that 
	\Be\label{converge_D_alpha_ell}
	\nabla_{x,v} \alpha_{f^{\ell_N},\e,\iota}^\beta  \overset{\ast}{\rightharpoonup}  	\nabla_{x,v} \alpha_{f ,\e,\iota}^\beta  \textit{ weakly}-* \textit{ in } L^\infty
			( (-\e, T^{**}) \times (\mathcal{S}_N)^c
			).
	\Ee	
Note that the limiting function is identified from (\ref{converge_alpha_ell}).  Clearly the convergence of (\ref{af_l_3}) is an easy consequence of strong convergence of (\ref{1+delta_stability}) and the weak$-*$ convergence of (\ref{converge_D_alpha_ell}). 
	
	\vspace{4pt}

		\textit{Step 8-c. } Inductively we extract $\{\ell_N\}\supseteq\{\ell_{N+1}\} \supseteq\{\ell_{N+2}\} \supseteq \cdots$ by following all the process in \textit{Step 7} to \textit{Step 8-b}. Then finally we define the subsequence, by the Cantor's diagonal argument,  
		\Be\label{ell_*}
		\ell_*= \ell_{ \ell} .
		\Ee
		Then clearly (\ref{lim_Fell=F}) holds with this subsequence for any test function $\psi$. For any $\psi \in C^\infty_c (\bar{\O} \times \R^3 \backslash \gamma_0; \mathbb R^2)$ there exists $N_{\psi} \in \mathbb{N}$ such that $supp (\psi) \subset (\mathcal{S}_{N_\psi})^c$. Then all the proofs work. 
		
		This implies (\ref{a_nabla_f_seq}) from (\ref{af_l_1}), (\ref{af_l_2}), (\ref{af_l_3}). Positivity $F= \mu+ \sqrt{\mu}f\geq 0$ comes from \textit{Step 1} and \textit{Step 6}.

		\vspace{4pt}
		
		\textit{Step 5. }  Choose $t>t^\prime\geq0$. Instead of expanding $h_\iota(t,x,v)$ at $t=0$ as (\ref{h_ell_local}), we expand at $t^\prime$. Then by the iteration we have (\ref{h_iteration}) replacing $0$ by $t^\prime$. Collecting all terms at time $t^\prime$, we have
		\Be\begin{split}\label{expansion_t_prime}
			\| h_\iota(t)\|_\infty 
			\leq &  \| h_\iota(t^\prime)\|_\infty \bigg\{
			\mathbf{1}_{t_{1,\iota}\leq t^\prime} e^{- \int^t_{t^\prime} \nu_\iota}  \\
			& \ \ \ \ \ \ \     
			+  \mathbf{1}_{t_{1,\iota} \geq  t^\prime} \frac{e^{- \int^t_{t_{1,\iota} }\nu_\iota}}{\widetilde{w}_\vartheta (V_\iota(t_{1,\iota}))}
			\int_{\prod_{j=1}^{k-1} \mathcal{V}_{j,\iota}} \sum_{l=1}^{k-1}\mathbf{1}_{\{t^{\ell-l}_{l+1,\iota}\leq
				t^\prime<t_{l,\iota}^{\ell - (l-1)}\}} \dd\Sigma _{l_\iota}(t^\prime) \bigg\}. 
		\end{split} \Ee
		Since (\ref{smeasure}) is a probability measure and $|e^{-\int^t_{t^\prime} \nu_\iota} - 1| \ll |t-t^\prime|$ for $|t-t^\prime| \ll 1$, 
		\Be 
		|(\ref{expansion_t_prime})-  \| h_\iota(t^\prime)\|_\infty| \leq  O(|t-t^\prime|)+ \int_{\prod_{j=1}^{k-1} \mathcal{V}_{j,\iota}} \mathbf{1}_{\{ t_\iota^{k} (t,x,v,u^{1}, \cdots , u^{k-1}) >0 \}} \dd \Sigma_{k-1,\iota}^{k-1} 
		.\notag
		\Ee
		Then by (\ref{h_iteration}) we have $
		\| h(t)\|_\infty - \| h (t^\prime) \|_\infty < \frac{1}{2^k}
		+O_k(|t-t^\prime|).$ For large $k$, choosing $|t-t^\prime|\ll1$, we can prove $
		\| h(t)\|_\infty - \| h (t^\prime) \|_\infty\ll 1$ as $|t-t^\prime| \ll 1$. 
		
		Now we can expand $h(t^\prime,x,v)$ at $t$ by (\ref{h_ell_local}). Following the same argument we have $\| h(t^\prime) \|_\infty - \| h(t ) \|_\infty \ll 1$ as $|t-t^\prime| \ll 1$. Hence $\| w_{\vartheta}f(t) \|_\infty$ is continuous in $t$.

		The continuity of $\| \nabla_v f (t) \|_{L^3_xL^{1+ \delta}_v}$ and $\| w_{\tilde{\vartheta}}\alpha_{f,\e }^\beta \nabla_{x,v} f (t) \|_{p} ^p
		+ 
		\int^t_0  |w_{\tilde{\vartheta}} \alpha_{f,\e }^\beta \nabla_{x,v} f (t) |_{p,+}^p$ is an easy consequence of (\ref{g_initial})-(\ref{g_phi}), and \eqref{pf_green}, \eqref{mid}, (\ref{final_est_G}) as well.
		
		\end{proof}

	\section{$L^2$ coercivity}

\begin{proposition} \label{l2coercivity}
Suppose $(f, \phi)$ solves (\ref{2fVPB}), (\ref{smallfphi}), and (\ref{diffusef}). Then 
		%
		%
		%Suppose $\Phi = \Phi(x) \in C^{1}, g \in L^{2}(\mathbb{R}%
		%_{+} \times \Omega \times \mathbb{R}^{3})$, and $r \in L^{2}(\mathbb{R}_{+}
		%\times \gamma_{-})$ such that, for all $t>0$, 
		%\begin{equation}
		%\iint_{\Omega \times \mathbb{R}^{3}}g(t,x,v)\sqrt{\mu }\mathrm{d} v \mathrm{d%
		%} x=0=\int_{\gamma _{-}}r (t,x,v)\sqrt{\mu }\mathrm{d}\gamma .
		%\%label{dlinearcondition}
		%\end{equation}
		%Then, for any sufficiently small $\e$, there exists a unique solution to the
		%problem 
		%\begin{equation}
		% \partial _{t}f+v\cdot \nabla _{x}f+\frac 1 {\sqrt{\mu}} \Phi\cdot\nabla_v(\sqrt{\mu} f)+\e^{-1}Lf=\ g,  %\label{dlinear}
		%\end{equation}%
		%with $f|_{t=0}=f_{0}$ and $f_{-}=P_{\gamma }f+r$ on $\mathbb{R}_{+} \times
		%\gamma _{-}$ such that 
		%\begin{equation}
		%\iint_{\Omega \times \mathbb{R}^{3}}f(t,x,v)\sqrt{\mu }\mathrm{d} x\mathrm{d}
		%v=0, \ \ \ \text{for all} \ t\geq0.  \label{dlinearcondition1}
		%\end{equation}
		%Moreover, 
		there is $0<\lambda_{2} \ll 1$ such that for $0 \leq s \leq t$, 
		\Be\begin{split}\label{completes_dyn}
			& \| e^{\lambda_{2} t}f(t)\|_2^2
			+ \| e^{\lambda_{2} t} \nabla \phi (t) \|_2^2\\
			&
			+  \int_s^t \| e^{\lambda_{2} \tau}  f (\tau)\|_\nu^2 
			+ \| e^{\lambda_{2} \tau} \nabla \phi_f(\tau)\|_2^2 
			\mathrm{d} \tau 
			+  \int_s^t | e^{\lambda_{2} \tau} f |^2_{2,+}    \\
			\lesssim & \ \|e^{\lambda_{2} s} f(s)\|_2^2 +   \|e^{\lambda_{2} s} \nabla \phi_f(s)\|_2^2  \\&   + \sup_{s \leq \tau \leq t} \| w_{\vartheta} f (\tau) \|_\infty \int_s^t \| e^{\lambda_{2} \tau}  f (\tau)\|_\nu^2  .  
		\end{split}\Ee
\end{proposition}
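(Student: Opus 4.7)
The plan is to run a macroscopic/microscopic decomposition for $f$ using the null-space structure discussed in the introduction, with the test functions $\psi_a$, $\psi_{b,1}^{i,j}$, $\psi_{b,2}^{i,j}$, $\psi_c$ in (\ref{tests})--(\ref{phi_abc}), and to combine $\|f(t)\|_2^2+\|\nabla_x\phi(t)\|_2^2$ into a single Lyapunov quantity by exploiting conservation of charge.

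First, pair (\ref{systemf}) with $f$ in $L^2(\Omega\times\R^3;\R^2)$ and apply Lemma \ref{lem_Green}. The field term $\langle q\nabla_x\phi\cdot\nabla_v f, f\rangle$ vanishes after integration by parts in $v$, the cubic term $\langle q(v/2)\cdot\nabla_x\phi\, f, f\rangle$ is controlled by $\|w_\vartheta f\|_\infty\|f\|_\nu^2$, and the standard coercivity $\langle Lf,f\rangle\ge c_0\|\nu^{1/2}(\mathbf{I}-\mathbf{P})f\|_2^2$ applies. Writing $j(t,x):=\int_{\R^3} v\sqrt{\mu}(f_+-f_-)\,dv$, this yields
\begin{equation*}
\tfrac{1}{2}\tfrac{d}{dt}\|f\|_2^2 + \tfrac{1}{2}\bigl(|f|_{2,+}^2-|f|_{2,-}^2\bigr) + c_0\|\nu^{1/2}(\mathbf{I}-\mathbf{P})f\|_2^2 \le -\int_\Omega \nabla_x\phi\cdot j\,dx + C\|w_\vartheta f\|_\infty\|f\|_\nu^2.
\end{equation*}
Differentiating $-\Delta_x\phi=a_+-a_-$ in $t$ and using the continuity identity $\partial_t(a_+-a_-)+\nabla_x\cdot j=0$ (the $\sqrt{\mu}$-moment of the difference of the two equations in (\ref{2fVPB})), together with $\partial_n\phi=0$ and $j\cdot n|_{\partial\Omega}=0$ (from (\ref{null_flux})), gives $\tfrac12\tfrac{d}{dt}\|\nabla_x\phi\|_2^2=-\int_\Omega\nabla_x\phi\cdot j\,dx$. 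Moving this term to the left side produces a Lyapunov identity for $\tfrac{1}{2}(\|f\|_2^2+\|\nabla_x\phi\|_2^2)$. The diffuse BC (\ref{diffusef}) combined with Jensen then yields $|f|_{2,+}^2-|f|_{2,-}^2\gtrsim |(1-P_\gamma)f|_{2,+}^2$ where $P_\gamma$ is the Maxwellian projection on $\gamma_-$.

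Next, for the six macroscopic unknowns $(a_+,a_-,b,c)$, test (\ref{systemf}) against each of $\psi_a$, $\psi_{b,1}^{i,j}$, $\psi_{b,2}^{i,j}$, $\psi_c$ and integrate over $[0,t]\times\Omega\times\R^3$. The principal contribution comes from $\int_0^t\langle v\cdot\nabla_x f,\psi\rangle$, which, after integration by parts and insertion of $f=\mathbf{P}f+(\mathbf{I}-\mathbf{P})f$, reduces to Gaussian $v$-moments paired with $\nabla_x\varphi_{a_\pm}$, $\nabla_x\varphi_b$, $\nabla_x\varphi_c$. The specific choices $\beta_a=10$, $\beta_b=1$, $\beta_c=5$ are tuned so that mixed moments against $\mu$ kill the cross-couplings and leave the strictly positive principal forms $\|a_\pm\|_{L^2}^2$, $\|b\|_{L^2}^2$, $\|c\|_{L^2}^2$, via the elliptic estimates $\|\nabla\varphi_\cdot\|_{L^2}\lesssim\|\cdot\|_{L^2}$ for (\ref{phi_abc}). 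All remainders --- the temporal term $\int_0^t\langle\partial_t f,\psi\rangle$ (handled by integration in time), the microscopic contribution, the nonlinearity $\Gamma(f,f)$, the field terms involving $\nabla_x\phi$, and the boundary contributions on $\gamma$ --- are absorbed with small constants into $\|\nu^{1/2}(\mathbf{I}-\mathbf{P})f\|_2^2$, $\|w_\vartheta f\|_\infty\|f\|_\nu^2$, and $|(1-P_\gamma)f|_{2,+}^2$. Finally, $\|\nabla_x\phi\|_2^2\lesssim\|a_+-a_-\|_{L^2}^2$ from the Poisson equation with Neumann data closes the macroscopic loop.

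Combining the microscopic and macroscopic bounds with small positive weights yields the differential inequality
\begin{equation*}
\tfrac{d}{dt}\bigl(\|f\|_2^2+\|\nabla_x\phi\|_2^2\bigr) + \kappa\bigl(\|f\|_\nu^2+\|\nabla_x\phi\|_2^2+|f|_{2,+}^2\bigr) \le C\,\|w_\vartheta f\|_\infty\,\|f\|_\nu^2.
\end{equation*}
Multiplying by $e^{2\lambda_2\tau}$ and integrating on $[s,t]$, the extra $2\lambda_2\|e^{\lambda_2\tau}f\|_2^2$ is absorbed into $\kappa\|e^{\lambda_2\tau}f\|_\nu^2$ provided $0<\lambda_2\ll\kappa$, producing (\ref{completes_dyn}). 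The main obstacle is the principal moment computation showing that $\psi_a$ separates $a_+$ from $a_-$ (rather than only controlling their sum or difference): because the $b$ and $c$ components of $\mathbf{P}f$ are common to both species, the $v\cdot\nabla_x$ pairing with $\psi_a$ couples $a_\pm$ to $b$ and $c$, and only the precise asymmetric choice $\beta_a=10$ relative to $\beta_b$, $\beta_c$ renders the resulting principal block positive definite in $(a_+,a_-,c)$. Tracking the boundary contributions from $\varphi_{a_\pm}$ (which carry Neumann data on $\partial\Omega$, unlike $\varphi_b$, $\varphi_c$) against $|(1-P_\gamma)f|_{2,+}^2$ via a trace estimate is the most delicate bookkeeping step and is what distinguishes this argument from the one-species framework of \cite{VPBKim}.
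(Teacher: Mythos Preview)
Your approach matches the paper's: the Lyapunov identity for $\|f\|_2^2+\|\nabla_x\phi\|_2^2$ combined with the macroscopic estimate via the test functions (\ref{tests}) is exactly the proof of Proposition~\ref{l2coercivity} together with Lemma~\ref{dabc}. Three points deserve correction or expansion.

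First, the sign in $\tfrac12\tfrac{d}{dt}\|\nabla_x\phi\|_2^2=-\int_\Omega\nabla_x\phi\cdot j$ is wrong (it is $+$), though the Lyapunov conclusion survives since the right-hand side of your energy inequality carries $-\int_\Omega\nabla_x\phi\cdot j$. Second, your description of the role of $\beta_a=10$ is off: it does not produce a positive-definite $(a_+,a_-,c)$-block but simply \emph{annihilates} the $c$-coupling via $\int(|v|^2-10)\tfrac{|v|^2-3}{2\sqrt{2}}v_i^2\mu\,dv=0$, after which $\psi_a$ yields $\|a_+\|_2^2+\|a_-\|_2^2$ directly (the $b$-coupling already vanishes by oddness). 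Third, two substantive steps are glossed over. ``Handled by integration in time'' is not enough for the temporal term: after integrating by parts one is left with $\int_s^t\langle f,\partial_t\psi\rangle$, which contains $\partial_t\varphi_{a_\pm,b,c}$, and bounding $\|\nabla\Delta^{-1}\partial_t(a_\pm,b_j,c)\|_2$ requires the separate auxiliary test functions $\psi_a^t=\sqrt{\mu}\,\partial_t\varphi_{a_\pm}$, $\psi_{b,j}^t$, $\psi_c^t$ and their own weak-formulation estimates (cf.\ (\ref{c time}), (\ref{a time}), (\ref{b time})). And the diffuse boundary gives only $|(1-P_\gamma)f|_{2,+}^2$, not the full $|f|_{2,+}^2$ that appears in your final differential inequality; the paper recovers $|P_\gamma f|_{\gamma,2}$ in a separate step by writing $P_\gamma f_\iota=z_\iota(t,x)\sqrt{\mu}$, restricting to the non-grazing set $\gamma_+\setminus\gamma_+^\varepsilon$, and invoking the trace Lemma~\ref{le:ukai} applied to $\mu^{1/4}f$.
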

The null space of linear operator $L$ is a six-dimensional subspace of $L^2_v(\mathbb R^3; \mathbb R^2 )$ spanned by orthonormal vectors
\Be 
\left\{ \begin{bmatrix} \sqrt \mu \\ 0  \end{bmatrix}, \begin{bmatrix} 0  \\ \sqrt \mu \end{bmatrix}, \begin{bmatrix} \frac{v_i}{\sqrt 2 } \sqrt \mu \\ \frac{v_i}{\sqrt 2 } \sqrt \mu   \end{bmatrix}, \begin{bmatrix} \frac{|v|^2 - 3}{2\sqrt 2} \sqrt \mu \\ \frac{|v|^2 - 3}{2\sqrt 2} \sqrt \mu   \end{bmatrix}
 \right\}, \, i = 1,2,3,
\Ee
and the projection of $f$ onto the null space $N(L)$ is denoted by
\Be
\mathbf Pf(t,x,v) := \left\{ a_+(t,x) \begin{bmatrix} \sqrt \mu \\ 0  \end{bmatrix} + a_-(t,x) \begin{bmatrix} 0  \\ \sqrt \mu \end{bmatrix} + b(t,x)  \cdot \frac{v}{\sqrt 2 } \begin{bmatrix} \sqrt \mu \\ \sqrt \mu  \end{bmatrix} + c(t,x)  \frac{|v|^2 - 3}{2\sqrt 2}\begin{bmatrix} \sqrt \mu \\ \sqrt \mu  \end{bmatrix}
\right\}.
\Ee
In order to prove the proposition we need the following:
	
	\begin{lemma}
		\label{dabc}%Assume that $g$ and $r$ satisfy (\ref{dlinearcondition}) and $f$
		%satisfies (\ref{dlinear}), and (\ref{dlinearcondition1}). Then t
		There exists
		a function $G(t)$ such that, for all $0\le s\leq t$, $G(s)\lesssim
		\|f(s)\|_{2}^{2}$ and 
		\begin{equation}\begin{split}\label{estimate_dabc}
		&\int_{s}^{t}\|\mathbf{P}f(\tau)\|_{\nu }^{2}  + \int^t_s \| \nabla \phi_f \|_2^2\\
		\lesssim & \ 
		G(t)-G(s)
		+  \int_{s}^{t}\|(\mathbf{I}-\mathbf{P}%
		)f(\tau)\|_{\nu }^{2} +\int_{s}^{t}|(1-P_{\gamma })f(\tau)|_{2,+}^{2} \\
		& +\int_{s}^{t}\| \nu^{-1/2} { {\Gamma(f,f)} } \|_{2}^{2} + \int_{s}^{t} \|w_{\vartheta}f(\tau)\|^{}_{\infty} \|\mathbf{P}f(\tau)\|_{2}^{2} .
		\end{split}\end{equation}
	\end{lemma}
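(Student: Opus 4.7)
The plan is to adapt the macroscopic--equation approach of \cite{EGKM} (executed for one species in \cite{VPBKim}) to the two--species setting, using the test functions $\psi_a,\psi_{b,1}^{i,j},\psi_{b,2}^{i,j},\psi_c$ from (\ref{tests}) together with their elliptic inversions (\ref{phi_abc}). Set
\[
G(t)\ :=\ \sum_{\iota=\pm}\langle f(t),\psi_{a_\iota}(t)\rangle+\sum_{i,j}\langle f(t),\psi_{b,1}^{i,j}(t)+\psi_{b,2}^{i,j}(t)\rangle+\langle f(t),\psi_{c}(t)\rangle.
\]
By standard elliptic regularity for the Neumann problem for $\varphi_{a_\pm}$ and the Dirichlet problem for $\varphi_b^j,\varphi_c$, one has $\|\varphi_{a_\pm}\|_{H^2}+\|\varphi_b^j\|_{H^2}+\|\varphi_c\|_{H^2}\lesssim\|\mathbf{P} f\|_2\lesssim\|f\|_2$, so $|G(s)|\lesssim\|f(s)\|_2^2$ as required. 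The neutrality condition (\ref{neutral_condition}) together with species--wise mass conservation guarantees that $a_+$ and $a_-$ each have zero mean (up to a harmless constant absorbed in $\varphi_{a_\pm}$), making the Neumann Poisson problem for $\varphi_{a_\pm}$ solvable.

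Next, one tests (\ref{systemf}) against each $\psi_*$ and integrates on $[s,t]\times\Omega\times\R^3$. The coercive extraction comes entirely from $\langle v\cdot\nabla_x f,\psi_*\rangle$: integrating by parts in $x$ transfers one spatial derivative onto $\varphi_*$, and substituting $f=\mathbf{P}f+(\mathbf{I}-\mathbf{P})f$ reduces the integrand to Gaussian velocity moments of the form $\int v_iv_j(|v|^2-\beta)\mu\,dv$. The specific choices $\beta_a=10,\beta_b=1,\beta_c=5$ are made precisely so that, after using $-\Delta\varphi_{a_\pm}=a_\pm$, $-\Delta\varphi_b^j=b_j$, $-\Delta\varphi_c=c$, the $\mathbf{P}f$--contribution becomes a positive--definite quadratic form $c_1\|a_+\|_2^2+c_1\|a_-\|_2^2+c_2\|b\|_2^2+c_3\|c\|_2^2$, while the cross--couplings between the two species and between different macroscopic modes either vanish by oddness in $v$ or are absorbed into $\|(\mathbf{I}-\mathbf{P})f\|_\nu^2$ by Cauchy--Schwarz. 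The source $-q_1\,v\cdot\nabla\phi_f\sqrt\mu$ paired with $\psi_a$ yields, after $-\Delta\phi_f=\int(f_+-f_-)\sqrt\mu\,dv$ and Poisson duality, exactly the coercive term $\|\nabla\phi_f\|_2^2$ on the left of (\ref{estimate_dabc}).

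The remaining terms are controlled as follows. The $\partial_tf$ pairing integrates to $G(t)-G(s)$ plus corrections involving $\partial_t\varphi_*$; the latter are rewritten via the local conservation laws (obtained by testing the equation against $\sqrt\mu\begin{bmatrix}1\\0\end{bmatrix}$, $\sqrt\mu\begin{bmatrix}0\\1\end{bmatrix}$, $v\sqrt\mu\begin{bmatrix}1\\1\end{bmatrix}$ and $\tfrac{|v|^2}{2}\sqrt\mu\begin{bmatrix}1\\1\end{bmatrix}$) in terms of divergences of fluxes of $(\mathbf{I}-\mathbf{P})f$, which after elliptic regularity for Poisson contribute only $\|(\mathbf{I}-\mathbf{P})f\|_\nu^2$ and $\|\nu^{-1/2}\Gamma(f,f)\|_2^2$. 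Testing $Lf$ against $\psi_*\in N(L)^\perp$ yields only $\|(\mathbf{I}-\mathbf{P})f\|_\nu^2$. The field terms $-q\nabla\phi\cdot\nabla_v f$ and $\tfrac{v}{2}\cdot q\nabla\phi\,f$, using smallness of $\|\nabla\phi\|_\infty$ from (\ref{Morrey}), produce the allowed error $\|w_\vartheta f\|_\infty\|\mathbf{P}f\|_2^2$. For the boundary contributions from $\langle v\cdot\nabla_xf,\psi_*\rangle$: the Neumann condition $\partial_n\varphi_{a_\pm}=0$ kills all boundary terms for the $\psi_a$ extraction, and for $\psi_b,\psi_c$ the Dirichlet condition $\varphi|_{\partial\Omega}=0$ forces $\psi_b,\psi_c|_{\partial\Omega}$ to be tangential and even in the normal component of $v$, so splitting $f|_\gamma=P_\gamma f+(\mathbf{I}-P_\gamma)f$ via (\ref{diffusef}) makes the $P_\gamma f$ contribution vanish by $v$--parity, leaving only $|(1-P_\gamma)f|_{2,+}^2$.

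The main obstacle, relative to \cite{VPBKim}, is the algebraic bookkeeping of the $5\times5$ extraction system coupling $a_+,a_-,b,c$: in the two--species case the symmetric test functions $\psi_b,\psi_c,\psi_{b,2}^{i,j}$ see $f_++f_-$, while $\psi_{a_\pm}$ see $f_\pm$ separately, which creates off--diagonal terms like $\langle b_j,a_+-a_-\rangle$ that must be absorbed. The verification that with the prescribed $\beta$'s these cross terms either cancel identically against the corresponding contributions from $\psi_{a_+}-\psi_{a_-}$ or can be bounded by $o(1)(\|a_\pm\|_2^2+\|b\|_2^2+\|c\|_2^2)+O(1)\|(\mathbf{I}-\mathbf{P})f\|_\nu^2$ is the delicate step; once done, summing the five extractions with appropriately chosen weights and absorbing $o(1)$ terms into the left--hand side yields (\ref{estimate_dabc}).
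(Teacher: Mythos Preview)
Your approach is the paper's approach and the overall strategy is correct. Two points of correction, however.

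First, your boundary analysis is imprecise. For $\psi_a$, the Neumann condition kills only the $P_\gamma f$ contribution (not all boundary terms); the $(1-P_\gamma)f$ part survives and produces the $|(1-P_\gamma)f|_{2,+}^2$ term on the right. For $\psi_c$, the $P_\gamma f$ contribution does \emph{not} vanish by Dirichlet-induced parity: on $\partial\Omega$ one has $\psi_c\propto(|v|^2-\beta_c)\sqrt\mu\,(v\cdot n)\,\partial_n\varphi_c$, which is odd (not even) in $v\cdot n$, so $\psi_c\,(v\cdot n)\,P_\gamma f$ is even and its $v$-integral is proportional to $\int(|v|^2-\beta_c)(v\cdot n)^2\mu\,dv$, which vanishes because of the choice $\beta_c=5$ in (\ref{defbeta}), not by parity. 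The Dirichlet condition is used instead for the auxiliary test $\psi_c^t\propto\sqrt\mu\,\partial_t\varphi_c$ in the control of $\|\nabla\Delta^{-1}\partial_t c\|_2$, where it makes the boundary term vanish identically.

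Second, the ``$5\times5$ system with off-diagonal terms like $\langle b_j,a_+-a_-\rangle$'' is not the obstacle you describe. By design, the symmetric tests $\psi_b,\psi_c$ see only the even combination $f_++f_-$ and extract $b,c$ cleanly, while the species-separated $\psi_{a_\pm}$ extract $a_\pm$ directly; there are no new $a$--$b$ couplings specific to two species. The genuine cross-couplings that appear are the same as in the one-species case of \cite{VPBKim} (namely $a\leftrightarrow b$ and $b\leftrightarrow c$, arising from the $\partial_t\psi_*$ corrections and the auxiliary time-derivative estimates via $\psi_*^t$), and they close by the standard $\varepsilon$-weighted triangular summing: first bound $\int\|c\|_2^2$ with error $\varepsilon\int(\|a_\pm\|_2^2+\|b\|_2^2)$, then $\int\|b\|_2^2$ with error $\int\|c\|_2^2+\varepsilon\int\|a_\pm\|_2^2$, then $\int\|a_\pm\|_2^2+\int\|\nabla\phi_f\|_2^2$ with error $\int\|b\|_2^2$. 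No additional two-species cancellation is required.
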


\begin{proof}[Proof of Proposition \ref{l2coercivity}] 
		\textit{Step 1. }  Without loss of generality we prove the result with $s=0$. We have an $L^2$-estimate from $\int_0^t \langle 2e^{\lambda_{2} t} f, (\ref{systemf}) \rangle$
		\Be\notag
		\begin{split}
			&\|e^{\lambda_{2} t}  f(t) \|_2^2 - \| f(0) \|_2^2 +  \int^{t}_{0} | e^{\lambda_{2} \tau }
			(1-P_{\gamma} ) f^{j}|_{2,+}^{2}
			\\
			& + \int_0^t  \iint_{\O \times \R^3} v \cdot \nabla \phi_f e^{2\lambda_{2} \tau} (f_+^2 - f_-^2) +
			2\int^t_0 e^{\lambda_{2} \tau} \langle f, Lf \rangle \\
			= &\ 2\int^t_0  e^{2\lambda_{2} \tau} \langle f, \Gamma(f,f) \rangle - 2\int^t_0 e^{2\lambda_{2} \tau}\int_{\O  } \nabla \phi_f \cdot \int_{\R^3} v \sqrt{\mu} (f_+ - f_-)\\
			& 
			+2 \lambda_{2}  \int_0^t \|e^{\lambda_{2} \tau}  f(\tau) \|_2^2,
		\end{split}
		\Ee 
		where 
		\Be \label{Pgamma}
	P_\g f :=  \begin{bmatrix} P_\g f_+ \\ P_\g f_- \end{bmatrix} : = \begin{bmatrix} c_{\mu}\sqrt{\mu(v)} \int_{n(x)\cdot u>0} f_+(u) \sqrt{\mu(u)} \{n(x) \cdot u\} \mathrm{d}u \\
	c_{\mu}\sqrt{\mu(v)} \int_{n(x)\cdot u>0} f_-(u) \sqrt{\mu(u)} \{n(x) \cdot u\} \mathrm{d}u \end{bmatrix}.
		\Ee 
		On the other hand multiplying $\sqrt{\mu(v)} \phi_f(t,x)$ with a test function $\psi(t,x)$ to (\ref{systemf}) and applying the Green's identity, (from the charge conservation) we obtain
		\Be\begin{split}\notag
			&
			\int_{\O} \nabla \phi_f(t,x)  \cdot \int_{\R^3}  v \sqrt{\mu}( f_+ - f_-) \dd v\dd x 
			\\ =&  - \ \int_{\O} \phi_f(t,x)   \left(\int_{\R^3}   v \cdot  \nabla_x \sqrt{\mu} (f_+ - f_- )  \dd v\right)  \dd x
			+ \iint_{\p\O \times \R^3}  \phi_f(t,x) (f_+ - f_-) \sqrt{\mu} \{n \cdot v\} \dd v\dd S_x
			\\
			= & \ \int_{\O} \phi_f(t,x) \p_\tau\left(\int_{\R^3} (f_+  -  f_-)\sqrt{\mu} \dd v\right)  \dd x
			+ \iint_{\p\O \times \R^3}  \phi_f(t,x) (f_+ - f_- ) \sqrt{\mu} \{n \cdot v\} \dd v\dd S_x .
		\end{split}\Ee
		From (\ref{null_flux}), the last boundary contribution equals zero. Now we use (\ref{smallfphi}) and deduce that 
		\Be
		\begin{split}\notag
			& \int^t_0e^{2 \lambda_{2} \tau}\int_{\O} \phi_f(t,x) \p_\tau\left(\int_{\R^3} (f_+ - f_- )(\tau)  \sqrt{\mu} \dd v\right)  \dd x \dd \tau\\
			= & \ -  \int^t_0e^{2 \lambda_{2} \tau}\int_{\O} \phi_f(t,x) \p_\tau  \Delta_x  \phi_f(\tau,x) \dd x \dd \tau\\
			= &  \  \frac{1}{2}\int^t_0e^{2 \lambda_{2} \tau}\int_{\O}  \p_\tau  |\nabla_x  \phi_f(\tau,x)|^2 \dd x \dd \tau
			\\
			= & \ \frac{1}{2}   \left(\int_{\O} e^{2 \lambda_{2} t} |\nabla_x \phi_f (t,x)|^2  \dd x\right)- \frac{1}{2}   \left(\int_{\O} |\nabla_x \phi_f (0,x)|^2  \dd x\right)\\
			& \ 
			- \lambda_{2} \int^t_0 e^{2 \lambda_{2} \tau} \int_\O |\nabla_x  \phi_f(\tau,x)|^2 \dd x \dd \tau
			.
		\end{split}
		\Ee
		
		Hence we derive 
		\Be\notag
		\begin{split}
			&\| e^{\lambda_{2} t} f(t) \|_2^2 + \| e^{\lambda_{2} t}\nabla\phi_f (t) \|_2^2  + \int_0^t  \iint_{\O \times \R^3}
			e^{2\lambda_{2} \tau}
			v \cdot \nabla \phi_f (f_+^2- f_- ^2)\\
			& +
			2C\int^t_0 \iint_{\O \times \R^3}\| e^{\lambda_{2} \tau} (\mathbf{I} - \mathbf{P}) f \|_\nu^2
			+ \int^{t}_{0} | e^{\lambda_{2} \tau }
			(1-P_{\gamma} ) f^{j}|_{2,+}^{2}
			\\
			\lesssim &\ 
			\| f(0) \|_2^2+ \| \nabla \phi_f (0) \|_2^2 +
			\int^t_0 \| e^{\lambda_{2} \tau}  \nu^{-1/2} \Gamma(f,f) \|_2^2\\
			& 
			+\{ \lambda_{2} + o(1)\} \int^t_0 \| e^{\lambda_{2} \tau} f \|_\nu^2 + \lambda_{2} \int^t_0 \| e^{\lambda_{2} \tau} \nabla_x \phi_f \|_2^2
			.
		\end{split}
		\Ee
		Now we apply Lemma \ref{dabc} and add $o(1) \times (\ref{estimate_dabc})$ to the above inequality and choose $0< \lambda_{2} \ll 1$ to conclude (\ref{completes_dyn}) except the full boundary control. 
		
		\vspace{4pt}
		
		\textit{Step 2. } Note that from (\ref{Pgamma}), $P_\gamma f_\pm = z_\pm(t,x)\sqrt{\mu(v)}$ for a suitable functions $z_\pm(t,x)$ on the boundary. Then for $\iota = +$ or $-$, for $0 < \e \ll 1$
		\Be
		\begin{split}\notag
			| P_\gamma f_\iota |_{\gamma,2}^2 
			% = & 
			% \int_{\gamma} |P_\gamma f \mu^{ 1/4}|^2\mu^{-1/2}
			%\\
			= & \ \int_{\p\O } |z_\iota(t,x)|^2 \dd x \times \int_{\R^3} \mu (v) |n(x) \cdot v| \dd v\\
			\lesssim & \  \int_{\p\O } |z_\iota(t,x)|^2 \dd x \times \int_{ \gamma_+(x) \backslash \gamma_+^\e(x)} \mu (v)^{3/2} |n(x) \cdot v| \dd v\\
			= & \  | \mathbf{1}_{ \gamma_+  \backslash \gamma_+^\e }\mu^{1/4} P_\gamma f_\iota   |_{2,+}^2.
		\end{split}
		\Ee
		Since $P_\gamma f = f - (1- P_\gamma) f$ on $\gamma_+$ we have $
		| \mathbf{1}_{ \gamma_+  \backslash \gamma_+^\e } \mu^{1/4} P_\gamma f|_{2, +}^2
		\lesssim |\mathbf{1}_{ \gamma_+  \backslash \gamma_+^\e }\mu^{1/4}  f |_{ 2,+}^2 + |(1- P_\gamma) f|_{ 2,+}^2.$ Therefore
		\Be\label{Pgamma_bound}
		\int^t_0 | P_\gamma f|_{\gamma,2}^2 \lesssim \int^t_0 |\mathbf{1}_{ \gamma_+  \backslash \gamma_+^\e }\mu^{1/4}  f |_{ 2,+}^2 +\int^t_0 |(1- P_\gamma) f|_{ 2,+}^2.
		\Ee
		Note that 
		\Be\begin{split}\notag
			&\big|[\p_t + v\cdot \nabla_x - q \nabla \phi \cdot \nabla_v] (\mu^{1/4}  f )\big|\\
			\lesssim&\ \mu^{1/4} \{ |v|| \nabla_x \phi|  f  +  |v|| \nabla_x \phi|  +  |Lf| + |\Gamma (f,f) | \}.
		\end{split}\Ee
		By the trace theorem Lemma \ref{le:ukai},  
		\Be\label{est_trans_f}
		\begin{split}
			& \int^t_0 |\mathbf{1}_{ \gamma_+  \backslash \gamma_+^\e }\mu^{1/4}  f |_{ 2,+}^2\\
			\lesssim & \ \| f_0 \|_2 + (1 + \| wf \|_\infty) \int^t_0 \| f \|_2^2 + \int^t_0\| \nabla \phi \|_2^2.
		\end{split}
		\Ee
		Adding $o(1) \times (\ref{Pgamma_bound})$ to the result of \text{Step 1} and using (\ref{est_trans_f}) we conclude (\ref{completes_dyn}).\end{proof}

\begin{proof} [Proof of Lemma \ref{dabc}]
From the Green's identity, a solution $f$ of \eqref{systemf} satisfies
%		\begin{equation} \label{weakformulation}
%		\begin{split}
%		&\iint_{\O \times \R^3} f(t) \psi(t) - \iint_{\O \times \R^3} f(s) \psi(s)
%		- \underbrace{ \int_s^t \iint_{\O \times \R^3} f \p_t\psi }_{(\ref{weakformulation})_{T}} +
%		\underbrace{ \int^t_s\int_{\gamma }\psi f (v\cdot n(x)) }_{(\ref{weakformulation})_{B}}  \\
%		& - \underbrace{ \int^t_s\iint_{\Omega \times  \R^{3}} \mathbf{P}f {v}\cdot \nabla _{x}\psi  }_{(\ref{weakformulation})_{C}} - \int^t_s\iint_{\Omega \times  \R^{3}} (\mathbf{I-P})f {v}\cdot \nabla _{x}\psi\\
%		& +  \underbrace{ q \int_s^t \iint_{\O \times \R^3} \sqrt{\mu} f \nabla_x \phi_f \cdot \nabla_v \Big[\frac{1}{\sqrt{\mu}} \psi\Big] }_{(\ref{weakformulation})_{P}}
%		\\
%		&=\int^t_s\iint_{\Omega \times \R^{3}}\psi \{ -L  (\mathbf{I}-\mathbf{P%
%		})f + \Gamma(f,f)\}-   \underbrace{q \int^t_s\iint_{\Omega \times  \R^{3}} \psi v\cdot \nabla_x \phi_f \sqrt{\mu}}_{(\ref{weakformulation})_{\phi_f}}  .  
%		\end{split}
%		\end{equation}%
		\begin{equation} \label{weakformulation}
		\begin{split}
		& \langle f(t) , \psi(t) \rangle -\langle f(s), \psi(s) \rangle
		- \underbrace{ \int_s^t \langle f  , \p_t\psi \rangle }_{(\ref{weakformulation})_{T}} +
		\underbrace{ \int^t_s\int_{\gamma } (\psi\cdot f) (v\cdot n(x)) }_{(\ref{weakformulation})_{B}}  \\
		& - \underbrace{ \int^t_s \langle \mathbf{P}f  ,{v}\cdot \nabla _{x}\psi \rangle }_{(\ref{weakformulation})_{C}} - \int^t_s \langle  (\mathbf{I-P})f , {v}\cdot \nabla _{x}\psi \rangle
		 +  \underbrace{  \int_s^t   \langle q \sqrt{\mu} f , \nabla_x \phi_f \cdot \nabla_v ( \frac{1}{\sqrt{\mu}} \psi )  \rangle}_{(\ref{weakformulation})_{P}}
		\\
		&=\int^t_s \langle \psi,  \{ -L  (\mathbf{I}-\mathbf{P%
		})f + \Gamma(f,f)\} \rangle-   \underbrace{ \int^t_s \langle \psi , q_1 v\cdot \nabla_x \phi_f \sqrt{\mu} \rangle} _{(\ref{weakformulation})_{\phi_f}}  .  
		\end{split}
		\end{equation}%
		
%		As \cite{EGKM, EGKM2} 
		We use a set of test functions:
		\Be \label{tests}
		\begin{split}
%			\psi_{a_+}  &\equiv  \begin{bmatrix}  (|v|^{2}-\beta_{a} )\sqrt{\mu }v\cdot\nabla_x\varphi _{a_+} \\ 0 \end{bmatrix} ,  
%			\psi_{a_-}  \equiv   \begin{bmatrix} 0, \\  (|v|^{2}-\beta_{a} )\sqrt{\mu }v\cdot\nabla_x\varphi _{a_-} \end{bmatrix},  \\
			\psi_{a}  &\equiv  \begin{bmatrix} - (|v|^{2}-\beta_{a} )\sqrt{\mu }v\cdot\nabla_x\varphi _{a_+} \\  - (|v|^{2}-\beta_{a} )\sqrt{\mu }v\cdot\nabla_x\varphi _{a_-} \end{bmatrix} ,  \\
			\psi^{i,j}_{b,1} &\equiv  \begin{bmatrix} (v_{i}^{2}-\beta_ b)\sqrt{\mu }\partial _{j}\varphi _{b}^{j} \\  (v_{i}^{2}-\beta_ b)\sqrt{\mu }\partial _{j}\varphi _{b}^{j} \end{bmatrix}, \quad i,j=1,2,3,   \\
			\psi^{i,j}_{b,2} &\equiv \begin{bmatrix} |v|^{2}v_{i}v_{j}\sqrt{\mu }\partial _{j}\varphi _{b}^{i}(x) \\ |v|^{2}v_{i}v_{j}\sqrt{\mu }\partial _{j}\varphi _{b}^{i}(x) \end{bmatrix},\quad i\neq j,  \\
			\psi_c &\equiv \begin{bmatrix} (|v|^{2}-\beta_c )\sqrt{\mu }v \cdot \nabla_x \varphi_{c}\\ (|v|^{2}-\beta_c )\sqrt{\mu }v \cdot \nabla_x \varphi_{c} \end{bmatrix},  \\
		\end{split}
		\Ee
		where $\varphi_{a_{\pm}}(t,x)$, $\varphi_{b}(t,x)$, and $\varphi_{c}(t,x)$ solve
		\Be\begin{split}\label{phi_abc}
			- \Delta \varphi_{a_\pm} &= a_\pm (t,x),  \quad \p_{n}\varphi_{a_\pm} \vert_{\p\O} = 0,  \\
			- \Delta \varphi_b^j &= b_j(t,x),   \ \ \varphi^j_b|_{\p\O} =0,\  \text{and}  \ - \Delta \varphi_c  = c(t,x),   \ \ \varphi_c|_{\p\O} =0,
		\end{split}
		\Ee
		and $\beta_a=10$, $\beta_b=1$, and $\beta_c= 5$ such that for all $i=1,\notag2,3,$
		\Be \label{defbeta}
		\begin{split}
			&\int_{{\R}^{3}}(|v|^{2}-\beta_a )(\frac{|v|^{2}-3}{2\sqrt 2}%
			) v_{i} ^{2}\mu(v)\dd v =0 ,\\
			&\int_{\R^3} (v_i^2 - \beta_b) \mu(v) \dd v=0 ,\\
			&\int_{\R^3} (|v|^{2}-\beta_c )v_{i}^{2}\mu(v) \dd v=0. 
		\end{split}
		\Ee

		\textit{Step 1. } Estimate of $(\ref{weakformulation})_{\phi_f}$: From (\ref{tests}) and (\ref{defbeta}), we have $(\ref{weakformulation})_{\phi_f}\equiv0$ for $\psi_{b,1}^{i,j}$, $\psi_{b,2}^{i,j}$, and $\psi_c$. For $\psi = \psi_{a}$, because from definition $\phi = \varphi_{a_+} - \varphi_{a_-}$, $(\ref{weakformulation})_{\phi_f}$ equals
		\Be \label{extra}
		\begin{split}
			%&
			(\ref{weakformulation})_{\phi_f} \big\vert_{\psi=\psi_{a}} &= \int_{\R^3} - (|v|^2- \beta_a) (v_1)^2\mu \dd v \int^t_s\int_{\O  } (\nabla \varphi_{a_+} -  \nabla \varphi_{a_-} ) \cdot \nabla \phi_f  = C_1 \int_s^t \| \nabla \phi_f \|_2^2,
%			&= C \int^t_s \| \nabla \phi_f \|_2^2 ,  \\
%(\ref{weakformulation})_{\phi_f} \big\vert_{\psi=\psi_{a_-}} &= \begin{bmatrix} 0 \\ -1 \end{bmatrix} \int_{\R^3} (|v|^2- \beta_a) (v_1)^2\mu \dd v \int^t_s\int_{\O  } \nabla \varphi_{a_-} \cdot \nabla \phi_f \\
%			(\ref{weakformulation})_{\phi_f} \big\vert_{\psi=\psi_{b},\psi_{c}} &= 0,  \\
			%=& C \| \nabla \phi_f\|_2^2. 
		\end{split}\Ee
where 
\Be \label{intcst1}
C_1 = \int_{\R^3} - (|v|^2- \beta_a) (v_1)^2\mu \dd v = 5 . 
\Ee
%		because $\varphi_a = \phi_f$ from the definitions of (\ref{smallfphi}) and (\ref{phi_abc}). \\
%Thus $ |(\ref{weakformulation})_{\phi_f} \big\vert_{\psi=\psi_{a_\pm}}  |  \lesssim \| \nabla \varphi_{a_+}  \|_2 + \| \nabla \varphi_{a_-} \|_2 $

Now we look at $(\ref{weakformulation})_{C}$. For $\psi = \psi_c$, from oddness in velocity integration and (\ref{defbeta}), $(\ref{weakformulation})_{C}$ becomes
		\Be \label{cC}
		\begin{split}
			\int^t_s \langle \mathbf{P}f, {v}\cdot \nabla
			_{x}\psi_{c} \rangle &= - C_{2} \int^t_s \|c(\tau)\|_{2}^{2},
		\end{split}	
		\Ee
		where $C_2 = 2 \int_{\mathbf{R}^3}(|v|^{2}-\beta_c )v_{i}^{2} (\frac{|v|^{2} -3}{2\sqrt2})\mu(v)dv= 20\pi^{3/2}$. 

For $\psi = \psi_a$, from oddness in velocity integration and (\ref{defbeta}), $(\ref{weakformulation})_{C}$ becomes
\Be \label{aC?}
		\begin{split}
			\int^t_s \langle \mathbf{P}f, {v}\cdot \nabla
			_{x}\psi_{a} \rangle &= - C_{1} \int^t_s \|a_+(\tau)\|_{2}^{2}+ \|a_-(\tau)\|_{2}^{2},
		\end{split}	
		\Ee
where $C_1 = 5$ as in \eqref{intcst1}.

For fixed $i,j$, we choose test function $\psi = \psi_{b,1}^{i,j}$ in (\ref{tests}) where $\b_{b}$ and $\varphi_{b}$ are defined in (\ref{defbeta}) and (\ref{phi_abc}). From oddness in velocity integration and definition of $\b_{b}$, $(\ref{weakformulation})_{C}$ in (\ref{weakformulation}) yields
		\Be \label{bC}
		\begin{split}
			(\ref{weakformulation})_{C}\vert_{\psi_{b,1}^{i,j}} &:= \int_s^t  \langle \mathbf{P}f,  v\cdot\nabla \psi_{b,1}^{i,j}  \rangle = - C_{3} \int_s^t \int_{\O} b_{i} (\p_{ij} \Delta^{-1} b_{j}), 
		\end{split}
		\Ee
		where $C_{3} :=  2 \int_{\R^{3}} (v_{i}^{2} - \b_{b})\frac{ v_{i}^{2}}{\sqrt 2 } \mu dv = 4\sqrt{\pi}$. 		
		
		Next, we try test function $\psi_{b,2}^{i,j}$ with $i \neq j$ to obtain 
		\Be \label{bC2}
		\begin{split}
			(\ref{weakformulation})_{C}\vert_{\psi_{b,2}^{i,j}} &:= \int_s^t \langle  \mathbf{P}f  , \psi_{b,2}^{i,j} \rangle = 2  \int_s^t \iint_{\O\times \R^{3}} (b\cdot \frac{v}{\sqrt 2} )\sqrt{\mu} v\cdot\nabla \psi_{b,2}^{i,j} = - C_{4} \int_s^t \int_{\O} \big(  b_{j} (\p_{ij} \Delta^{-1} b_{i}) + b_{i} (\p_{jj} \Delta^{-1} b_{i}) \big).
		\end{split}
		\Ee
		by oddness in velocity integral where $C_{4} := 14\sqrt{\pi}$. Note that the RHS of \eqref{bC} cancel out with the first term in the RHS of \eqref{bC2}, therefore combining them we get
\Be \notag
\left( \sum_{i,j}  - \frac{C_4}{C_3} \times (\ref{weakformulation})_{C}\vert_{\psi_{b,1}^{i,j}} \right) + \left( \sum_{i \neq j }(\ref{weakformulation})_{C}\vert_{\psi_{b,2}^{i,j}}  \right)    = - C_4 \sum_{i,j} \int_s^t \int_{\O} b_{i} (\p_{jj} \Delta^{-1} b_{i}) = - C_4 \int^t_s \|b(\tau)\|_{2}^{2}.
\Ee

		Estimate of $(\ref{weakformulation})_{P}$: From (\ref{tests}),
		\Be \label{extraP}
		\begin{split}
			(\ref{weakformulation})_{P} &= \int_s^t  \langle q \sqrt{\mu} f, \nabla_x \phi_f \cdot \nabla_v (\frac{1}{\sqrt{\mu}} \psi ) \rangle ,\quad  \psi = \psi_{a_\pm,b,c},  \\
			&\lesssim \int_{s}^{t} \|w_{\vartheta}f\|_{\infty} \int_{\O}  \nabla_{x}\phi_{f}\cdot\nabla_{x}\varphi_{a_\pm,b,c}  \lesssim \int_s^t \|w_{\vartheta}f(\tau)\|_{\infty} \|\mathbf{P}f(\tau)\|_{2}^{2}, 
		\end{split}
		\Ee
		by elliptic estimate $\|\nabla_{x}\varphi_{a_\pm,b,c}\|_{2} \lesssim \|\varphi_{a_\pm,b,c}\|_{H^{2}} \lesssim \|\mathbf{P}f\|_{2} $.  \\
		
		{\it Step 2}. {Estimate of} \ ${c}$ : 
%		We apply $\varphi_{c}$ to (\ref{weakformulation}). From oddness in velocity integration and (\ref{defbeta}), $(\ref{weakformulation})_{C}$ becomes,
%		\Be \label{cC}
%		\begin{split}
%			\int^t_s\iint_{\Omega \times  \R^{3}} \mathbf{P}f {v}\cdot \nabla
%			_{x}\psi_{c} &= C_{1} \int^t_s \|c(\tau)\|_{2}^{2},
%		\end{split}	
%		\Ee
%		where $\int_{\mathbf{R}^3}(|v|^{2}-\beta_c )v_{i}^{2} (\frac{|v|^{2}}{2}-\frac{3}{2})\mu(v)dv= 10\pi\sqrt{2\pi}$. 
For boundary integral $(\ref{weakformulation})_{B}$, we decompose $f_{\gamma } = P_{\gamma }f +\mathbf{1}_{\gamma_{+}}(1-P_{\gamma })f$. Then from (\ref{defbeta}) and trace theorem $|\nabla\varphi_{c}|_{2} \lesssim \|\varphi_{c}\|_{H^{2}} \lesssim \|c\|_{2}$,
		\Be \label{cB}
		\begin{split}
			&\int^t_s\int_{\gamma }\psi_{c} \cdot f (v\cdot n(x)) = \int^t_s\int_{\gamma }\psi_{c} \cdot \mathbf{1}_{\gamma_{+}} (1-P_{\gamma})f d\gamma  \\
			%&\lesssim \varepsilon \int^t_s \|\nabla_{x}\varphi_{c}(\tau)\|_{2}^{2} + \int^t_s |(1-P_{\gamma})f(\tau)|_{2,+}  \\
			&\lesssim \varepsilon\int^t_s \|c(\tau)\|_{2}^{2} + C_{\varepsilon}\int^t_s |(1-P_{\gamma})f(\tau)|^{2}_{2,+} ,\quad \varepsilon \ll 1. \\
		\end{split}
		\Ee
		If we define
		\Be \label{Re}
		\begin{split}
			Re := 
%			(\ref{weakformulation})_{P} + (\ref{weakformulation})_{\phi_{f}} + 
			 \int^t_s \langle \psi, \{ L  (\mathbf{I}-\mathbf{P%
			})f - \Gamma(f,f)\} \rangle   
			 + \int^t_s \langle (\mathbf{I-P})f , {v}\cdot \nabla _{x}\psi \rangle,
		\end{split}
		\Ee
		then from \eqref{extra}, \eqref{extraP}, elliptic estimate and Young's inequality we have
		\Be \label{c Re}
%		\int_{s}^{t} \iint_{\O\times \R^{3}} \psi_{c} 
		Re\vert_{\psi_{c}} \lesssim \varepsilon \int_{s}^{t} \|c\|_{2}^{2} + \int_{s}^{t} \|(\mathbf{I-P})f(\tau)\|_{\nu}^{2} + \int_{s}^{t} \|\nu^{-1/2}\Gamma(f,f)(\tau)\|_{2}^{2},
		\Ee
		We also use even/oddness in velocity integration, (\ref{defbeta}), and Young's inequality to estimate,
		\Be \label{cT}
		\begin{split}
			(\ref{weakformulation})_{T}\vert_{\psi_{c}} &= \int_s^t \langle f , \p_t\psi_{c} \rangle = \int_s^t\langle (\mathbf{I-P})f ,\p_t\psi_{c} \rangle \\
			&\lesssim \varepsilon \int_s^t \|\nabla\Delta^{-1}\p_{t}c(\tau)\|_{2}^{2} + \int_{s}^{t} \|(\mathbf{I-P})f(\tau)\|_{\nu}^{2}.
		\end{split}
		\Ee
		
		Now, we choose a new test function 
		$
		\psi_{c}^{t} :=  \begin{bmatrix} (\frac{|v|^2 -3}{2\sqrt 2})\sqrt{\mu } \p_{t}\varphi_{c} (t,x) \\ (\frac{|v|^2 -3}{2\sqrt 2})\sqrt{\mu } \p_{t}\varphi_{c} (t,x) \end{bmatrix}.
		$
		Note that $\p_{t}\varphi_{c}$ solves $-\Delta \p_{t}\varphi_{c} = \p_{t}c(t,x)$ with $\p_{t}\varphi_{c}(t,x)|_{\p\O} = 0$. We taking difference quotient for $\p_{t}f$ and it replace first three terms in the LHS of (\ref{weakformulation}). With help of Poincar\'e inequality $\|\p_{t}\varphi_{c}\|_{2} \lesssim \|\nabla\p_{t}\varphi_{c}\|_{2}$, we can also compute $(\ref{weakformulation})_{\phi_f} \big|_{\psi=\psi_{c}^{t}} = 0$, and
		\Be \label{ct_extra}
		\begin{split}
%			(\ref{weakformulation})_{\phi_f} \big|_{\psi=\psi_{c}^{t}} &= \int^t_s\iint_{\Omega \times  \R^{3}} (\frac{|v|^{2} -3}{\sqrt 2})\sqrt{\mu } \p_{t}\varphi_{c} v\cdot \nabla_x \phi_f \sqrt{\mu}  
%\\	&\lesssim \varepsilon \int^t_s \|\nabla\Delta^{-1}\p_{t}c(\tau)\|_{2}^{2} +\int^t_s \|a_+(\tau)\|_{2}^{2}+\int^t_s \|a_-(\tau)\|_{2}^{2},	
		(\ref{weakformulation})_{P} \big|_{\psi=\psi_{c}^{t}} &= \int_s^t \langle q \sqrt{\mu} f , \begin{bmatrix} \nabla_x \phi_f \cdot \frac{v}{\sqrt 2 } \p_{t}\varphi_{c} \\ \nabla_x \phi_f \cdot \frac{v}{\sqrt 2 } \p_{t}\varphi_{c} \end{bmatrix} \rangle    \\
			&\lesssim  \int_{s}^{t} \|w_{\vartheta}f\|_{\infty} \left( \varepsilon \|\nabla\Delta^{-1}\p_{t}c(\tau)\|_{2}^{2} +  \big(\|a_+(\tau)\|_{2}^{2} + \|a_-(\tau)\|_{2}^{2}\big) \right),	\\
		\end{split}
		\Ee
		\Be \label{ct_extra2}
		\int_s^t \langle \mathbf{P}f , v \cdot \nabla \psi_c^t \rangle + \int_s^t \langle (\mathbf{I-P})f , v \cdot \nabla \psi_c^t \rangle \lesssim  \varepsilon \int_s^t  \| \nabla \Delta^{-1} \p_t c (\tau) \|_2 ^2 + \int_s^t \| b(\tau) \|_2 ^2 + \int_s^t \| (\mathbf{I-P}) f \|_{\nu}^2.
		\Ee
		Since $\psi_{c}^{t}$ vanishes when it acts with $Lf$ and $\Gamma(f,f)$, and boundary integral $(\ref{weakformulation})_{B}$ vanishes by Dirichlet boundary condition of $\varphi_{c}$ , from (\ref{ct_extra}), \eqref{ct_extra2}, and (\ref{weakformulation}), we obtain
		\begin{equation} \label{c time}
		\begin{split}
		& \int_s^t \langle \p_t f , \psi_c^t \rangle =\int_s^t \int_{\Omega} \p_{t}\varphi_{c}(\tau,x)\partial_t c(\tau,x) dx = \int_s^t \|\nabla\Delta^{-1} \p_{t} c(\tau)\|_{2}^{2}    \\
		&\lesssim \varepsilon \int^t_s \|\nabla\Delta^{-1}\p_{t}c(\tau)\|_{2}^{2} +\int^t_s \big( \|a_+(\tau)\|_{2}^{2} + \|a_-(\tau)\|_{2}^{2} + \int_s^t \| b(\tau) \|_2 ^2 + \int_s^t \| (\mathbf{I-P}) f \|_{\nu}^2 \big).
%		+ \int_{s}^{t} \|(\mathbf{I-P})f(\tau)\|_{\nu}^{2} . \\	
		\end{split}
		\end{equation}
		
		We combine (\ref{weakformulation}), (\ref{extra}), (\ref{extraP}), (\ref{cB}), (\ref{c Re}), (\ref{cT}), and (\ref{c time}) with $\varepsilon \ll 1$ to obtain
		\Be \label{c est}
		\begin{split}
			\int^t_s \|c(\tau)\|_{2}^{2} &\lesssim G_{c}(t) - G_{c}(s) + \int_{s}^{t} \|(\mathbf{I-P})f(\tau)\|_{\nu}^{2} + \int^t_s |(1-P_{\gamma})f(\tau)|^{2}_{2,+} \\
			&\quad + \int_{s}^{t} \|\nu^{-1/2}\Gamma(f,f)(\tau)\|_{2}^{2} + \int_{s}^{t} \|w_{\vartheta}f(\tau)\|_{\infty} \|\mathbf{P}f(\tau)\|_{2}^{2}  \\
			&\quad + \varepsilon\int^t_s \big(\|a_+(\tau)\|_{2}^{2} + \|a_-(\tau)\|_{2}^{2} + \| b(\tau ) \|_2^2 \big),
		\end{split}
		\Ee
		for $\varepsilon \ll 1$ where $G_{c}(t) := \langle f(t), \psi_{c}(t) \rangle \lesssim \|f(t)\|_{2}^{2}$.  \\
		
		{\it Step 3.} {Estimate of} \ ${a}$ : From mass conservation $\int_{\O} a_\pm(t,x) dv = 0$, $\varphi_{a_\pm}$ in (\ref{phi_abc}) is well-defined. Moreover, we choose $\varphi_{a_\pm}$ so that has mean zero, $\int_{\O} \varphi_{a_\pm}(t,x) dx = 0$. Therefore, Poincar\'e inequality $\|\varphi_{a_\pm}\|_{2} \lesssim \|\nabla \varphi_{a_\pm}\|_{2}$ holds and these are also true for $\p_{t}\varphi_{a_\pm}$ which solves same elliptic equation with Neumann boundary condition.  
		
		For boundary integral $(\ref{weakformulation})_{B}$, we decompose $f_{\gamma } = P_{\gamma }f +\mathbf{1}_{\gamma_{+}}(1-P_{\gamma })f$. From Neumann boundary condition $\p_{n}\varphi_{a} = 0$ and oddness in velocity integral, $\int_{\gamma} \psi_{a} \cdot P_{\gamma}f (v\cdot n(s)) = 0$ and we obtain similar esimate as (\ref{cB}),
		\Be \label{aB}
		\begin{split}
			&\int^t_s\int_{\gamma }\psi_{a} \cdot f (v\cdot n(x)) = \int^t_s\int_{\gamma }\psi_{a} \cdot \mathbf{1}_{\gamma_{+}} (1-P_{\gamma})f d\gamma  \\
			%&\lesssim \varepsilon \int^t_s \|\nabla_{x}\varphi_{c}(\tau)\|_{2}^{2} + \int^t_s |(1-P_{\gamma})f(\tau)|_{2,+}  \\
			&\lesssim \varepsilon\int^t_s \|a(\tau)\|_{2}^{2} + C_{\varepsilon}\int^t_s |(1-P_{\gamma})f(\tau)|^{2}_{2,+} ,\quad \varepsilon \ll 1. \\
		\end{split}
		\Ee
		For $(\ref{weakformulation})_{T}$, from oddness,
		\Be \label{aT}
		\begin{split}
			(\ref{weakformulation})_{T}\vert_{\psi_{a}} &= \int_s^t \langle f, \p_t\psi_{a} \rangle = \int_s^t \iint_{\O \times \R^3} (  (b\cdot v) \begin{bmatrix} \sqrt \mu \\ \sqrt \mu \end{bmatrix} + (\mathbf{I-P})f ) \cdot \p_t\psi_{a}  \\
			&\lesssim \varepsilon \int_s^t (\|\nabla\Delta^{-1}\p_{t}a_+(\tau)\|_{2}^{2} +  \|\nabla\Delta^{-1}\p_{t}a_-(\tau)\|_{2}^{2}) + \int_{s}^{t} \|b(\tau)\|_{2}^{2}+ \int_{s}^{t} \|(\mathbf{I-P})f(\tau)\|_{\nu}^{2}.
		\end{split}
		\Ee
		Now let us estimate $\int_{s}^{t} \|\nabla\Delta^{-1}\p_{t}a_+(\tau)\|_{2}^{2} +  \|\nabla\Delta^{-1}\p_{t}a_-(\tau)\|_{2}^{2}$ which appear in (\ref{cT}) type estimate. We use new test function $\psi_{a}^{t} = \begin{bmatrix} \p_t{\varphi_{a_+}}(x)\sqrt{\mu} \\ \p_t{\varphi_{a_-}}(x)\sqrt{\mu} \end{bmatrix} $. It's easy to check 
		\Be \label{at_extra}
		\begin{split}
			(\ref{weakformulation})_{\phi_f} \big|_{\psi=\psi_a^t} &= \int^t_s\iint_{\Omega \times  \R^{3}}  q_1\sqrt{\mu } v\cdot \nabla_x \phi_f \cdot  \psi_a^t =0, 	\\
			(\ref{weakformulation})_{P} \big|_{\psi=\psi_a^t} &= \int_s^t \iint_{\O \times \R^3}  q \sqrt{\mu} f  \cdot \nabla_x \phi_f \cdot \nabla_v \begin{bmatrix} \p_t \varphi_{a_+} \\ \p_t \varphi_{a_-}  \end{bmatrix} = 0,  \\
		\end{split}
		\Ee
		and from the null condition on boundary (\ref{null_flux}), we have $ (\ref{weakformulation})_{B} \vert_{\psi =\psi_a^t } = 0 $. Moreover,
		\Be \label{extra_at2}
		\int_s^t \langle \mathbf{P} f , v \cdot \nabla_x \psi_a^t \rangle + \int_s^t \langle \mathbf{I -P} f , v \cdot \nabla_x \psi_a^t \rangle \lesssim \varepsilon \int_s^t ( \| \nabla \Delta^{-1} \p_t a_+(\tau) \|_2^2 +\| \nabla \Delta^{-1} \p_t a_-(\tau) \|_2^2 )+ \int_s^t \| b(\tau) \|_2^2 + \int_s^t \| \mathbf{I- P} f(\tau) \|_\nu^2,
		\Ee
and from \eqref{collison_invariance}
\Be \label{extra_at3} \begin{split}
\int_s^t \langle \psi_a^t , \Gamma(f,f) \rangle &= 0.
%\int_s^t\iint_{\Omega \times \mathbb R^3 } \sqrt \mu  \p_t \phi_{a_+}   \Gamma(f_+,f_- ) +  \sqrt \mu \p_t \phi_{a_-} \Gamma(f_-,f_+ )  
%\\ & \lesssim \varepsilon \int_s^t ( \| \nabla \Delta^{-1} \p_t a_+(\tau) \|_2^2 +\| \nabla \Delta^{-1} \p_t a_-(\tau) \|_2^2 ) + \int_s^t \|\nu^{-1/2} \Gamma(f,f) \|_2^2
\end{split} \Ee		
		Now taking difference quotient, we obtain from \eqref{at_extra}, \eqref{extra_at2}, and \eqref{extra_at2}, for almost $t$,
		\begin{equation} \label{a time} \begin{split}
		\int_s^t \langle \p_t f , \psi_a^t \rangle & =\int_s^t \int_{\Omega} \p_{t}\varphi_{a_+} \partial_t a_+ + \p_{t}\varphi_{a_-}\partial_t a_- dx \\ & = \int_s^t (\|\nabla\Delta^{-1} \p_{t} a_+(\tau)\|_{2}^{2} + \|\nabla\Delta^{-1} \p_{t} a_-(\tau)\|_{2}^{2} )
		  \lesssim \int_s^t \| b(\tau) \|_2^2 + \int_s^t \| \mathbf{I-P} f(\tau) \|_\nu^2 + \int_s^t \|\nu^{-1/2} \Gamma(f,f) \|_2^2.
		\end{split} \end{equation}
%		In particular, if we choose $\bar{\varphi} = 1$, we directly get $\int_{\O} \p_{t} a = 0$, so Neumann problem 
%		\begin{equation}
%		-\Delta \Phi_a =\partial _{t}a(t)\ ,\ \ \ \ \ \frac{\partial \Phi_a }{\partial n}%
%		\Big{|}_{\partial \Omega }=0, \notag %\label{poisson}
%		\end{equation}
%		is well-defined. For dual pair $(H^{1})^{\ast }\equiv(H^{1}(\Omega ))^{\ast }$ with respect to $\langle A,B \rangle=\int_{\Omega } A \cdot B dx$ for $A\in H^{1}$ and $B\in (H^1)^{\ast}$,
%		\begin{equation} \label{a time}
%		\|\nabla_x \partial_t \varphi_a \|_{2} = \|\Delta^{-1}\partial
%		_{t}a(t)\|_{H^{1}}=\|\Phi_a \|_{H^1} \lesssim \|\partial _{t}a(t)\|_{(H^{1})^{\ast }}
%		\lesssim  \|b(t)\|_{2}.  \\
%		\end{equation}
%		
		Finally we change $c$ into $a$ in (\ref{c Re}) and combine with  (\ref{weakformulation}), (\ref{extra}), \eqref{aC?}, (\ref{extraP}), (\ref{aB}), (\ref{aT}), and (\ref{a time}) with $\varepsilon \ll 1$ to obtain
		\Be \label{a est}
		\begin{split}
			&\int^t_s \|a(\tau)\|_{2}^{2} + \int^t_s \|\nabla \phi_{f}(\tau)\|_{2}^{2} \\
			&\lesssim G_{a}(t) - G_{a}(s) + \int_{s}^{t} \|(\mathbf{I-P})f(\tau)\|_{\nu}^{2} + \int^t_s |(1-P_{\gamma})f(\tau)|^{2}_{2,+} \\
			&\quad + \int_{s}^{t} \|\nu^{-1/2}\Gamma(f,f)(\tau)\|_{2}^{2} + \int_{s}^{t} \|w_\vartheta f(\tau)\|^{}_{\infty} \|\mathbf{P}f(\tau)\|_{2}^{2}  + \int^t_s \|b(\tau)\|_{2}^{2} ,  \\
		\end{split}
		\Ee
		for $\varepsilon \ll 1$ where $G_{a}(t) := \iint_{\O\times \R^{3}} f(t)\psi_{a}(t) \lesssim \|f(t)\|_{2}^{2}$.  \\
		
		{\it Step 4.} {Estimate of} \ $ {b}$ : For fixed $i,j$, we choose test function $\psi = \psi_{b,1}^{i,j}$ in (\ref{tests}) where $\b_{b}$ and $\varphi_{b}$ are defined in (\ref{defbeta}) and (\ref{phi_abc}). For boundary integration, contribution of $P_{\gamma}f$ vanishes by oddness. 
		\Be \label{bB}
		\begin{split}
			(\ref{weakformulation})_{B}\vert_{\psi_{b,1}^{i,j}} &:= \int_s^t \int_{\gamma} \psi_{b,1}^{i,j} \cdot \mathbf{1}_{\gamma_{+}} (1-P_{\gamma})f (v\cdot n(x)) \lesssim \varepsilon \int_s^t \|b(\tau)\|^{2}_{2} + \int_s^t |(1-P_{\gamma})f|_{2,+}^{2},
		\end{split}
		\Ee
		and similar as (\ref{cT}) and (\ref{c Re}), we use oddness and definition of $\b_{b}$ to vanish contribution of $a$ and $b$. We obtain
		\Be \label{bT}
		\begin{split}
			(\ref{weakformulation})_{T}\vert_{\psi_{b,1}^{i,j}} &\lesssim  \varepsilon \int_s^t \|\nabla\Delta^{-1}\p_{t} b_{j}(\tau)\|_{2}^{2} + \int_{s}^{t} \|c(\tau)\|_{2}^{2} + \int_{s}^{t} \|(\mathbf{I-P})f(\tau)\|_{\nu}^{2},
		\end{split}
		\Ee
		\Be \label{b Re}
		 Re\vert_{\psi_{b,1}^{i,j}} \lesssim \varepsilon \int_{s}^{t} \|b(\tau)\|_{2}^{2} + \int_{s}^{t} \|(\mathbf{I-P})f(\tau)\|_{\nu}^{2} + \int_{s}^{t} \|\nu^{-1/2}\Gamma(f,f)(\tau)\|_{2}^{2}.
		\Ee
		
		Next, we try test function $\psi_{b,2}^{i,j}$ with $i \neq j$.
%		to obtain 
%		\Be \label{bC2}
%		\begin{split}
%			(\ref{weakformulation})_{C}\vert_{\psi_{b,2}^{i,j}} &:= \int_s^t \iint_{\O\times \R^{3}} (b\cdot v)\sqrt{\mu} v\cdot\nabla \psi_{b,2}^{i,j} = C_{4} \int_s^t \int_{\O} \big(  b_{j} (\p_{ij} \Delta^{-1} b_{i}) + b_{i} (\p_{jj} \Delta^{-1} b_{i}) \big) 
%		\end{split}
%		\Ee
%		by oddness in velocity integral where $C_{4} := 7\sqrt{2\pi}$. 
		We also have the following three estimates using oddness of velocity integral,
		\Be \label{bB2T2Re2}
		\begin{split}
			(\ref{weakformulation})_{B}\vert_{\psi_{b,2}^{i,j}} &:= \int_s^t \int_{\gamma} \psi_{b,1}^{i,j} \cdot \mathbf{1}_{\gamma_{+}} (1-P_{\gamma})f (v\cdot n(x)) \lesssim \varepsilon \int_s^t \|b(\tau)\|^{2}_{2} + \int_s^t |(1-P_{\gamma})f|_{2,+}^{2},  \\
			(\ref{weakformulation})_{T}\vert_{\psi_{b,2}^{i,j}} &\lesssim  \varepsilon \int_s^t \|\nabla\Delta^{-1}\p_{t} b_{i}(\tau)\|_{2}^{2} + \int_{s}^{t} \|(\mathbf{I-P})f(\tau)\|_{\nu}^{2},  \\
			 Re\vert_{\psi_{b,2}^{i,j}} &\lesssim \varepsilon \int_{s}^{t} \|b(\tau)\|_{2}^{2} + \int_{s}^{t} \|(\mathbf{I-P})f(\tau)\|_{\nu}^{2} + \int_{s}^{t} \|\nu^{-1/2}\Gamma(f,f)(\tau)\|_{2}^{2}.
		\end{split}
		\Ee
		
		To obtain estimate for $\|\nabla\Delta^{-1} \p_{t} b_{j}\|_{2}$, we use a test function $\psi_{b,j}^{t} := \begin{bmatrix} \frac{v_{j}}{\sqrt 2}\sqrt{\mu } \p_{t}\varphi^{j}_{b} (t,x) \\ \frac{v_{j}}{\sqrt 2} \sqrt{\mu } \p_{t}\varphi^{j}_{b} (t,x)  \end{bmatrix}$. Note that $\p_{t}\varphi_{b}^{j}$ solves $-\Delta \p_{t}\varphi_{b}^{j} = \p_{t}b_{j}(t,x)$ with $\p_{t}\varphi_{b}^{j}(t,x)|_{\p\O} = 0$. We taking difference quotient for $\p_{t}f$ in (\ref{smallfphi}) and with help of Poincar\'e inequality, we get
		\Be \label{bt_extra}
		\begin{split}
			(\ref{weakformulation})_{\phi_f} \big|_{\psi=\psi_{b,j}^{t}} &= 0,
%			\int^t_s\iint_{\Omega \times  \R^{3}} v_{j} \sqrt{\mu } \p_{t}\varphi_{b}^{j} v\cdot \nabla_x \phi_f \sqrt{\mu}  
%			&\lesssim \varepsilon \int^t_s \|\nabla\Delta^{-1}\p_{t}b_{j}(\tau)\|_{2}^{2} +\int^t_s \|a(\tau)\|_{2}^{2},	
\\
			(\ref{weakformulation})_{P} \big|_{\psi=\psi_{b,j}^{t}} &= \int_s^t \iint_{\O \times \R^3} \sqrt{\mu} f \cdot \begin{bmatrix} \p_{j} \phi_f \cdot \p_{t}\varphi_{b}^{j} \\  \p_{j} \phi_f \cdot \p_{t}\varphi_{b}^{j} \end{bmatrix}    \\
			&\lesssim  \int^t_s  \| w_\vartheta f \|_\infty (\varepsilon \|\nabla\Delta^{-1}\p_{t}b_{j}(\tau)\|_{2}^{2} +  \|a_+(\tau)\|_{2}^{2}+ a_-(\tau)\|_{2}^{2} ).	\\
		\end{split}
		\Ee
		Moreover,
		\Be \label{bt_extra2}
		\int_s^t \langle \mathbf{P}f , v \cdot \nabla \psi_{b,j}^t \rangle + \int_s^t \langle (\mathbf{I-P})f , v \cdot \nabla \psi_{b,j}^t \rangle \lesssim  \varepsilon \int_s^t  \| \nabla \Delta^{-1} \p_t b_j (\tau) \|_2 ^2 + \int_s^t \| (a_+(\tau) \|_2 ^2+\| a_-(\tau) \|_2 ^2 +  \| c(\tau) \|_2 ^2 ) +\int_s^t \| (\mathbf{I-P}) f \|_{\nu}^2.
		\Ee
		Since $\psi_{b,j}^{t}$ vanishes when it acts with $Lf$ and $\Gamma(f,f)$, and boundary integral $(\ref{weakformulation})_{B}$ vanishes by Dirichlet boundary condition of $ \p_t \varphi_{b}^j$ , from (\ref{bt_extra}), \eqref{bt_extra2}, and (\ref{weakformulation}), we obtain
		\begin{equation} \label{b time}
		\begin{split}
		&\int_s^t \int_{\Omega} \p_{t}\varphi_{b}^{j}(\tau,x)\partial_t b_{j}(\tau,x) dx = \int_s^t \|\nabla\Delta^{-1} \p_{t} b_{j}(\tau)\|_{2}^{2}    \\
		&\lesssim \varepsilon \int^t_s \|\nabla\Delta^{-1}\p_{t}b_{j}(\tau)\|_{2}^{2} +\int^t_s  (\|a_+(\tau)\|_{2}^{2}+\|a_-(\tau)\|_{2}^{2} +\|c(\tau)\|_{2}^{2} ) + \int_{s}^{t} \|(\mathbf{I-P})f(\tau)\|_{\nu}^{2} . \\	
		\end{split}
		\end{equation}
		
		Now we combine (\ref{weakformulation}), (\ref{extra}), (\ref{extraP}), (\ref{bC}), (\ref{bB}), (\ref{bT}), (\ref{b Re}), (\ref{bC2}), and (\ref{bB2T2Re2}) for all $i,j$ with proper constant weights. In particular, we note that RHS of (\ref{bC}) is cancelled by the first term on the RHS of (\ref{bC2}). Therefore,
		\Be \label{b est}
		\begin{split}
			&\int^t_s \|b(\tau)\|_{2}^{2}  = - \sum_{i,j} \int_s^t \int_{\O} b_{i} (\p_{jj} \Delta^{-1} b_{i})   \\
			&\lesssim G_{b}(t) - G_{b}(s) + \int_{s}^{t} \|(\mathbf{I-P})f(\tau)\|_{\nu}^{2} + \int^t_s |(1-P_{\gamma})f(\tau)|^{2}_{2,+} \\
			&\quad + \int_{s}^{t} \|\nu^{-1/2}\Gamma(f,f)(\tau)\|_{2}^{2} + \int_{s}^{t} \|w_{\vartheta}f(\tau)\|^{2}_{\infty} \|\mathbf{P}f(\tau)\|_{2}^{2}  \\
			&\quad + \int^t_s \|c(\tau)\|_{2}^{2} + \varepsilon\int^t_s (\|a_+(\tau)\|_{2}^{2} + \|a_-(\tau)\|_{2}^{2}) , \quad G_{b}(t) \lesssim \|f(t)\|_{2}^{2},\quad \varepsilon \ll 1. \\
		\end{split}
		\Ee
		
		Finally we combine (\ref{c est}), (\ref{a est}), and (\ref{b est}) with $\varepsilon \ll 1$ to conclude (\ref{estimate_dabc}).

\end{proof}

	\section{Global Existence and Exponential decay}
The following time-dependent interpolation estimate is crucial in the proof of Theorem \ref{main_existence}. 
	\begin{lemma}\label{lemma_interpolation}Assume $\O \subset \R^3$ with a smooth boundary $\p\O$. For $0< D_1<1$, $0< D_2<1$, and $\Lambda_0>0$,
		\Be\begin{split}\label{phi_interpolation}
			%&
			\|\nabla^2_x \phi(t )\|_{L^\infty (\O)}%\\
			%=& \sup_x\sup_{\hat{\omega} \in \mathbb{S}^2}|\big(\hat{\omega} \cdot \nabla\big) \nabla \phi (t,x)|\\
			\lesssim_{\O, D_1, D_2}
			%& 
			e^{D_1 \Lambda_0t}\|  \phi(t)\|_{C^{1,1-D_1}(\O)}
			+ e^{- D_2 \Lambda_0t}\|  \phi(t)\|_{C^{2, D_2}(\O)} \ \ for \ all \  t \geq 0
		\end{split}\Ee
		%\Be
		%\|  \nabla^2\phi (t) \|_{L^\infty (\Omega)} 
		%\lesssim_\O [ \nabla \phi (t, \cdot) ]_{C^{0, \frac{1 - (1+ \delta_2) \delta_1}{1-\delta_1}} (\O)}^{1-\delta_1}[\phi(t, \cdot)]_{C^{2,\delta_2} (\O)}^\delta_1.
		%\Ee
	\end{lemma}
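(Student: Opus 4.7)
The plan is to treat this as a pure interpolation statement between $C^{1,1-D_1}$ and $C^{2,D_2}$ for the function $\phi(t,\cdot)$ and to tune the small parameter appearing in a standard difference-quotient argument so that its positive and negative powers produce exactly the factors $e^{D_1 \Lambda_0 t}$ and $e^{-D_2 \Lambda_0 t}$.

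First I would fix $t\geq 0$, a point $x\in\bar\Omega$, and a unit vector $v\in\R^3$. Because $\Omega$ is bounded with a smooth boundary, there exists $h_0 = h_0(\Omega)>0$ such that for every $0<h\leq h_0$ one can pick a unit vector $v = v(x,h)$ (equal to a chosen direction in the interior, tilted inward near $\p\Omega$) with $x+sh v\in\bar\Omega$ for all $s\in[0,1]$. Writing $g:= \p_i \phi(t,\cdot)$ for one Cartesian component of $\nabla\phi$, the identity
\Be\notag
\nabla g(x)\cdot v
= \frac{g(x+hv)-g(x)}{h} - \frac{1}{h}\int_0^h \big(\nabla g(x+ sv)-\nabla g(x)\big)\cdot v \,\dd s
\Ee
combined with the two H\"older estimates $|g(x+hv)-g(x)|\leq \|g\|_{C^{0,1-D_1}} h^{1-D_1}$ and $|\nabla g(x+sv)-\nabla g(x)|\leq \|\nabla g\|_{C^{0,D_2}}s^{D_2}$ yields
\Be\label{pointwise_interp}
|\nabla^2\phi(t,x)| \;\lesssim\; \|\phi(t)\|_{C^{1,1-D_1}(\Omega)}\, h^{-D_1} \;+\; \|\phi(t)\|_{C^{2,D_2}(\Omega)}\, h^{D_2},
\Ee
uniformly in $x$ and in the direction $v$ (taking the supremum over unit $v$ on the left).

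The second step is to choose $h$ depending on $t$. For $t$ large enough that $e^{-\Lambda_0 t}\leq h_0$, I would set $h = e^{-\Lambda_0 t}$ in \eqref{pointwise_interp}; then $h^{-D_1} = e^{D_1 \Lambda_0 t}$ and $h^{D_2} = e^{-D_2 \Lambda_0 t}$, giving \eqref{phi_interpolation}. For the remaining bounded range $0\leq t \leq \Lambda_0^{-1}\log(1/h_0)$, the estimate is trivial since $\|\nabla^2\phi(t)\|_\infty \lesssim \|\phi(t)\|_{C^{2,D_2}(\Omega)}\leq e^{D_2\Lambda_0/\Lambda_0 \cdot \log(1/h_0)}\cdot e^{-D_2 \Lambda_0 t}\|\phi(t)\|_{C^{2,D_2}(\Omega)}$, so the $C^{2,D_2}$-term alone absorbs everything up to a constant depending on $\Omega$, $D_2$, $\Lambda_0$. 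Combining the two ranges yields \eqref{phi_interpolation}.

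The only subtle point is justifying the difference-quotient step at boundary points $x\in\p\Omega$, where the segment $x+[0,h]v$ must stay inside $\bar\Omega$. Since $\Omega$ is a $C^3$ (in particular $C^1$) domain, for sufficiently small $h$ the inward unit normal $-n(x)$ (or any vector forming an obtuse angle with $n(x)$) does the job, so we can always supply an admissible $v$ once $h \leq h_0(\Omega)$. Apart from this geometric detail the argument is a standard Kolmogorov-type interpolation and no further estimate is required; there is no real obstacle, merely the bookkeeping that matches $h = e^{-\Lambda_0 t}$ to the exponential weights demanded in \eqref{phi_interpolation}.
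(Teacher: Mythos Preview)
Your core idea—the Kolmogorov-type difference-quotient interpolation with the scale $h=e^{-\Lambda_0 t}$—is exactly what the paper does. The difference is in how the boundary is handled: the paper first invokes a standard $C^{2,D_2}$ extension of $\phi(t,\cdot)$ from $\Omega$ to a larger set $\Omega_1\supset\bar\Omega$ (Lemma 6.37 in Gilbarg--Trudinger), with $\|\bar\phi\|_{C^{1,1-D_1}(\Omega_1)}\lesssim\|\phi\|_{C^{1,1-D_1}(\Omega)}$ and similarly for $C^{2,D_2}$. After extension the segment $\overline{xy}$ with $|x-y|=e^{-\Lambda_0 t}$ may be taken in \emph{any} direction $\hat\omega\in\mathbb{S}^2$, so the supremum over $\hat\omega$ of $|(\hat\omega\cdot\nabla)\nabla\bar\phi(t,x)|$ directly yields $\|\nabla^2\bar\phi\|_\infty$, and there is no case split in $t$. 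Your direct-in-$\Omega$ route avoids the extension theorem at the cost of a geometric argument at $\partial\Omega$.

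Two loose ends in your version. First, the identity you write bounds only $|\nabla(\partial_i\phi)(x)\cdot v|$ for the one admissible direction $v=v(x,h)$; ``taking the supremum over unit $v$ on the left'' is not available since near $\partial\Omega$ only an inward cone of directions keeps the segment in $\bar\Omega$. You need to pick three uniformly linearly independent directions in that cone (interior-cone condition for smooth domains) and invert the resulting $3\times 3$ matrix with a bound depending only on $\Omega$. Second, your handling of $0\le t\le\Lambda_0^{-1}\log(1/h_0)$ via the trivial bound $\|\nabla^2\phi\|_\infty\le\|\phi\|_{C^{2,D_2}}$ leaves a constant depending on $\Lambda_0$, contrary to the $\lesssim_{\Omega,D_1,D_2}$ in the statement. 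The fix is to run the same interpolation with $h=h_0$: in this range $e^{D_1\Lambda_0 t}\ge 1$ and $e^{-D_2\Lambda_0 t}>h_0^{D_2}$, so $h_0^{-D_1}\|\phi\|_{C^{1,1-D_1}}+h_0^{D_2}\|\phi\|_{C^{2,D_2}}$ is already dominated by the right-hand side of \eqref{phi_interpolation} with constant $h_0(\Omega)^{-D_1}$.
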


	\begin{proof}%[\textbf{Proof of Lemma \ref{lemma_interpolation}}]
	Let ${\O}_1$ be an open bounded subset of $\R^3$ containing the closure $\bar{\O}$. Suppose ${\phi (t)} \in C^{2,D_2}(\O)$. From a standard extension theorem (e.g. see Lemma 6.37 of \cite{GT} in page 136) there exists a function $\bar{\phi}(t) \in C^{2,D_2} (\O_1)$ and $\bar{\phi}(t)\equiv 0$ in $\R^3 \backslash \O_1$ such that $\phi(t) \equiv \bar{\phi}(t)$ in $\O$ and 
	\Be\label{Holder_extension}
	\| \bar{\phi}(t)\|_{C^{1,1-D_1} (\O_1)} \leq C_{\O,\O_1,D_1, D_2} \| \phi(t)\|_{C^{1,1-D_1} (\O)} \ \ and \ \ 
	\| \bar{\phi}(t)\|_{C^{2,D_2} (\O_1)} \leq C_{\O,\O_1,D_1, D_2} \| \phi(t)\|_{C^{2,D_2} (\O)},
	\Ee	
	where $C_{\O,\O_1,D_1, D_2}$ does not depend on $\phi(t)$ and $t$.

		Choose arbitrary points $x,y$ in $\R^3$. For $0\leq s\leq 1$, $(1-s)x + sy \in \overline{xy}$. Note that 
		\Be\begin{split}\notag
			& [(y-x) \cdot \nabla]\nabla \bar{\phi} (t,(1-s)x + sy)  \\
			=& \ \frac{ 
				[(y-x) \cdot \nabla]\nabla \bar{\phi} (t,(1-s)x + sy) -  [(y-x) \cdot \nabla]\nabla \bar{\phi} (t,x)
			}{|(1-s) x + s y - x|^{D_2}}|(1-s) x + s y - x|^{D_2}\\
			&+   \Big(\frac{y-x}{|y-x|} \cdot \nabla\Big)\nabla \bar{\phi} (t,x) |y-x|\\
			=& \ O( |x-y|^{ 1+ D_2} ) s^{D_2} [\nabla^2 \bar{\phi} (t)]_{C^{0, D_2}}
			+  \Big(\frac{y-x}{|y-x|} \cdot \nabla\Big)  \nabla \bar{\phi} (t,x) |y-x|.
		\end{split}\Ee
		Taking an integration on $s \in [0,1]$, we obtain that 
		\Be\begin{split}\label{C_2_alpha}
			&\left| \Big(\frac{y-x}{|y-x|} \cdot \nabla\Big)  \nabla \bar{\phi} (t,x)\right|\\
			\leq & \  \frac{1}{|y-x|}
			\left| \int_0^1
			[(y-x) \cdot \nabla]\nabla \bar{\phi} (t,(1-s)x + sy) \dd s \right|
			+\frac{1}{1+ D_2} |x-y|^{D_2} [\nabla^2 \bar{\phi}(t)]_{C^{0, D_2}}.
		\end{split}\Ee
		
		On the other hand, from an expansion along $s$,
		\[
		\nabla \bar{\phi}(t,y)- \nabla \bar{\phi}(t,x)  =  \int^1_0 [(y-x) \cdot \nabla]\nabla \bar{\phi} (t,(1-s)x + sy) \dd s.
		\]
		We plug this identity into (\ref{C_2_alpha}) and deduce that for $0<D_1<1$
		\Be\begin{split}\label{C2_interpolation}
			&\left|\left(\frac{x-y}{|x-y|} \cdot \nabla \right) \nabla \bar{\phi} (t,x)\right|\\ 
			\leq&  \ \frac{|\nabla \bar{\phi} (t,x) - \nabla \bar{\phi}(t,y)|}{|x-y|}
			+\frac{1}{1+ D_2}  |x-y|^{  {D_2}}    [\nabla^2 \bar{\phi}(t)]_{C_x^{0, {D_2}}}\\
			\leq& \  \frac{1}{|x-y|^{D_1}} [\nabla \bar{\phi} (t)]_{C^{0, 1-D_1}}+ \frac{1}{1 + D_2}  |x-y|^{  {D_2}}    [\nabla^2 \bar{\phi}(t)]_{C_x^{0, {D_2}}}.
		\end{split}\Ee
		Now let us choose 
		\[
		|x-y| = e^{- \Lambda_0 t}, \ \ \ \hat{\omega}:=\frac{x-y}{|x-y|} \in \mathbb{S}^2.
		\]  
		From (\ref{C2_interpolation})
		\Be\notag
		% \sup_x\sup_{\hat{\omega} \in \mathbb{S}^2}
		|\big(\hat{\omega} \cdot \nabla\big) \nabla \bar{\phi} (t,x)|
		\leq e^{D_1 \Lambda_0 t} [\nabla \bar{\phi} (t)]_{C^{0, 1- D_1}} + \frac{1}{1+ D_2}
		e^{- D_2 \Lambda_0 t}   [\nabla^2 \bar{\phi}(t)]_{C_x^{0, {D_2}}}.
		\Ee
		Taking supremum in $x$ and $\hat{\o}$ to the above inequality and using $
		\|\nabla^2_x \bar{\phi}(t )\|_{L^\infty_x } 
		=  \sup_x\sup_{\hat{\omega} \in \mathbb{S}^2}|\big(\hat{\omega} \cdot \nabla\big) \nabla \bar{\phi} (t,x)|$, we get
		\Be\begin{split}\notag
			%&
			\|\nabla^2_x \bar{\phi}(t )\|_{L^\infty (\O_1)}%\\
			%=& \sup_x\sup_{\hat{\omega} \in \mathbb{S}^2}|\big(\hat{\omega} \cdot \nabla\big) \nabla \phi (t,x)|\\
			\leq
			%& 
			e^{D_1 \Lambda_0t}[\nabla_x \bar{\phi}(t)]_{C^{0,1-D_1}(\O_1)}
			+ e^{- D_2 \Lambda_0t}[\nabla^2 \bar{\phi}(t)]_{C^{0, D_2}(\O_1)}.
		\end{split}\Ee
		Finally from (\ref{Holder_extension}) and the above estimate we conclude (\ref{phi_interpolation}).%\lesssim& e^{\delta_1\lambda_0t}[\nabla_x \phi(t)]_{C^{0,1-\delta_1}}
	\end{proof}

	Now we are ready to prove the global-in-time result. 
	
	%combine altogether and estimate a global-in-time bound of $\| w_{\vartheta} f(t) \|_\infty$.
	
	\begin{proof}[\textbf{Proof of Theorem \ref{main_existence}}]
		%\Be
		%\begin{split}
		%&f(t,x,v)\\
		%=  & \  \mathbf{1}_{\max\{t_1(t,x,v) , t-T\}\leq0}
		%e^{- \nu_0 t } f(0, X(0;t,x,v), V(0;t,x,v))\\
		%& +  \mathbf{1}_{\max\{ t_1(t,x,v),0\}\leq t-T}
		%e^{- \nu_0( t-T) } f(t-T, X(t-T;t,x,v), V(t-T;t,x,v))\\
		%& +  \mathbf{1}_{\max\{ t-t_1,0\}\leq t_1(t,x,v)}
		%e^{- \nu_0( t-t_1) } f(t_1, X(t_1;t,x,v), V(t_1;t,x,v))
		%\end{split}
		%\Ee

	\textit{Step 1. }	For $0< M \ll 1$ and $0<\delta_*\ll 1$, we first assume that an initial datum satisfies
		\Be
		\begin{split}
		\label{initial_M}
		\| w_\vartheta f_0 \|_\infty  + \| w_{\tilde{\vartheta}}  f_0 \|_p + \|w_{\tilde{\vartheta}} \alpha_{f_0,\e}^\beta \nabla_{x,v} f_0 \|_p   \leq \delta_*  M,    \\
		    \| w_{\tilde{\vartheta}} \nabla_v f_0\|_{L^3(\O \times \R^3)}+ \| \nabla_x^2 \phi_f(0) \|_{\infty}< \infty.
		\end{split}\Ee
We will choose $M, \delta_*$ later. For the sake of convenience we choose a large constant $L\gg \max\left(M, \| \nabla_x^2 \phi_f (0) \|_{\infty}\right)$. In order to use the continuation argument along the lines of the local existence theorem, Theorem \ref{local_existence}, we set 
		\Be\label{global_T}
		\begin{split}
			T 
			=& \sup_{t } \Big\{ t \geq 0:     \| e^{\lambda_\infty s} w_\vartheta f(t) \|_\infty  + \| w_{\tilde{\vartheta}} f (t) \|_p    \leq  M ,
			\\
			& \ \ \ \ \ \ \ \   \ \  \ \ \ \ \ \ \    \text{and}  \  \|w_{\tilde{\vartheta}} \alpha_{f,\e}^\beta \nabla_{x,v} f (t) \|_p^p
			+ \int^t_0  | w_{\tilde{\vartheta}}\alpha_{f,\e}^\beta \nabla_{x,v} f(t) |_{p,+}^p  
			<\infty, \\
			& \ \ \ \ \ \ \ \   \ \  \ \ \ \ \ \ \   \text{and}  \ \| \nabla_v f(t) \|_{L^3_x(\O) L^{1+ \delta}_v (\R^3)}< \infty, \\
			& \ \ \ \ \ \ \ \   \ \  \ \ \ \ \ \ \   \text{and}  \  \| \nabla_x^2 \phi_f (t) \|_{\infty}\leq L
			\Big\}.
		\end{split}
		\Ee
		%\[
		%\sup_{0  \leq s \leq t} \| \nabla_x^2 \phi_f (s) \|_\infty \leq M
		%\]
		Here for fixed $\delta \ll 1$, we choose $\lambda_{\infty}$ such that 
		\Be\label{condition_lambda_infty}
		\begin{split}
			 &  %  \max\big\{2CM, \frac{\sqrt{M}}{10},  
			 20\sqrt{   C C_2 M }
			% \big\}
			  \leq \lambda_\infty \leq  \min \left( \frac{\lambda_2}{2}, \frac{\nu_0}{4}\right), %\\
%			\quad &\text{where}\quad \max\big\{2CM, \frac{\sqrt{M}}{10}\big\} \leq \min \left\{ \frac{\nu_0}{2}, 
%			\frac{\lambda_2 - \delta}{2}
%			\right\} \leq \frac{\lambda_2}{2}, \quad 
 \ \  for \   M \ll 1,
		\end{split}
		\Ee
		where $\lambda_{2}$ is obtained in Proposition \ref{l2coercivity}. Note that from (\ref{Morrey}) the condition (\ref{delta_1/lamdab_1}) holds for $M \ll 1$.
		
		\hide

		\vspace{4pt}

		\textit{Step 2. } From Proposition \ref{prop_W1p}, (\ref{initial_M}), and (\ref{phi_c,gamma_M}), we have for $t \leq T$ %if $\sup_t\geq 0\| w f (t)\|_\infty\ll 1$
		%
		%Then 
		\hide\[
		e^{ {C (1 %+ \sup_{0 \leq s \leq t} \| w f(s) \|_\infty
				+ \sup_{0 \leq s \leq t} \| \nabla^2 \phi_f(s) \|_\infty) t} } \leq e^{C(1+ (C_1 M)^{1/p})t}\delta_* M.
		\]

		\Be\label{est_W}
		\begin{split}
			&\| f(t) \|_p^p
			+\| \alpha_f^\beta 
			\p f (t)\|_p^p \\
			& 
			\lesssim  \   e^{ {C (1 %+ \sup_{0 \leq s \leq t} \| w f(s) \|_\infty
					+ \sup_{0 \leq s \leq t} \| \nabla^2 \phi_f(s) \|_\infty) t} }
			\{
			\| f(0) \|_p^p
			+\| \alpha_f^\beta 
			\p f (0)\|_p^p \}
			.\end{split} \Ee

		\unhide
		\Be\label{growth_W1p}
		%\begin{split}
		%& 
		\| \alpha_f^\beta \nabla_{x,v} f(t) \|_p ^p
		+ \| f(t) \|_p^p
		% + \| \nabla \phi_f (t) \|_p^p
		%\\
		\leq %& \ 
		C_p e^{ C_p(1+ (C_1 M)^{1/p}) t} %\times \{ \| \alpha_f^\beta \nabla_{x,v} f_0 \|_p+ \| f_0\|_p
		% \}
		\times \delta_* M  .
		%\end{split}
		\Ee

		\vspace{4pt}
		
		\unhide
		
		\textit{Step 2. } We claim that 
		\Be\label{decay_C2}
	\sup_{0 \leq t \leq T}	e^{  \frac{\lambda_\infty}{2}t} \| \nabla_x^2 \phi_f(t) \|_\infty \leq
		C_2 M , \ \   with   \ \ C_2 := C_{  \O }+ (C_1 C_p )^{1/p}\delta_*. 
		\Ee
		Here $C_{\O }$ appears in (\ref{Morrey}), and $C_1$ in (\ref{phi_c,gamma}), and $C_p$ in Proposition \ref{prop_W1p}.

		\hide
		We claim that, for $\delta_1>0$, $\lambda_0>0$, and $3<p<6$ 
		\Be\label{decay_phi_C2}
		\begin{split}
			&\| \nabla_x^2 \phi_f(t) \|_\infty\\
			\leq & \  e^{\delta_1 \lambda_0 t}  \| w f (t)\|_\infty   \\
			& \  +   e^{- (1- \frac{3}{p}) \lambda_0 t} 
			e^{ C\int^t_0 1+ \| w f(s) \|_\infty  + \| \phi_f (s) \|_{C^2}  \dd s} \times  \| \alpha^\beta \nabla_{x,v} f_0 \|_p
		\end{split}
		\Ee

		From (\ref{global_T}), (\ref{phi_c,gamma}) and (\ref{growth_W1p}), for all $p>3$
		\Be\label{growth_C2gamma}
		\| \phi_f (t) \|_{C^{2, 1- \frac{3}{p}} (\bar{\O})} \lesssim e^{CMt} M.
		\Ee\unhide
		
		From (\ref{Morrey}) and (\ref{global_T}), for $0 \leq t \leq T$, for all $D_1>0$
		\Be\begin{split}\label{decay_C0,alpha}
			%[\nabla_x \phi_f (t)]_{C_x^{0,1- D_1}} \leq& 
			\|  \phi_f (t)\|_{C ^{1,1- D_1}(\bar{\O})}  
			\leq  C_{ \O} \| w_{\vartheta} f (t) \|_\infty 
			\leq  C_{  \O}M e^{-  {\lambda_\infty}  t}.
		\end{split}\Ee
		
		On the other hand, from Proposition \ref{prop_W1p}, replacing $f^{\ell}$ and $f^{\ell +1 }$ by $f$ in (\ref{final_est_G}), (\ref{mid}), and by Gronwall's inequality and  (\ref{initial_M}), %(\ref{phi_c,gamma_M}), (\ref{phi_c,gamma_M}), 
		 we derive that for $0\leq t \leq T$ %if $\sup_t\geq 0\| w f (t)\|_\infty\ll 1$
		%
		%Then 
		\hide\[
		e^{ {C (1 %+ \sup_{0 \leq s \leq t} \| w f(s) \|_\infty
				+ \sup_{0 \leq s \leq t} \| \nabla^2 \phi_f(s) \|_\infty) t} } \leq e^{C(1+ (C_1 M)^{1/p})t}\delta_* M.
		\]

		\Be\label{est_W}
		\begin{split}
			&\| w_{\tilde{\vartheta}} f(t) \|_p^p
			+\| \alpha_f^\beta 
			\p f (t)\|_p^p \\
			& 
			\lesssim  \   e^{ {C (1 %+ \sup_{0 \leq s \leq t} \| w f(s) \|_\infty
					+ \sup_{0 \leq s \leq t} \| \nabla^2 \phi_f(s) \|_\infty) t} }
			\{
			\| f(0) \|_p^p
			+\| \alpha_f^\beta 
			\p f (0)\|_p^p \}
			.\end{split} \Ee

		\unhide
		\Be\label{growth_W1p}
		\begin{split}
			& 
			\| f(t) \|_p^p+\| w_{\tilde{\vartheta}}  \alpha_{f,\e}^\beta \nabla_{x,v} f(t) \|_p ^p 
			+ \int^t_0 | w_{\tilde{\vartheta}}  \alpha_{f,\e}^\beta \nabla_{x,v} f(s)  |_{p,+} ^p
			% + \| \nabla \phi_f (t) \|_p^p
			\\
			\leq  &    \ 
			C_p e^{ C_p(1+ L ) t} %\times \{ \| \alpha_f^\beta \nabla_{x,v} f_0 \|_p+ \| f_0\|_p
			% \}
			\times( \delta_* M)^p  .
		\end{split}
		\Ee
		Now we use Lemma \ref{lemma_apply_Schauder}, from (\ref{phi_c,gamma}), for $p>3$ and $0\leq t \leq T$,
		%From (\ref{phi_c,gamma}) and (\ref{global_T}), for $p>3$ and $0\leq t \leq T$,
		\Be\label{phi_c,gamma_M}
		%\|  \nabla^2\phi_f(t) \|_\infty \leq
		 \| \phi_f (t) \|_{C^{2, 1-\frac{3}{p}} (\bar{\O})} \leq 
		(C_1   C_p )^{1/p}  e^{ \frac{1}{p}C_p(1+ L ) t} %\times \{ \| \alpha_f^\beta \nabla_{x,v} f_0 \|_p+ \| f_0\|_p
			% \}
			\times \delta_*  M  .
		\Ee

	\hide	
		On the other hand, from (\ref{phi_c,gamma}) and (\ref{growth_W1p}), 
		\Be\label{phi_c,gamma_M}
		\| \phi_f (t) \| _{C^{2, 1- \frac{3}{p}} (\bar{\O})} \leq  (C_1 C_p \delta_*  M )^{1/p}   e^{\frac{C_p(1+ (C_1 M)^{1/p})}{p}t}.
		\Ee 
		%With (\ref{apply_Schauder}), from (\ref{global_T}), we have for $t \leq T$
	\unhide

	Finally we use an interpolation between $C^{1,1-D_1}(\bar{\O})$ and $C^{2, 1- \frac{3}{p}}(\bar{\O})$ and derive an estimate of $C^2(\bar{\O})$: Applying Lemma \ref{lemma_interpolation} and (\ref{phi_interpolation}) with $D_2 = 1-\frac{3}{p}$, from (\ref{growth_W1p}) and (\ref{decay_C0,alpha}), we derive that for all $0< D_1<1$, $3<p<6$, $\Lambda_0>0$, and $0 \leq t \leq T$,
		\Be 
		\begin{split}\label{apply_Lemma_interpolation}
			  \| \nabla_x^2 \phi_f(t) \|_\infty 
			\leq   e^{- [\lambda_\infty - D_1 \Lambda_0] t} C_{ \O} M  + e^{- [(1- \frac{3}{p}) \Lambda_0
			-\frac{1}{p}C_p(1+ L ) 
			] t} 
			(C_1   C_p )^{1/p}  %e^{ \frac{1}{p}C_p(1+ L ) t} %\times \{ \| \alpha_f^\beta \nabla_{x,v} f_0 \|_p+ \| f_0\|_p
			% \}
		 \delta_*  M  .
			%
			%
			% (C_1 C_p \delta_*  )^{1/p}   e^{\frac{C_p(1+ (C_1 M)^{1/p})}{p}t}
			% M^{1/p}
			%. 
		\end{split}
		\Ee 
		Then we choose 
		\Be\label{choice_lambda_0_delta_1}
		\Lambda_0 =  
			\frac{\frac{\lambda_\infty}{2} + \frac{C_p}{p}
			(1+ L)}{1- \frac{3}{p}}  \ \  and  \ then    \ D_1= \frac{\lambda_\infty}{2\Lambda_0}.
		\Ee
		In conclusion we have, for all $0 \leq t \leq T$, 
		\[  \| \nabla_x^2 \phi_f(t) \|_\infty 
			\leq
			e^{- \frac{\lambda_\infty}{2} t} [C_\O+(C_1   C_p )^{1/p}   
		 \delta_*  ]M .
		\]
		As long as $M \ll L$ then $ \| \nabla_x^2 \phi_f(t) \|_\infty\leq L$ for all $0 \leq t \leq T$ and hence the claim (\ref{decay_C2}) holds.

		%Be
		%\lambda_0 = \frac{  \lambda_\infty}{1- 3/p}  \ \ \text{and} \  \ \delta_1\leq \frac{\lambda_\infty}{2\lambda_0}.
		%\Ee

		\hide

		Let us choose 
		\Be
		\lambda_0:= \frac{\frac{\lambda}{2} + C(1+ 2M)}{1- 3/p} \ \ \ \text{and} \ \ \ \delta_1:= \frac{\lambda}{2 \lambda_0}.
		\Ee
		Then 
		\Be
		\begin{split}
			\| \nabla_x^2 \phi_f(t) \|_\infty&\leq e^{\delta_1 \lambda_0 t} e^{-\lambda t}M + 
			e^{- (1- \frac{3}{p}) \lambda_0 t}  e^{C(1+ 2M) }\frac{M}{2}\\
			&\leq e^{- \frac{\lambda}{2}}\frac{3M}{2}.
		\end{split}\Ee
		
		\unhide
		\vspace{4pt}
		
		\hide
		
		\textit{Step 4. } We claim that there exist $T>0$ and $k_{0} >0$ such that for all $k\geq  k_{0}$ and for all $(t,x,v) \in [0,T] \times \bar{\O} \times \R^{3}$, we have 
		\Be
		\int_{\prod_{j=1}^{k-1} \mathcal{V}_{j}} \mathbf{1}_{\{ t^{k} (t,x,v,u^{1}, \cdots , u^{k-1}) >0 \}} \dd \Sigma_{k-1}^{k-1} \lesssim_{\O} \Big\{\frac{1}{2}\Big\}^{-k/5}.
		\Ee
		Here we define 
		\Be\label{mathcal_V}
		\mathcal{V}_j: = \{v^j \in \R^3: n(x^j) \cdot v^j >0\},
		\Ee
		and
		\Be\begin{split}
			%d\Sigma _{l} &=&\{\Pi _{j=l+1}^{k-1}d\sigma _{j}\}\times\{\tilde{w}(v_{l})d\sigma
			%_{l}\}\times \Pi _{j=1}^{l-1} d\sigma _{j}
			\label{measure} %\\
			d\Sigma _{l}^{k-1}(s) &= \{\Pi _{j=l+1}^{k-1}\dd\sigma _{j}\}\times\{
			e^{
				-\int^{t_l}_s
				\nu_\phi 
			}
			\tilde{w}(v_{l})\dd\sigma _{l}\}\times \Pi
			_{j=1}^{l-1}\{{{
					e^{
						-\int^{t_j}_{t_{j+1}}
						\nu_\phi 
					}
					%e^{\nu (v_{j})(t_{j+1}-t_{j})} 
					\dd\sigma _{j}}}\}.
			%\notag %\\
			%d\Sigma _{l}^r &=&\{\Pi _{j=l+1}^{k-1}d\sigma _{j}\}\times\{e^{\nu
			%(v_{l})(t_{l+1}-t_{l})}\tilde{w}(v_{l})w(v_{l})r( t_{l+1},x_{l+1},v_{l} ) d\sigma_l\}\notag\\
			%&& \  \times \Pi
			%_{j=1}^{l-1}\{{{e^{\nu (v_{j})(t_{j+1}-t_{j})} d\sigma _{j}}}\}.
		\end{split} \Ee 
		The proof of the claim is a modification of a proof of Lemma 14 of \cite{GKTT1}.
		
		For $0<\delta\ll 1$ we define
		\Be\label{V_j^delta}
		\mathcal{V}_{j}^{\delta} := \{ v^{j } \in \mathcal{V}^{j} : |v^{j} \cdot n(x^{j})| > \delta, \ |v^{j}| \leq \delta^{-1} \}.
		\Ee 
		
		Choose   
		\Be\label{large_T}
		T= \frac{2}{\delta^{2/3} (1+ \| \nabla \phi \|_\infty)^{2/3}}.
		\Ee
		We claim that 
		\Be\label{t-t_lowerbound}
		|t^j -t^{j+1}|\gtrsim   \delta^3, \ \ \text{for} \ v^j \in \mathcal{V}^\delta_j,  \ 0 \leq t\leq T, \ 0 \leq t^j.
		\Ee
		
		For $j \geq 1$
		\Bes
		&&\Big| \int^{t^{j+1}}_{t^{j}} V(s;t^{j}, x^{j}, v^{j}) \dd s   \Big|^{2}\\
		&=& |x^{j+1} -x^{j}|^{2}\\
		&\gtrsim& |(x^{j+1} -x^{j}) \cdot n(x^{j})|\\
		&=&\Big| \int^{t^{j+1}}_{t^{j }}  V(s;t^{j},x^{j},v^{j}) \cdot  n(x^{j})  \dd s 
		\Big|\\
		&=&\Big| \int_{t^{j}}^{t^{j+1}}  
		\Big(
		v^{j} - \int^{s}_{t^{j}} \nabla \phi (\tau, X(\tau;t^j,x^j,v^j))    \dd \tau
		\Big)\cdot   n(x^{j}) 
		\dd s 
		\Big|\\
		&\geq& |v^{j} \cdot n(x^{j})| |t^{j}-t^{j+1}|
		- \Big|
		\int^{t^{j+}}_{t^{j }} \int^{s}_{t^{j }}  \nabla \phi (\tau, X(\tau; t^{j},x^{j},v^{j})) \cdot n(x^{j})\dd \tau \dd s 
		\Big|.
		\Ees
		Here we have used the fact if $x,y \in \p\O$ and $\p\O$ is $C^2$ and $\O$ is bounded then $|x-y|^2\gtrsim_\O |(x-y) \cdot n(x)|$.

		Hence
		\Be\label{lower_tb}
		\begin{split}
			&|v^{j} \cdot n(x^{j})| \\
			& \lesssim \frac{1}{|t^{j} - t^{j+1}| } \Big| \int^{t^{j+1}}_{t^{j}}
			V(s;t^{j},x^{j},v^{j}) \dd s
			\Big|^{2}\\
			& \ \  + 
			\frac{1}{|t^{j} - t^{j+1}| } \Big| \int^{t^{j+1}}_{t^{j}} 
			\int^{s}_{t^{j}} 
			\nabla \phi (\tau, X(\tau;t^{j},x^{j},v^{j}) )\cdot n(x^{j})\dd \tau  \dd s
			\Big|\\
			& \lesssim 
			|t^{j} - t^{j+1}|   \big\{ |v^j|^2 + |t^j - t^{j+1}|^3 \|\nabla \phi\|^2_\infty
			% \max_{t^{j+1} \leq s  \leq t^{j}} |V(s;t^{j},x^{j},v^{j})|^{2}
			\\
			& \ \ \ \ \ \ \ \  \ \ \ \ \ \  \ \ \ \     +    \frac{1}{2}\sup_{t^{j+1} \leq \tau \leq t^{j}} 
			|  \nabla \phi (\tau, X(\tau;t^{j},x^{j},v^{j}) )\cdot n(x^{j})|
			\big\}.
		\end{split}
		\Ee
		For $v^j \in \mathcal{V}^\delta_j$, $0 \leq t\leq T$, and $t^j\geq 0$,
		\Be\notag
		|v^j \cdot n(x^j)|\lesssim |t^j -t^{j+1}|
		\{
		\delta^{-2} + T^3 \| \nabla \phi \|_\infty^2 + \| \nabla \phi \|_\infty
		\}.
		\Ee
		If we choose $T$ as (\ref{large_T}) then we can prove (\ref{t-t_lowerbound}).

		{\color{red}Complete the proof by modifying the proof of Lemma 14 of \cite{GKTT1}}. 
		
		\vspace{4pt}
		
		\unhide
		
		%\textit{Step 5. }
		%	\Be \begin{split}
		%	&\lesssim \|wf\|_{\infty} + \Big\|\int_{\R^{3}} \nabla_{x} f\sqrt{\mu} dv \Big\|_{L^{p}(\O)}  \\
		%	&\lesssim  \|wf\|_{\infty} + \Big( \int_{\R^{3}} \Big| \frac{\sqrt{\mu}}{\alpha^{\b}(x,v)} \Big|^{\frac{p}{p-1}} dv \Big)^{\frac{p-1}{p}} \|\alpha^{\b}\nabla_{x}f\|_{p}  \\
		%	&\lesssim e^{t C_{p}(f,\phi_{f})},
		%\end{split}
		%\Ee
		%where we have applied Proposition \ref{prop_int_alpha} with $\b \frac{p}{p-1} < 1$. Now, let us %choose 
		%\[
		%\delta_{2} = 1-\frac{3}{p}, \quad \lambda_{0} > \frac{1}{\delta_{2}} C_{p}(f,\phi_{f}).
		%\]
		%From (\ref{morrey}), $\|\nabla_{x}\phi_{f}\|_{C^{0,1-\frac{3}{p_{1}}}} \lesssim \|wf\|_{\infty} \lesssim e^{-\lambda t}$ holds for any $p_{1} > 3$. We choose $p_{1} \gg 3$ so that $\frac{3}{p_{1}}\lambda_{0} < \lambda$. Finally we apply (\ref{phi_interpolation}) to conclude
		%\Be\begin{split}
		%	%&
		%	\|\nabla^2_x \phi(t )\|_{L^\infty_x}%\\
		%=& \sup_x\sup_{\hat{\omega} \in \mathbb{S}^2}|\big(\hat{\omega} \cdot \nabla\big) \nabla \phi (t,x)|\\
		%	&\leq
		%& 
		%	e^{\frac{3}{p_{1}}\lambda_0t}[\nabla_x \phi(t)]_{C^{0,1-\frac{3}{p_{1}}}_x}
		%	+ e^{-(1-\frac{3}{p}) \lambda_0t}[\nabla^2 \phi(t)]_{C_x^{0, 1-\frac{3}{p}}}  \\
		%	&\lesssim e^{ -\lambda_{1} t } ,
		%\end{split}\Ee
		%where
		%\[
		%	\lambda_{1} := \min\{ \lambda-\frac{3}{p}\lambda_{0}, (1-\frac{3}{p})\lambda_{0} - C_{p}(f,\phi_{f}) \}.	
		%\]

		\vspace{4pt}
		
		\textit{Step 3. } We claim that there exists $T_\infty\gg1$ such that, for $N\in \mathbb{N}$, $t \in [NT_\infty, (N+1)T_\infty]$, and $(N+1)T_\infty \leq T$, 
		\Be\begin{split}
			&\| w_\vartheta f (t) \|_\infty \\
			\leq&  \   (t - NT_\infty ) e^{-\frac{3}{4}\nu_0 (t-NT_\infty)} \| w_\vartheta f (NT_\infty) \|_\infty 
			+o(1) \sup_{NT_\infty \leq s \leq t}e^{- \frac{3}{4}\nu_0  (t-s)} \| w_\vartheta f(s) \|_\infty
			\\
			&+ C_{T_\infty} \int^t_{NT_\infty} e^{- \frac{3}{4}\nu_0  (t-s)} \| f(s) \|_{L^2_{x,v}} \dd s \\
			&+C_{T_\infty}  \int^t_{NT_\infty} e^{- \frac{3}{4}\nu_0  (t-s)} \| \nabla \phi_f (s) \|_\infty \dd s.
		\end{split}\label{decay_time_interval}
		\Ee
		For the sake of simplicity we present a proof of (\ref{decay_time_interval}) for $N=0$. The proof for $N >0$ can be easily obtained by considering $f(NT_\infty)$ as an initial datum.  
		
		As (\ref{h}) we define $h(t,x,v): = w_\vartheta f(t,x,v)$. Then $h$ solves (\ref{fell_local}) and (\ref{bdry_local}) with exchanging all $(h^{\ell}, h^{\ell+1}, \phi^\ell)$ to $(h,h, \phi_f)$. We define 
		\Be\label{nu_w}
	\begin{split}
		\nu_{\phi_f, w_\vartheta } (t,x,v) &:   = \begin{bmatrix} \nu_{\phi_f, w_\vartheta , +}  & 0 \\ 0 & \nu_{\phi_f, w_\vartheta,- }  \end{bmatrix} =  \begin{bmatrix} \nu(v) + \frac{v}{2} \cdot \nabla \phi_f - \frac{\nabla_x \phi_f \cdot \nabla_v w_{\vartheta}}{w_{\vartheta}}  & 0 
		\\ 0 & \nu(v) - \frac{v}{2} \cdot \nabla \phi_f + \frac{\nabla_x \phi_f \cdot \nabla_v w_{\vartheta}}{w_{\vartheta}} 
		\end{bmatrix}.		\end{split} 		\Ee
		From (\ref{global_T}) and (\ref{Morrey}), for $0 \leq t \leq T$ 
		\Be\begin{split}\label{lower_nu_phi_f,w}
			\nu_{\phi_f,w_\vartheta, \pm}\geq&  \ \big\{\nu_0- \frac{\| \nabla \phi_f \|_\infty}{2}  - 2 \vartheta \| \nabla \phi_f \|_\infty  \big\}\langle v\rangle \\
			\geq & \ \big\{\nu_0 - (\frac{1}{2} -2 \vartheta)M \big\}   \langle v\rangle\\
			\geq & \ \frac{4 \nu_0}{5} \langle v\rangle. 
		\end{split}\Ee
		Then $h$ solves (\ref{duhamel_local}) along the trajectory with deleting all superscriptions of $\ell$ and $\ell+1$ and exchanging $\nu^\ell$ to $\nu_{\phi_f, w_\vartheta}$ and with new $g$
		\Be\label{g_global}
		g: = - q_1 v\cdot \nabla \phi_f \sqrt{\mu} + \Gamma (\frac{h}{w_\vartheta}, \frac{h}{w_\vartheta}).
		\Ee
We define a stochastic cycles for $\iota =  +$ or $-$ as in \eqref{iota},
		\Be\notag
		(t_{l,\iota}(t,x,v,v_1, \cdots, v_{l-1}),x_{l,\iota}(t,x,v,v_1, \cdots, v_{l-1})),
		\Ee
		by deleting all superscriptions in (\ref{cycle}) and (\ref{cycle_ell}). Then by deleting all superscriptions of $\ell$ and $\ell+1$ from (\ref{stochastic_h^ell}), we obtain the bound for $h_\iota$:
		\Be \label{globalfirstexp}
		\begin{split}
			& |h_\iota  (t,x,v)|\\
			\leq & \   \| e^{- \frac{3}{4}\nu_0 t} h_0 \|_\infty
%			\\
%			& 
			+ O(k) \sup_{0 \leq s \leq t} \| e^{- \frac{3}{4}\nu_0 (t-s)} h  (s) \|_\infty^2
%			\\
%			& 
			+ O(k)  \int^t_0  \| e^{-\frac{3}{4}\nu_0 (t-s)}   \nabla   \phi_{\frac{h}{w_{\vartheta}}} (s) \|_\infty \dd s \\
			&  + \Big\{ \frac{1}{2} \Big\}^{k/5} \sup_{0 \leq s \leq t} \| e^{- \frac{3}{4}\nu_0 (t-s)} h  (s) \|_\infty \\
			& + \underbrace{ \int^{t}_{\max\{t _{1,\iota}, 0 \}}
			e^{ - \frac{3}{4}\nu_0 (t-s)}\int_{
				\R^3} \mathbf{k}_{\varrho} (V_\iota(s;t,x,v),u)
			|h  (s, X_\iota (s;t,x,v), u)|
			\dd u
			\dd s }_{\eqref{globalfirstexp}_1}\\
			& +\underbrace{ O(k) \sup_l  \int^{t_{l,\iota} }_{\max\{ t_{l+1,\iota} , 0  \}}   e^{ - \frac{3}{4}\nu_0 (t-s)} 
			\int_{\mathbb R^3} \int_{\mathbb R^3}
			\mathbf{k}_{\varrho} (V_\iota(s;t_l,x_l,v_l),u) |h (s, X_\iota (s;t_l,x_l, v_l)  ,u )| |n(x_l)\cdot v_l| \sqrt{\mu(v_l ) } \dd u
			\dd v_l
			\dd s  }_{\eqref{globalfirstexp}_2}.
		\end{split}
		\Ee

		For any large $m\gg 1$ we define
		\Be\label{k_m}
		\mathbf{k}_{\varrho,m} (v,u) = \mathbf{1}_{|v-u| \geq \frac{1}{m}, |v| \leq m} \mathbf{k}_\varrho (v,u),
		\Ee
		such that $\sup_v \int_{\R^3} | \mathbf{k}_{\varrho,m} (v,u) - \mathbf{k}_\varrho (v,u) | \dd u \lesssim \frac{1}{m}$, %\downarrow 0$ as $m \uparrow \infty$. 
		and $|\mathbf{k}_{\varrho,m} (v,u)| \lesssim_m 1$.
		
		Furthermore we split the time interval as, for each $\ell, l$
		\Be\label{time_splitting_l_ell}
		\begin{split}
			\{\max\{  t _{l+1,\iota}, 0  \} \leq s\leq  t  _{l,\iota}\}  
			=
			\{\max\{  t _{l+1,\iota}, 0  \} \leq s\leq  t _{l,\iota}- \delta\} 
			\cup \{  t _{l,\iota} - \delta \leq s\leq  t _{l,\iota}\} ,
		\end{split}\Ee
		where we choose a small constant $0<\delta \ll_k 1$ later in (\ref{choice_m_delta}). 
		
		For $\eqref{globalfirstexp}_1$, we have
		\Be \label{gfirstexp1} \begin{split} 
		\eqref{globalfirstexp}_1 \le & \int_{\max{ \{ t_{1,\iota},0 \} }}^{t-\delta} +  \int_{t-\delta }^t 
		\\ \lesssim &  \int_{\max{ \{ t_{1,\iota},0 \} }}^{t-\delta} e^{ - \frac{3}{4}\nu_0 (t-s)}\int_{
				\R^3} \mathbf{k}_{\varrho} (V_\iota(s;t,x,v),u)
			|h  (s, X_\iota (s;t,x,v), u)|
			\dd u
			\dd s +  \delta  \sup_{0 \leq s \leq t} \| e^{- \frac{3}{4}\nu_0 (t-s)} h  (s) \|_\infty
		\\ \lesssim &   \int_{\max{ \{ t_{1,\iota},0 \} }}^{t-\delta} e^{ - \frac{3}{4}\nu_0 (t-s)}\int_{
				\R^3} \mathbf{k}_{\varrho,m} (V_\iota(s;t,x,v),u)
			|h  (s, X_\iota (s;t,x,v), u)|
			\dd u
			\dd s 
			\\ & + ( \frac{1}{m} +  \delta )  \sup_{0 \leq s \leq t} \| e^{- \frac{3}{4}\nu_0 (t-s)} h  (s) \|_\infty .
		\end{split} \Ee
		
		Now for $\eqref{globalfirstexp}_2$ we separate into several cases:
		
		\textbf{Case 1:} For $|v_l | > \frac{m}{2}$, we have $ | V_\iota(s;t_l,x_l,v_l) |  > \frac{m}{4}$, so 
		\[
		\int_{|v_l| > \frac{m}{2} } \int_{\mathbb R^3 } \mathbf{k}_{\varrho} (V_\iota(s;t_l,x_l,v_l),u) | (n(x_l)\cdot v_l | \sqrt{\mu(v_l ) } \dd u
			\dd v_l \lesssim \frac{1}{m}.
		\]
		Thus
		\[ \begin{split}
		\int^{t_{l,\iota} }_{\max\{ t_{l+1,\iota} , 0  \}}   e^{ - \frac{3}{4}\nu_0 (t-s)} 
			& \int_{ |v_l| > \frac{m}{2}} \int_{\mathbb R^3}
			\mathbf{k}_{\varrho} (V_\iota(s;t_l,x_l,v_l),u) |h (s, X_\iota (s;t_l,x_l, v_l)  ,u )| |n(x_l)\cdot v_l| \sqrt{\mu(v_l ) } \dd u
			\dd v_l
			\dd s 
			 \\ \lesssim & \frac{1}{m}  \sup_{0 \leq s \leq t} \| e^{- \frac{3}{4}\nu_0 (t-s)} h  (s) \|_\infty. 
		\end{split} \]
		
		\textbf{Case 2:} For $|v_l | \le \frac{m}{2}$, $|u| > m $, we have $  | V_\iota(s;t_l,x_l,v_l) - u  | > \frac{m}{4}$ so \[
 \sup_{|v_l | \le \frac{m}{2} } \int_{ |u| > m }  \mathbf{k}_{\varrho} (V_\iota(s;t_l,x_l,v_l),u) | (n(x_l)\cdot v_l | \sqrt{\mu(v_l ) } \dd u \lesssim \frac{1}{m}.
		\]
		Thus
		\[ \begin{split}
		 \int^{t_{l,\iota} }_{\max\{ t_{l+1,\iota} , 0  \}}   e^{ - \frac{3}{4}\nu_0 (t-s)} 
			& \int_{ |v_l| \le \frac{m}{2}} \int_{ |u | > m }
			\mathbf{k}_{\varrho} (V_\iota(s;t_l,x_l,v_l),u) |h (s, X_\iota (s;t_l,x_l, v_l)  ,u )| |n(x_l)\cdot v_l| \sqrt{\mu(v_l ) } \dd u
			\dd v_l
			\dd s 
		\\ 	\lesssim &  \frac{1}{m}  \sup_{0 \leq s \leq t} \| e^{- \frac{3}{4}\nu_0 (t-s)} h  (s) \|_\infty .
		\end{split} \]
		
		\textbf{Case 3:} For $|v_l | \le \frac{m}{2} $ and $|u| \le m $, we split the time integration as $ \int^{t_{l,\iota} }_{\max\{ t_{l+1,\iota} , 0  \}} =  \int^{t_{l,\iota} -\delta }_{\max\{ t_{l+1,\iota} , 0  \}}  +  \int^{t_{l,\iota} }_{t_{l,\iota} -\delta }$ and use \eqref{k_m} to conclude that
		\Be \label{gfirstexp2} \begin{split}
		\eqref{globalfirstexp}_2
		  \lesssim  & O(k) \sup_l \int^{t_{l,\iota} - \delta }_{\max\{ t_{l+1,\iota} , 0  \}}   e^{ - \frac{3}{4}\nu_0 (t-s)} 
			\\ & \times  \int_{ |v_l| \le m} \int_{ |u | \le m }
			\mathbf{k}_{\varrho,m} (V_\iota(s;t_l,x_l,v_l),u) |h (s, X_\iota (s;t_l,x_l, v_l)  ,u )| |n(x_l)\cdot v_l| \sqrt{\mu(v_l ) } \dd u
			\dd v_l
			\dd s  \\ &  +O(k)  ( \frac{1}{m} +  \delta )  \sup_{0 \leq s \leq t} \| e^{- \frac{3}{4}\nu_0 (t-s)} h  (s) \|_\infty
		\\ 	\lesssim & O_m(k) \sup_l  \int^{t_{l,\iota} - \delta }_{\max\{ t_{l+1,\iota} , 0  \}}   e^{ - \frac{3}{4}\nu_0 (t-s)} 
			 \int_{ |v_l| \le m} \int_{ |u | \le m }
			 |h (s, X_\iota (s;t_l,x_l, v_l)  ,u )| |n(x_l)\cdot v_l| \sqrt{\mu(v_l ) } \dd u
			\dd v_l
			\dd s  \\ &  +O(k) ( \frac{1}{m} +  \delta )  \sup_{0 \leq s \leq t} \| e^{- \frac{3}{4}\nu_0 (t-s)} h  (s) \|_\infty.
	\end{split}	\Ee
	
	Combining \eqref{globalfirstexp}, \eqref{gfirstexp1}, and \eqref{gfirstexp2} we get
	
	\Be\label{double_teration}
		\begin{split}
			& |h_\iota  (t,x,v)|\\
			\leq & \  \| e^{- \frac{3}{4}\nu_0 t} h_0 \|_\infty
%			\\
%			& 
			+ O(k) \sup_{0 \leq s \leq t} \| e^{- \frac{3}{4}\nu_0 (t-s)} h  (s) \|_\infty^2
%			\\
%			& 
			+ O(k)  \int^t_0  \| e^{-\frac{3}{4}\nu_0 (t-s)}   \nabla   \phi_{\frac{h}{w_{\vartheta}}} (s) \|_\infty \dd s \\
			&+ \left\{O(k)  (\delta + \frac{1}{m})   + \Big\{ \frac{1}{2} \Big\}^{k/5}\right\}  \sup_{0 \leq s \leq t} \| e^{- \frac{3}{4}\nu_0 (t-s)} h  (s) \|_\infty \\
			& + \ \int^{t-\delta}_{\max\{t _{1,\iota}, 0 \}}
			e^{ - \frac{3}{4}\nu_0 (t-s)}\int_{
				\R^3} \mathbf{k}_{\varrho,m} (V_\iota(s;t,x,v),u)
			|h  (s, X_\iota (s;t,x,v), u)|
			\dd u
			\dd s\\
			& +O_m(k) \sup_l  \int^{t_{l,\iota} -\delta}_{\max\{ t_{l+1,\iota} , 0  \}}   e^{ - \frac{3}{4}\nu_0 (t-s)}\\
			& \ \ \ \ \  \ \ \ \   \times
			\int_{|u| \leq m} \int_{|v_l| \leq m} 
			|h (s, X_\iota (s;t_l,x_l, v_l)  ,u )| \dd v_l
			\dd u
			\dd s .
		\end{split}
		\Ee

%		Now for $h_+$ and $h_-$ separately, following (\ref{stochastic_h^ell}) with an extra decomposition of (\ref{k_m}), and the fact that $ \int_{|v_l| \ge m } \int \mathbf{k}_{\varrho} (v,u ) du  \mu (v_l) n (x_l ) \cdot v_l dv_l   \lesssim \frac{1}{m}$ (for details see \cite{Guo10}), we derive that
		
		Now for $|h  (s, X_\iota (s;t,x,v), u)|$ we use similar bounds for $|h_+(s,X_\iota (s;t,x,v) ,u ) | $ and $|h_-(s,X_\iota (s;t,x,v) ,u ) | $ separately and add them together to get		
	%	Note that $|h  (s, X_\iota (s;t,x,v), u)|$ has the similar bound as
		\hide
		\Be
		\begin{split}
			&|h_\iota  (s, X_\iota (s;t,x,v), u)| 
			\\ =& |h_+(s,X_\iota (s;t,x,v) ,u ) | +|h_-(s,X_\iota (s;t,x,v) ,u ) |   \\
			\leq & \  \| e^{- \frac{\nu_0}{2} t} h_0 \|_\infty\\
			& + O(k) \sup_{0 \leq s \leq t} \| e^{- \frac{\nu_0}{2} (t-s)} h  (s) \|_\infty^2\\
			& + O(k) \int^s_0  \| e^{-\frac{\nu_0}{2} (t-s)}  \nabla   \phi_{\frac{h}{w}} (s) \|_\infty   \\
			&+ \left\{O(k) \left(\delta + \frac{1}{m}\right) + \Big\{ \frac{1}{2} \Big\}^{k/5}\right\}   \sup_{0 \leq s \leq t} \| e^{- \frac{\nu_0}{2} (t-s)} h  (s) \|_\infty \\
			& + \ \int^{t-\delta}_{\max\{t _1, 0 \}}
			e^{ - \frac{\nu_0}{2} (t-s)}\int_{
				|u| \leq m}
			|h  (s, X (s;t,x,v), u)|
			\dd u
			\dd s\\
			& +O(k) \int^{t_l -\delta}_{\max\{ t^{\ell-1} , 0  \}}   e^{ - \frac{\nu_0}{2} (t-s)}\\
			& \ \ \ \ \  \ \ \ \ \  \  \times
			\int_{|u| \leq m} \int_{|v_l| \leq m} 
			|h (s, X (s;t_l,x_l, v_l)  ,u )| \dd v_l
			\dd u
			\dd s .
		\end{split}
		\Ee\unhide
		\Be\label{double_teration_s}
		\begin{split}
			   |h & (s, X_\iota(s;t,x,v), u)| 
			 \\ =  & |h_+(s,X_\iota (s;t,x,v) ,u ) | +|h_-(s,X_\iota (s;t,x,v) ,u ) |\\
			\leq &  \ \| e^{- \frac{3}{4}\nu_0 s} h_0 \|_\infty
%			\\
%			& 
			+ O(k) \sup_{0 \leq s^\prime \leq s} \| e^{- \frac{3}{4}\nu_0 (s-s^\prime)} h (s^\prime) \|_\infty^2
%			\\
%			& 
			+ O(k)  \int^s_0 \| e^{-\frac{3}{4}\nu_0 (s-s^\prime)}  \nabla   \phi_{\frac{h}{w_{\vartheta}}}  (s^\prime) \|_\infty \dd s^\prime \\
			&+ \left\{O(k) \left(\delta + \frac{1}{m}\right) + \Big\{ \frac{1}{2} \Big\}^{k/5}\right\}   \sup_{0 \leq s^\prime \leq s} \| e^{- \frac{3}{4}\nu_0 (s-s^\prime)} h (s^\prime) \|_\infty \\
			& + O(m) \int^{s-\delta}_{\max\{t^\prime_{1,+}, 0 \}}
			e^{ - \frac{3}{4}\nu_0 (s-s^\prime)}\int_{
				|u^\prime| \leq m}
			|h_+  (s^\prime, X_+ (s^\prime;s,X_\iota(s;t,x,v),u), u^\prime)|
			\dd u^\prime
			\dd s^\prime
			\\
			& +  O(m) \int^{s-\delta}_{\max\{t^\prime_{1,-}, 0 \}}
			e^{ - \frac{3}{4}\nu_0 (s-s^\prime)}\int_{
				|u^\prime| \leq m}
			|h_-  (s^\prime, X_- (s^\prime;s,X_\iota(s;t,x,v),u), u^\prime)|
			\dd u^\prime
			\dd s^\prime
			\\
			& +O_m(k) \sup_{l,l^\prime}   \int^{t^\prime_{l^\prime,+} -\delta}_{\max\{ t^{\prime}_{l^\prime+1,+}, 0  \}}   e^{ - \frac{3}{4}\nu_0 (s-s^\prime)}
%			\\
%			& \ \ \ \ \  \ \ \ \    \times
			\int_{|u^\prime| \leq m} \int_{|v^\prime_{l^\prime}| \leq m} 
			|h_+ (s^\prime, X_\iota(s^\prime;t^\prime_{l^\prime}, x^\prime_{l^\prime}, v^\prime_{l^\prime})  ,u^\prime )| \dd v^\prime_{l^\prime}
			\dd u^\prime
			\dd s^\prime
			\\
			& +O_m(k) \sup_{l,l^\prime}   \int^{t^\prime_{l^\prime,-} -\delta}_{\max\{ t^{\prime}_{l^\prime+1,-}, 0  \}}   e^{ - \frac{3}{4}\nu_0 (s-s^\prime)}
%			\\
%			& \ \ \ \ \  \ \ \ \    \times
			 \int_{|u^\prime| \leq m} \int_{|v^\prime_{l^\prime}| \leq m} 
			|h_- (s^\prime, X_\iota(s^\prime;t^\prime_{l^\prime}, x^\prime_{l^\prime}, v^\prime_{l^\prime})  ,u^\prime )| \dd v^\prime_{l^\prime}
			\dd u^\prime
			\dd s^\prime ,
		\end{split}
		\Ee  
		where 
		\Be\notag
		\begin{split} t^\prime_{l^\prime,+} &= t_{l^\prime,+} (s, X_\iota(s;t,x,v), u,v^\prime_1, \cdots, v^\prime_{l^\prime-1}),\\
		t^\prime_{l^\prime,-} &= t_{l^\prime,-} (s, X_\iota(s;t,x,v), u,v^\prime_1, \cdots, v^\prime_{l^\prime-1}),\\
			x^\prime_{l^\prime,+} &= x_{l^\prime,+} (s, X_\iota(s;t,x,v), u,v^\prime_1, \cdots, v^\prime_{l^\prime-1})
			\\
			x^\prime_{l^\prime,-} &= x_{l^\prime,-} (s, X_\iota(s;t,x,v), u,v^\prime_1, \cdots, v^\prime_{l^\prime-1}).
		\end{split}
		\Ee
		
		\hide

		From (\ref{small_k}) for $k\gg 1$
		\Be
		\mathbf{1}_{t_{1}^\ell >0}   %   e^{-  \int^t_{t_1}     \nu_\phi }   %e^{-\nu (v)(t-t_{1})}
		%|w r(t_1,x_{1},v)|
		\frac{
			e^{-  \int^t_{t_1^\ell}     \nu^\ell }
			%e^{-\nu
			%(v)(t-t_{1})}
		}{\tilde{w}_{\varrho}(V^\ell (t_1^\ell))}\int_{\prod_{j=1}^{k-1}\mathcal{V}_{j}}  (\ref{h5}) \leq o(1)\max_{ l \geq 0 } \| h^{\ell-l} \|_\infty
		\Ee

		%Recall (\ref{k_estimate}) and the Grad estimate \cite%
		%{gl} for hard potent ial, $0\leq \kappa \leq 1,$
		%\%begin{equation}\notag
		%|\mathbf{k}(u,v)| \ \lesssim \ \{|v-u|^{\kappa }+|v-u|^{-2+\kappa }\}e^{-\frac{1}{8}%
		%|v-u|^{2}-\frac{1}{8}\frac{(|v|^{2}-|u|^{2})^{2}}{|v-u|^{2}}}\lesssim \frac{e^{-\frac{1}{10}%
		%|v-u|^{2}-\frac{1}{10}\frac{(|v|^{2}-|u|^{2})^{2}}{|v-u|^{2}}} }{|v-u|^{2-\kappa }} .
		%\label{eq:grad_estimate}
		%\end{equation}%
		From Lemma 3 in \cite{Guo10} for $\varrho>0$ and $-2\varrho<  \vartheta <2\varrho$ and $\zeta\in\mathbb{R}$, we have \begin{equation} 
		\int_{\mathbb{R}^{3}}
		\mathbf{k} (v,u)
		\frac{\langle
			v\rangle ^{\zeta }e^{\vartheta |v|^{2}}}{\langle u\rangle ^{\zeta }e^{\vartheta
				|u|^{2}}}\mathrm{d}u \ \lesssim \ \langle v\rangle ^{-1}.   \label{int_k}  
		\end{equation}
		Then $|K_w h^{\ell-l}| \leq \int_{\R^3} \mathbf{k} (v,u) \frac{w(v)}{w(u)} \dd u \times \| h^{\ell-l} \|_\infty \lesssim \| h^{\ell-l} \|_\infty$. Together with (\ref{bound_g_ell}) 
		%
		%and, for $\kappa =0,$
		%\begin{equation}\notag
		%\int_{\mathbb{R}^{3}}
		%\mathbf{k}_{0,\varrho}(v,u)  \frac{\langle
		%v\rangle ^{\zeta }e^{\theta |v|^{2}}}{\langle u\rangle ^{\zeta }e^{\theta
		%5|u|^{2}}} \mathrm{d}u \ \lesssim \ \langle v\rangle ^{-1+\delta }, \ \ \text{for any} \ 0<\delta \ll 1.
		%\label{int_k_maxwell}
		%\end{equation}% 

		Then $|h(t,x,v)|$ have the same bound of (\ref{h_iteration}), (\ref{h1}), (\ref{h2}), (\ref{h5}).

		. 
		
		We split $\mathbf{k}_w = \mathbf{k}_m + ( \mathbf{k}_w - \mathbf{k}_m)$
		\Be\label{double_teration_ell}
		\begin{split}
			& |h^{\ell+1} (t,x,v)|\\
			\leq & O(k) \| e^{- \frac{\nu_0}{4} t} h_0 \|_\infty\\
			& + O(k)\max_l\sup_{0 \leq s \leq t} \| e^{- \frac{\nu_0}{2} (t-s)} h^{\ell-l} (s) \|_\infty^2\\
			& + O(k) \max_l  \| e^{-\frac{\nu_0}{2} (t-s)}  \nabla   \phi ^{\ell-l} (s) \|_\infty \\
			&+ o(1) \max_l\sup_{0 \leq s \leq t} \| e^{- \frac{\nu_0}{2} (t-s)} h^{\ell-l} (s) \|_\infty \\
			& + \ \int^t_{\max\{t^\ell_1, 0 \}}
			e^{ - \frac{\nu_0}{2} (t-s)}\int_{
				|u| \leq m}
			|h^\ell (s, X^\ell (s;t,x,v), u)|
			\dd u
			\dd s\\
			& +O(k) \sup_l \int^{t_l^{\ell- (l-1)}}_{\max\{ t^{\ell-1}_{l+1}, 0  \}}   e^{ - \frac{\nu_0}{2} (t-s)}
			\int_{|u| \leq m} \int_{|v_l| \leq m} 
			|h^{\ell-l} (s, X^{\ell-l}(s;t_l,x_l, v_l)  ,u )| \dd v_l
			\dd u
			\dd s .
		\end{split}
		\Ee

		We plug this again to achieve 
		\Be\label{double_teration_twice}
		\begin{split}
			& |h (t,x,v)|\\
			\leq & O(k ) \| e^{- \frac{\nu_0}{4} t} h_0 \|_\infty\\
			& + O(k )\max_l\sup_{0 \leq s \leq t} \| e^{- \frac{\nu_0}{2} (t-s)} h  (s) \|_\infty^2\\
			& + O(k) \max_l  \| e^{-\frac{\nu_0}{2} (t-s)}  \nabla   \phi  _f (s) \|_\infty \\
			&+ \{ \frac{1}{N} + \{\frac{1}{2}\}^{-k/5}  \}  \max_l\sup_{0 \leq s \leq t} \| e^{- \frac{\nu_0}{2} (t-s)} h  (s) \|_\infty \\
			& + \ \int^t_{\max\{t_1, 0 \}}
			e^{ - \frac{\nu_0}{2} (t-s)}\int_{
				|u| \leq m}
			\int^s_{\max\{ t_1^\prime, 0 \}} 
			e^{- \frac{\nu_0}{2} (s-s^\prime)} \\
			& \ \ \ \ \ \  \times 
			\int_{|u^\prime| \leq m} 
			|h(s^\prime, X(s^\prime; s, X(s;t,x,v), u), u^\prime)| \dd u^\prime
			\dd s^\prime
			\dd u
			\dd s\\
			& +O(k) \sup_l \int^{t_l^{\ell- (l-1)}}_{\max\{ t^{\ell-1}_{l+1}, 0  \}}   e^{ - \frac{\nu_0}{2} (t-s)}
			\int_{|u| \leq m} \int_{|v_l| \leq m} 
			|h^{\ell-l} (s, X^{\ell-l}(s;t_l,x_l, v_l)  ,u )| \dd v_l
			\dd u
			\dd s 
			%
			%
			%& +O(k^2) \max_{l, l^\prime} \int^{t_l }_{\max\{ t _{l+1}, 0  \}}   e^{ - \frac{\nu_0}{2} (t-s)}
			%\int_{|u| \leq m} 
			%|h  (s, X (s;t_l,x_l, v_l)  ,u )|
			%\dd u
			% \dd s 
		\end{split}
		\Ee

		% \Be
		% \sup_{l^\prime} \int^{t_{l^\prime}}_{\max\{ t_{l^\prime + 1},0\}}
		% e^{- \frac{\nu_0}{2} (s-s^\prime)} \int_{|u^\prime| \leq m} | h (s^\prime, X(s^\prime; t_{l^\prime}, x_{l^\prime}, v_{l^\prime}), u^\prime)| \dd u^\prime \dd s^\prime
		% \Ee

		\hide
		
		We define
		\begin{equation}
		\tilde{w} (v)\equiv \frac{1}{ w  (v)\sqrt{\mu (v)}}%=\sqrt{2\pi} \frac{e^{(\frac{1}{4}- \zeta)
		%|v|^{2}}}{(1+\varrho ^{2}|v|^{2})^{\frac{\beta}{2} }}
		,\label{tweight}
		\end{equation}
		and $\mathcal{V}(x)=\{v \in\mathbb{R}^3 : n(x)\cdot v >0\}$
		with a probability measure $\dd\sigma=\dd\sigma(x)$ on $\mathcal{V}(x)$ which is given by
		\begin{equation}
		\dd\sigma \equiv\mu
		(v)\{n(x)\cdot v\}\dd v.\label{smeasure}
		\end{equation}

		Denote $h=w  f$ and $K_{w }( \ \cdot \ )=w  K( \frac {1}{w } \ \cdot)$.   
		\Be\begin{split}
			&|h (t,x,v)| \\
			\leq & \ \mathbf{1}_{t_{1}\leq 0}e^{-      \int^t_0     \nu_\phi 
			}|h (0,x-t{v},v)|  \notag \\
			&  +%\mathbf{1}_{t_{1}\leq 0}
			\int_{\max\{t_1,0\}}^{t}e^{-  \int^t_s     \nu_\phi }|[K_{w }h  +w g](s,X(s;t,x,v), V(s;t,x,v))|\dd s  \notag \\
			%&   +\mathbf{1}_{t_{1}>0}\int_{t_{1}}^{t}e^{-  \int^t_s     \nu_\phi }  |[K_{w }h +w g](s,X(s;t,x,v), V(s;t,x,v))|\dd s \ \ \ \ \  \ \ \ \ \  \label{iteration} \\
			&    +\mathbf{1}_{t_{1}>0}   %   e^{-  \int^t_{t_1}     \nu_\phi }   %e^{-\nu (v)(t-t_{1})}
			%|w r(t_1,x_{1},v)|
			\frac{
				e^{-  \int^t_{t_1}     \nu_\phi }
				%e^{-\nu
				%(v)(t-t_{1})}
			}{\tilde{w_{\varrho}}(v)}\int_{\prod_{j=1}^{k-1}\mathcal{V}_{j}}|H|  \notag
		\end{split}\Ee
		where $|H|$ is bounded by
		\begin{eqnarray}
		&&\sum_{l=1}^{k-1}\mathbf{1}_{\{t_{l+1}\leq
			0<t_{l}\}}|h (0,
		X(0;t_l,x_l,v_l)
		,V(0;t_l,x_l,v_l))|\dd\Sigma _{l}(0)  \label{h1} \\
		&&+\sum_{l=1}^{k-1}\int_{\max\{ t_{l+1}, 0 \}}^{t_l}\mathbf{1}_{\{t_{l+1}\leq
			0<t_{l}\}} 
		\\
		&& \ \ \ \ \  \ \ \ \ \   \times|[K_{w }h +w   g](s,
		X(s;t_l,x_l,v_l), V(s;t_l,x_l,v_l)
		%x_{l}-(t_{l}-s){v}_{l},v_{l})
		|\dd \Sigma
		_{l}(s)\dd s  \ \ \ \  \ \ \ \ \ \ \label{h2} \\
		%&&+\sum_{l=1}^{k-1}\int_{t_{l+1}}^{t_{l}}\mathbf{1}_{\{0<t_{l}%
		%\}}|[K_{w_{\varrho}}h^{\ell-l}+w_{\varrho} \ g](s,x_{l}-(t_{l}-s){v}_{l},v_{l})|d\Sigma _{l}(s)ds
		%\label{h3} \\
		%&&+\sum_{l=1}^{k-1} \mathbf{1}_{\{0<t_{l}\}} d\Sigma _{l}^r
		%\label{h4} \\
		&&+\mathbf{1}_{\{0<t_{k}\}}|h (t_{k},x_{k},v_{k-1})|\dd\Sigma
		_{k-1}(t_{k}),  \label{h5}
		\end{eqnarray}%
		where $\dd\Sigma _{l}^{k-1}(s)$ is defined in (\ref{measure}).
		
		\unhide
		
		\unhide

		Plugging (\ref{double_teration_s}) into (\ref{double_teration}) we conclude that 
		\Be\label{double_teration_double}
		\begin{split}
			& |h_\iota(t,x,v)|\\
			\leq & \   \| ( 1 + t ) e^{- \frac{3}{4}\nu_0 t} h_0 \|_\infty  +  O_m(k)  \sup_{0 \leq s \leq t} \| e^{- \frac{3}{4}\nu_0 (t-s)} h  (s) \|_\infty^2
			\\
			& + O_m(k)  \int^t_0  \| e^{-\frac{3}{4}\nu_0 (t-s)}   \nabla   \phi_{\frac{h}{w_{\vartheta}}} (s) \|_\infty \dd s \\
			&+ \left\{O_m(k) \delta +  O(k)\frac{1}{m}  + O_m(1)\Big\{ \frac{1}{2} \Big\}^{k/5}\right\}  \sup_{0 \leq s \leq t} \| e^{- \frac{3}{4}\nu_0 (t-s)} h  (s) \|_\infty \\
			& + O(m^2) \int^{t }_{0}
			\int_{
				|u^\prime| \leq m}
						\int^{s-\delta}_{0} e^{ - \frac{3}{4}\nu_0 (t-s^\prime)} \int_{
				|u| \leq m}
			|h_+  (s^\prime, X_+ (s^\prime;s,X_\iota(s;t,x,v),u), u^\prime)| \dd u
			\dd s^\prime \dd u^\prime
			\dd s
			 \\
			& + O(m^2) \int^{t }_{0}
			\int_{
				|u^\prime| \leq m}
						\int^{s-\delta}_{0} e^{ - \frac{3}{4}\nu_0 (t-s^\prime)} \int_{
				|u| \leq m}
			|h_-  (s^\prime, X_- (s^\prime;s,X_\iota(s;t,x,v),u), u^\prime)| \dd u
			\dd s^\prime \dd u^\prime
			\dd s\\
			& +O(k) \sup_l  \int^{t_{l,\iota} -\delta}_{0}   e^{ - \frac{3}{4}\nu_0 (t-s)}%\\
			%& \ \ \ \ \  \ \ \ \
			%\times
			\int_{|u| \leq m} \int_{|v_l| \leq m} 
			|h_\iota (s, X_\iota (s;t_l,x_l, v_l)  ,u )| \dd v_l
			\dd u
			\dd s
%			 \\
%			& +O (k) \sup_{l,l^\prime}    \int^{t }_{0}
%			\int_{
%				|u| \leq m} \int^{t^\prime_{l^\prime} -\delta}_{0}   e^{ - \frac{\nu_0}{2} (t-s^\prime)}  \\
%			& \ \ \ \ \  \ \ \ \    \times\int_{|u^\prime| \leq m} \int_{|v^\prime_{l^\prime}| \leq m} 
%			|h (s^\prime, X(s^\prime;t^\prime_{l^\prime}, x^\prime_{l^\prime}, v^\prime_{l^\prime})  ,u^\prime )| \dd v^\prime_{l^\prime}
%			\dd u^\prime
%			\dd s^\prime
			\\& +O(k) \sup_{l,l^\prime} \int^{t }_{0}  \int^{t^\prime_{l^\prime,+} -\delta}_{0}   e^{ - \frac{3}{4}\nu_0 (t-s^\prime)}
%			\\
%			& \ \ \ \ \  \ \ \ \    \times
			\int_{|u^\prime| \leq m} \int_{|v^\prime_{l^\prime}| \leq m} 
			|h_+ (s^\prime, X_\iota(s^\prime;t^\prime_{l^\prime}, x^\prime_{l^\prime}, v^\prime_{l^\prime})  ,u^\prime )| \dd v^\prime_{l^\prime}
			\dd u^\prime
			\dd s^\prime
			\dd s
			\\
			& +O(k) \sup_{l,l^\prime}  \int^{t }_{0} \int^{t^\prime_{l^\prime,-} -\delta}_{ 0  }   e^{ - \frac{3}{4}\nu_0 (t-s^\prime)}
%			\\
%			& \ \ \ \ \  \ \ \ \   \times
			 \int_{|u^\prime| \leq m} \int_{|v^\prime_{l^\prime}| \leq m} 
			|h_- (s^\prime, X_\iota(s^\prime;t^\prime_{l^\prime}, x^\prime_{l^\prime}, v^\prime_{l^\prime})  ,u^\prime )| \dd v^\prime_{l^\prime}
			\dd u^\prime
			\dd s^\prime
			\dd s.
		\end{split}\Ee
		
		Choose $T_\infty \gg 1$ and $k\gg 1$ in (\ref{small_k}) and (\ref{large_T}). Then we choose 
		\Be
		m =  {k^2} \ \ \text{and} \ \ \delta = \frac{1}{m^3 k},\label{choice_m_delta}
		\Ee
		so that $O_m(k) \delta +  O(k)\frac{1}{m}  + O_m(1) \{ \frac{1}{2} \}^{k/5}  \ll1$.
		
		Note that 
		\Be\label{X_v}
		\begin{split}
			&\frac{\p  X_\pm(s ;t _{l },x_{l },v _{l }) }{\p v _{l }}\\
			= &   - ( t _{l  }-s ) \mathrm{Id}_{3 \times 3}\\
			& \mp
			\int^{s }_{t _{l }} \int^\tau_{t _{l }} \left( \frac{\p X_\pm(\tau^\prime;t _{l }, x _{l }, v _{l })}{\p v _{l }} \cdot \nabla_x\right)
			\left(          
			\nabla_x \phi_{\frac{h}{w_{\vartheta}}} (\tau^\prime, X_\pm(\tau^\prime;t _{l },x _{l },v _{l }))
			\right) \dd \tau^\prime\dd \tau,
		\end{split}
		\Ee

		\hide
		\Be\notag%\label{decay_C2}
		\| \nabla_x^2 \phi_f(t) \|_\infty \leq
		C_2 M^{1/p} 
		e^{- \frac{\lambda_\infty}{2}t}   \ \ \text{with} \ C_2 := C_{\frac{\lambda_\infty}{2 \lambda_0}, \O }+ (C_1 C_p \delta_*)^{1/p},
		\Ee\unhide

		Now we use Lemma \ref{est_X_v}. Note that from (\ref{decay_C2}), the condition (\ref{Decay_phi_2}) of Lemma \ref{est_X_v} is satisfied with $\Lambda_{2} = \frac{\lambda_\infty}{2}$ and $\delta_{2} = C_2 M$. From Lemma \ref{est_X_v} and (\ref{result_X_v}) we have for $\iota = +$ or $ - $,
		\Be\label{X_v_l}
		\left|\frac{\p X_\iota(\tau^\prime;t _{l },x   _{l  },v _{l })}{\p v _{l }}\right| \leq Ce^{ \frac{4C C_2 M}{(\lambda_\infty)^2}   }|t _{l } -\tau^\prime|.
		\Ee
%		On the other hand from (\ref{decay_C2}),
%		\Be\label{decay_phi_C_ell}
%		|\nabla_x^2 \phi _f(\tau^\prime, X(\tau^\prime;t _{l },x _{l },v _{l }))| \leq C_2 M^{1/p} 
%		e^{- \frac{\lambda_\infty}{2}\tau^\prime} .
%		\Ee
		From (\ref{X_v_l}) and (\ref{decay_C2}), the second term of RHS in (\ref{X_v}) is bounded by %	Note that, for $\tilde{C}= Ce^{C \frac{\lambda_\infty}{2} \sqrt{C_2 M^{1/p}}} C_2$,
		\Be\begin{split}\label{double_int_phi}
			&C C_2 M e^{ \frac{4CC_2 M}{(\lambda_\infty)^2}    }  \int^{t _{l }}_{s }   \int^{t _{l }}_\tau (t _{l } - \tau^\prime) e^{-\frac{\lambda_\infty}{2} \tau^\prime}
			\dd \tau^\prime \dd \tau \\ 
%			\leq & \  M^{1/p} (t _{l } -s^\prime) \int^{t _{l }}_{s^\prime}   \int^{t _{l }}_\tau   e^{- \frac{\lambda_\infty}{2} \tau^\prime} \dd \tau^\prime
%			\dd \tau\\
%			=&  \ M^{1/p} (t _{l } -s^\prime) \int^{t _{l }}_{s^\prime}  
%			\frac{e^{-\frac{\lambda_\infty}{2} t _{l }} - e^{-\frac{\lambda_\infty}{2} \tau}}{-\frac{\lambda_\infty}{2}} \dd \tau\\
%			=& \  M^{1/p} (t _{l } -s^\prime)  \Big\{
%			\frac{e^{-\frac{\lambda_\infty}{2} t _{l }}}{- \frac{\lambda_\infty}{2}} (t _{l } -s^\prime) - \frac{e^{-\frac{\lambda_\infty}{2} t _{l  }} - e^{-\frac{\lambda_\infty}{2} s^\prime}}{\left(\frac{\lambda_\infty}{2}\right)^2} 
%			\Big\}\\
%			=& \  M^{1/p} (t _{l } -s^\prime)  \frac{e^{-\frac{\lambda_\infty}{2} s^\prime}}{\left(\frac{\lambda_\infty}{2}\right)^2}
%			\Big\{ 
%			1- e^{- \frac{\lambda_\infty}{2} (t _{l } -s^\prime)  } (1+ \frac{\lambda_\infty}{2} (t _{l } -s^\prime) ) 
%			\Big\}\\
			\leq & \ 
	 \frac{4	C C_2 M }{(\lambda_\infty)^2}	 e^{ \frac{4	C C_2 M }{(\lambda_\infty)^2}}
			|t _{l } -s |.
		\end{split}\Ee  
	%	where we have used (\ref{direct_computation}). 
		%Similarly the second term of RHS in (\ref{DX_Du}) is bounded as $
%		 \frac{4	C C_2 M^{1/p}}{(\lambda_\infty)^2}	 e^{ \frac{4	C C_2 M^{1/p}}{(\lambda_\infty)^2}}
%			| s-s^\prime |. $
		From our choice of $\lambda_\infty$ in (\ref{condition_lambda_infty}), we have 
		\Be\notag
		 \frac{4	C C_2 M }{(\lambda_\infty)^2}	 e^{ \frac{4	C C_2 M }{(\lambda_\infty)^2}} < \frac{1}{10}. 
		\Ee
		Therefore from (\ref{X_v}), for $0 \leq s  \leq t_l - \delta$
\hide		\Be\label{X_v_estimate}
		\left|\frac{\p  X(s ;t _{l },x _{l },v _{l }) }{\p v _{l }} 
		+ (t _{l }-s^\prime) \mathrm{Id}_{3 \times 3}\right|
		\leq  4Ce^{C \frac{\lambda_\infty}{2} \sqrt{C_2 M^{1/p}}} C_2 \frac{M^{1/p}}{(\lambda_\infty)^2} |t _{l }-s^\prime|.
		\Ee       
		From (\ref{condition_lambda_infty}) and (\ref{choice_m_delta}) we have  
		%
		% 
		%  From (\ref{condition_lambda_infty}) and (\ref{X_v})-(\ref{double_int_phi})
\unhide		\Be\label{lower_jacob_l}
		\begin{split}
		&\det\left(\frac{\p X_\iota(s ;t _{l }, x _{l},v _{l })}{\p v _{l }}\right)
		\\
	= 	&  \  \det\left( - (t_l - s ) \text{Id}_{3 \times 3}
	+  o(1)
	\right) \\ 
		\gtrsim & \   |t_l - s |^3\\
		\gtrsim & \   \delta.
	\end{split}	\Ee
		We can obtain the exactly same lower bound of $\det \left( \frac{\p X_\iota(s;t^\prime_{l^\prime}, x^\prime_{l^\prime},v^\prime_{l^\prime})}{\p v^\prime_{l^\prime}}\right)$, $\det \left( \frac{\p X_+(s^\prime;s,X_\iota(s;t,x,v), u)}{\p u} \right)$, and $\det \left( \frac{\p X_-(s^\prime;s,X_\iota(s;t,x,v), u)}{\p u} \right)$ for $0 \leq s^\prime \leq s-\delta$ and $0 \leq s^\prime \leq t^\prime_{l^\prime} - \delta$.
		
		Now we apply the change of variables 
		\Bes
		v_l &\mapsto& X_\iota(s;t_l, x_l,v_l),\\
		v^\prime_{l^\prime} &\mapsto& X_\iota(s;t^\prime_{l^\prime}, x^\prime_{l^\prime},v^\prime_{l^\prime}),\\
		u &\mapsto& X_+(s^\prime;s,X_\iota(s;t,x,v), u), 
		\\ u &\mapsto& X_-(s^\prime;s,X_\iota(s;t,x,v), u), 
		\Ees 
		and conclude (\ref{decay_time_interval}) from (\ref{double_teration_double}) and (\ref{choice_m_delta}).

		By choosing $T_\infty \gg 1 $ so that $(1 + T_\infty) e^{-\frac{3}{4} \nu_0 T_\infty }  \le e^{-\frac{1}{2} \nu_0 T_\infty } $, and applying (\ref{decay_time_interval}) successively, we achieve that 
		\Be\begin{split}\label{global_decay_N}
			&\| w_{\vartheta} f(t) \|_\infty\\
			\leq & \  C_{T_\infty} e^{- \frac{\nu_0}{2} t} \| w_{\vartheta} f(0) \|_\infty 
			+o(1) \frac{e^{\nu_0 T_\infty}}{1- e^{- \nu_0 T_\infty}}
			\sup_{0 \leq s \leq t} e^{- \nu_0 (t-s)} \| w_{\vartheta} f(s) \|_\infty
			\\
			&
			+\underbrace{ C_{T_\infty} e^{\frac{\nu_0}{2}} \int^t_0 e^{- \frac{\nu_0}{2} (t-s)} \| f(s) \|_2 \dd s}_{(\ref{global_decay_N})_{L^2}}
    			+  \underbrace{C_{T_\infty} e^{\frac{\nu_0}{2}}\int^t_0 e^{- \frac{\nu_0}{2} (t-s)}  \| \nabla \phi_f (s) \|_\infty \dd s
			}_{(\ref{global_decay_N})_{\phi_f}}
			,
		\end{split}\Ee
		where we have used
		\[
	 e^{\nu_0 T_\infty} \{ 1+ e^{- \nu_0 T_\infty} +  \cdots +  e^{- \nu_0 NT_\infty}\}= \frac{e^{\nu_0 T_\infty}}{1- e^{- \nu_0 T_\infty}}.
		\]
		
		\vspace{4pt}
		
		\textit{Step 4. } 	From Proposition \ref{l2coercivity} and (\ref{completes_dyn}) we have 
		\Be\begin{split}\label{completes_dyn_final}
			& \| e^{\lambda_2 t}f(t)\|_2^2
			+ \| e^{\lambda_2 t} \nabla \phi (t) \|_2^2\\
			&
			+  \int_0^t \| e^{\lambda_2 \tau}  f (\tau)\|_\nu^2 
			+ \| e^{\lambda_2 \tau} \nabla \phi_f(\tau)\|_2^2 
			\mathrm{d} \tau 
			+  \int_0^t | e^{\lambda_2 \tau} f |^2_{2,+ }    \\
			\lesssim & \ \| f_0\|_2^2 +   \| \nabla \phi_{f_0}\|_2^2   .  
		\end{split}\Ee
		Hence 
		\Be\label{global_decay_N_L2}
		\begin{split}
		(\ref{global_decay_N})_{L^2}\lesssim& \   t e^{- \min ( \frac{\nu_0}{2}, \lambda_2) \times  t}
		\{\| f_0\|_2  +   \| \nabla \phi_{f_0}\|_2   \}\\
		\lesssim& \  e^{- \min ( \frac{\nu_0}{4}, \frac{\lambda_2}{2}) \times  t}
		\{\| f_0\|_2  +   \| \nabla \phi_{f_0}\|_2   \}.
		\end{split}\Ee

	Now we consider $(\ref{global_decay_N})_{\phi_f}$. In order to close the estimate in (\ref{global_decay_N}) we need to improve the decay rate of $ \| \nabla \phi_f (s) \|_\infty$. We claim that, for $\theta_{2,r,p}>0$ (which is specified in (\ref{theta_2rp})),
		\Be\label{better_decay_phi_C1}
		\| \nabla_x \phi_f (s) \|_\infty \lesssim e^{- (1+ \theta_{2,r,p}) \lambda_\infty s}
		\{\sup_{t\geq 0} \|e^{\lambda_2 t} f(s) \|_2  + 
		\sup_{t\geq 0} \|e^{\lambda_\infty t} f(s) \|_\infty\}.
		\Ee

		By Morrey's inequality for $\O \subset \R^3$ and $r>3$
		\Be
		\begin{split}\label{Morry_phi_infty}
			\| \nabla_x \phi_f \|_\infty 
			\lesssim   \| \nabla_x \phi_f \|_{C^{0, 1-3/r} (\O)} 
			\lesssim  \| \nabla_x \phi_f \|_{W^{1,r} (\O)}.
		\end{split}\Ee
		Then applying the standard elliptic estimate to (\ref{smallfphi}), we get  % and (\ref{completes_dyn}),
		\begin{eqnarray}
		\| \nabla_x \phi_f(t) \|_{W^{1,2} (\O)}
		\lesssim 
		\left\| 
			\int_{\R^3} ( f_+(t) - f_-(t)) \sqrt{\mu}  \dd v
			\right\|_{L^2 (\O)}
		 \lesssim e^{- \lambda_2 t}  \sup_{t\geq 0} \|e^{\lambda_2 t} f(t) \|_2,\label{decay_phi_1r} \\
			\| \nabla_x \phi_f(t) \|_{W^{1,p} (\O)} \lesssim%& \ 
			\left\| 
			\int_{\R^3} (f_+(t) - f_-(t) )  \sqrt{\mu}  \dd v
			\right\|_{L^p (\O)}
			% \\
			\lesssim
			%& \ 
			e^{- \lambda_\infty t} \sup_{t\geq 0} \|e^{\lambda_\infty t} f(t) \|_\infty
			.\label{decay_phi_1p}
		 \end{eqnarray}
		 
		 Now we use the standard interpolation: For $p>r>3$,
		 \Be\begin{split}\notag%\label{W1r_interpolation}
			%& \| \nabla_x \phi_f(t) \|_\infty\\
			% \lesssim & \ 
			\| \nabla_x \phi_f \|_{W^{1,r} (\O)}%\\
			\lesssim %& \ 
			\| \nabla_x \phi_f(t) \|_{W^{1,2} (\O)}^{\theta_{2,r,p} } \| \nabla_x \phi_f(t)\|_{W^{1,p} (\O)}^{1-\theta_{2,r,p} },
		\end{split}
		\Ee
		for 
		\Be\label{theta_2rp}
		\theta_{2,r,p} : = \frac{\frac{1}{r}- \frac{1}{p}}{\frac{1}{2} - \frac{1}{p}}> \frac{2}{3}\cdot \frac{p-3}{p-2}.
		\Ee
		%$\frac{1}{r} = \frac{({\frac{1}{r}- \frac{1}{p}})/({\frac{1}{2} - \frac{1}{p}})}{2} + \frac{1- ({\frac{1}{r}- \frac{1}{p}})/({\frac{1}{2} - \frac{1}{p}})}{p}$
	%	we have 
		Then we derive 
		\Be\begin{split}\label{decay_interpolation}
			& \sup_{t\geq 0 }\|e^{  [\theta_{2,r,p} \lambda_2  + (1- \theta_{2,r,p}) \lambda_\infty  ] t } \nabla_x \phi_f (t) \|_\infty \\
			\lesssim & \ \left(\sup_{t\geq 0} \|e^{\lambda_2 t} f(t) \|_2\right)^{\theta_{2,r,p}} \left(\sup_{t\geq 0} \|e^{\lambda_\infty t} f(t) \|_\infty\right)^{1-\theta_{2,r,p}}\\
			\lesssim & \  \sup_{t\geq 0} \|e^{\lambda_2 t} f(t) \|_2  + o(1)
			\sup_{t\geq 0} \|e^{\lambda_\infty t} f(t) \|_\infty .
		\end{split}\Ee
		From our choice (\ref{condition_lambda_infty}) and $0<p-3 \ll 1$,
		\Be\label{decay_phiC1}
		\theta_{2,r,p} \lambda_2  + (1- \theta_{2,r,p}) \lambda_\infty  \geq (1+ \theta_{2,r,p}) \lambda_\infty.
		\Ee
		
		From (\ref{decay_interpolation})
		\Be
		\begin{split}\label{est_gdN_phif}
		%	&C_{T_\infty}e^{\frac{\nu_0}{2}}\int^t_0 e^{- \frac{\nu_0}{2} (t-s)}  \| \nabla \phi_f (s) \|_\infty \dd s\\
		(\ref{global_decay_N})_{\phi_f}%	\leq& \ C_{T_\infty}e^{\frac{\nu_0}{2}}\int^t_0 e^{- \frac{\nu_0}{2} (t-s)}  
		%	\Big(e^{\delta s} \| f(s) \|_2\Big)^\theta
		%	\Big(
		%	e^{- \delta \frac{\theta}{1-\theta} s} \| w_{\vartheta} f(s) \|_\infty
		%	\Big)^{1-\theta}
		%	\dd s\\
			\lesssim \ &\int^t_0 e^{- \frac{\nu_0}{2} (t-s)}  e^{-(1+ \theta_{2,r,p}) \lambda_\infty s}
			 \|e^{\lambda_2 s} f(s) \|_2 \dd s  \\
			 &+o(1) \int^t_0e^{- \frac{\nu_0}{2} (t-s)}    e^{-(1+ \theta_{2,r,p}) \lambda_\infty s}
		  \| e^{\lambda_ \infty s}w_{\vartheta} f(s) \|_\infty 
			\dd s\\
			\lesssim \ & e^{- \min (\frac{\nu_0}{4}, \lambda_\infty) \times t}
		\{	 \| f_0\|_2  +   \| \nabla \phi_{f_0}\|_2 \}
			 \ \  from \ (\ref{completes_dyn_final})\\
			 & + o(1)e^{- \min (\frac{\nu_0}{4}, \lambda_\infty) \times t}\sup_{0 \leq s\leq t} \| e^{\lambda_ \infty s}w_{\vartheta} f(s) \|_\infty .
		\end{split}
		\Ee
		
%		Choose 
%		\Be\label{choice_L_infty}
%		\lambda_\infty: = \min \left\{ \frac{\nu_0}{2}, 
%		\frac{\lambda_2 - \delta}{2}
%		\right\},
%		\Ee
%		where this choice of $\lambda_{\infty}$ satisfies the condition (\ref{condition_lambda_infty}). \\
		Multiplying $e^{\lambda_\infty t}$ and taking $\sup_{t \geq 0}$ to (\ref{global_decay_N}) with $\lambda_\infty \leq \min \left(\frac{\nu_0}{4}, \frac{\lambda_2}{2}\right)$, and from (\ref{global_decay_N_L2}) and (\ref{est_gdN_phif}), we obtain that 
		\Be\label{f_infty_1}
		%\begin{split}
			 \sup_{t \geq 0}e^{\lambda_\infty t}\| w_{\vartheta} f(t) \|_\infty 
			\lesssim   \|w_{\vartheta} f(0) \|_\infty + \| f_0\|_2  +   \| \nabla \phi_{f_0}\|_2   + o(1) \sup_{0 \leq s\leq t} e^{\lambda_\infty s} \| w_{\vartheta} f(s) \|_\infty.
%			\\
%			& +\sup_{t \geq 0} \int^t_0 e^{\big(\lambda_\infty- \min \{ {\nu_0} , \lambda_2- \delta \}\big) \times s} \sup_{s\geq 0}e^{\lambda_2 s} \| f(s) \|_2\\
%			&
%			+o(1)\sup_{t \geq 0} \int^t_0 e^{\big(\lambda_\infty- 
%				\min \big\{
%				{\nu_0} 
%				,
%				\lambda_\infty + \delta \frac{\theta}{1-\theta}
%				\big\}\big) \times 
%				s} \sup_{s \geq0} e^{\lambda_\infty s} \| w_{\vartheta} f(s) \|_\infty. 
%		\end{split}
		\Ee
		By absorbing the last (small) term, we conclude that \hide
		
		From (\ref{choice_L_infty})
		\Be\notag
		\begin{split}
			\lambda_\infty- \min \{ {\nu_0} , \lambda_2- \delta \}%\\
			%\geq & \ \frac{\lambda_2 - \delta}{2}- (\lambda_2 - \delta)\\
			\geq &   -\frac{\lambda_2 - \delta}{2},\\
			\lambda_\infty- 
			\min \big\{
			{\nu_0} 
			,
			\lambda_\infty + \delta \frac{\theta}{1-\theta}
			\big\}\geq &  - \delta \frac{\theta}{1-\theta}.
		\end{split}
		\Ee
		Note that 
		\Be\begin{split}\notag
			\int^t_0 e^{  -\frac{\lambda_2 - \delta}{2} s} \dd s &\lesssim  \frac{1}{\lambda_2- \delta},\\
			\int^t_0 e^{  -    \delta \frac{\theta}{1-\theta} s} \dd s &\lesssim  \frac{1-\theta}{\delta \theta}.
		\end{split}
		\Ee
		
		Using (\ref{f_infty_1}) and choosing $o(1) \ll_{\delta, \theta, \lambda_2} 1$ we conclude that 
		\Be\label{Linfty_L2}
		\sup_{t \geq 0}e^{\lambda_\infty t}\| w_\vartheta f(t) \|_\infty \lesssim \| w_\vartheta f(0) \|_\infty + 
		\sup_{t \geq 0} e^{\lambda_2 t} \| f(t) \|_2.
		\Ee

		From (\ref{Linfty_L2}) and (\ref{completes_dyn_final}) we conclude that, for some $\mathfrak{C}\gg1$,
		 \unhide
		\Be\label{Linfty_final}
		\sup_{0 \leq t \leq T}e^{\lambda_\infty t}\| w_\vartheta f(t) \|_\infty  
		\leq \mathfrak{C}\delta_* M.
		\Ee
		If we choose $\delta_* \ll  1/ \mathfrak{C}$ then by the local existence theorem (Theorem \ref{local_existence}) and continuity of $\| w_\vartheta f(t) \|_\infty$, $\| w_{\tilde{\vartheta}} f (t) \|_p^p+ \|w_{\tilde{\vartheta}} \alpha_{f,\e}^\beta \nabla_{x,v} f (s) \|_p^p
			+ \int^t_0  | w_{\tilde{\vartheta}}\alpha_{f,\e}^\beta \nabla_{x,v} f(s) |_{p,+}^p$, and $\| \nabla_v f(t) \|_{L^3_x(\O) L^{1+ \delta}_v (\R^3)}$, we conclude that $T= \infty$.

		Then the estimates of (\ref{W1p_main}) and (\ref{nabla_v f_31}) are direct consequence of Proposition \ref{prop_W1p}, Lemma \ref{lemma_apply_Schauder}, and Proposition \ref{prop_better_f_v}.
			
		And (\ref{stability_1+}) can be derived from (\ref{eqtn_f-g})-(\ref{l1+stabfinal}) by replacing $f^\ell$, $f^{\ell +1}$ with $f,g$.
		\hide
		
		\vspace{4pt}
		
		\textit{Step 8. } We choose, for $0<p-3\ll1$,
		\Be\label{lambda_infty}
		\begin{split}
			r&= \frac{p+3}{2}, \ \ \theta_{2,r,p} = \frac{2(p-3)}{(p+3) (p-2)}>0, \\
			\lambda_\infty &= \frac{\theta_{2,r,p}}{%\frac{1}{10}+ \frac{9}{10} \theta_{2,r,p} 
				1- \frac{9}{10} (1- \theta_{2,r,p})
			} \times \frac{9}{10} \lambda_2.\end{split}
		\Ee
		
		It is important to note that, from (\ref{lambda_infty}),
		\Be\begin{split}\notag
			& \left(\frac{9}{10}\right)^2 \times  \big\{\theta_{2,r,p} \lambda_2 + (1- \theta_{2,r,p}) \lambda_\infty\big\}\\
			=& \ \left(\frac{9}{10}\right)^2\times 
			\Big\{
			\frac{10}{9}  \Big(1- \frac{9}{10} (1- \theta_{2,r,p})\Big)+ (1- \theta_{2,r,p})
			\Big\} \lambda_\infty\\
			= & \  \frac{9 }{10}\lambda_\infty.
		\end{split}\Ee

		We choose 
		\Be\label{lambda_T}
		\lambda_{\infty} = \frac{9}{10} \times  \min \Big\{  {\nu_0} ,  \lambda_2  ,  {\theta_{2,r,p} \lambda_2 + (1- \theta_{2,r,p}) \lambda_\infty} \Big\}.
		\Ee

		Now we apply

		\vspace{4pt}
		
		\textit{Step 6. } 
		\unhide
	\end{proof}

	\appendix

	\section{Auxiliary Results and Proofs}
	\begin{proof}[\textbf{Proof of (\ref{alpha_invariant})}]
	Let $\iota = +$ or $-$, from (\ref{hamilton_ODE}), for $t-\tbpm(t,x,v)<s\leq t$,
		\Be\begin{split}\notag
			\xbpm(s,X_\pm(s;t,x,v),V_\pm(s;t,x,v))  =& \  \xbpm(t,x,v),\\
			\vbpm(s,X_\pm(s;t,x,v),V_\pm(s;t,x,v))  =& \  \vbpm(t,x,v).
		\end{split}\Ee
		Therefore 
		\Be\notag
		\begin{split}
			& [\p_t + v\cdot \nabla_x \mp \nabla_x \phi_f \cdot \nabla_v] \alpha_{f,\e,\pm}(t,x,v)  \\
			=   & \    \frac{d}{ds} \alpha_{f,\e,\pm}(s,X_\pm(s;t,x,v),V_\pm(s;t,x,v)) \big|_{s=t}\\
			= & \ \frac{d}{ds} \alpha_{f,\e,\pm}(t,x,v) 
			=    0.
		\end{split}
		\Ee
		
		From (\ref{tb}) and (\ref{hamilton_ODE}),
		\Be\begin{split}\notag
			\tbpm(s,X_\pm(s;t,x,v), V_\pm(s;t,x,v)) 
			= \tbpm(t,x,v)- (t-s).
		\end{split}\Ee
		Therefore 
		\Be\notag
		\begin{split}
			& [\p_t + v\cdot \nabla_x \mp \nabla_x \phi_f  \cdot \nabla_v] (t-\tbpm(t,x,v)) \\
			=   & \    \frac{d}{ds} [s- \tbpm(s,X_\pm(s;t,x,v),V_\pm(s;t,x,v))] \big|_{s=t}\\
			= & \ \frac{d}{ds} [t-\tbpm(t,x,v)]
			=    0.
		\end{split}
		\Ee
		These prove (\ref{alpha_invariant}).
		\hide
		\Be \notag
		\begin{split}
			& [\p_{t} + v\cdot\nabla_{x} - \nabla_{x}\phi\cdot\nabla_{v}] \alpha(t,x,v)  \\
			&= \frac{d}{ds} \alpha(s,X(s;t,x,v), V(s;t,x,v)) \Big\vert_{s=t}  \\
			&= \mathbf{1}_{ t+1\geq \tbpm(t,x,v)  } (t,x,v) \frac{d}{ds} |n(\xbpm(s,X(s), V(s)))\cdot \vbpm(s,X(s), V(s)) | \Big\vert_{s=t}  \\
			&\quad + |n(\xbpm(t,x,v))\cdot \vbpm(t,x,v)| \frac{d}{ds} \mathbf{1}_{ s+1 \geq \tbpm(s,X(s),V(s))  } \Big\vert_{s=t} \\
			&= \mathbf{1}_{ t+1\geq \tb(t,x,v)  } (t,x,v) \frac{d}{ds} |n(t,x,v)\cdot \vbpm(t,x,v) | \Big\vert_{s=t}  \\
			&\quad + |n(\xbpm(t,x,v))\cdot \vbpm(t,x,v)| \frac{d}{ds} \mathbf{1}_{ t+1 \geq \tb(t,x,v) } \Big\vert_{s=t}  \\
			&= 0,
		\end{split}
		\Ee
		where $X(s)=X(s;t,x,v)$ and $V(s) = V(s;t,x,v)$. We have used deterministic properties:
		\Be
		\begin{split}
			n(\xbpm(s,X(s), V(s)))\cdot \vbpm(s,X(s), V(s)) &= n(\xbpm(t,x,v))\cdot \vbpm(t,x,v) ,  \\
			t - \tb(t,x,v) &= s - \tb(s,X(s), V(s)),
		\end{split}
		\Ee
		in the last step. \unhide
	\end{proof}

	\begin{proof}[\textbf{Proof of (\ref{k_vartheta_comparision})}] The proof follows the argument of Lemma 7 in \cite{Guo10}. Note
		\Be\notag
		\begin{split}
			\mathbf{k}_{  \varrho}(v,u) \frac{e^{\vartheta |v|^2}}{e^{\vartheta |u|^2}} 
			=  \frac{1}{|v-u| } \exp\left\{- {\varrho} |v-u|^{2}  
			-  {\varrho} \frac{ ||v|^2-|u|^2 |^2}{|v-u|^2} + \vartheta |v|^2 - \vartheta |u|^2
			\right\}.
		\end{split}\Ee
		%
		% By (\ref{weight}), we first notice that for some $C_{\rho ,\beta }>0,$ 
		%\begin{equation*}
		%\left\vert \frac{w(v)}{w(v^{\prime })}\right\vert \leq C[1+|v-v^{\prime
		%}|^{2}]^{|\beta |}e^{-\theta \{|v^{\prime }|^{2}-|v|^{2}\}}.
		%\end{equation*}%
		Let $v-u=\eta $ and $u=v-\eta $. Then the exponent equals
		\begin{eqnarray*}
			&&- \varrho|\eta |^{2}-\varrho\frac{||\eta |^{2}-2v\cdot \eta |^{2}}{%
				|\eta |^{2}}-\vartheta \{|v-\eta |^{2}-|v|^{2}\} \\
			&=&-2 \varrho |\eta |^{2}+ 4 \varrho v\cdot \eta - 4 \varrho\frac{|v\cdot
				\eta |^{2}}{|\eta |^{2}}-\vartheta \{|\eta |^{2}-2v\cdot \eta \} \\
			&=&(-2 \varrho-\vartheta  )|\eta |^{2}+(4 \varrho+2\vartheta )v\cdot \eta -%
			4 \varrho\frac{\{v\cdot \eta \}^{2}}{|\eta |^{2}}.
		\end{eqnarray*}%
		If $0<\vartheta <4 \varrho$ then the discriminant of the above quadratic form of 
		$|\eta |$ and $\frac{v\cdot \eta }{|\eta |}$ is 
		\begin{equation*}
		(4 \varrho+2\vartheta )^{2}-4
		(-2 \varrho-\vartheta  )(-%
		4 \varrho)
		=4\vartheta ^{2}- 16 \varrho \vartheta<0.
		\end{equation*}%
		Hence, the quadratic form is negative definite. We thus have, for $%
		0<\tilde{\varrho}< \varrho - \frac{\vartheta}{4}  $, the following perturbed quadratic form is still negative definite 
		\[
		-(\varrho - \tilde{\varrho})|\eta |^{2}-(\varrho - \tilde{\varrho})\frac{||\eta
			|^{2}-2v\cdot \eta |^{2}}{|\eta |^{2}}-\vartheta \{|\eta |^{2}-2v\cdot \eta \}  \leq 0.
		%\notag \\
		%&\leq &-C_{\theta }\{|\eta |^{2}+\frac{|v\cdot \eta |^{2}}{|\eta |^{2}}%
		%\}=-C_{\theta }\{\frac{|\eta |^{2}}{2}+(\frac{|\eta |^{2}}{2}+\frac{|v\cdot
		%\eta |^{2}}{|\eta |^{2}})\}  \notag \\
		%&\leq &-C_{\theta }\{\frac{|\eta |^{2}}{2}+|v\cdot \eta |\}.
		%\label{exponent}
		\]
		Therefore we conclude (\ref{k_vartheta_comparision}).
		\hide, for given $|v|\geq 1,$ we make another change of variable $\eta
		_{\shortparallel }=\{\eta \cdot \frac{v}{|v|}\}\frac{v}{|v|},$ and $\eta
		_{\perp }=\eta -\eta _{||}$ so that $|v\cdot \eta |=|v||\eta
		_{\shortparallel }|$ and $|v-v^{\prime }|\geq |\eta _{\perp }|.$ We can
		absorb $\{1+|\eta |^{2}\}^{|\beta |}$, $|\eta |\{1+|\eta |^{2}\}^{|\beta |}$
		by $e^{\frac{C_{\theta }}{4}|\eta |^{2}}$, and bound the integral in (\ref%
		{wk}) by (\ref{exponent}): 
		\begin{eqnarray*}
			&&C_{\beta }\int_{\mathbf{R}^{2}}(\frac{1}{|\eta _{_{\perp }}|}+1)e^{-\frac{%
					C_{\theta }}{4}|\eta |^{2}}\left\{ \int_{-\infty }^{\infty }e^{-C_{\theta
				}|v|\times |\eta _{||}|}d|\eta _{||}|\right\} d\eta _{\perp } \\
			&\leq &\frac{C_{\beta }}{|v|}\int_{\mathbf{R}^{2}}(\frac{1}{|\eta _{_{\perp
				}}|}+1)e^{-\frac{C_{\theta }}{4}|\eta _{\perp }|^{2}}\left\{ \int_{-\infty
			}^{\infty }e^{-C_{\theta }|y|}dy\right\} d\eta _{\perp }\text{ \ \ }%
			(y=|v|\times |\eta _{||}|).
		\end{eqnarray*}%
		We thus deduce our lemma since both integrals are finite.

		\unhide
	\end{proof}

	Recall $\kappa_\delta(x,v)$ in (\ref{Z_dyn}). Let us denote $f_{\delta}(t,x,v) 
	:=      \kappa_\delta (x,v) f(t,x,v)$. We assume that $f(s,x,v)=e^s f_0(x,v)$ for $s<0$. Then $
	\| f_{\delta} \|_{L^{2} (\mathbb{R} \times \Omega \times \mathbb{R}^{3})} 
	\lesssim   \| f \|_{L^{2} (\mathbb{R}_{+} \times \Omega \times \mathbb{R}^{3})}
	+ \| f_{0}\|_{L^{2} (\Omega \times \mathbb{R}^{3})}$, 
	$\| f_{\delta} \|_{L^{2} ( \mathbb{R} \times \gamma)}  \lesssim  \| f_{\gamma} \|_{L^{2} ( \mathbb{R}_{+} \times \gamma)} + \| f_{0} \|_{L^{2} (\gamma)}$.
	\hide
	Note that, at the boundary $(x,v) \in \gamma:=\partial\Omega \times \mathbb{R}^{3}$,  
	\begin{equation}\label{Z_support_dyn}
	f_{\delta}(t,x,v)|_{\gamma}\equiv 0, \ \  \text{for}   \ |n(x) \cdot v| \leq \delta \  \text{ or } \  |v|\geq \frac{1}{\delta}  .
	\end{equation}
	\unhide
	
	\hide

	The main goal of this section is the following:
	\begin{proposition}\label{prop_3} Assume $g \in L^{2} (\mathbb{R}_{+} \times \Omega \times \mathbb{R}^{3})$, $f_{0} \in L^{2} (\Omega \times \mathbb{R}^{3})$, and $f_{\gamma}\in L^{2} (  \mathbb{R}_{+}\times \gamma)$. Let $f \in L^{\infty}(  \mathbb{R}_{+}; L^{2} (\Omega \times \mathbb{R}^{3}))$ solves (\ref{linear_dyn}) in the sense of distribution and satisfies $f(t,x,v)   =   f_{\gamma}(t,x,v) $ on $ \mathbb{R}_{+} \times \gamma$ and $f(0,x,v)  = f_{0} (x,v)$ on $\Omega \times \mathbb{R}^{3}.$
		
		Then
		\begin{equation}\label{S1}
		\begin{split}
		|a  (t,x)| + |b(t,x)| + |c(t,x)|  % = \sum_{j=0}^{4} \big|\int_{\mathbb{R}^{3}} f(t,x,v) \zeta_{j} (v) \dd v   \big| 
		\leq  \ \mathbf{S}_{1}f(t,x) + \mathbf{S}_{2} f(t,x),\\
		\mathbf{S}_{1} f(t,x) : =   \ 4 \int_{\mathbb{R}^{3}} 
		| f_{\delta} (t,x,v)|
		\langle v\rangle^{2} \sqrt{\mu(v)}\dd v,  \\
		\mathbf{S}_{s} f(t,x) : =    4 \int_{\mathbb{R}^{3}}
		| (\mathbf{I} - \mathbf{P}) f (t,x,v)| \langle v\rangle ^{2}\sqrt{\mu(v)} \dd v
		+ 2\chi(t) \int_{\mathbb{R}^{3}} |f_{0} (x,v)| \langle v \rangle^{2} \sqrt{\mu(v)} \dd v
		,
		\end{split}
		\end{equation} 
		where $f_{\delta}$ is defined in (\ref{Z_dyn}). 
		
		Moreover
		% \begin{equation}\label{S1}
		% \begin{split}
		% \mathbf{S}_{1}f(t,x) & \ :=  \ 2 \int_{\mathbb{R}^{3}} 
		% | f_{\delta} (t,x,v)|
		% \langle v\rangle^{2} \sqrt{\mu(v)}\dd v,\\
		%  \mathbf{S}_{1}f(t,x) & \ :=  \ 2 \int_{\mathbb{R}^{3}}
		% | (\mathbf{I} - \mathbf{P}) f (t,x,v)| \langle v\rangle ^{2}\sqrt{\mu(v)} \end{split}
		%\end{equation}
		%we can decompose
		%\begin{eqnarray*}
		%  \mathbf{P} f (t,x,v) \ =  \ \mathbf{S}_{1}f(t,x ) \cdot \zeta(v)  + \mathbf{S}_{2}f(t,x )\psi(v),
		%\end{eqnarray*}
		%where $|\psi(v)|\lesssim \mu(v)^{  \frac{1}{4} }$ such that
		\begin{equation}\label{decom_Pf}
		\begin{split}
		\| \mathbf{S}_{1}f \|_{L^{3}_{x} L^{2}_{t}}   & \ \lesssim   \ \| w^{-1} f\|_{L^{2}_{t,x,v}}+  \| w^{-1} g\|_{L^{2}_{t,x,v} }+ \|   f\|_{L^{2}(\mathbb{R}_{+} \times\gamma)} ,\\
		\| \mathbf{S}_{2}f \|_{L^{2}_{t,x} }  & \  \lesssim   \  \| (\mathbf{I}- \mathbf{P}) f\|_{L^{2}_{t,x,v} }
		+ \| f_{0} \|_{L^{2}_{x,v}}
		,
		\end{split}
		\end{equation}
		for $w= e^{\beta |v|^{2}}$ with $0< \beta\ll 1$.
		
	\end{proposition}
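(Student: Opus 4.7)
The plan is to split the proof into three stages: a pointwise extraction of $(a,b,c)$ via a $\delta$-cutoff, a trivial Cauchy--Schwarz bound for $\mathbf{S}_2 f$, and a velocity-averaging argument for $\mathbf{S}_1 f$.

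First I would establish the pointwise bound. Since $\mathbf{P}f = a\sqrt{\mu}+b\cdot v\sqrt{\mu}+c\tfrac{|v|^2-3}{2\sqrt{2}}\sqrt{\mu}$, the coefficients $a,b,c$ at $(t,x)$ are realized as $L^2_v$ inner products of $f$ against $\sqrt{\mu}$, $v_i\sqrt{\mu}$, and $\tfrac{|v|^2-3}{2\sqrt{2}}\sqrt{\mu}$ respectively. This gives $|a|+|b|+|c|\lesssim \int_{\mathbb{R}^3}|f|\langle v\rangle^2\sqrt{\mu}\,dv$. Decomposing $f=f_\delta+(1-\kappa_\delta)f$, the $f_\delta$-contribution is precisely $\mathbf{S}_1 f$. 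For the complement $(1-\kappa_\delta)f$, I split further into $\mathbf{P}f$ and $(\mathbf{I}-\mathbf{P})f$ pieces: the orthogonal piece contributes to $\mathbf{S}_2 f$ by the same inner-product structure, while the macroscopic piece is dominated by $\left(|a|+|b|+|c|\right)\cdot \int (1-\kappa_\delta(x,v))\langle v\rangle^4\mu\,dv$. Because the support of $1-\kappa_\delta$ consists of the grazing layer $\{|n\cdot v|<\delta\}$, the high-energy tail $\{|v|>1/\delta\}$ and the low-energy ball $\{|v|<\delta\}$, this integral tends to $0$ as $\delta\to 0$. Choosing $\delta$ small enough to absorb it into the left-hand side yields $|a|+|b|+|c|\le \mathbf{S}_1 f+\mathbf{S}_2 f$.

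Second, the $L^2_{t,x}$-bound on $\mathbf{S}_2 f$ is immediate. Cauchy--Schwarz in $v$ gives $\big|\int(\mathbf{I}-\mathbf{P})f\,\langle v\rangle^2\sqrt{\mu}\,dv\big|\lesssim \|(\mathbf{I}-\mathbf{P})f(t,x,\cdot)\|_{L^2_v}$ since $\langle v\rangle^2\sqrt{\mu}\in L^2_v$; integrating in $(t,x)$ produces the stated $\|(\mathbf{I}-\mathbf{P})f\|_{L^2_{t,x,v}}$ term. The initial-data piece is handled the same way, using that $\chi$ has bounded temporal support so $\|\chi(t)\int|f_0|\langle v\rangle^2\sqrt{\mu}\,dv\|_{L^2_{t,x}}\lesssim \|f_0\|_{L^2_{x,v}}$.

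The third and hardest step is the $L^3_x L^2_t$ bound on $\mathbf{S}_1 f$. The key point is that $f_\delta$ vanishes in a neighborhood of the grazing set $\gamma_0$ and at high/low velocities, so it admits an extension $\bar{f}\in L^2(\mathbb{R}\times\mathbb{R}^3\times\mathbb{R}^3)$ satisfying a transport equation $\partial_t\bar{f}+v\cdot\nabla_x\bar{f}=\bar{H}$ on the whole space, where $\bar{H}$ absorbs both the bulk source $g-\nu f+\cdots$ and the commutator terms produced by differentiating $\kappa_\delta$; the latter are controlled by $\|w^{-1}f\|_{L^2_{t,x,v}}$ and $\|f\|_{L^2(\mathbb{R}_+\times\gamma)}$. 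I would invoke the velocity-averaging lemma (Theorem 7.2.1 of Glassey) to deduce $\big\|\int \bar{f}\langle v\rangle^2\sqrt{\mu}\,dv\big\|_{H^{1/4}_{t,x}(\mathbb{R}\times\mathbb{R}^3)}\lesssim \|\bar{f}\|_{L^2}+\|\bar{H}\|_{L^2}$, and then use the anisotropic Sobolev embedding $H^{1/4}_{t,x}\hookrightarrow L^3_x L^2_t$ (valid in the present dimension-count since the average already enjoys $H^{1/4}$ regularity in the full $(t,x)$-variable), to arrive at the claimed $L^3_x L^2_t$ estimate.

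The main obstacle lies in Step 3: controlling the commutator terms produced by the cutoff $\kappa_\delta$ so that $\bar{H}$ actually sits in $L^2$ with a norm dominated by $\|w^{-1}f\|_{L^2_{t,x,v}}+\|w^{-1}g\|_{L^2_{t,x,v}}+\|f\|_{L^2(\mathbb{R}_+\times\gamma)}$, and in verifying that the extension procedure of Lemma \ref{extension_dyn} genuinely preserves these bounds across $\partial\Omega$. The anisotropic embedding step also requires a careful interpolation between the temporal and spatial directions, which is where the exponent $1/4$ from the averaging lemma is exactly sufficient but gives no slack.
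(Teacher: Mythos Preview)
The statement you are trying to prove is commented out in the paper: it sits inside a \verb|\hide ... \unhide| block, which expands to \verb|\iffalse ... \fi|, so the proposition is not part of the compiled text and the paper contains no proof of it. The only surviving nearby material is Lemma~\ref{extension_dyn}, the extension lemma you invoke in Step~3. So there is no paper proof to compare against; what follows is an assessment of your argument on its own merits.

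Steps~1 and~2 are fine in outline, though in Step~1 you do not account for the term $\chi(t)\int|f_0|\langle v\rangle^2\sqrt{\mu}\,dv$ appearing in $\mathbf{S}_2 f$. It does not arise from the splitting $f=f_\delta+(1-\kappa_\delta)f$ at a fixed positive time~$t$; it presumably enters through the negative-time extension $f(s)=e^s f_0$ that the paper builds into $f_\delta$, but your derivation does not make this visible.

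The genuine gap is in Step~3. The embedding $H^{1/4}_{t,x}(\mathbb{R}\times\mathbb{R}^3)\hookrightarrow L^3_x L^2_t$ that you assert is \emph{false}. Take $g(t,x)=\phi(t)\,\epsilon^{-3/2}\psi(x/\epsilon)$ with smooth compactly supported bumps $\phi,\psi$ normalized in $L^2$: then $\|g\|_{H^{1/4}}\sim\epsilon^{-1/4}$ while $\|g\|_{L^3_x L^2_t}\sim\epsilon^{-1/2}$, so the ratio blows up as $\epsilon\to 0$. The isotropic $H^{1/4}$ gain from the Glassey averaging lemma embeds only into $L^2_t H^{1/4}_x$, and $H^{1/4}(\mathbb{R}^3)$ does not reach $L^3(\mathbb{R}^3)$; that requires a full half-derivative in $x$. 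In the active part of the paper the $H^{1/4}$ averaging is used solely for compactness (strong $L^2$ convergence via $H^{1/4}\subset\!\subset L^2$, see the argument around~(\ref{H_1/4})), never for any $L^3_x$ gain. To actually obtain $L^3_x L^2_t$ control of $\mathbf{S}_1 f$ you would need either an $L^p$-type averaging lemma (Bouchut--Desvillettes, Golse--Saint-Raymond, or a Strichartz-type estimate for free transport) or an argument that produces $L^2_t H^{1/2}_x$ regularity of the velocity average; the quarter-derivative isotropic lemma is simply not strong enough, and the ``exactly sufficient'' remark in your final paragraph is incorrect.
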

	
	\bigskip
	
	We need several lemmas to prove Proposition \ref{prop_3}.

	\unhide
	
	\begin{lemma} \label{extension_dyn}Assume $\O$ is convex in (\ref{convexity_eta}) and $\sup_{0 \leq t \leq T}\|E(t)\|_{L^\infty (\O)} < \infty$. Let $\bar{E}(t,x) = \mathbf{1}_{\O}(x) E(t,x)$ for $x \in \R^3$. There exists $\bar{f}(t,x,v) \in L^{2}( \mathbb{R} \times   \mathbb{R}^{3} \times \mathbb{R}^{3} ; \mathbb R^2)$, an extension of $f_{\delta}$, such that 
		\begin{equation}\notag
		\bar{f}  |_{\Omega \times \mathbb{R}^{3}}\equiv f_{\delta}    \  \text{ and } \  \bar{f}  |_{\gamma}\equiv f_{ \delta} |_{\gamma}   \  \text{ and } \ \bar{f } |_{t=0} \equiv f_{\delta} |_{t=0}.
		\end{equation} Moreover, in the sense of distributions on $\mathbb{R} \times \mathbb{R}^{3} \times \mathbb{R}^{3} \to \mathbb R^2$,
		\begin{equation}\label{eq_barf_dyn}
		[\partial_{t} +  v\cdot \nabla_{x} + q \bar{E} \cdot \nabla_{v}]\bar{f} =  h  ,
		\end{equation}
		where
		\Be\begin{split}\label{barf_h}
			h_{} (t,x,v)=& \ 
			\kappa_\delta(x,v)    \mathbf{1}_{t \in [0,\infty)} 
			[ \partial_{t} + v\cdot \nabla_{x}   + q E
			\cdot \nabla_{v}  ] f\\
			&
			+ \kappa_\delta(x,v)  \mathbf{1}%
			_{t \in ( - \infty, 0 ]} e^t
			[1  + v\cdot \nabla_{x}   + q E
			\cdot \nabla_{v}] f_0 \kappa_\delta(x,v) \\
			& 
			+ f (t,x,v) [v\cdot \nabla_{x} + q_1 E
			\cdot \nabla_{v}] \kappa_\delta(x,v),\\
%			h_2 (t,x,v) = & \  \mathbf{1}_{t-\tb^{EX}(t,x,v)\leq 0}  
%			\mathbf{1}_{\xb^{EX} (t,x,v) \in \p \O}   
%			e^{t-\tb^{EX} (x,v)} f_0 (\xb^{EX} (x,v),v) \\
%			&+    \mathbf{1}_{t-\tf^{EX}(t,x,v)\leq 0}  
%			\mathbf{1}_{\xf^{EX} (t,x,v) \in \p \O}   
%			e^{t-\tf^{EX} (x,v)} f_0 (\xf^{EX} (x,v),v),
		\end{split}\Ee
		where $t_{\mathbf{b}}^{EX}, x_{\mathbf{b}}^{EX}, t_{\mathbf{f}}^{EX}, x_{\mathbf{f}}^{EX}$ are defined in (\ref{def_tb_EX}).
		
		Moreover,
		\Be\begin{split}\label{estimate_h1_h2}
			\| h_{} \|_{  L^{2}( \mathbb{R} \times   \mathbb{R}^{3} \times \mathbb{R}^{3})}  
			\lesssim  & \  \|  [ \partial_{t} + v\cdot \nabla_{x}   + q E
			\cdot \nabla_{v}  ] f\|_{L^{2}(
				\mathbb{R}_{+} \times 
				\Omega \times \mathbb{R}^{3})} 
			+  \| f \|_{L^{2} (\R \times \Omega \times \mathbb{R}^{3})} \\
			&  
			+ \| [ v\cdot \nabla_{x} + q E\cdot \nabla_{v} ] f_{0} \|_{L^{2 } (\Omega \times \mathbb{R}^{3})} .
%			,\\
%			\| h_{2} \|_{L^{2} ( \mathbb{R} \times \mathbb{R}^{3} \times \mathbb{R}^{3})}\lesssim & \ 
%			\| f_0 \|_{L^2(\gamma)}.
		\end{split}\Ee
		
	\end{lemma}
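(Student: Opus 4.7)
The plan is to build $\bar f$ by extending $f_\delta$ outside $\bar\Omega\times\R^3$ via transport along characteristics of the extended field $\bar E$, and by prescribing an exponential profile in time for $t<0$. For $t\geq 0$ and $(x,v)\in\Omega\times\R^3$ I simply set $\bar f(t,x,v):=f_\delta(t,x,v)$. For $t\geq 0$ and $(x,v)\notin\bar\Omega\times\R^3$, I would flow along the extended characteristics $(X^{EX},V^{EX})$ solving $\dot X=V$, $\dot V=-q\bar E(s,X)$ with $\bar E=\mathbf{1}_\Omega E$ until they enter $\bar\Omega$; the backward (or forward, if the backward one does not hit $\bar\Omega$) exit time $t_{\mathbf{b}}^{EX}(t,x,v)$ and position/velocity $(x_{\mathbf{b}}^{EX},v_{\mathbf{b}}^{EX})$ are well defined outside a null set, and I put $\bar f(t,x,v):=f_\delta(t-t_{\mathbf{b}}^{EX},x_{\mathbf{b}}^{EX},v_{\mathbf{b}}^{EX})$. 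For $t<0$, I set $\bar f(t,x,v):=e^{t}\bar f(0,x,v)$ to interpolate smoothly to the initial datum while staying in $L^2(\R;L^2_{x,v})$.

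Next I would verify \eqref{eq_barf_dyn} in the sense of distributions on $\R\times\R^3\times\R^3$. Inside $\Omega\times\R^3$ for $t>0$, applying the transport operator to $\kappa_\delta f$ and using the product rule yields
\begin{equation*}
[\partial_t+v\cdot\nabla_x+qE\cdot\nabla_v](\kappa_\delta f)=\kappa_\delta[\partial_t+v\cdot\nabla_x+qE\cdot\nabla_v]f+f\,[v\cdot\nabla_x+qE\cdot\nabla_v]\kappa_\delta,
\end{equation*}
which accounts for the first and third terms of $h$. Outside $\bar\Omega\times\R^3$, the construction makes $\bar f$ constant along characteristics of $\bar E$, so the transport operator annihilates $\bar f$ there. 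For $t<0$ the time derivative hits the exponential factor, producing the second term $\kappa_\delta e^{t}[1+v\cdot\nabla_x+qE\cdot\nabla_v]f_0$ (after noting that on $(-\infty,0]$ the field reduces to the time-zero field and $\bar f(0,\cdot,\cdot)=\kappa_\delta f_0$). Finally, compatibility across $\{t=0\}$ and across $\partial\Omega\times\R^3$ is what ensures no singular boundary measure appears: at $t=0$ the two definitions agree, and at $\partial\Omega$ the characteristic extension matches the trace $f_\delta|_\gamma$ by construction.

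For the $L^2$ bound \eqref{estimate_h1_h2} I would estimate each term of $h$ separately. The first term is $\kappa_\delta\mathbf{1}_{t\geq 0}[\partial_t+v\cdot\nabla_x+qE\cdot\nabla_v]f$, so its $L^2(\R\times\R^3\times\R^3)$ norm is controlled by $\|[\partial_t+v\cdot\nabla_x+qE\cdot\nabla_v]f\|_{L^2(\R_+\times\Omega\times\R^3)}$. The second term carries the factor $e^t\mathbf{1}_{t\leq 0}$, which is in $L^2(\R_t)$, so after integrating $t$ it is dominated by $\|f_0\|_{L^2}+\|[v\cdot\nabla_x+qE\cdot\nabla_v]f_0\|_{L^2(\Omega\times\R^3)}$. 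The third (commutator) term is $f\,[v\cdot\nabla_x+q_1 E\cdot\nabla_v]\kappa_\delta$; since $|[v\cdot\nabla_x+q_1E\cdot\nabla_v]\kappa_\delta|\lesssim_\delta 1$ and is supported in a compact set in $v$, this is controlled by $\|f\|_{L^2(\R\times\Omega\times\R^3)}$. Adding these three estimates yields \eqref{estimate_h1_h2}.

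The main obstacle is the boundary compatibility at $\partial\Omega$: one needs to ensure that the characteristic extension outside $\Omega$ agrees distributionally with $f_\delta$ inside, with no jump in the trace. This is precisely where the cutoff $\chi(|n(x)\cdot v|/\delta-1)$ is crucial—it excises a neighborhood of the grazing set $\gamma_0$, so Lemma \ref{cannot_graze} guarantees that all relevant characteristics enter/exit $\Omega$ transversally, producing a smooth (in the $L^2$-trace sense) extension across $\partial\Omega\times\R^3$. A secondary technical point is justifying the change of variables $v\mapsto(x_{\mathbf{b}}^{EX},t_{\mathbf{b}}^{EX})$ used implicitly when estimating the contribution of the exterior region; the Jacobian is bounded below by a constant depending on $\delta$ since $|n(x_{\mathbf{b}}^{EX})\cdot v_{\mathbf{b}}^{EX}|\geq\delta$ on the support of $\kappa_\delta$.
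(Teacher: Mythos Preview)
Your approach is essentially the same as the paper's: extend by free transport outside $\bar\Omega$ (since $\bar E=\mathbf{1}_\Omega E$ vanishes there, your characteristics are straight lines, which is exactly the paper's $t_{\mathbf b}^{EX},x_{\mathbf b}^{EX},t_{\mathbf f}^{EX},x_{\mathbf f}^{EX}$), use the product rule inside $\Omega$ for $t>0$, and use the $e^t f_0$ convention for $t<0$. The paper handles the ``backward or forward'' dichotomy slightly differently, writing $f_E=\mathbf{1}_{x_{\mathbf b}^{EX}\in\partial\Omega}f_\delta(t-t_{\mathbf b}^{EX},x_{\mathbf b}^{EX},v)+\mathbf{1}_{x_{\mathbf f}^{EX}\in\partial\Omega}f_\delta(t+t_{\mathbf f}^{EX},x_{\mathbf f}^{EX},v)$ and invoking convexity to observe that the two indicator sets are disjoint; your ``use backward, else forward'' rule gives the same function.

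Two minor over-complications in your write-up: (i) Lemma~\ref{cannot_graze} concerns interior characteristics under the field $E$ and is not what guarantees boundary compatibility here---the relevant fact is simply that $\kappa_\delta$ vanishes where $|n(x)\cdot v|\leq\delta$, so $f_\delta|_\gamma$ and hence $f_E$ vanish near the grazing set, which is what prevents a boundary Dirac layer; (ii) the Jacobian discussion is unnecessary for \eqref{estimate_h1_h2}, since $h$ as written in \eqref{barf_h} is supported in $\bar\Omega\times\R^3$ and the three terms are estimated directly exactly as you say.
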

	
	\begin{proof} In the sense of distributions  
		\begin{equation} \label{eqtn_f_delta}
		%\begin{split}
		\partial_{t} f_{\delta}+ v\cdot \nabla_{x} f_{\delta} + q E
		\cdot \nabla_{v} f_{\delta} = h  \text{ in }  (\ref{barf_h}).% \\
		% =& \ \kappa_\delta(x,v)   \Big\{ \mathbf{1}_{t \in [0,\infty)} 
		% [ \partial_{t} + v\cdot \nabla_{x}   + E
		%\cdot \nabla_{v}  ] f
		%  + \mathbf{1}%
		%_{t \in ( - \infty, 0 ]} e^t
		%[1  + v\cdot \nabla_{x}   + E
		%\cdot \nabla_{v}] f_0\Big\} \\
		%& 
		%+ f (t,x,v) [v\cdot \nabla_{x} + E
		%\cdot \nabla_{v}] \kappa_\delta(x,v).
		%\end{split}%
		\end{equation}
		Clearly $| [v\cdot \nabla_{x} + q_1 E
		\cdot \nabla_{v}] \kappa_\delta(x,v)| \lesssim_\delta 1$.\hide
		\begin{eqnarray}
		&&\Big|\{v\cdot \nabla_{x} + \e^{2} \Phi \cdot \nabla_{v}\} [1-\chi(\frac{%
			n(x) \cdot v}{\delta}) \chi \big( \frac{ \xi(x )}{\delta}\big) ]
		\chi(\delta|v|)\Big|  \label{der_chi} \\
		&=&\Big| - \frac{1}{\delta} \{v\cdot \nabla_{x} n(x) \cdot v + \e^{2} \Phi
		\cdot n(x) \} \chi^{\prime} \big(\frac{n(x) \cdot v}{\delta} \big) \chi %
		\big( \frac{ \xi(x )}{\delta}\big) \chi(\delta|v|)  \notag \\
		&& - \ \frac{1}{\delta} v\cdot \nabla_{x} \xi(x) \chi^{\prime} \big( \frac{%
			\xi(x)}{\delta}\big) \chi (\frac{n(x) \cdot v}{\delta}) \chi(\delta|v|) + \e%
		^{2}\delta \Phi \cdot \frac{v}{|v|} \chi^{\prime} (\delta|v|)[1-\chi(\frac{%
			n(x) \cdot v}{\delta}) \chi \big( \frac{\xi(x)}{\delta}\big) ] \Big|  \notag
		\\
		&\leq& \frac{4}{\delta}( |v|^{2}\|\xi\|_{C^2} + \e^{2}\|\Phi\|_\infty )
		\chi(\delta|v|) + \frac{C_{\Omega}}{\delta} |v|\chi(\delta|v|) + \e^{2}
		\delta\|\Phi\|_\infty \mathbf{1}_{|v| \leq {2}{\delta}^{-1}}  \notag \\
		&\lesssim & {\delta^{-3}} \mathbf{1}_{|v| \leq 2 \delta^{-1}}.  \notag
		\end{eqnarray}\unhide

		For $x \in \R^3 \backslash \bar{\O}$ we define
		\Be\begin{split}\label{def_tb_EX}
			\tb^{EX}(x,v) &:= \sup\{s \geq 0: x-\tau v \in \R^3 \backslash \bar{\O}
			\ \text{ for all } \ \tau \in (0,s)
			\},\\
			\tf^{EX}(x,v) &:= \sup\{s \geq 0: x+\tau v \in \R^3 \backslash \bar{\O}
			\ \text{ for all } \ \tau \in (0,s)
			\},
		\end{split}\Ee
		and $\xb^{EX}(x,v) = x- \tb^{EX}(t,x,v))v$, $\xf^{EX}(x,v) = x + \tf^{EX}(t,x,v))v$.
		
		We define, for $x \in \R^3 \backslash \bar{\O}$,
		\Be\label{def_f_E}
		\begin{split}
			f_E (t,x,v) =& \mathbf{1}_{\xb^{EX} (t,x,v) \in \p \O} f_\delta(t-\tb^{EX}(x,v), \xb^{EX} (x,v),v)\\
			+& \mathbf{1}_{\xf^{EX} (t,x,v) \in \p \O}f_\delta(t+\tf^{EX}(x,v), \xf^{EX} (x,v),v).
		\end{split}\Ee
		Recall that, from (\ref{Z_dyn}), $f_\delta\equiv0$ when $n(x) \cdot v = 0$, and hence $f_E\equiv0$ for $n(x) \cdot v = 0$. Since $\O$ is convex if $v\neq 0$ then $\{\xb^{EX} (x,v) \in \p \O\} \cap \{\xf^{EX} (x,v) \in \p \O\}= \emptyset$. Note that 
		\Be\label{no_jump_bdry}
		f_E(t,x,v) = f_\gamma(t,x,v) = f_\delta (t,x,v) \ \ \text{for }   x \in \p\O.
		\Ee
			And since for any $s>0$, 
			\[ \begin{split}
			&(t +s - \tb^{EX}(x+sv,v), \xb^{EX}(x+sv,v),v  ) = (t - \tb^{EX}(x,v),\xb^{EX}(x,v),v) ,
			\\ & (t +s + \tf^{EX}(x+sv,v), \xf^{EX}(x+sv,v),v  ) = (t - \tf^{EX}(x,v),\xf^{EX}(x,v),v),
			\end{split} \]
		so in the sense of distribution, in $\R \times [\R^3 \backslash \bar{\O}] \times \R^3$ 
		\Be%\begin{split}
		\label{eqtn_f_E}
		%&
		\p_t f_E + v\cdot \nabla_x f_E = 0.
%		\text{ in }  (\ref{barf_h}).%\\
		%=& \   \mathbf{1}_{t-\tb^{EX}(t,x,v)\leq 0}  
		% \mathbf{1}_{\xb^{EX} (t,x,v) \in \p \O}   
		%e^{t-\tb^{EX} (x,v)} f_0 (\xb^{EX} (x,v),v) \\
		%+ & \ \mathbf{1}_{t-\tb^{EX}(t,x,v)\leq 0}  
		% \mathbf{1}_{\xf^{EX} (t,x,v) \in \p \O}   
		%e^{t-\tf^{EX} (x,v)} f_0 (\xf^{EX} (x,v),v).
		%\end{split}
		\Ee

		We define 
		\Be\label{def_bar_f}
		\bar{f}(t,x,v) : = \mathbf{1}_{\O} (x)  f_\delta (t,x,v)
		+ \mathbf{1}_{\R^3 \backslash \bar{\O}} (x) f_E (t,x,v).
		\Ee

		From (\ref{eqtn_f_delta}), (\ref{no_jump_bdry}), and (\ref{eqtn_f_E}) we prove (\ref{eq_barf_dyn}). The estimates of (\ref{estimate_h1_h2}) are direct consequence of Lemma \ref{le:ukai}.
		\end{proof}

\textbf{Acknowledgements.} This paper is part of the author's thesis. He thanks his advisor Professor Chanwoo Kim for helpful discussions. This research is partially supported by NSF Grant No. 1501031.


\begin{thebibliography}{9}  
	
	\bibitem{BD} Bernis, L.; Desvillettes, L.: Propagation of singularities for classical solutions of the Vlasov-Poisson-Boltzmann equation. \textit{Discrete Contin. Dyn. Syst.} 24 (2009), no. 1, 13--33. 
	
		\bibitem{CIP} Cercignani, C.; Illner, R.,; Pulvirenti, M.: \textit{The
			mathematical theory of dilute gases}. Applied Mathematical Sciences, 106.
		Springer-Verlag, New York, 1994.


	
	\bibitem{CK} Cao, Y.: Regularity of Boltzmann equation with external fields in convex domains of diffuse reflection,  \textit{submitted, arxiv:1812.09388}

\bibitem{VPBKim} Cao, Y.; Kim, C.; Lee, D.: Global strong solutions of the Vlasov-Poisson-Boltzmann system in bounded domains.  \textit{Arch Rational Mech Anal (2019)}. https://doi.org/10.1007/s00205-019-01374-9
%\textit{submitted}

		
				
		\bibitem{DD} Desvillettes, L.; Dolbeault, J.: On long time asymptotics of the Vlasov-Poisson-Boltzmann equation. \textit{Comm. Partial Differential Equations} 16 (1991), no. 2-3, 451--489. 

		
	\bibitem{DS} Duan, R.; Strain, R.: Optimal large-time behavior of the Vlasov-Maxwell-Boltzmann system in the whole space. \textit{Comm. Pure Appl. Math.} 64 (2011), no. 11, 1497--1546.

\bibitem{EGKM} Esposito, R.; Guo, Y.; Kim, C.; Marra, R.: Non-Isothermal
		Boundary in the Boltzmann Theory and Fourier Law.  \textit{Comm. Math. Phys.} \textbf{323} (2003), no. 1, 177--239. 

	\bibitem{EGKM2} Esposito, R.; Guo, Y.; Kim, C.,; Marra, R.: Stationary solutions to the Boltzmann equation in the Hydrodynamic limit. \textit{Ann. PDE} 4 (2018), no. 1, Art. 1, 119 pp.


		
\bibitem{gl} Glassey, R.: \textit{The Cauchy Problems in Kinetic Theory.} Philadelphia: SIAM, 1996

\bibitem{GS} Glassey, R.; Strauss, W.: Decay of the linearized Boltzmann-Vlasov system. \textit{Transport Theory Statist. Phys.} 28 (1999), no. 2, 135--156. 
	
		\bibitem{GT} Gilbarg, D.; Trudinger, N.: \textit{Elliptic partial differential equations of second order}, Reprint of the 1998 edition, Classics in Mathematics, Springer-Verlag, Berlin, 2001.


\bibitem{Guo_V}		Guo, Y.: Regularity of the Vlasov equations in a half space. \textit{Indiana. Math. J.} 43, 255--320 (1994)

\bibitem{Guo10} Guo, Y.: Decay and Continuity of Boltzmann Equation in
		Bounded Domains. \textit{Arch. Rational Mech. Anal.} \textbf{197} (2010), no. 3, 713--809.


\bibitem{Guo_P} Guo, Y.: The Vlasov-Poisson-Boltzmann system near Maxwellians. \textit{Comm. Pure Appl. Math.} \textbf{55} (2002), no. 9, 1104--1135. 

\bibitem{Guo_M} Guo, Y.: The Vlasov-Maxwell-Boltzmann system near Maxwellians. \textit{Invent. Math.} September 2003, Volume 153, Issue 3, 593--630
%
%\bibitem{Guo_V} Guo, Y.: The Vlasov-Poisson-Boltzmann system near vacuum. \textit{Comm. Math. Phys.} 218 (2001), no. 2, 293?313. 

\bibitem{GKTT1} Guo, Y.; Kim, C.; Tonon, D.; Trescases, A.: Regularity of the Boltzmann Equation in Convex Domains. \textit{Invent. Math.} January 2017, Volume 207, Issue 1, 115--290
		
		
		\bibitem{GKTT2} Guo, Y.; Kim, C.; Tonon, D.; Trescases, A.: BV-Regularity of the Boltzmann Equation in Non-convex Domains.  \textit{Arch. Rational Mech. Anal.} \textbf{220} (2016), no. 3, 1045--1093. 
		
		
		\bibitem{Hwang} Hwang H.-J.; Velazquez J.: Global existence for the Vlasov-Poisson system in bounded domains. \textit{Arch. Rat. Mech. Anal.} 195(3), 763--796 (2010)
		
		\bibitem{HwangFP} Hwang H.-J.; Jang, J.; Jung, J.: The Fokker-Planck equation with absorbing boundary conditions in bounded domains. \textit{SIAM J. Math. Anal.} 50 (2018), no. 2, 2194--2232. 

		
				
					
\bibitem{KimB} Kim, C.: Boltzmann equation with a large potential in a periodic box \textit{Comm. Partial Differential Equations}    39 (2014), no. 8, 1393--1423.
		
		
			
		\bibitem{KL1} Kim, C. and Lee, D.: The Boltzmann equation with specular boundary condition in convex domains. \textit{Comm. Pure Appl. Math.} 71 (2018), no. 3, 411--504.
		
		
 	\bibitem{KL2} Kim, C. and Lee, D.: Decay of the Boltzmann equation with the specular boundary condition in non-convex cylindrical domains. \textit{Arch. Ration. Mech. Anal.} 230 (2018), no. 1, 49--123. 
	
	\bibitem{Kim11} Kim, C.: Formation and propagation of discontinuity for Boltzmann equation in non-convex domains. \textit{Comm. Math. Phys.} \textbf{308} (2011), no. 3, 641--701.
%	
%	\bibitem{Hwang1} Hwang, H.-J.: Regularity for the Vlasov-Poisson system in a convex domain. \textit{SIAM J. Math. Anal.} 36 (2004), no. 1, 121--171. 

		
			\bibitem{Maxwell} Maxwell, J. C.: On the Dynamical Theory of gases, Phil. Trans. Roy. Soc. London, 157, 49--88, (1866).
			
	\bibitem{Michler} Mischler, S.: On the initial boundary value problem for the Vlasov-Poisson-Boltzmann system. \textit{Commun. Math. Phys.} 210, 447--466 (2000)


			


		
		
		
		
		
		
		

		
			\end{thebibliography}
\end{document}